\tikzset{
>=stealth',
help lines/.style={dashed, thick},
axis/.style={<->},
important line/.style={thick},
connection/.style={thick, dotted},
}
\newcommand {\Omit}[1]{}
\newcommand{\nc}{\newcommand}
\nc{\rnc}{\renewcommand}
\nc{\bb}[1]{{\mathbb #1}}
\nc{\bbA}{\bb{A}}\nc{\bbB}{\bb{B}}\nc{\bbC}{\bb{C}}\nc{\bbD}{\bb{D}}
\nc{\bbE}{\bb{E}}\nc{\bbF}{\bb{F}}\nc{\bbG}{\bb{G}}\nc{\bbH}{\bb{H}}
\nc{\bbI}{\bb{I}}\nc{\bbJ}{\bb{J}}\nc{\bbK}{\bb{K}}\nc{\bbL}{\bb{L}}
\nc{\bbM}{\bb{M}}\nc{\bbN}{\bb{N}}\nc{\bbO}{\bb{O}}\nc{\bbP}{\bb{P}}
\nc{\bbQ}{\bb{Q}}\nc{\bbR}{\bb{R}}\nc{\bbS}{\bb{S}}\nc{\bbT}{\bb{T}}
\nc{\bbU}{\bb{U}}\nc{\bbV}{\bb{V}}\nc{\bbW}{\bb{W}}\nc{\bbX}{\bb{X}}
\nc{\bbY}{\bb{Y}}\nc{\bbZ}{\bb{Z}}
\nc{\mbf}[1]{{\mathbf #1}}
\nc{\bfA}{\mbf{A}}\nc{\bfB}{\mbf{B}}\nc{\bfC}{\mbf{C}}\nc{\bfD}{\mbf{D}}
\nc{\bfE}{\mbf{E}}\nc{\bfF}{\mbf{F}}\nc{\bfG}{\mbf{G}}\nc{\bfH}{\mbf{H}}
\nc{\bfI}{\mbf{I}}\nc{\bfJ}{\mbf{J}}\nc{\bfK}{\mbf{K}}\nc{\bfL}{\mbf{L}}
\nc{\bfM}{\mbf{M}}\nc{\bfN}{\mbf{N}}\nc{\bfO}{\mbf{O}}\nc{\bfP}{\mbf{P}}
\nc{\bfQ}{\mbf{Q}}\nc{\bfR}{\mbf{R}}\nc{\bfS}{\mbf{S}}\nc{\bfT}{\mbf{T}}
\nc{\bfU}{\mbf{U}}\nc{\bfV}{\mbf{V}}\nc{\bfW}{\mbf{W}}\nc{\bfX}{\mbf{X}}
\nc{\bfY}{\mbf{Y}}\nc{\bfZ}{\mbf{Z}}
\nc{\bfa}{\mbf{a}}\nc{\bfb}{\mbf{b}}\nc{\bfc}{\mbf{c}}\nc{\bfd}{\mbf{d}}
\nc{\bfe}{\mbf{e}}\nc{\bff}{\mbf{f}}\nc{\bfg}{\mbf{g}}\nc{\bfh}{\mbf{h}}
\nc{\bfi}{\mbf{i}}\nc{\bfj}{\mbf{j}}\nc{\bfk}{\mbf{k}}\nc{\bfl}{\mbf{l}}
\nc{\bfm}{\mbf{m}}\nc{\bfn}{\mbf{n}}\nc{\bfo}{\mbf{o}}\nc{\bfp}{\mbf{p}}
\nc{\bfq}{\mbf{q}}\nc{\bfr}{\mbf{r}}\nc{\bfs}{\mbf{s}}\nc{\bft}{\mbf{t}}
\nc{\bfu}{\mbf{u}}\nc{\bfv}{\mbf{v}}\nc{\bfw}{\mbf{w}}\nc{\bfx}{\mbf{x}}
\nc{\bfy}{\mbf{y}}\nc{\bfz}{\mbf{z}}
\nc{\mcal}[1]{{\mathcal #1}}
\nc{\calA}{\mcal{A}}\nc{\calB}{\mcal{B}}\nc{\calC}{\mcal{C}}\nc{\calD}{\mcal{D}}
\nc{\calE}{\mcal{E}} \nc{\calF}{\mcal{F}}\nc{\calG}{\mcal{G}}\nc{\calH}{\mcal{H}}
\nc{\calI}{\mcal{I}}\nc{\calJ}{\mcal{J}}\nc{\calK}{\mcal{K}}\nc{\calL}{\mcal{L}}
\nc{\calM}{\mcal{M}}\nc{\calN}{\mcal{N}}\nc{\calO}{\mcal{O}}\nc{\calP}{\mcal{P}}
\nc{\calQ}{\mcal{Q}}\nc{\calR}{\mcal{R}}\nc{\calS}{\mcal{S}}\nc{\calT}{\mcal{T}}
\nc{\calU}{\mcal{U}}\nc{\calV}{\mcal{V}}\nc{\calW}{\mcal{W}}\nc{\calX}{\mcal{X}}
\nc{\calY}{\mcal{Y}}\nc{\calZ}{\mcal{Z}}
\nc{\fA}{\frak{A}}\nc{\fB}{\frak{B}}\nc{\fC}{\frak{C}} \nc{\fD}{\frak{D}}
\nc{\fE}{\frak{E}}\nc{\fF}{\frak{F}}\nc{\fG}{\frak{G}}\nc{\fH}{\frak{H}}
\nc{\fI}{\frak{I}}\nc{\fJ}{\frak{J}}\nc{\fK}{\frak{K}}\nc{\fL}{\frak{L}}
\nc{\fM}{\frak{M}}\nc{\fN}{\frak{N}}\nc{\fO}{\frak{O}}\nc{\fP}{\frak{P}}
\nc{\fQ}{\frak{Q}}\nc{\fR}{\frak{R}}\nc{\fS}{\frak{S}}\nc{\fT}{\frak{T}}
\nc{\fU}{\frak{U}}\nc{\fV}{\frak{V}}\nc{\fW}{\frak{W}}\nc{\fX}{\frak{X}}
\nc{\fY}{\frak{Y}}\nc{\fZ}{\frak{Z}}
\nc{\fa}{\frak{a}}\nc{\fb}{\frak{b}}\nc{\fc}{\frak{c}} \nc{\fd}{\frak{d}}
\nc{\fe}{\frak{e}}\nc{\fFf}{\frak{f}}\nc{\fg}{\frak{g}}\nc{\fh}{\frak{h}}
\nc{\fri}{\frak{i}}\nc{\fj}{\frak{j}}\nc{\fk}{\frak{k}}\nc{\fl}{\frak{l}}
\nc{\fm}{\frak{m}}\nc{\fn}{\frak{n}}\nc{\fo}{\frak{o}}\nc{\fp}{\frak{p}}
\nc{\fq}{\frak{q}}\nc{\fr}{\frak{r}}\nc{\fs}{\frak{s}}\nc{\ft}{\frak{t}}
\nc{\fu}{\frak{u}}\nc{\fv}{\frak{v}}\nc{\fw}{\frak{w}}\nc{\fx}{\frak{x}}
\nc{\fy}{\frak{y}}\nc{\fz}{\frak{z}}
\newtheorem{theorem}{Theorem}[section]
\newtheorem{lemma}[theorem]{Lemma}
\newtheorem{corollary}[theorem]{Corollary}
\newtheorem{prop}[theorem]{Proposition}
\newtheorem{setting}[theorem]{Setting}
\theoremstyle{definition}
\newtheorem{definition}[theorem]{Definition}
\newtheorem{example}[theorem]{Example}
\newtheorem{remark}[theorem]{Remark}
\newtheorem{conj}[theorem]{Conjecture}
\newtheorem{prop-defi}[thm]{Proposition-Definition}
\newtheorem{thm-defi}[thm]{Theorem-Definition}
\newtheorem{lem-defi}[thm]{Lemma-Definition}
\DeclareMathOperator{\Kos}{\mathrm{Kos}}
\DeclareMathOperator{\Fact}{\mathrm{Fact}}
\DeclareMathOperator{\qcoh}{\mathrm{qcoh}}
\DeclareMathOperator{\coh}{\mathrm{coh}}
\DeclareMathOperator{\Tot}{\mathrm{Tot}}
\DeclareMathOperator{\Spin}{\mathrm{Spin}}
 \DeclareMathOperator{\id}{id}
\DeclareMathOperator{\Image}{Im} 
 \DeclareMathOperator{\GL}{GL}
\DeclareMathOperator{\Hom}{{Hom}}
\DeclareMathOperator{\Hilb}{{Hilb}}
\DeclareMathOperator{\Spec}{{Spec}} \DeclareMathOperator{\tr}{tr}
\DeclareMathOperator{\Aut}{Aut}
\DeclareMathOperator{\Coh}{Coh}
\newcommand{\loc}{\mathrm{loc}}
\newcommand{\IndCoh}{\operatorname{IndCoh}}
\newcommand{\QCoh}{\operatorname{QCoh}}
\DeclareMathOperator{\Ind}{Ind}
\newcommand{\pt}{\text{pt}}
\newcommand{\Z}{\bbZ}
\newcommand{\C}{\bbC}
\newcommand{\QM}{\mathrm{QM}
}
\DeclareMathOperator{\Crit}{Crit}
\DeclareMathOperator{\Perf}{Perf}
\DeclareMathOperator{\bbf}{\textbf{f}}
\DeclareMathOperator{\fBun}{\fB un}
\DeclareMathOperator{\bCrit}{\textbf{Crit}}
\DeclareMathOperator{\bMap}{\pmb{\mathfrak{M}}ap}
 \gdef\Young(#1){\hbox{$\vcenter
 {\mathcode`,="8000\mathcode`|="8000
  \def,{\global\advance\cols by 1 &}
  \def|{\cr
        \multispan{\the\cols}\hrulefill\cr
        &\global\cols=2 }%
  \offinterlineskip\everycr{}\tabskip=0pt
  \dimen0=\ht\strutbox \advance\dimen0 by \dp\strutbox
  \halign
   {\vrule height \ht\strutbox depth \dp\strutbox##
    &&\hbox to \dimen0{\hss$##$\hss}\vrule\cr
    \noalign{\hrule}&\global\cols=2 #1\crcr
    \multispan{\the\cols}\hrulefill\cr%
   }}$}}}
\newcommand{\gufang}[1]{\textcolor{red}{$[$ Gufang: #1 $]$}}
\newcommand{\yc}[1]{\textcolor{blue}{  #1 }}
\newcommand{\yl}[1]{\textcolor{blue}{$[$ Yalong: #1 $]$}}
\newcommand{\yk}[1]{\textcolor{orange}{$[$ Yukinobu: #1 $]$}}
\DeclareFontFamily{U}{rsfs}{%
\skewchar\font127}
\DeclareFontShape{U}{rsfs}{m}{n}{%
<-6>rsfs5<6-8.5>rsfs7<8.5->rsfs10}{}
\DeclareSymbolFont{rsfs}{U}{rsfs}{m}{n}
\DeclareRobustCommand*\rsfs{%
\@fontswitch\relax\mathrsfs}
\newdimen\argwidth
\def\db[#1\db]{
 \setbox0=\hbox{$#1$}\argwidth=\wd0
 \setbox0=\hbox{$\left[\box0\right]$}
  \advance\argwidth by -\wd0
 \left[\kern.3\argwidth\box0 \kern.3\argwidth\right]}
\newcommand{\D}{\mathrm{D}}
\newcommand{\eE}{\mathcal{E}}
\newcommand{\oO}{\mathcal{O}}
\newcommand{\dR}{\mathbf{R}}
\newcommand{\rk}{\mathop{\rm rk}\nolimits}
\newcommand{\cneq}{\mathrel{\raise.095ex\hbox{:}\mkern-4.2mu=}}
\newcommand{\eqcn}{\mathrel{=\mkern-4.5mu\raise.095ex\hbox{:}}}
\newcommand{\DT}{\mathop{\rm DT}\nolimits}
\newcommand{\Sing}{\mathop{\rm Sing}\nolimits}
\title[{$K$-theoretic  pullbacks for Lagrangians on derived critical loci}]
{$K$-theoretic  pullbacks for Lagrangians \\ on derived critical loci}
\author{Yalong Cao}
\address{Morningside Center of Mathematics, Institute of Mathematics \& State Key Laboratory of Mathematical Sciences, Academy of Mathematics and Systems Sciences, Chinese Academy of Sciences, 55 Zhongguancun East Road, 100190, Beijing, China}
\email{yalongcao@amss.ac.cn}
\author{Yukinobu Toda}
\address{Kavli Institute for the Physics and Mathematics of the Universe (WPI), The University of Tokyo Institutes for Advanced Study, The University of Tokyo, Kashiwa, Chiba 277-8583, Japan}
\email{yukinobu.toda@ipmu.jp}
\author{Gufang Zhao}
\address{School of Mathematics and Statistics, University of Melbourne, Parkville VIC 3010, Australia}
\email{gufangz@unimelb.edu.au}
\subjclass[2020]{
Primary
14N35; 
Secondary 
14C17, 
14F05,   
14D23
}
\keywords{$(-1)$-shifted Lagrangians, derived critical loci, $K$-theoretic pullbacks}
\begin{document}

\maketitle

\begin{abstract}

Given a regular function $\phi$ on a smooth stack, and a $(-1)$-shifted Lagrangian $M$ on the derived 
critical locus of $\phi$, under fairly general hypotheses,  we construct a pullback map from the Grothendieck group of coherent matrix factorizations of 
$\phi$ to that of coherent sheaves on $M$. This map satisfies a functoriality property with respect to the composition of Lagrangian correspondences, as well as the usual bivariance and base-change properties.  

We provide three applications of the construction, one in the
definition of quantum $K$-theory of critical loci (Landau-Ginzburg models), paving the way to  generalize works of 
Okounkov school from Nakajima quiver varieties to  quivers with potentials,
one in establishing a degeneration formula for $K$-theoretic Donaldson-Thomas theory of local Calabi-Yau 4-folds, the other
in confirming a $K$-theoretic version of Joyce-Safronov conjecture.

\end{abstract}

\setcounter{tocdepth}{1}
\hypersetup{linkcolor=black}
\tableofcontents

\section{Introduction}





A \textit{Landau-Ginzburg model} is a pair 
\begin{align*}
(X, \phi_X), \quad \phi_X \colon X \to \mathbb{C}
\end{align*}
of a smooth stack $X$ over $\mathbb{C}$ and a regular function $\phi_X$ on it. 
They are ubiquitous in enumerative geometry, geometric representation theory, singularity theory and mathematical physics, where one studies the critical locus $\mathrm{Crit}(\phi_X) \subset X$ of the function $\phi_X$. Many different considerations lead to the same notion of the \textit{category of} $B$-\textit{branes} in this setup, that of the \textit{category} $\mathrm{MF}(X, \phi_X)$ 
of \textit{matrix factorizations} \cite{Orl, Orl2, BFK1}. Its objects 
consist of \textit{matrix factorizations}:
  \begin{align}\notag
\xymatrix{
\mathcal{E}_{-1} \ar@/^8pt/[r]^-{d_0} &  \ar@/^8pt/[l]^-{d_{-1}} \mathcal{E}_0,  
} \quad
d_0 \circ d_{-1}=\cdot \phi_X, \,\,\,
d_{-1} \circ d_0=\cdot \phi_X,
\end{align}
where $\mathcal{E}_{-1}, \mathcal{E}_0$ are coherent sheaves on $X$. 

An example of such is
the theory of gauged linear sigma models (GLSM),  a curve counting theory for critical loci in GIT quotients, invented by Witten \cite{Witten} and studied from mathematical point of view intensively \cite{FJR, CFGKS, KL1, TX, FK}, which uses
matrix factorizations or their Chern characters as \textit{states} of the theory \cite{PV1, PV2}.
It provides powerful tools for computing Gromov-Witten invariants of Calabi-Yau 3-folds by linking them to Fan-Jarvis-Ruan-Witten invariants \cite{FJRW} 
of Landau-Ginzburg models with isolated singularities. 

Another example of matrix factorizations in Landau-Ginzburg models occurs in Donaldson-Thomas theory of (local) Calabi-Yau 3-folds, where the category of 
matrix factorizations and its 
$K$-theory or periodic cyclic homology provide categorified Donaldson-Thomas theory \cite{KS, Oko, Toda:localsurfaceZ/2, Toda2, PT}, which 
links to geometric representation theory via Hall algebras~(e.g.~\cite{DM, Pa, VV}), 
making hidden symmetries of invariants manifest.

As explored in \cite{CZ, CZZ},  definitions of both  GLSM invariants and  more recently studied  
relative Donaldson-Thomas invariants of log Calabi-Yau 4-folds,  fit into the setting of shifted symplectic geometry 
of Pantev-To\"en-Vaqui\'e-Vezzosi \cite{PTVV, CPTVV}.
Roughly speaking, both invariants can be interpreted as \textit{partition functions} associated to \textit{shifted Lagrangians} on \textit{derived critical loci} or more general $(-1)$-shifted symplectic targets. 

The main player in this paper is the Grothendieck group of the category $\mathrm{MF}(X, \phi_X)$:
\begin{align*}
K_0(X, \phi_X):=K_0(\mathrm{MF}(X, \phi_X)),
\end{align*}
called \textit{critical K-theory}. 
Given a flat map $\pi_X: B\to X$ with $\phi=\phi_X\circ \pi_X: B\to \C$, and a 
shifted Lagrangian 
$\textbf{\emph{M}} \to \mathbf{Crit}(\phi)$ (with fairly general hypotheses), by combining 
the method of matrix factorizations \cite{Orl, Orl2, BFK1, EP}, 
the theory of compact generators of ind-coherent sheaves \cite{DG, Gai}, 
and techniques recently developed from 
DT theory of Calabi-Yau 4-folds \cite{OT, Park1,Park2},  
we show the existence of a natural map:
\begin{align}\label{map:intro:crit}
      K_0(X, \phi_X) \to K_0(M, \mathbb{Z}[1/2]),
\end{align}
which we call \textit{critical pullback}.
This map is 
applicable to both examples mentioned above. 
When $M$ is (equivariantly) proper, by composing \eqref{map:intro:crit} with (equivariant) pushforward to a point, we obtain $K$-\textit{theoretic partition functions} on state spaces. 

The construction of 
critical pullbacks is important as it yields the following generalizations and applications: 
\begin{itemize}
    \item When $X$ is a point, the map \eqref{map:intro:crit} reproduces the $K$-theoretic virtual structure sheaves of 
$(-2)$-shifted symplectic derived schemes due to Oh-Thomas~\cite{OT}.
\item The map \eqref{map:intro:crit} gives a descent of Park's square root virtual pullback \cite[Thm.~5.2.2]{Park2}:  
\begin{align*}
    K_{0}(Z(\phi_X)) \to K_{0}(M,\mathbb{Z}[1/2])
\end{align*}
from the Grothendieck group of coherent sheaves on the zero locus $Z(\phi_X)$ to the critical $K$-theory (see Proposition~\ref{intro prop:compare_OT}). 

\item The map \eqref{map:intro:crit} is a $K$-theoretic
analogue of Joyce conjecture for DT perverse sheaves: 
$$H(X,\varphi_{\phi_X}\mathbb{Q})\to H(M),$$
see~\cite{Joy}, \cite[Rmk.~1.6]{CZZ}.
\item By taking the dual of the map \eqref{map:intro:crit}, we confirm  
a $K$-theoretic version of Joyce-Safronov conjecture (Theorem~\ref{cor:JSconj}). 
Here Joyce-Safronov conjecture roughly says that 
the category $\mathrm{MF}(X, \phi_X)$ may be regarded as a `Fukaya category' for 
shifted Lagrangians over $\textbf{Crit}(\phi)$. 

\item Using \eqref{map:intro:crit}, we define a linearization functor 
on the category of $(-1)$-shifted Lagrangians on derived critical loci (see Definition \ref{def of linear func}, Theorem \ref{thm:functor}).
\item Applied to examples above, we  provide foundations for 
 quantum critical $K$-theory (see Theorem~\ref{intro thm on glue in GLSM}), and establish a degeneration formula of $K$-theoretic Donaldson-Thomas invariants on local
Calabi-Yau 4-folds (see Theorem~\ref{intro thm on glue in DT4}). 

\end{itemize}

In what follows, we give a more detailed summary of results obtained in 
this paper.

\subsection{Definition of critical pullbacks}\label{subsec:intro_def}

Let $B$ be a smooth QCA\footnote{This means quasi-compact with affine stabilizer. } stack over $\mathbb{C}$ with a flat 
map $\pi_X\colon B\to X$ to a smooth global quotient stack $X$, and $\phi_X\colon X\to \C$ be a flat map with 
$\phi:=\phi_X\circ \pi_X\colon B\to \C$. We also regard $B$ as a derived stack by the inclusion functor. 

Consider a map of derived stacks 
\begin{equation}\label{intro equ on fmb}\textbf{\emph{f}}: \textbf{\emph{M}}\to B, \end{equation}
which has a  $(-2)$-\textit{shifted symplectic structure} $\Omega_{\textbf{\emph{f}}}$ 
such that the image of $\Omega_{\textbf{\emph{f}}}$ in the periodic cyclic homology is given 
by the function ${\phi}$. By a tautological argument (see \cite{Park2} or Proposition \ref{thm on equi of two}), this is equivalent to the structure of an exact Lagrangian 
$$\textbf{\emph{M}}\to \textbf{Crit}(\phi)$$
on the derived critical locus. As the title suggests, we shall construct a $K$-theoretic pullback map. 
Assume that $\textbf{\emph{f}}$ satisfies hypotheses in Setting \ref{setting of lag}. 
In particular, 
denote $\bbL_{\textbf{\emph{f}}}\,|_{M}$ to be the restriction of the cotangent complex of $\textbf{\emph{f}}$ to the classical truncation $M=t_0(\textbf{\emph{M}})$, 
there is a resolution
\begin{equation}\label{intro sym reso}\bbL_{\textbf{\emph{f}}}\,|_{M}\cong (V\xrightarrow{d}  E \xrightarrow{d^\vee}  V^\vee), \end{equation}
given by a symmetric complex of finite rank vector bundles, 
where $E$ is a quadratic bundle on $M$ with a non-degenerate quadratic form $q_E$.

The main construction of this paper (ref.~Definition \ref{defi of crit pb}) is the following pullback map. 
\begin{theorem}\label{intro:thm}
There is a well-defined map (called \textit{critical pullback}): 
\begin{align}\label{intro equ on crit pb der}f_{\pi_X}^!: K_0(X,\phi_X)\xrightarrow{\sigma_{\pi_X\circ f}} K_0(E,q_E)\xrightarrow{\Spin}K_0(M,\Z[1/2]) 
\xrightarrow{\otimes\sqrt{\det(V)^\vee}}  K_0(M,\Z[1/2]), \end{align} 
from the Grothendieck group $K_0(X,\phi_X)$ of the category of \textit{coherent matrix factorizations} 
of $(X,\phi_X)$ to the Grothendieck group of coherent sheaves on $M$.
\end{theorem}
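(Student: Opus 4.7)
The plan is to unpack the three-step composition \eqref{intro equ on crit pb der}, verify that each arrow is a well-defined map on Grothendieck groups, and then check that the composite depends only on the derived Lagrangian data $\textbf{\emph{f}}\colon \textbf{\emph{M}}\to B$ rather than on the auxiliary choices (the factorization $\phi=\phi_X\circ\pi_X$ and the symmetric resolution \eqref{intro sym reso}). Each of the three arrows has an analogue in the Chow-theoretic construction of Park's square-root virtual pullback, and the expectation is that the proof should largely consist of a $K$-theoretic transcription, with matrix-factorization categories replacing the role of Borel--Moore homology.

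For the first arrow $\sigma_{\pi_X\circ f}$, I would exploit the Lagrangian structure: the isotropic data provides a canonical nullhomotopy of $\phi=\phi_X\circ\pi_X$ pulled back along $f$, whose second-order part is, by the very construction of the resolution \eqref{intro sym reso}, the quadratic form $q_E$ on $E$. The Koszul factorization machinery of Orlov, Efimov--Positselski and Ballard--Favero--Katzarkov \cite{Orl2, BFK1, EP} then converts a coherent matrix factorization of $(X,\phi_X)$ into a coherent matrix factorization of the quadratic bundle $(E,q_E)$. Flatness of $\pi_X$ and compact generation of $\mathrm{IndCoh}$ in the sense of Drinfeld--Gaitsgory \cite{DG, Gai} are what ensure this construction descends to the level of Grothendieck groups.

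The second arrow $\Spin$ is the $K$-theoretic incarnation of Kn\"orrer periodicity: after inverting $2$, one has an equivalence $\mathrm{MF}(E,q_E)\simeq \mathrm{Perf}(M,\mathrm{Cl}(E))$ which, via a projective spinor representation, induces a map to $K_0(M,\Z[1/2])$. The third step, tensoring by the half-determinant $\sqrt{\det V^\vee}$, is the Oh--Thomas-type correction that compensates for the ambiguity in the spinor representation; it is well-defined with $\Z[1/2]$-coefficients once orientation data is fixed, exactly as in \cite{OT}.

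The main obstacle will be showing that the composite is independent of the chosen symmetric resolution \eqref{intro sym reso}. Following \cite{OT, Park2}, any two such resolutions are related by a zig-zag of hyperbolic stabilizations $V\rightsquigarrow V\oplus V'$, $E\rightsquigarrow E\oplus(V'\oplus V'{}^{\vee})$ with an added hyperbolic quadratic summand, and under such a stabilization one must check that the changes in $\sigma_{\pi_X\circ f}$ (a Koszul tensor factor), in $\Spin$ (the spinor of a hyperbolic summand), and in the determinant twist $\sqrt{\det V^\vee}\rightsquigarrow \sqrt{\det(V\oplus V')^\vee}$ all cancel. The computation reduces to a Thom isomorphism for the hyperbolic quadratic bundle, which in $K$-theory is precisely Kn\"orrer periodicity for a hyperbolic summand; this is the technical heart of the argument. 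Independence from the presentation $\pi_X$ and from the factorization of $\phi_X$ would then follow by reduction to a common smooth refinement, using flatness of $\pi_X$ to commute the Koszul construction with base change. Assembling these ingredients yields the asserted critical pullback $f^!_{\pi_X}$.
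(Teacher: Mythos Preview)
Your outline captures the overall three-step architecture, but the construction of the first arrow $\sigma_{\pi_X\circ f}$ has a genuine gap, and your approach to independence of resolution differs substantially from the paper's.

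For $\sigma_{\pi_X\circ f}$: you describe a direct conversion of matrix factorizations of $(X,\phi_X)$ into matrix factorizations of $(E,q_E)$ via ``Koszul factorization machinery''. There is no such direct functor; the paper's construction is purely $K$-theoretic and passes through \emph{deformation to the normal cone}. Concretely, one first builds the ordinary specialization $K_0(Z(\phi_X))\to K_0(Z^{\mathrm{der}}(q_E|_{C_f}))$ using the function $\bar\phi$ on the deformation space $\mathring{M}_f$ (Lemma~\ref{prop on fun on def}) whose restriction to the central fiber is $q_{\fC_f}$. The real work is then the \emph{descent}: showing this map factors through the quotient $K_0(Z(\phi_X))\twoheadrightarrow K_0(X,\phi_X)$, whose kernel is generated by perfect complexes (Proposition~\ref{prop:ab_vs_D}). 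This requires tracking that perfect complexes on $Z(\phi_X)$ specialize to perfect complexes (Lemma~\ref{lem on perf goes to pr}), which then die in the relative singularity category $\mathrm{D}^b_{\coh}(Z(q_E))/\langle\iota_E^*\mathrm{D}^b_{\coh}(E)\rangle$ (Propositions~\ref{prop on vanishing of pef},~\ref{prop on compare sing and mf}). The compact-generation input from \cite{DG,Gai} enters precisely here, to show $\Perf(Z^{\mathrm{der}}(q_E|_{C_f}))\subset\langle\iota_C^*\mathrm{D}^b_{\coh}(C_f)\rangle$ when $M$ is singular and stacky. Your proposal does not identify this descent step or the role of the deformation space.

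For independence of resolution: your hyperbolic-stabilization strategy is plausible in principle but the paper takes a much cleaner route. It first proves a compatibility (Proposition~\ref{prop:compare_OT}): the composite $K_0(Z(\phi_X))\twoheadrightarrow K_0(X,\phi_X)\xrightarrow{f_{\pi_X}^!}K_0(M,\mathbb{Z}[1/2])$ equals Park's square-root virtual pullback $\sqrt{f_Z^!}\circ\pi_X^*$. Since the canonical map is surjective and $\sqrt{f_Z^!}$ is already known to be independent of resolution \cite[Lem.~1.14]{Park1}, independence of $f_{\pi_X}^!$ follows immediately (Proposition~\ref{prop:indep}). This also yields bivariance and functoriality essentially for free, by reduction to the corresponding properties of $\sqrt{f_Z^!}$. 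Your direct cancellation argument would need to be carried out from scratch.

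The $\Spin$ map in the paper is not built via Clifford modules or Kn\"orrer periodicity but by tensoring with an explicit Koszul factorization on (a Jouanolou torsor over) the isotropic Grassmannian, then pushing forward; this is closer in spirit to Oh--Thomas's localized square-root Euler class, with which it is shown to be compatible (Proposition~\ref{prop on cpr spin and old}).
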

We briefly explain the construction. 
Let $Z(\phi_X)$ denote the zero locus of $\phi_X$. There is a \textit{surjective canonical map}:
\begin{equation}\label{intro equ on kgps}can: K_0(Z(\phi_X)) \twoheadrightarrow  K^{}_0(X,\phi_X), \end{equation}
whose kernel is generated by perfect complexes on $Z(\phi_X)$ (Proposition \ref{prop:ab_vs_D}). 
One has a standard \textit{specialization map} for the zero locus:
\begin{equation}\label{eqn on sp of zr}K_0(Z(\phi_X))\to  K_0(Z(q_E)), \end{equation}
defined via deformation to the normal cone \eqref{sp map1}. 
We show \eqref{eqn on sp of zr} factors through \eqref{intro equ on kgps} and hence descends to a map $\sigma_{\pi_X\circ f}$ in \eqref{intro equ on crit pb der}. 
Such a descent is remarkable as there is in general no specialization map from open to closed for critical $K$-theories
(see Remark \ref{rmk on sp of cri ex} for a counterexample). To show the descent, 
we introduce \textit{relative singularity categories} \eqref{sp map2}  following ideas of Efimov-Positselski \cite{EP},
and compare them with the \textit{matrix factorization} category of $(E,q_E)$ (Proposition \ref{prop on compare sing and mf}). We use the theory of compact generators of ind-coherent sheaves \cite{DG, Gai} to handle \textit{stackyness} and \textit{singularity} of $M$ (and hence $E$) in this comparison. The details are in~\S \ref{sect on spe3}--\S \ref{sect on spe1}.
The map $\Spin$ in \eqref{intro equ on crit pb der} is defined in \S \ref{sec:spin}, which, roughly speaking, is given by the tensor product with 
 \textit{Koszul factorization} \eqref{equ on kos fa}.
The last map is the twisting by a square root line bundle,  
 making the map \eqref{intro equ on crit pb der} independent of the choice of the symmetric resolution \eqref{intro sym reso}. 

We remark that for applications considered in this paper, it is necessary to distinguish $B$ and $X$ in the setup of \S \ref{subsec:intro_def}, where the former is a very general smooth stack and the latter is a smooth global quotient stack, 
because our interested shifted Lagrangians appear on derived critical loci of functions on $B$, while  the fact that the functions are pulled back from a global quotient stack $X$ makes comparison between matrix factorization category and  singularity category more natural.


\subsection{Properties of critical pullbacks}
Let $f_Z$ be the base change of $f$ to the zero locus of $\phi$ \eqref{diag on MZPHI}.
Consider the composition 
$$\sqrt{f_Z^!}\circ \pi_X^*: K_0(Z(\phi_X))\xrightarrow{\pi_X^*} K^{}_0(Z(\phi)) \xrightarrow{\sqrt{f_Z^!}}K_0(M,\Z[1/2]), $$
where $\pi_X^*$ is the flat pullback and $\sqrt{f_Z^!}$ is the \textit{square root virtual pullback} \eqref{equ on k sqr pb} \cite{Park1}.
It is compatible with critical pullback  \eqref{intro equ on crit pb der} in the following sense:
\begin{prop}[Proposition \ref{prop:compare_OT}]
\label{intro prop:compare_OT}

We have a commutative diagram 
$$
\xymatrix{
K_0(Z(\phi_X))    \ar@{->>}[r]^{can}   \ar[rd]_{\sqrt{f_Z^!}\circ\pi_X^*\,\,\,\, }    &  K_0(X,\phi_X) \ar[d]^{f_{\pi_X}^!} \\
&  K_0(M,\Z[1/2]),
}
$$
where the horizontal map is the canonical map \eqref{intro equ on kgps}.
The critical pullback map is uniquely characterized by this property, due to the surjectivity of the canonical map.
\end{prop}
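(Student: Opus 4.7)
The plan is to verify the commutative triangle by a short diagram chase, relying only on the definition \eqref{intro equ on crit pb der} of $f_{\pi_X}^!$, the defining property of the descended specialization $\sigma_{\pi_X\circ f}$, and a known factorization of Park's square root virtual pullback through the specialization--Spin--twist recipe. The uniqueness assertion will then follow from the surjectivity of $can$ recalled in Proposition \ref{prop:ab_vs_D}.

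First, I would recall from the construction in \S\ref{sect on spe3}--\S\ref{sect on spe1} that $\sigma_{\pi_X\circ f}$ is obtained as the descent along $can$ of the deformation-to-the-normal-cone specialization $\mathrm{sp}\colon K_0(Z(\phi))\to K_0(E,q_E)$ precomposed with the flat pullback $\pi_X^*\colon K_0(Z(\phi_X))\to K_0(Z(\phi))$ (note $Z(\phi)=\pi_X^{-1}Z(\phi_X)$ since $\phi=\phi_X\circ\pi_X$). This is precisely the identity $\sigma_{\pi_X\circ f}\circ can = \mathrm{sp}\circ \pi_X^*$. Next, I would invoke the description of Park's square root virtual pullback $\sqrt{f_Z^!}$ from \cite[Thm.~5.2.2]{Park2} (in its $K$-theoretic form \eqref{equ on k sqr pb}) as the composite $(\otimes\sqrt{\det(V)^\vee})\circ \Spin\circ \mathrm{sp}$, which is exactly the right-hand portion of the definition \eqref{intro equ on crit pb der} of $f_{\pi_X}^!$. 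This is no accident: the three-fold composite in \eqref{intro equ on crit pb der} was designed to reproduce Park's recipe on the critical $K$-theory side.

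Combining the two identities yields
$f_{\pi_X}^!\circ can = (\otimes\sqrt{\det(V)^\vee})\circ \Spin\circ \sigma_{\pi_X\circ f}\circ can = (\otimes\sqrt{\det(V)^\vee})\circ \Spin\circ \mathrm{sp}\circ \pi_X^* = \sqrt{f_Z^!}\circ \pi_X^*$,
so the triangle commutes. Uniqueness is then immediate: any map $K_0(X,\phi_X)\to K_0(M,\Z[1/2])$ is determined by its precomposition with the surjection $can$, so the identity above characterizes $f_{\pi_X}^!$ uniquely.

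The hard part of the proof is not in this proposition itself but in its input, namely the descent of $\mathrm{sp}$ through the kernel of $can$, which is the technical heart of the paper and the whole reason the relative singularity category comparisons of \S\ref{sect on spe3}--\S\ref{sect on spe1} were needed. Given that descent, the present statement is a short bookkeeping consequence and I expect no further obstacles.
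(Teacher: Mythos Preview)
Your proposal has a genuine gap. You assert that Park's $\sqrt{f_Z^!}$ equals $(\otimes\sqrt{\det(V)^\vee})\circ \Spin\circ \mathrm{sp}$, citing \eqref{equ on k sqr pb}, but that formula actually reads
\[
\sqrt{f_Z^!} \;=\; (\otimes\sqrt{\det(V)^\vee})\circ \sqrt{e}\bigl(\pi_E^*E|_{C_{f_Z}},\tau_E|_{C_{f_Z}}\bigr) \circ p^* \circ sp_{f_Z},
\]
which involves the specialization $sp_{f_Z}$ for the map $f_Z\colon M\to Z(\phi)$ (landing in $K_0(\fC_{f_Z})$) and the Oh--Thomas localized square root Euler class $\sqrt{e}$. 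By contrast, $f_{\pi_X}^!$ is built from the composite specialization \eqref{equ on spe map on k0} associated with $f\colon M\to B$ (landing in $K_0(E,q_E)$ via $Z^{\mathrm{der}}(q_{\fC_f})$ and the relative singularity category) and the map $\Spin$ of \eqref{equ on spin fa}. These are \emph{a priori} different pairs of operations, and matching them is precisely the content of the proof, not an input.

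The paper carries out both comparisons explicitly. For the specialization maps, it uses \cite[Thm.~2.31]{Man} applied to the Cartesian square \eqref{diag on MZPHI} to produce a Cartesian diagram of deformation spaces $\mathring M_{f_Z}\to \mathring M_f$, observes that the pulled-back potential vanishes on $\mathring M_{f_Z}$ (so that $\fC_{f_Z}$ maps to $Z(q_{\fC_f})$), and then applies base change for the Gysin map $i^*$ to show that $sp_{f_Z}$ and $sp$ agree after the relevant pushforwards. For the Euler-class versus Spin comparison it invokes Proposition~\ref{prop on cpr spin and old}, which was proved separately for exactly this purpose. Neither step is a tautology; the deformation-space comparison is the substantive geometric input. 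Your outline has the right shape, but the sentence beginning ``I would invoke the description of Park's square root virtual pullback\ldots'' is asserting the conclusion rather than proving it.
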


When $\phi_X \equiv 0$, the obstruction theory on $f$ is \textit{isotropic} (e.g.~\cite[Rmk.~5.2.5]{Park2}), the map $can$ is an isomorphism, and critical pullback $f_{\pi_X}^!$  coincides with that of \cite{Park1, OT}. In this sense, $f_{\pi_X}^!$ is a generalization of  {\it loc.\,cit.} to a  map $f$ that does not necessarily satisfy the isotropic condition.



We remark that the \textit{advantage} of considering $K_0(X,\phi_X)$ over $K_0(Z(\phi_X))$ is that the former
has a \textit{natural pairing} (Lemma\,\ref{lem on pairing}) which is \textit{non-degenerate} in many cases (Examples \ref{ex on nak}, \ref{ex on hilb3}), 
while the latter seldom does (unless $\phi_X\equiv 0$). 
The existence of a perfect pairing fits into the framework of \textit{cohomological field theories} \cite{KM} 
(or their $K$-theoretic variants),
making powerful computational tools \cite{Giv, Tel} possible to many examples \cite{P}. It is also pivotal for applications to geometric representation theory. 

By using Proposition \ref{intro prop:compare_OT}, we show that \eqref{intro equ on crit pb der} is well-defined (Proposition~\ref{prop:indep}), has natural bivariance properties (Proposition \ref{pullback comm with base change}) in intersection theory, and the following \textit{functorial} property.
\begin{theorem}
[Theorem~\ref{thm on funct}]
Given a commutative diagram of derived stacks: 
\begin{equation}
   \label{eqn on comm diag func}
   \xymatrix{
\textbf{{N}}\,' \ar[r]^{\textbf{{i}}} \ar[d]_{\textbf{{f}}}     &  \textbf{{N}} \ar[d]^{\textbf{{h}}} \\
\textbf{{M}} \ar[r]^{\textbf{{g}}} & \textbf{{B}},  }
\end{equation}
such that 
\begin{itemize}
\item  $\textbf{{g}}$ and $\textbf{{h}}$ are as \eqref{intro equ on fmb} for a common $\pi_X: B\to X$ and $\phi_X: X\to \C$,
\item $\textbf{{f}}$ is quasi-smooth and the classical truncation of $\textbf{{i}}$ is an isomorphism,
\item the obstruction theories are compatible $($see diagram \eqref{eqn:triang_obst} and Remark \ref{rmk:lag_cl}\,$)$,
\end{itemize}
then we have   
\begin{equation}\label{intro equ on fun of crit pb}h_{\pi_X}^!=f^!\circ g_{\pi_X}^!: K_0(X,\phi_X)\to K_0(N,\Z[1/2]), \end{equation}
where $g_{\pi_X}^!,h_{\pi_X}^!$ are critical pullbacks \eqref{intro equ on crit pb der} and $f^!: K_0(M)\to K_0(N)$ is the virtual pullback \cite{Qu}. 
\end{theorem}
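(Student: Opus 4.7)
The plan is to reduce the functoriality statement to Park's functoriality of square root virtual pullbacks on $K$-theory of zero loci, by exploiting the characterization of critical pullbacks given in Proposition \ref{intro prop:compare_OT}.

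First, by surjectivity of the canonical map $can\colon K_0(Z(\phi_X)) \twoheadrightarrow K_0(X,\phi_X)$ from \eqref{intro equ on kgps}, it suffices to verify the identity \eqref{intro equ on fun of crit pb} after pre-composition with $can$. Combining Proposition \ref{intro prop:compare_OT} applied to both $\textbf{g}$ and $\textbf{h}$ with functoriality of the flat pullback $\pi_X^*$ and compatibility of $f^!$ with flat base change (Proposition \ref{pullback comm with base change}), the claim reduces to the identity
$$\sqrt{h_Z^!} \;=\; f^! \circ \sqrt{g_Z^!}$$
as maps $K_0(Z(\phi)) \to K_0(N,\Z[1/2])$, where $g_Z$ and $h_Z$ are the base-changes of $\textbf{g}$ and $\textbf{h}$ along $Z(\phi) \hookrightarrow B$. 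The hypothesis that the classical truncation of $\textbf{i}$ is an isomorphism identifies the target $N'$ of $\sqrt{h_Z^!}$ with $N$, so that the virtual pullback $f^!$ on the right hand side is defined with respect to the classical quasi-smooth structure on $\textbf{f}$.

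Second, I would unpack the compatibility of obstruction theories in \eqref{eqn:triang_obst}. Quasi-smoothness of $\textbf{f}$ combined with the classical-truncation condition on $\textbf{i}$ produces a distinguished triangle relating $\bbL_{\textbf{h}}\,|_N$, the restriction $\bbL_{\textbf{g}}\,|_M$ pulled back along $f$, and the perfect two-term complex $\bbL_{\textbf{f}}\,|_N$; the $(-2)$-shifted symplectic structures furnish self-dualities on the first two terms which, by Remark \ref{rmk:lag_cl}, identify the quadratic bundle $E_{\textbf{h}}$ as an orthogonal summand of $f^* E_{\textbf{g}}$ modulo the obstruction bundle of $\textbf{f}$. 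This is exactly the compatible-triple input to Park's functoriality of square root virtual pullback \cite[Thm.~5.2.2]{Park2}, which then yields the identity $\sqrt{h_Z^!} = f^! \circ \sqrt{g_Z^!}$ on $K_0(Z(\phi))$ and completes the reduction.

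The main obstacle will be the second step: making precise how the compatibility of the $(-1)$-shifted Lagrangian data in \eqref{eqn on comm diag func} produces the orthogonal decomposition required by Park. In particular, one must track the square-root line bundle twist $\otimes \sqrt{\det(V)^\vee}$ appearing in \eqref{intro equ on crit pb der} and verify that, under the triangle of symmetric resolutions, these twists for $\textbf{g}$ and $\textbf{h}$ differ exactly by a canonically trivialized square root of $\det(\bbL_{\textbf{f}}\,|_N)$ on the classical truncation, so that no sign ambiguity is introduced by the spinor construction. This compatibility of determinants, together with verifying that the orthogonal splitting is compatible with the base change to $Z(\phi)$, is the real technical content of the proof.
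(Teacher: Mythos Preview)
Your approach is essentially the paper's: reduce via the surjection $can$ and Proposition~\ref{prop:compare_OT} to the functoriality of square root virtual pullbacks, then invoke Park. The paper's proof is literally one sentence doing exactly this.

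Two small corrections. First, the relevant functoriality statement is \cite[Thm.~2.2]{Park1}, not \cite[Thm.~5.2.2]{Park2}; the latter is the construction of $\sqrt{f_Z^!}$, not its functoriality. Second, you are overestimating the work in your second and third paragraphs. The compatibility hypothesis \eqref{eqn:triang_obst} in the paper is \emph{already} phrased as a compatible triple in the sense of \cite[Def.~2.1]{Park1} (the paper says so explicitly just before the theorem), including the orientation compatibility. So there is nothing to unpack: once you have reduced to $\sqrt{h_Z^!}=f^!\circ\sqrt{g_Z^!}$, Park's \cite[Thm.~2.2]{Park1} applies verbatim, and the square-root determinant twists are absorbed into his statement. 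Your worry about tracking $\sqrt{\det(V)^\vee}$ separately is unnecessary here---that bookkeeping is internal to Park's proof, not something you need to redo. Likewise, the appeal to ``compatibility of $f^!$ with flat base change'' is not needed: you only need that $\sqrt{h_Z^!}\circ\pi_X^*=f^!\circ\sqrt{g_Z^!}\circ\pi_X^*$, which follows by post-composing Park's identity with $\pi_X^*$.
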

The above functorial property is used to show how critical pullbacks behave under the composition of shifted Lagrangian correspondences (see Theorem \ref{thm:functorial}). 
As an application of this, we obtain a linearization functor 
on the category of $(-1)$-shifted Lagrangians on derived critical loci (Definition \ref{def of linear func}, Theorem \ref{thm:functor}).
It is also used in the proof of gluing formulae in examples mentioned below. 
 
\subsection{Applications}

The main examples considered in this paper are given by the theory of GLSM and Donaldson-Thomas theory of (local) Calabi-Yau 4-folds (Example \ref{ex of papers}).
We sketch the main results and  refer \S \ref{sec:appl} for details. 

\begin{theorem}
[Quantum critical $K$-theory, Theorem \ref{thm on glue in GLSM}]
\label{intro thm on glue in GLSM}
Consider a smooth GIT quotient
$W/\!\!/ G$ and a regular function $\phi_{W/\!\!/ G}: W/\!\!/ G\to \C$, which is invariant under a torus $T$-action, such that the torus fixed critical locus 
$\mathrm{Crit}(\phi_{W/\!\!/ G})^T$ is proper.
Then for any $g,n\geqslant 0$ $($such that $2g-2+n>0$$)$ and a curve class $\beta$,
there is a map 
$$\Phi^{\Crit,K}_{g,n,\beta}: K_0(\overline{\mathcal{M}}_{g,n})\otimes K_0^T\left((W/\!\!/ G)^n,\phi_{W/\!\!/ G}^{\boxplus n}\right)_{ } \to K_0^T\left(\mathrm{pt}\right)_{\loc}, $$
which satisfies the gluing formula 
\begin{align} 
\nonumber
&\quad \,\, \Phi^{\Crit,K}_{g,n,\beta}(\iota_*\alpha\boxtimes\gamma)\\  \nonumber
&=\sum_{k=0}^{\infty}(-1)^k\sum_{\beta_0+\beta_1+\cdots+\beta_k+\beta_\infty=\beta}\Phi^{\Crit,K}_{g_1,n_1+1,\beta_0}\otimes \Phi^{\Crit,K}_{0,2,\beta_1}\otimes \cdots \otimes\Phi^{\Crit,K}_{0,2,\beta_k}\otimes \Phi^{\Crit,K}_{g_2,n_2+1,\beta_{\infty}}(\alpha\boxtimes\gamma\boxtimes\oO_{\Delta^{k+1}}),  
\end{align}
where $\overline{\mathcal{M}}_{g,n}$ is the moduli stack of genus $g$, $n$-pointed stable curves,
$\iota: \overline{\calM}_{g_1,n_1+1}  \times \overline{\calM}_{g_2,n_2+1} \to \overline{\calM}_{g,n}$ is a gluing map, and the RHS of the equation is defined by \eqref{map phi tens}.
\end{theorem}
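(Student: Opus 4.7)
The plan is to realize $\Phi^{\Crit,K}_{g,n,\beta}$ as the critical pullback of Theorem~\ref{intro:thm} applied to a moduli of stable maps. Let $\mathfrak{M}_{g,n,\beta}$ denote the derived moduli stack of $n$-pointed, genus $g$, class-$\beta$ stable maps to $W/\!\!/G$ whose image is constrained to $\mathrm{Crit}(\phi_{W/\!\!/G})$; concretely, this is the stable locus inside the derived mapping stack from the universal curve to $\bCrit(\phi_{W/\!\!/G})$. Since $\bCrit(\phi_{W/\!\!/G})$ is $(-1)$-shifted symplectic and the source curves are $1$-dimensional and proper, PTVV theory produces a $(-2)$-shifted symplectic structure on $\mathfrak{M}_{g,n,\beta}$ together with a Lagrangian structure on the total evaluation map $\textbf{\emph{ev}}: \mathfrak{M}_{g,n,\beta}\to \bCrit(\phi_{W/\!\!/G}^{\boxplus n})$. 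After verifying the hypotheses of Setting~\ref{setting of lag}, we set
$$
\Phi^{\Crit,K}_{g,n,\beta}(\nu \boxtimes \alpha) := \chi^{T}\bigl(\mathfrak{M}_{g,n,\beta},\, F^*\nu \otimes \textbf{\emph{ev}}_{\pi_X}^!\,\alpha\bigr)_{\loc},
$$
where $F: \mathfrak{M}_{g,n,\beta}\to\overline{\mathcal{M}}_{g,n}$ is the forgetful map and $T$-equivariant integration over $\mathfrak{M}_{g,n,\beta}$ is defined via the $K$-theoretic localization theorem, using the properness of $\mathrm{Crit}(\phi_{W/\!\!/G})^T$.

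\textbf{Gluing via Lagrangian correspondences.} The map $\iota$ corresponds to the nodal degeneration of curves. The derived base change of $\mathfrak{M}_{g,n,\beta}\to\overline{\mathcal{M}}_{g,n}$ along $\iota$, combined with the splitting $\beta=\beta_0+\beta_\infty$ of the curve class across the node, presents the pulled-back moduli as a derived fiber product $\mathfrak{M}_{g_1,n_1+1,\beta_0}\times^{\mathrm{L}}_{(W/\!\!/G)^2}\mathfrak{M}_{g_2,n_2+1,\beta_\infty}$ over evaluation at the glued marks, landing on $\Delta\subset(W/\!\!/G)^2$. Viewing $\Delta$ as a $(-1)$-shifted Lagrangian in $\bCrit(\phi_{W/\!\!/G}^{\boxplus 2})$, this exhibits $\iota^!\mathfrak{M}_{g,n,\beta}$ as a composition of $(-1)$-shifted Lagrangian correspondences, and applying the functoriality of critical pullback (Theorem~\ref{thm:functorial}, resting on Theorem~\ref{thm on funct}) produces the $k=0$ term of the claimed formula.

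\textbf{Bubble chains and the alternating sum.} The $k\geqslant 1$ terms account for bubble chains of rational components forming at the node. They are obtained by iterating the two-factor gluing across a chain of $k+1$ nodes, inserting $\mathfrak{M}_{0,2,\beta_j}$ as intermediate Lagrangian correspondences, and viewing the total joined condition as pullback by $\oO_{\Delta^{k+1}}\in K_0^T((W/\!\!/G)^{2(k+1)},\phi_{W/\!\!/G}^{\boxplus 2(k+1)})$. The alternating signs $(-1)^k$ arise from a Koszul-type resolution comparing the derived fiber product over a chain of diagonals with the class $\oO_{\Delta^{k+1}}$, together with the sign conventions governing the composition of $(-1)$-shifted Lagrangian correspondences.

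\textbf{Main obstacle.} The hardest step is the rigorous derived identification of $\iota^!\mathfrak{M}_{g,n,\beta}$ with the chain-fiber-product \emph{as $(-1)$-shifted Lagrangian correspondences}, together with the compatibility of critical pullback with this derived base change on each stratum of the boundary of $\overline{\mathcal{M}}_{g,n}$. This reduces to repeated applications of the functoriality~\eqref{intro equ on fun of crit pb} and the bivariance of~\eqref{intro equ on crit pb der}, but requires careful control of localization denominators along each bubble stratum and of the convergence of the a priori infinite sum in $K_0^T(\mathrm{pt})_{\loc}$; the cohomological analogue for GLSM of Chang--Li and Fan--Jarvis--Ruan provides the model which we upgrade to critical $K$-theory.
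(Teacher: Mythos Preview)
Your proposal has the right overall architecture—define the invariant via critical pullback plus equivariant integration, then derive the gluing formula from functoriality—but it diverges from the paper in several concrete ways that constitute genuine gaps.

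\textbf{Moduli space.} The paper works with the derived moduli of \emph{twisted stable quasimaps} $\textbf{QM}_{g,n}^{R_{\chi}=\omega_{\mathrm{log}}}(\bCrit(\phi_{W/\!\!/ G}),\beta)$ (Example~\ref{ex of papers}\,(1)), not stable maps. The structure map $\textbf{\emph{f}}$ lands in $\fBun_{H_R,g,n}^{R_{\chi}=\omega_{\mathrm{log}}}\times_{[\pt/G]^n}(W/\!\!/ G)^n$, and the factor $\overline{\mathcal{M}}_{g,n}$ enters through the flat map $\pi_X$ via stabilization (diagram~\eqref{eqn:GW_diag}), not as a separate pullback $F^*\nu$. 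The hypotheses of Setting~\ref{setting of lag}—in particular the symmetric resolution of $\bbL_{\textbf{\emph{f}}}|_M$—are verified for quasimaps in Appendix~\ref{app b} (Lemma~\ref{lem on exi of reso}); for stable maps to the singular target $\Crit(\phi_{W/\!\!/ G})$ you would need a separate argument that you have not supplied. Quasimaps are also what make the $(g,n)=(0,2)$ invariants $\Phi^{\Crit,K}_{0,2,\beta}$ of~\eqref{equ on phi02} available, since that case lies outside the stable range.

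\textbf{Origin of the alternating signs.} Your attribution of $(-1)^k$ to a ``Koszul-type resolution'' or to ``sign conventions governing the composition of $(-1)$-shifted Lagrangian correspondences'' is not correct. In the paper the signs come from Lee's inclusion--exclusion principle in $K$-theory \cite[Lem.~3]{Lee}: when the boundary of the quasimap moduli over $\Image(\iota)$ is decomposed into strata indexed by bubble chains, the $K$-class of the structure sheaf of the union is an alternating sum over intersections, and this is exactly what yields the $(-1)^k$ weighting. This is a specifically $K$-theoretic phenomenon, absent in the cohomological formula of~\cite{CZ}. The paper's proof (Theorem~\ref{thm on glue in GLSM}) cites this explicitly together with functoriality (Theorem~\ref{thm on funct}), and then invokes the argument of~\cite[Thm.~5.7]{CZ} with square-root virtual pullbacks replaced by critical pullbacks—not the Chang--Li or FJRW framework you name.

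\textbf{The automorphism $\sigma$.} You do not address how to insert the diagonal class $\oO_{\Delta}\in K_0^T((W/\!\!/ G)^2,-\phi\boxplus\phi)$ into a product where each factor is supposed to carry the function $\phi$. The paper handles this via a $(G\times T)$-equivariant automorphism $\sigma$ of $W$ with $\sigma^*\phi_W=-\phi_W$ (\S 8.1.2, Proposition~\ref{lem of crit k via sigma} and Remark~\ref{rmk on repack fun}), which identifies the relevant critical $K$-groups and makes the right-hand side~\eqref{map phi tens} well-defined. Without this step the expression $\alpha\boxtimes\gamma\boxtimes\oO_{\Delta^{k+1}}$ does not type-check.

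Finally, the ``convergence'' concern you raise is a non-issue: the sum over $k$ is finite because only finitely many effective decompositions $\beta=\beta_0+\cdots+\beta_\infty$ exist.
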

\begin{remark}
This theorem allows us to extend quantum $K$-theory from smooth varieties \cite{Lee} to the setting of \textit{critical loci}, which are possibly singular.
By virtue of this, one can lift the Okounkov package (e.g.~quantum connections, difference equations, Bethe ansatz, etc) from Nakajima quiver varieties \cite{Oko} to more general quivers with potentials \cite[\S 6-\S8]{CZ}. They have applications to geometric representation theory,~e.g.~\cite{BR, LY, RSYZ1, RSYZ2, Pa, VV}, along the line discussed in \cite{Oko, CZ}.  
\end{remark}

\begin{theorem}
[Degeneration formula of $K$-theoretic $\DT_4$ invariants, Theorem \ref{thm on glue in DT4}]
\label{intro thm on glue in DT4}
Let $X\to \mathbb{A}^1$ be a simple degeneration of local Calabi-Yau 4-folds with torus $T$-action, such that the central fiber 
$$X_0=Y_-\cup_D Y_+ $$
is the union of two (local) log Calabi-Yau pairs $(Y_{\pm},D)$ \eqref{equ on log cy}. 
Fix numerically equivalent $K$-theory classes $P_t \in K_{c,\leqslant 1}^\mathrm{{num}} (X/\mathbb{A}^1)$ for all $t\in \bbA^1$.
Assume conditions in Lemma \ref{lem on exi of reso2} hold for $(Y_{\pm},D)$.

Then there is a ``family class" and ``relative classes":
$$\Phi^{P_t}_{X/\bbA^1}; \quad \Phi_{Y_{-},D}^{Q_{0}}, \quad \Phi_{Y_{+},D}^{Q_{\infty}}, \quad \Phi_{\Delta^{\sim}}^{Q_i}, $$
such that the following gluing formula holds
\begin{align*} i_0^!(\Phi^{P_t}_{X/\bbA^1})=\sum_{k=0}^{\infty}(-1)^k\sum_{\begin{subarray}{c}\delta \\ k(\delta)=k \\ P(\delta)=P_0   \end{subarray}} \Phi_{Y_{-},D}^{Q_0}\otimes \Phi_{\Delta^{\sim}}^{Q_1}\otimes \cdots \otimes\Phi_{\Delta^{\sim}}^{Q_k}\otimes \Phi_{Y_{+},D}^{Q_\infty}(\oO_{\Delta^{k+1}}).
\end{align*}
\end{theorem}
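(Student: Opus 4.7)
The plan is to realize the relative class $\Phi^{P_t}_{X/\bbA^1}$ as the output of the critical pullback \eqref{intro equ on crit pb der} applied to a relative moduli derived stack $\textbf{\emph{M}}_{X/\bbA^1}$ of compactly supported objects on $X/\bbA^1$ with class $P_t$, viewed as an exact $(-1)$-shifted Lagrangian over $\textbf{Crit}(\phi)$ for some function $\phi$ on a smooth stack $B \to \bbA^1$. This Lagrangian structure is available because the local CY 4-fold geometry makes the moduli $(-2)$-shifted symplectic via PTVV, and upon realizing this locally as a derived critical locus (following the Brav--Bussi--Joyce/Joyce--Safronov framework, which fits into the setting of Lemma \ref{lem on exi of reso2}), the relative CY 4-fold structure on $X/\bbA^1$ produces a family of such Lagrangians over $\bbA^1$. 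The specialization $i_0^!$ is then reduced, by the base-change property Proposition \ref{pullback comm with base change}, to computing the critical pullback at the central-fiber moduli $\textbf{\emph{M}}_{X_0}$.

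Second, following the Li--Wu and Maulik--Pandharipande--Thomas style of expanded degenerations, introduce the accordion moduli $\textbf{\emph{M}}_{X_0[k]}$ parametrizing stable objects on the expanded targets
$$X_0[k] \;=\; Y_- \cup_D \Delta^\sim \cup_D \cdots \cup_D \Delta^\sim \cup_D Y_+ \qquad (\text{with $k$ intermediate bubbles}).$$
Each $\textbf{\emph{M}}_{X_0[k]}$ decomposes as a composition of $(-1)$-shifted Lagrangian correspondences
$$\textbf{\emph{M}}_{Y_-,D} \;\circ\; \textbf{\emph{M}}_{\Delta^\sim} \;\circ\; \cdots \;\circ\; \textbf{\emph{M}}_{\Delta^\sim} \;\circ\; \textbf{\emph{M}}_{Y_+,D},$$
glued along the $(-1)$-shifted symplectic moduli on $D$, where the relative moduli $\textbf{\emph{M}}_{Y_\pm,D}$ and the bubble moduli $\textbf{\emph{M}}_{\Delta^\sim}$ inherit natural exact Lagrangian structures from the log-CY structures on $(Y_\pm, D)$ and on $\Delta^\sim$. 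The closed embedding $\textbf{\emph{M}}_{X_0} \hookrightarrow$ colimit of $\textbf{\emph{M}}_{X_0[k]}$'s, combined with inclusion-exclusion on the stratification of the accordion by the number of bubbling loci, produces the alternating sum $\sum (-1)^k$ and the outer sum over splittings $\delta$ of $P_0$.

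Third, apply the functoriality of critical pullbacks under composition of Lagrangian correspondences (Theorem \ref{thm on funct}, which globalizes into Theorem \ref{thm:functorial} and the linearization functor of Theorem \ref{thm:functor}) separately on each stratum. This converts the critical pullback along $\textbf{\emph{M}}_{X_0[k]}$ into the tensor product
$$\Phi_{Y_-,D}^{Q_0} \otimes \Phi_{\Delta^\sim}^{Q_1} \otimes \cdots \otimes \Phi_{\Delta^\sim}^{Q_k} \otimes \Phi_{Y_+,D}^{Q_\infty},$$
with the insertion $\oO_{\Delta^{k+1}}$ enforcing the gluing constraints at the $k+1$ copies of $D$, in complete analogy with the GLSM gluing formula of Theorem \ref{intro thm on glue in GLSM}.

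The main obstacle will be the rigorous construction of the expanded degeneration of derived moduli with compatible exact $(-1)$-shifted Lagrangian structures: one must check that the accordion construction at the derived/shifted-symplectic level restricts to the expected Lagrangian on each boundary stratum, that the symmetric resolutions \eqref{intro sym reso} coming from the CY 4-fold structure on $X$ are compatible with the resolutions arising on the bubble components (so that diagram \eqref{eqn on comm diag func} applies), and that the resulting signs and multiplicities from the combinatorics of the stratification exactly match the $(-1)^k$ with the sum over $\delta$. Torus equivariance, the properness of the $T$-fixed loci, and the tracking of the $\Z[1/2]$ square-root twists across the gluings must also be controlled throughout.
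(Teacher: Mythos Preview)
Your proposal is essentially correct and aligns with the paper's approach. The paper's own ``proof'' is a one-line reference: it says that, as in the GLSM case (Theorem~\ref{thm on glue in GLSM}), one follows the proof of \cite[Thm.~5.21]{CZZ} and \cite[\S 4.2]{Qu}. Your sketch is an accurate unpacking of what those citations actually contain---the expanded-degeneration/accordion moduli, the stratification by bubble number with inclusion--exclusion producing the alternating sign, and the replacement of square root virtual pullbacks by critical pullbacks via the functoriality Theorem~\ref{thm on funct}.

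Two minor corrections. First, you should not invoke the linearization functor Theorem~\ref{thm:functor}: that requires the perfect-pairing hypothesis of Setting~\ref{set of perf pair}, which is \emph{not} assumed in Theorem~\ref{thm on glue in DT4}. Similarly, Theorem~\ref{thm:functorial} is formulated in the situation $B=X$, $\pi_X=\id$, whereas in the DT$_4$ setup (Example~\ref{ex of papers}(2)) one has $B=\calA^P\times (W/\!\!/ G)^n$ with nontrivial projection $\pi_X$ to $X=(W/\!\!/ G)^n$. The correct and sufficient tool is the basic functoriality Theorem~\ref{thm on funct}, which you do cite; the Lagrangian-correspondence formalism is conceptual motivation but not literally what is applied. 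Second, the ``main obstacle'' you flag (the derived/shifted-symplectic construction of the accordion moduli with compatible resolutions and orientations) is not a new obstacle here---it is precisely what \cite{CZZ} establishes in cohomology, and the present paper imports it wholesale, substituting critical pullbacks for square root virtual pullbacks by Proposition~\ref{prop:compare_OT} and Theorem~\ref{thm on funct}.
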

\begin{remark} We comment on invariants obtained in Theorems~\ref{intro thm on glue in GLSM} and \ref{intro thm on glue in DT4}.
\begin{enumerate}
    \item A $K$-theoretic analogue of \cite[Def.~5.2]{CZ} defines a map 
$$K_0(\overline{\mathcal{M}}_{g,n})\otimes K_0^T\left(Z(\phi_{W/\!\!/ G}^{\boxplus n})\right)_{ } \to K_0^T\left(\mathrm{pt}\right)_{\loc}. $$
By Proposition \ref{intro prop:compare_OT}, it descends to the map in Theorem \ref{intro thm on glue in GLSM} through the canonical map. Similarly for the $K$-theoretic $\DT_4$ case (Theorem \ref{intro thm on glue in DT4} and \cite[Def.~5.9]{CZZ}).
    \item Explicit calculations of these invariants have been performed in several examples. For instance, {\it vertex functions} have been explicitly calculated in \cite[\S 7-\S 8]{CZ}, which consequently yields {\it Bethe equations}, demonstrating the nontriviality of the map \eqref{intro equ on crit pb der}\footnote{The calculation in \cite{CZ} is done in the cohomological case, it is straightforward to perform a $K$-theoretic one. In fact, \cite[Thm.~0.12]{CKM} shows that the $K$-theoretic invariant recovers the cohomological one by taking certain limit.}.
\end{enumerate}
\end{remark}

\subsection{$K$-theoretic Joyce-Safronov conjecture}
In \cite[Conj.~1.2]{JS}, Joyce-Safronov proposed the following conjecture. 
\begin{conj}
    Let $U$ be a smooth $\mathbb{C}$-scheme and $\phi \colon U \to \mathbb{C}$ be a 
    regular function. Let $\textbf{\textit{M}} \to \textbf{Crit}(\phi)$ be a $(-1)$-shifted Lagrangian
    such that $(\textbf{vdim}\,\textbf{\textit{M}}-\dim U)$ is even, with an {\it orientation data} and {\it spin structure}. 
    Then there exists an object $\mu_{\textbf{\textit{M}}} \in \mathrm{MF}(U, \phi)$ in the matrix factorization category $\mathrm{MF}(U, \phi)$, associated with $\textbf{\textit{M}}$, with prescribed local behavior spelled out in \cite{JS}.
\end{conj}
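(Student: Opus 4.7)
The plan is to construct $\mu_{\textbf{\emph{M}}}$ directly as a \emph{bona fide} matrix factorization, rather than only its $K$-theory class, by using the local structure of $(-1)$-shifted Lagrangians together with the Koszul matrix factorization building block already implicit in the $\Spin$ map of \eqref{intro equ on crit pb der}. The first step is to apply a Darboux-type theorem for $(-1)$-shifted Lagrangians (cf.\ Brav--Bussi--Joyce, Joyce--Safronov) to cover $\textbf{\emph{M}}\to\textbf{Crit}(\phi)$ by smooth charts $(V_\alpha,\psi_\alpha,q_\alpha)$ together with smooth maps $j_\alpha:V_\alpha\to U$ such that $\phi\circ j_\alpha=\psi_\alpha\oplus q_\alpha$, where $\psi_\alpha$ comes from a local presentation of the Lagrangian as a critical locus and $q_\alpha$ is a non-degenerate quadratic form on a vector bundle $E_\alpha$ providing the symmetric resolution \eqref{intro sym reso}. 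The orientation data trivialises $\sqrt{\det V^\vee}$ on each chart, and the spin structure provides, by the usual Clifford/spinor construction, a distinguished square root of the rank-one Clifford module over $q_\alpha$.

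On each chart the local object is declared to be
\[
\mu_\alpha\;:=\; j_{\alpha,*}\!\left(\Kos(q_\alpha)\otimes\sqrt{\det E_\alpha}\otimes p_\alpha^*\calF_\alpha\right)\;\in\;\mathrm{MF}(U,\phi),
\]
where $\Kos(q_\alpha)$ is the standard Koszul matrix factorization of $q_\alpha$ with values in the spinor bundle, $p_\alpha:V_\alpha\to M_\alpha\subset M$ is the local retraction onto the classical Lagrangian, and $\calF_\alpha$ is the spin line bundle on $M_\alpha$ determined by the spin structure. Pushforward under $j_\alpha$ is well-defined at the level of matrix factorizations since $j_\alpha$ is smooth and proper on supports (the Lagrangian lies over $\textbf{Crit}(\phi)$), and the product $\psi_\alpha\oplus q_\alpha=\phi\circ j_\alpha$ guarantees that the factorization equations $d_0d_{-1}=\phi\cdot\id$ hold after pushforward. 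The prescribed local behaviour in \cite{JS} is exactly what this formula records, by comparison with the standard Koszul model for the vanishing-cycle sheaf.

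For the global existence I would glue the $\mu_\alpha$ along overlaps. On $V_{\alpha\beta}=V_\alpha\times_U V_\beta$ the two local presentations differ by a symplectomorphism of the pair $(E,q)$, which up to homotopy acts on the Koszul factorization through $\Spin(q)\to O(q)$; the spin structure on $\textbf{\emph{M}}$ is precisely a choice of lift of this cocycle to $\Spin$, and the orientation data trivialises the remaining $\mathbb{Z}/2$ ambiguity coming from $\sqrt{\det V^\vee}$. Together these produce a $1$-cocycle of isomorphisms $\mu_\alpha|_{V_{\alpha\beta}}\simeq\mu_\beta|_{V_{\alpha\beta}}$ in $\mathrm{MF}(U,\phi)$, and one checks the $2$-cocycle condition on triple overlaps from the defining relations of the spin cover. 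Descent in the dg-category $\mathrm{MF}(U,\phi)$ (which satisfies smooth/\'etale descent by \cite{Orl,Orl2,BFK1,EP}) then yields the desired global object $\mu_{\textbf{\emph{M}}}$.

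The compatibility with Theorem~\ref{intro:thm} and Proposition~\ref{intro prop:compare_OT} is used as a sanity check: the class $[\mu_{\textbf{\emph{M}}}]\in K_0(U,\phi)$ must pair, via the natural pairing on $K_0(U,\phi)$, with any $[\calE]\in K_0(U,\phi)$ to give $\chi(M,f_{\pi_U}^!(\calE))$, thereby identifying $\mu_{\textbf{\emph{M}}}$ with the object dual to the critical pullback. The genuinely hard step is the homotopy-coherent gluing in paragraph three: one must track $\mathrm{Spin}$-equivariance through the Darboux changes of coordinates at the level of the dg-category, not merely on $K_0$, and prove that the ambiguity in the local Koszul models is resolved precisely by the orientation-plus-spin package that the conjecture assumes. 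I expect this to require an explicit $\infty$-categorical descent argument analogous to the one in \cite{EP}, together with the parity statement that $(\mathbf{vdim}\,\textbf{\emph{M}}-\dim U)$ is even so that the $\Spin$ lift is unobstructed.
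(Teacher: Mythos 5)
The statement you are proving is stated in the paper as a \emph{conjecture} (the Joyce--Safronov conjecture, \cite[Conj.~1.2]{JS}); the paper does not prove it. What the paper establishes is only the $K$-theoretic shadow, Theorem~\ref{cor:JSconj}: under Setting~\ref{set of perf pair} the pairing on $K_0^T(U,\phi)_{\loc}$ is perfect, so the class $[\mu_{\textbf{\emph{M}}}]$ is \emph{defined} as the unique element whose pairing against any class equals $p_{M*}\circ f_{\id}^!$. Your final ``sanity check'' paragraph is, in fact, essentially the entirety of the paper's argument; everything before it is an attempt at the much stronger categorical statement, which remains open.

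As a proof of the categorical conjecture, your argument has a genuine gap exactly where you flag it: the homotopy-coherent gluing of the local Koszul models. Producing the charts, the local factorizations $\mu_\alpha$, and the comparison isomorphisms on double overlaps is the content of \cite{JS} itself; the conjecture is precisely the assertion that the orientation-plus-spin package resolves the gluing coherently, and ``one checks the $2$-cocycle condition on triple overlaps'' is not a check one currently knows how to perform (this is the subject of the ongoing work \cite{HHR} cited in the paper). A second, independent obstruction is your appeal to smooth/\'etale descent for $\mathrm{MF}(U,\phi)$: the coherent matrix factorization category is \emph{not} defined as a limit over a cover, and the comparison functor $\mathrm{MF}(U,\phi)\to\varprojlim_{V\subset U}\mathrm{MF}(V,\phi|_V)$ is at best an equivalence after idempotent completion. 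Since the conjecture asks for an object of $\mathrm{MF}(U,\phi)$ itself (and, as the paper notes in its construction of critical pullbacks, $K$-theory is sensitive to idempotent completion, which is why the paper works with the non-completed category and the surjection $K_0(Z(\phi))\twoheadrightarrow K_0(U,\phi)$), descent cannot simply be invoked from \cite{Orl,Orl2,BFK1,EP}. Finally, the pushforward $j_{\alpha,*}$ along a smooth non-proper chart map does not preserve coherence without a properness-on-supports argument that you assert but do not supply. In short: the local construction and the $K$-theoretic consequence are sound, but the global categorical object is not constructed by this outline.
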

The above conjecture is important as it gives an interpretation of the matrix factorization
category $\mathrm{MF}(U, \phi)$ as a `Fukaya category' for $(-1)$-shifted symplectic 
derived schemes, and is also connected to the programme of Kapustin and Rozansky \cite{KR} for associating 2-categories to holomorphic symplectic manifolds, locally described using matrix factorization categories.
Note also that $\mathrm{MF}(U, \phi)$ is regarded as a local model of 
categorical DT theory for Calabi-Yau 3-folds~\cite{Toda:localsurfaceZ/2}, thus if global DT categories 
are constructed for more general $(-1)$-shifted symplectic derived schemes, we expect 
a similar statement there. 

As an application of Theorem~\ref{intro:thm}, we solve the following $K$-theoretic version of 
Joyce-Safronov conjecture. 
\begin{theorem}\label{cor:JSconj}
    Let $(U, \phi)$ be a smooth quasi-projective  scheme and a regular function with a torus $T$-action such that Setting~\ref{set of perf pair} holds, 
    and $\textbf{M} \to \mathbf{Crit}(\phi)$
    be a $T$-equivariant $(-1)$-shifted oriented exact Lagrangian
    such that the classical truncation $M$ is $T$-equivariantly proper, $M\to \Crit(\phi)$ is quasi-projective. 
    Then there is an associated element $$[\mu_\textbf{M}] \in K_0^T(U, \phi)_{\loc}. $$ 
    Here $(-)_{\loc}:=(-)\otimes_{K_0^T(\mathrm{pt})}K_0^T(\mathrm{pt})_{\loc}$,
    and $K_0^T(\mathrm{pt})_{\loc}$ is the field of fractions of $K_T(\mathrm{pt})$.  
\end{theorem}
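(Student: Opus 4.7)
The plan is to convert the data of the Lagrangian $\textbf{M}$ into a linear functional on $K_0^T(U,\phi)_{\loc}$ by combining critical pullback with equivariant pushforward, and then to represent this functional by a class using the non-degenerate pairing on $K_0^T(U,\phi)_{\loc}$.

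First I would apply Theorem~\ref{intro:thm} $T$-equivariantly, specialized to $X=B=U$ and $\pi_X=\mathrm{id}_U$. The orientation datum on $\textbf{M}$ supplies the square root line bundle $\sqrt{\det(V)^\vee}$ required in \eqref{intro equ on crit pb der}; quasi-projectivity of $\textbf{M}\to\mathbf{Crit}(\phi)$ places the data into Setting~\ref{setting of lag}; and exactness of the Lagrangian identifies the $(-2)$-shifted symplectic structure with $\phi$ via Proposition~\ref{thm on equi of two}. This produces a $T$-equivariant critical pullback
$$f^!: K_0^T(U,\phi) \to K_0^T(M,\mathbb{Z}[1/2]).$$
Since $M$ is $T$-equivariantly proper, the equivariant pushforward $\pi_{M,*}: K_0^T(M)_{\loc}\to K_0^T(\mathrm{pt})_{\loc}$ is defined (after inverting characters via the equivariant localization theorem), so composition yields the linear functional
$$\Psi_{\textbf{M}}:=\pi_{M,*}\circ f^!: K_0^T(U,\phi)_{\loc}\to K_0^T(\mathrm{pt})_{\loc}.$$

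Next, Setting~\ref{set of perf pair} is arranged precisely so that the natural pairing on $K_0^T(U,\phi)_{\loc}$ coming from Lemma~\ref{lem on pairing} is non-degenerate. Therefore there exists a unique class $[\mu_{\textbf{M}}]\in K_0^T(U,\phi)_{\loc}$ representing $\Psi_{\textbf{M}}$, characterized by
$$\langle [\mu_{\textbf{M}}],\alpha\rangle=\pi_{M,*}\bigl(f^!(\alpha)\bigr)\quad\text{for all }\alpha\in K_0^T(U,\phi)_{\loc}.$$
This is the desired element.

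The main obstacle I anticipate is checking that the construction of $f^!$ in Theorem~\ref{intro:thm} carries faithfully over to the $T$-equivariant setting: the specialization map in \eqref{eqn on sp of zr}, the comparison between relative singularity categories and matrix factorization categories in \S\ref{sect on spe3}--\S\ref{sect on spe1}, and the Spin map of \S\ref{sec:spin} must all be promoted equivariantly. Once these equivariant refinements are in place, together with the non-degeneracy supplied by Setting~\ref{set of perf pair}, the argument becomes formal. I emphasize that no attempt is made here to match $[\mu_{\textbf{M}}]$ against the local model prescribed in \cite{JS}; the $K$-theoretic statement asserts only existence of a class, and this is all that the pairing argument delivers.
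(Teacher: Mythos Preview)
Your proposal is correct and follows essentially the same argument as the paper: define the linear functional $\Psi_{\textbf{M}}=\pi_{M,*}\circ f_{\id}^!$ via the $T$-equivariant critical pullback (with $X=B=U$, $\pi_X=\id$) and equivariant pushforward, then invoke the perfect pairing guaranteed by Setting~\ref{set of perf pair} to represent it by a unique class $[\mu_{\textbf{M}}]$. The paper dispatches your concern about the $T$-equivariant lift of $f^!$ by a blanket remark (Remark~\ref{rmk on T-equiv}) rather than re-running the constructions, but otherwise the structure is identical.
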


Indeed under Setting~\ref{set of perf pair}, the natural pairing 
$\langle -, -\rangle$ on $K_0^T(U, \phi)_{\loc}$ is perfect. 
Therefore the element $[\mu_\textbf{\emph{M}}]$ is defined to be the unique element 
such that the pairing 
$\langle [\mu_\textbf{\emph{M}}], -\rangle$ equals to the composition 
\begin{align*}
    K_0^T(U, \phi)_{\loc} \stackrel{f_{\id}^!}{\to} K_0^T(M)_{\loc} \to K_T(\mathrm{pt})_{\loc},
\end{align*}
where the first map is the $T$-equivariant version of the map $f_{\id}^!$ in Theorem~\ref{intro:thm} and the second map is the equivariant push-forward to a point. 
In the above setting, all hypotheses in Setting \ref{setting of lag} are satisfied (where we take $\pi_X=\id$ to be the identity), so $f_{\id}^!$ is well-defined.


\subsection*{Related works and future directions}

In the application to GLSM, the construction of Theorem~\ref{intro thm on glue in GLSM} is expected to agree with the $K$-theoretic version of the work of Favero and Kim \cite{FK}. It is not clear, albeit interesting, to what extent the construction of \cite{FK}, which depends on the construction of global charts as well as sections and co-section of  obstruction bundles, can be carried out in the general case of Setting \ref{setting of lag}. For the same reason, although it is reasonable to expect invariants from \cite{FK} to be compatible with those from \cite{CZ} (see \cite[\S 1.5]{CZ}), a detailed comparison does not appear to be easy. We believe methods from the present paper will shade lights on this, which we hope to explore more in the future. 

There is a current project of gluing matrix factorization categories (or their variants) over $(-1)$-shifted symplectic target \cite{HHR}. 
It is an interesting question to construct critical pullbacks in this global setting. 
A further lift of the problem to the categorical level looks also natural to study.

\subsection*{Notations and Conventions}
Unless otherwise specified, all stacks are algebraic (Artin) stacks (over $\C$) as defined in the Stack Project \cite[Def.~Tag 026O]{Sta}, where no separation is imposed on the definition. 
All derived stacks are homotopically finitely presented derived Artin stacks \cite{Toen}.

\subsection*{Acknowledgments}

We are grateful to Jack Hall, Young-Hoon Kiem, Tasuki Kinjo, Henry Liu, Hiraku Nakajima, Andrei Okounkov, Hyeonjun Park, Feng Qu, Yongbin Ruan, Yang Zhou, Yehao Zhou, Zijun Zhou for helpful discussions and communications during the preparation of this work.
We particularly thank Hyeonjun Park for generously sharing the draft of \cite{Park2} in an early stage. 

During the  finalization stage of the present paper, we learned about the work in progress of Choa, Oh, and Thomas, which addresses a similar question of specialization to the normal cone in the critical $K$-theory. The authors thank Jeongseok Oh for interesting discussions on it.  
The construction of \textit{loc.\,cit.}\, on a local chart defines the specialization of a locally free factorization as a limit of a Fulton-MacPherson graph in a Grassmannian bundle. On a global $(-1)$-shifted Lagrangian, the resolution property of the Lagrangian is needed to globalize the construction (compared to the Setting \ref{setting of lag}).
We expect a comparison of the contruction in \textit{loc.\,cit.}\,with \eqref{equ on spe map on k} to be
 interesting, which on one hand simplifies the deduction of properties of \textit{loc.~cit.}, and on the other hand gives a geometric description of the effect of \eqref{equ on spe map on k} in terms of locally free factorizations.
 
G.~Z.~is partially supported by the Australian Research Council via DE190101222 and DP210103081. 
Y.~T.~is supported by World Premier International Research Center Initiative (WPI initiative), MEXT, Japan, and JSPS KAKENHI Grant Numbers JP24H00180.
Y.~T.~is also supported by the Distinguished Visiting Professor Program at 
 the Morningside Center of Mathematics, Chinese Academy of Sciences. 


\section{Exact Lagrangians on derived critical loci}
We collect some basic notations regarding Lagrangians on derived critical loci, and spell out the setup under which the present paper is concerned. We refer to Appendix~\ref{app on sympl} for the basic notions on shifted symplectic structures. 

\subsection{Exact Lagrangians and relative shifted symplectic structures}

\begin{definition}\label{defi of derived crit loci}
Let $\textbf{\emph{B}}$ be a derived stack with a function $\boldsymbol{\phi}: \textbf{\emph{B}}\to \C$ and 
$\textbf{Crit}(\boldsymbol{\phi})$ be the \textit{derived critical locus} defined by the homotopy pullback diagram of derived stacks: 
\begin{equation}\label{equ on sc cl}\begin{xymatrix}{
\bCrit^{}(\boldsymbol{\phi})  \ar[r]^{ } \ar[d]^{ } \ar@{}[dr]|{\Box} &\textbf{\emph{B}} \ar[d]^{d\boldsymbol{\phi}}  \\
\textbf{\emph{B}} \ar[r]^{0  \,\,} & \bbT^*\textbf{\emph{B}},
}\end{xymatrix}\end{equation}
where $\bbT^*\textbf{\emph{B}}$ is the cotangent bundle stack of $\textbf{\emph{B}}$. 

The derived critical locus is endowed with a $(-1)$-shifted symplectic structure $\Omega_{\bCrit^{}(\boldsymbol{\phi})}$ \cite[Thm.~2.9]{PTVV}. It has a canonical  \textit{exact} structure \cite[Ex.~3.1.2]{Park2},~i.e.~ a nullhomotopy 
\begin{equation}\label{equ on ex sym}  [\Omega_{\bCrit^{}(\boldsymbol{\phi})}]\sim 0 \end{equation}
of the image $[\Omega_{\bCrit^{}(\boldsymbol{\phi})}]$ of $\Omega_{\bCrit^{}(\boldsymbol{\phi})}$ in the periodic cyclic homology. 
\end{definition}

\begin{definition}
\label{def of exact lag}
Let $\textbf{\emph{M}}\to \textbf{Crit}(\boldsymbol{\phi})$
be a Lagrangian  with  defining nullhomotopy 
$$\Omega_{\bCrit^{}(\boldsymbol{\phi})}|_{M}\sim 0. $$ 
An \textit{exact} structure on the Lagrangian is a contraction of the loop in 
the periodic cyclic homology of $M$, where the loop is formed by  the image of the Lagrangian structure
$[\Omega_{\textbf{Crit}^{}(\boldsymbol{\phi})}|_{M}]\sim 0$ and the pullback to $M$ of the exact structure \eqref{equ on ex sym}.  
\end{definition}
By compositing with the inclusion $\textbf{Crit}(\boldsymbol{\phi})\to \textbf{\emph{B}}$, we obtain a map 
\begin{equation}\label{equ on -2 symp} \textbf{\emph{M}}\to \textbf{\emph{B}},  \end{equation}
which has a (relative) $(-2)$-shifted symplectic structure.
\begin{prop}\label{thm on equi of two}
\cite[Cor.~3.1.3]{Park2}
Let $\textbf{M}$ be a derived stack over $\textbf{B}$. Then there exists a canonical equivalence between spaces of the following objects: 
\begin{itemize}
\item relative $(-2)$-shifted symplectic structures $\Omega_{\textbf{M}/\textbf{B}}$ with $[\Omega_{\textbf{M}/\textbf{B}}]=\boldsymbol{\phi}|_{\textbf{M}}$ with $\boldsymbol{\phi}: \textbf{B}\to \C$,
\item morphisms $\textbf{M}\to \textbf{\emph{Crit}}^{}(\boldsymbol{\phi})$ over $\textbf{B}$ with exact Lagrangian structures. 
\end{itemize}
\end{prop}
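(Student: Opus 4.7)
My plan is to reduce the claim to unpacking the definitions plus a comparison in the negative cyclic complex. The key geometric fact is that \eqref{equ on sc cl} presents $\bCrit(\boldsymbol{\phi})$ as a Lagrangian intersection of the zero section and $d\boldsymbol{\phi}$ in $\bbT^*\textbf{B}$, so a map $\textbf{M}\to \bCrit(\boldsymbol{\phi})$ over $\textbf{B}$ amounts to a lift of $\textbf{M}\to\textbf{B}$ together with a nullhomotopy $\eta$ between $d\boldsymbol{\phi}|_\textbf{M}$ and $0$ in the section space of $\bbT^*\textbf{B}|_\textbf{M}$. A Lagrangian structure on such a map supplies further closed $2$-form data and non-degeneracy, and the exactness additionally supplies a contraction in periodic cyclic homology. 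I would then argue that these are precisely the ingredients packaged by a relative $(-2)$-shifted symplectic structure on $\textbf{M}/\textbf{B}$ whose class is $\boldsymbol{\phi}|_{\textbf{M}}$.

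\textbf{Underlying non-degenerate form.} For the underlying (non-closed) form, I would apply the relative cotangent triangle to the composition $\textbf{M}\to\bCrit(\boldsymbol{\phi})\to\textbf{B}$. Combining $\bbL_{\bCrit(\boldsymbol{\phi})/\textbf{B}}\simeq \bbT_\textbf{B}[-1]|_{\bCrit(\boldsymbol{\phi})}$ with the Lagrangian equivalence $\bbL_{\textbf{M}/\bCrit(\boldsymbol{\phi})}\simeq \bbT_\textbf{M}[-2]$, one reads off a symmetric non-degenerate pairing of degree $-2$ on $\bbL_{\textbf{M}/\textbf{B}}$, i.e.\ the underlying $2$-form of a relative $(-2)$-shifted symplectic structure. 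Conversely, such a pairing together with the fixed map to $\textbf{B}$ reproduces, via the same diagram chase, the underlying Lagrangian equivalence for a map $\textbf{M}\to\bCrit(\boldsymbol{\phi})$ over $\textbf{B}$.

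\textbf{Matching exact and closed structures.} The crucial technical piece is promoting the underlying form bijection to closed/exact forms. I would use that the canonical exact structure \eqref{equ on ex sym} of $\Omega_{\bCrit(\boldsymbol{\phi})}$ is witnessed tautologically by $\boldsymbol{\phi}|_{\bCrit(\boldsymbol{\phi})}$ as a Hochschild primitive, which follows from writing $\bCrit(\boldsymbol{\phi})$ via the derivation $d\boldsymbol{\phi}$ and tracking the mixed complex. Under the correspondence from the previous paragraph, restricting this primitive to $\textbf{M}$ and combining it with the Lagrangian nullhomotopy produces a loop in the cyclic complex whose contraction is precisely the exact Lagrangian datum of Definition~\ref{def of exact lag}. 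On the other side, the condition $[\Omega_{\textbf{M}/\textbf{B}}]=\boldsymbol{\phi}|_\textbf{M}$ is exactly the same contraction data. A careful chase through the Connes double complex identifies these two.

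\textbf{Main obstacle.} The genuinely hard step is upgrading the $\pi_0$-level correspondence to an equivalence of full mapping spaces, which requires coherently matching all higher homotopies between closed $2$-forms, Lagrangian nullhomotopies, and primitives in the cyclic complex. This coherence is precisely what is handled in \cite[Cor.~3.1.3]{Park2}, building on the shifted symplectic/Lagrangian machinery of \cite{PTVV, CPTVV} and the isotropic/Lagrangian framework of Calaque; in practice I would invoke this machinery to close the final coherence check rather than rebuild it from scratch, since the heavy lifting — $\infty$-categorical bookkeeping of the de Rham/cyclic complex — is exactly what those references provide.
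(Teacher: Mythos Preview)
The paper does not supply its own proof of this proposition: it is stated with the citation \cite[Cor.~3.1.3]{Park2} as its justification and is immediately followed only by a remark on terminology ($\boldsymbol{\phi}$-locked forms). Your sketch is a reasonable outline of the argument behind that citation---presenting $\bCrit(\boldsymbol{\phi})$ as a Lagrangian intersection, matching the underlying non-degenerate forms via the cotangent triangle, and then identifying the exact/closed data through the cyclic complex---and you correctly flag that the genuine content (upgrading to an equivalence of spaces with all higher coherences) is precisely what \cite{Park2} supplies. Since you ultimately defer to the same reference for the hard step, your approach is not so much an alternative proof as an expanded gloss on the cited result; there is nothing to compare against in the paper itself.
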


\begin{remark}
    The relative $(-2)$-shifted symplectic form $\Omega_{\textbf{\emph{M}}/\textbf{\emph{B}}}$ 
    in Proposition~\ref{thm on equi of two} is called \textit{$\boldsymbol{\phi}$-locked form} 
    in~\cite{Park2}. 
\end{remark}

\subsection{Setting of the paper}
Recall \cite[Def.~1.1.8]{DG} that a derived stack $S$ is \textit{QCA} if it is quasi-compact with affine stabilizer, and the classical inertia stack is of finite presentation over 
the classical truncation of $S$. 

\begin{setting}\label{setting of lag}
In the present paper, we consider the following data.
\begin{enumerate}
\item (LG) Let $B$ be a smooth QCA classical stack with a flat morphism 
$\pi_X:B\to X$ to the quotient stack $X=[S/H]$ of a smooth quasi-projective scheme $S$ by a linear algebraic group $H$. 
Let $\phi_X: X\to \C$ be a flat regular function\,\footnote{We assume the classical critical locus $\Crit(\phi_X)\hookrightarrow Z(\phi_X)$  embeds into 
 the zero locus $Z(\phi_X)$. } with pullback function $\phi=\phi_X\circ \pi_X: B\to \C$.
\item (Oriented Lagrangian) A map of derived stacks
\begin{equation}\label{equ on -2 symp}\bbf: \textbf{M} \to B, \end{equation}
which has a (relative) $(-2)$-shifted symplectic structure $\Omega_{\bbf}$, and a relative orientation in the sense of Definition \ref{ori on even cy}. 
\end{enumerate}
The above data are assumed to satisfy the following constraints.
\begin{enumerate}
\item (Compatibility) The class
$[\Omega_{\bbf}]$ in periodic cyclic homology is given by ${\phi}|_{\textbf{M}}:={\phi}\circ \textbf{f}$. 
    \item (DM) The classical truncation $f: M\to B$ of $\bbf$ is a quasi-projective Deligne-Mumford morphism and $M=t_0(\textbf{M})$ is Noetherian.
     \item  (Resolution) The restriction $\bbL_{\bbf}\,|_{M}$ of the cotangent complex to the classical truncation 
     has a resolution by symmetric complex  
     $(V\xrightarrow{d}  E \xrightarrow{d^\vee}  V^\vee)$ of finite rank bundles, where $E$ is a 
     quadratic bundle with a non-degenerate quadratic form $q_E$.
\end{enumerate}
\end{setting}

In \S \ref{sec:linearization}, we further assume properness (or equivariant properness) of $M$ to do integration. This is not needed before that section.

\begin{example}\label{ex of papers}
The main examples considered in this paper are as follows: 
 \begin{enumerate}
\item
(Quasimaps to critical loci): 
Let $W/\!\!/ G$ be a smooth GIT quotient of a complex vector space $W$ by a linear algebraic group $G$, 
$\phi_{W/\!\!/ G}: W/\!\!/ G\to \C$ be a flat regular function, with derived critical locus $\bCrit(\phi_{W/\!\!/ G})$. Fix
$n,g\in \mathbb{Z}_{\geqslant 0}$ such that $2g-2+n>0$, let $\fBun_{H_R,g,n}^{R_{\chi}=\omega_{\mathrm{log}}}$ be the stack of principal $(H_R:=G\times \mathbb{C}^*)$-bundles $P$ on genus $g$, $n$-pointed prestable curves together with twists given by isomorphisms $P/G\times_{\C^*}R_{\chi}\cong \omega_{\mathrm{log}}$, 
where $R_{\chi}$ is a character of $\C^*$.
A \textit{curve class} is the degree of the principal $G$-bundle $P_G:=P/\mathbb{C}^*$:
$$\beta\in \Hom_{\mathbb{Z}}(\bbX(G), \bbZ), \quad \beta(\xi):=\deg_C(P_G\times_G \bbC_{\xi}). $$
For each $\beta$, let $\textbf{QM}_{g,n}^{R_{\chi}=\omega_{\mathrm{log}}}(\bCrit(\phi_{W/\!\!/ G}),\beta)$ be the derived moduli stack of twisted stable quasimaps to $\bCrit(\phi_{W/\!\!/ G})$ with 
natural morphism 
$$\emph{\textbf{f}}: \textbf{QM}_{g,n}^{R_{\chi}=\omega_{\mathrm{log}}}(\bCrit(\phi_{W/\!\!/ G}),\beta)\to \fBun_{H_R,g,n}^{R_{\chi}=\omega_{\mathrm{log}}}\times_{[\pt/G]^n}(W/\!\!/ G)^n, $$ 
given by forgetting the sections in quasimaps data and evaluation at marked points of curves. 
We refer to \cite[\S 3]{CZ} for more details about the construction. 


\item
(Relative Hilbert stacks on  local log Calabi-Yau 4-folds): Let $(Y,D)$ be a local log Calabi-Yau 4-fold \eqref{equ on log cy}, and $P$ be some 
numerical $K$-theory class on $Y$, with restriction $P|_D$ to $D$. Assume the derived Hilbert scheme $\textbf{Hilb}^{P|_D}(D)$ is given by the 
derived critical locus of a function on a smooth GIT quotient $W/\!\!/ G$ as in above. 
We take   
$$\emph{\textbf{f}}: \textbf{Hilb}^P(Y,D) \to  \calA^P\times (W/\!\!/ G)^n, $$
where the target  $\calA^P$ is the stack of $P$-decorated expanded pairs, 
and $n$ is the number of connected components of $D$. The domain is the derived relative Hilbert stack on $(Y,D)$.
We refer  to \cite{CZZ} for more details about the construction. 
\end{enumerate}
\end{example}
In Appendix \ref{app b}, we verify hypotheses in Setting \ref{setting of lag} in  examples above.

\section{Factorization categories and their $K$-theories}

Categories of factorizations have been extensively studied from mathematical point of view after the pioneer work of Orlov \cite{Orl, Orl2}. 
We refer to \cite{BFK1, BFK, EP, PV3} and references therein. 
In this section, we recall the basic notions and several properties of their $K$-theories. 

\subsection{Abelian categories of factorizations}\label{sec on ab fac}
\begin{definition}
Let $\calX$ be an algebraic stack, and 
$\phi$ a regular function on $\calX$. 
A \textit{(quasi-)coherent factorization} of $\phi$ is a quadruple 
$\calE_\bullet=(\calE_0,\calE_{-1},d_{0},d_{-1})$
where $\calE_0$ and $\calE_{-1}$ are (quasi-)coherent sheaves and $d_{-1}:\calE_0\to \calE_{-1}$ and $d_0:\calE_{-1}\to \calE_{0}$ are morphisms of (quasi-)coherent sheaves so that $$d_0\circ d_{-1}=d_{-1}\circ d_0=\phi. $$
We say a factorization $\calE_\bullet$ \textit{locally free} if $\calE_0,\calE_{-1}$ are locally free.
\end{definition}
A \textit{morphism} between two factorizations is a map of pairs of (quasi-)coherent sheaves so that the obvious square commutes. Similarly, the notion of a \textit{homotopy} between two morphisms is the obvious one \cite[\S 2]{BFK}. We denote the \textit{abelian categories} of \textit{coherent and quasi-coherent factorizations} of $(\calX,\phi)$ by 
$$\Fact_{\star}(\calX,\phi), \quad \star=\coh,\,\qcoh.$$ 
We say a factorization $\calE_\bullet$ is {\it contractible} if its identity map is homotopic to zero.

There is  an \textit{exterior product} on categories of factorizations  as \cite[Def.~3.22,~Def.~3.51]{BFK1}: 
\begin{equation}\label{equ on ten prod}
\boxtimes: \Fact_{\coh}(\mathcal{X},\phi)\otimes \Fact_{\coh}(\mathcal{Y},\phi')\to \Fact_{\coh}(\mathcal{X}\times \calY,\phi\boxplus \phi')
\end{equation}
$$(\eE_\bullet,\mathcal{F}_\bullet)\mapsto \eE_\bullet\boxtimes \mathcal{F}_\bullet:=\left((\eE_\bullet\boxtimes\mathcal{F}_\bullet)_0,(\eE_\bullet\boxtimes\mathcal{F}_\bullet)_{-1},
d^{\eE_\bullet\boxtimes \mathcal{F}_\bullet}_0,d^{\eE_\bullet\boxtimes \mathcal{F}_\bullet}_{-1}\right), $$
where 
$$(\eE_\bullet\boxtimes\mathcal{F}_\bullet)_0:=\eE_0\boxtimes \mathcal{F}_0\oplus \eE_{-1}\boxtimes \mathcal{F}_{-1}, $$
$$(\eE_\bullet\boxtimes\mathcal{F}_\bullet)_{-1}:=\eE_{-1}\boxtimes \mathcal{F}_0\oplus \eE_{0}\boxtimes \mathcal{F}_{-1}, $$
\begin{equation*} 
d^{\eE_\bullet\boxtimes \mathcal{F}_\bullet}_0:=  \begin{pmatrix}
d_0^{\eE_\bullet}\boxtimes 1 & 1\boxtimes d_0^{\calF_\bullet}   \\
-1\boxtimes d_{-1}^{\calF_\bullet} & d_{-1}^{\eE_\bullet}\boxtimes 1 \end{pmatrix},
\end{equation*}
\begin{equation*} 
d^{\eE_\bullet\boxtimes \mathcal{F}_\bullet}_{-1}:=  \begin{pmatrix}
d_{-1}^{\eE_\bullet}\boxtimes 1 & -1\boxtimes d_0^{\calF_\bullet}   \\
1\boxtimes d_{-1}^{\calF_\bullet} & d_{0}^{\eE_\bullet}\boxtimes 1 \end{pmatrix},
\end{equation*}
and the sum function is given by 
$$\phi\boxplus \phi': \mathcal{X}\times \calY\to \C, \quad (x,y)\mapsto \phi(x)+\phi'(y).$$

\subsection{Derived categories of factorizations}\label{sect on dev fact}
With the obvious notions of suspension auto-equivalence $[1]$, mapping cone of a morphism, and homotopy between two morphisms, $\Fact_{\star}(\calX,\phi)$ 
($\star=\coh, \qcoh$)
has a \textit{homotopy category} $\mathcal{K}(\Fact_{\star}(\calX,\phi))$, which are triangulated categories.

Starting with a bounded  complex of factorizations, its totalization in the obvious sense gives a single factorization. Let $\mathrm{Acycl}$ be the smallest thick subcategory of 
$\mathcal{K}(\Fact_{\star}(\calX,\phi))$ containing totalizations of all bounded exact complexes from $\Fact_{\star}(\calX,\phi)$. Objects of $\mathrm{Acycl}$ are called \emph{acyclic}.
The Verdier quotient is denoted by 
$$\mathrm{MF}_{\star}(\calX,\phi), $$
called the \textit{matrix factorization category} of $(\calX,\phi)$. 
As we mainly work in the coherent setting, we often use the shorthand 
$$\mathrm{MF}_{}(\calX,\phi):=\mathrm{MF}_{\coh}(\calX,\phi).$$

\begin{definition}\label{defi of supp}
Let $\mathcal{Z}\hookrightarrow \mathcal{X}$ be a closed substack and 
$\mathcal{U}=\mathcal{X}\setminus \mathcal{Z}$ be the complement with an open immersion $j: \mathcal{U}\to \mathcal{X}$.
    We say that a factorization $\calE_\bullet$ is \textit{supported on} $\calZ$ if $j^*\calE_\bullet$ is acyclic on $\calU$. The full subcategory of factorizations supported on $\calZ$ is denoted by $\mathrm{MF} (\calX,\phi)_{\calZ}$.

    We say that $\calE_\bullet$ is \textit{locally contractable} off $\calZ$ if there is a smooth atlas $p:Y\to \calX\setminus\calZ$ so that $p^*\calE_\bullet$ is \textit{contractible} (i.e.~the identity map is homotopic to the zero map). 
    \end{definition}

 Let $\mathcal{X}$ be an algebraic stack with two regular functions $\phi_1$ and $\phi_2$, $\calE_\bullet$ be a factorization of $\phi_1$ and $\calF_\bullet$ a factorization of $\phi_2$. Then there is a tensor $\calE_\bullet\otimes\calF_\bullet$ which is a factorization of $\phi_1+\phi_2$. Here the $d_i$'s on the tensor are defined similar to the differentials on a tensor complex with the same sign convention as \eqref{equ on ten prod}.  
 Notice in particular that the tensor of $\calE_\bullet$ with a morphism between $\calF_\bullet$ and $\calF'_\bullet$ is well-defined; and a homotopy between two morphisms induces a homotopy between the two morphisms on the tensor. 
 The following lemma is standard. 

\begin{lemma}\label{lem:tensor_acyc}
  Let $\mathcal{X}$ be an algebraic stack with two regular functions $\phi_1$ and $\phi_2$ on it,  
  $\calE_\bullet$, $\calF_\bullet$ be a factorization of $\phi_1$ and $\phi_2$ respectively. 
  
  \begin{enumerate}
      \item If $\calF_\bullet$ is contractible, then $\calE_\bullet\otimes\calF_\bullet$ is contractible. 
      \item In particular, if $\calF_\bullet$ is locally contractible off a closed substack $\calZ$, then so is $\calE_\bullet\otimes\calF_\bullet$.
      \item   
  If $\calE_\bullet$ is locally free and $\calF_\bullet$ is acyclic, then $\calE_\bullet\otimes\calF_\bullet$ is acyclic.
  \end{enumerate}
\end{lemma}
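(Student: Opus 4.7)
All three statements are standard in the theory of factorization categories, and the argument is essentially formal once one unwinds the definitions. The plan is to handle (1) by an explicit homotopy, to deduce (2) from (1) by restriction to an atlas, and to handle (3) by the thick subcategory characterization of acyclic objects.

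For (1), suppose $\calF_\bullet$ is contractible, and pick a null-homotopy, i.e.\ morphisms $h_0 \colon \calF_0 \to \calF_{-1}$ and $h_{-1} \colon \calF_{-1} \to \calF_{0}$ satisfying the usual relations $d_{-1}^{\calF_\bullet} h_0 + h_{-1} d_0^{\calF_\bullet} = \id_{\calF_0}$ and $d_0^{\calF_\bullet} h_{-1} + h_0 d_{-1}^{\calF_\bullet} = \id_{\calF_{-1}}$. I would then set $H := \id_{\calE_\bullet} \otimes h$ and verify by direct computation (matching the sign convention of \eqref{equ on ten prod}) that $H$ is a null-homotopy for the identity of $\calE_\bullet \otimes \calF_\bullet$. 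This is purely a bookkeeping check on the $2 \times 2$ matrix differentials, parallel to the Leibniz rule for chain homotopies on ordinary tensor complexes.

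For (2), by Definition~\ref{defi of supp} there is a smooth atlas $p \colon Y \to \calX \setminus \calZ$ such that $p^*\calF_\bullet$ is contractible. Since the tensor product is compatible with pullback, $p^*(\calE_\bullet \otimes \calF_\bullet) \cong p^*\calE_\bullet \otimes p^*\calF_\bullet$, which is contractible by part (1) applied on $Y$. This immediately gives (2).

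For (3), the subtlety is that ``acyclic'' is not defined componentwise but as the thick closure inside $\mathcal{K}(\Fact_\star(\calX,\phi_2))$ of totalizations of bounded exact complexes of factorizations. The plan is to show that tensoring with the fixed locally free factorization $\calE_\bullet$ defines an exact endofunctor of the homotopy category $\mathcal{K}(\Fact_\star(\calX,\phi_1+\phi_2))$ which sends the generators to acyclic objects; by thickness it will then preserve $\mathrm{Acycl}$. Exactness as a triangulated functor is a formal consequence of the compatibility of the tensor with mapping cones and shifts. For the generators, if $F^{\bullet,\bullet}$ is a bounded exact complex of factorizations whose totalization is $\calF_\bullet$, then $\calE_\bullet \otimes \Tot(F^{\bullet,\bullet}) \cong \Tot(\calE_\bullet \otimes F^{\bullet,\bullet})$, and one checks that $\calE_\bullet \otimes F^{\bullet,\bullet}$ is again a bounded exact complex of factorizations: exactness reduces to the componentwise statement that tensoring an exact sequence of quasi-coherent sheaves with the locally free sheaves $\calE_0$ and $\calE_{-1}$ preserves exactness. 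The main (mild) obstacle is precisely this last bookkeeping: one must see that the components $(\calE_\bullet \otimes F^{p,\bullet})_0$ and $(\calE_\bullet \otimes F^{p,\bullet})_{-1}$ form exact complexes in $p$, which follows from local freeness of $\calE_0$ and $\calE_{-1}$ and the direct-sum formula for the tensor product in \eqref{equ on ten prod}.
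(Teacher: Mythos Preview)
Your proposal is correct and follows the same approach as the paper: an explicit null-homotopy for (1), immediate reduction to (1) for (2), and the observation that tensoring with locally free (hence flat) sheaves preserves exactness of bounded complexes of factorizations for (3). The only cosmetic difference is that the paper writes down a slightly different explicit homotopy in (1), one whose matrix entries also involve $d^{\calE_\bullet}$; your simpler ansatz $\id_{\calE_\bullet}\otimes h$ with the appropriate Koszul signs works just as well.
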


\begin{proof}

For (1), assume $h_\bullet^{\calF_\bullet}: \calF_\bullet \to \calF_\bullet[1]
    $ is a homotopy between $\id_{\calF_\bullet}$ and the zero endomorphism on $\calF_\bullet$. Then, it is straightforward to check that 
\begin{equation*} 
h^{\calE_\bullet\otimes \mathcal{F}_\bullet}_0:=  \begin{pmatrix}
 d_{-1}^{\calE_\bullet}\otimes 1 & -1\otimes h_{-1}^{\calF_\bullet} \\
1\otimes h_{0}^{\calF_\bullet} & -d_{0}^{\calE_\bullet}\otimes 1 \end{pmatrix},
\end{equation*}
\begin{equation*} 
h^{\eE_\bullet\otimes \mathcal{F}_\bullet}_{-1}:=  \begin{pmatrix}
 -d_{0}^{\calE_\bullet}\otimes 1 & 1\otimes h_{-1}^{\calF_\bullet} \\
-1\otimes h_{0}^{\calF_\bullet} & d_{-1}^{\calE_\bullet}\otimes 1 \end{pmatrix}
\end{equation*}
provides a homotopy between $\id_{\calE_\bullet\otimes\calF_\bullet}$ and the zero endomorphism of $\calE_\bullet\otimes\calF_\bullet$.
The statement (2) follows directly from (1).

The statement about acyclicity follows from the fact that the tensor product with flat coherent sheaves preserves exactness. 
\end{proof}

\subsection{Critical $K$-theories}



\begin{definition}\label{def of kgp}
Let $\mathcal{X}$ be an algebraic stack with a regular function $\phi:\mathcal{X}\to \bbC$. Its \textit{critical} $K$-\textit{theory} 
is the Grothendieck group of the coherent matrix factorization category:
$$K_0(\calX,\phi):=K_0(\mathrm{MF} (\mathcal{X},\phi)).  $$
Similarly, for a closed substack $\calZ$, we define 
\[K_0(\calX,\calZ,\phi):=K_0(\mathrm{MF} (\calX,\phi)_\calZ).\]
\end{definition}

\begin{prop}\label{prop:ab_vs_D}
Let $Z(\phi)\subseteq \calX$ be the zero locus of the function $\phi$, with inclusion $i:Z(\phi)\to \calX$. 
There is a natural functor from the derived category of coherent sheaves on $Z(\phi)$:
$$\Upsilon:\mathrm{D}^b_{\coh}(Z(\phi))\to \mathrm{MF} (\calX,\phi), \quad \eE\mapsto (i_*\eE,0,0,0), $$ 
which induce a surjective map of Grothendieck groups
\begin{equation}\label{equ on kgps}K_0(Z(\phi)) \twoheadrightarrow  K^{}_0(\calX,\phi). \end{equation}
Moreover, if $\calX$ is a quotient stack of a smooth quasi-projective variety by a reductive group, and $\phi$ is flat, then the kernel of the composition is generated by $[\calE]$ with $\calE$ a vector bundle on $Z(\phi)$.
\end{prop}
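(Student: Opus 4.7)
The plan proceeds in three steps.

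\textbf{(1) Construction of $\Upsilon$.} For $\calE\in\Coh(Z(\phi))$, the quadruple $(i_*\calE,0,0,0)$ is a coherent factorization of $\phi$ since $\phi\cdot i_*\calE=i_*(i^*\phi\cdot\calE)=0$. I would extend to a bounded complex $\calE^\bullet\in\mathrm{D}^b_{\coh}(Z(\phi))$ by totalizing the associated bounded complex of factorizations $(i_*\calE^n,0,0,0)$. Acyclic bounded complexes on $Z(\phi)$ produce totalizations of bounded exact complexes in $\Fact_{\coh}(\calX,\phi)$, which are acyclic factorizations, and short exact sequences of complexes on $Z(\phi)$ go to short exact sequences of factorizations, hence to exact triangles in $\mathrm{MF}(\calX,\phi)$. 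Therefore $\Upsilon$ descends to a well-defined homomorphism on $K_0$.

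\textbf{(2) Surjectivity via explicit devissage.} Given any $\calE_\bullet=(\calE_0,\calE_{-1},d_0,d_{-1})$, set $\calC:=\coker d_0$ and $\calK:=\ker d_0\subset\calE_{-1}$. Both are annihilated by $\phi$: for $\calC$, $\phi\calE_0=d_0 d_{-1}(\calE_0)\subseteq \im d_0$; for $\calK$, $\phi\calK=d_{-1}d_0(\calK)=0$. Hence they arise as $i_*\calC'$, $i_*\calK'$ for coherent sheaves $\calC',\calK'$ on $Z(\phi)$. The two short exact sequences of factorizations
\begin{align*}
    0\to (\im d_0,\calE_{-1},d_0,d_{-1}|_{\im d_0})\to \calE_\bullet\to \Upsilon(\calC')\to 0, \\
    0\to \Upsilon(\calK')[1]\to (\im d_0,\calE_{-1},d_0,d_{-1}|_{\im d_0})\to (\im d_0,\im d_0,\id,\phi|_{\im d_0})\to 0,
\end{align*}
together with the observation that $(\im d_0,\im d_0,\id,\phi|_{\im d_0})$ is contractible via the explicit null-homotopy $h_0=\id$, $h_{-1}=0$, yield the identity $[\calE_\bullet]=[\Upsilon(\calC')]-[\Upsilon(\calK')]$ in $K_0(\calX,\phi)$. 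This proves surjectivity of the map \eqref{equ on kgps}.

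\textbf{(3) Identification of the kernel.} Here I would invoke the Orlov-type equivalence $\mathrm{MF}(\calX,\phi)\simeq \mathrm{D}_{\mathrm{sg}}(Z(\phi))$ between the coherent matrix factorization category and the singularity category, as established for flat $\phi$ on quotient stacks $[Y/G]$ with $Y$ smooth quasi-projective and $G$ reductive in \cite{Orl2, BFK1, EP}. Under this equivalence $\Upsilon$ corresponds to the Verdier quotient $\mathrm{D}^b_{\coh}(Z(\phi))\twoheadrightarrow \mathrm{D}^b_{\coh}(Z(\phi))/\Perf(Z(\phi))$. Schlichting's localization theorem for $K_0$ of a Verdier quotient then yields exactness of
\begin{align*}
    K_0(\Perf(Z(\phi)))\to K_0(Z(\phi))\to K_0(\calX,\phi)\to 0,
\end{align*}
so the kernel is the image of $K_0(\Perf(Z(\phi)))$. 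By Thomason's resolution theorem applied to the reductive-quotient presentation inherited by $Z(\phi)$, every perfect complex on $Z(\phi)$ is quasi-isomorphic to a bounded complex of vector bundles, whence $K_0(\Perf(Z(\phi)))$ is generated by vector bundle classes. The main technical hurdle in the argument is confirming that the Orlov/Efimov--Positselski equivalence applies to the present \emph{coherent} factorization formulation in the stacky setting; once flatness of $\phi$ (so $Z(\phi)$ is Cartier and $\phi$ is a non-zerodivisor), smoothness of $\calX$, and the reductive-quotient hypothesis are combined with the cited theorems, the remainder of the argument is formal.
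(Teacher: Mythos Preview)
Your proposal is correct and follows the same overall strategy as the paper: surjectivity by d\'evissage, and identification of the kernel via the Orlov-type equivalence $\mathrm{MF}(\calX,\phi)\simeq \mathrm{D}_{\mathrm{sg}}(Z(\phi))$ together with Schlichting's localization sequence for $K_0$.

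The main difference is in presentation. For surjectivity, the paper simply cites \cite[Lem.~2.24]{BFK}, whereas your step~(2) writes out an explicit two-step d\'evissage; this is essentially a reconstruction of that lemma and is entirely valid (the null-homotopy $h_0=\id$, $h_{-1}=0$ on $(\im d_0,\im d_0,\id,\phi)$ works as you say). For the kernel, the paper's argument is identical to your step~(3): it invokes \cite{Orl}, \cite[Rmk.~3.65]{BFK1}, \cite[Thm.~3.14]{PV3} for the stacky Orlov equivalence and \cite[Exer.~3.1.6]{Sch} for the $K_0$ localization. Your caution about whether the Orlov equivalence applies to the coherent factorization category in this stacky setting is well-placed; these are precisely the references the paper uses to settle that point, and once granted, the rest is formal as you say. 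Your additional remark that $Z(\phi)$ inherits a reductive-quotient presentation (hence the resolution property, so $K_0(\Perf(Z(\phi)))$ is generated by vector bundles) is the correct justification and is implicit in the paper's citation of \cite{Sch}.
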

\begin{proof}
The surjectivity follows from \cite[Lem.~2.24]{BFK}. 

Under the additional assumptions, the functor $\Upsilon$ descends to a functor 
\[\mathrm{D}^b_{\coh}(Z(\phi))/\Perf(Z(\phi))\to \mathrm{MF} (\calX,\phi)\]
where $\Perf(Z(\phi))$ is the subcategory of perfect complexes \cite{Orl}, \cite[Lem.~3.63]{BFK1}, and the induced functor is an equivalence of categories \cite{Orl}, \cite[Rmk.~3.65]{BFK1}, \cite[Thm.~3.14]{PV3}. 
Therefore, the kernel of the map \eqref{equ on kgps} is generated by vector bundles on $Z(\phi)$ \cite[Exer.~3.1.6]{Sch}. 
\end{proof}

\begin{remark}\label{rmk on Kfact0}
The category $\mathrm{MF} (\mathcal{X},0)$ can be described as the derived category of 2-periodic complexes of coherent sheaves $\calE_\bullet$, which have two cohomology sheaves $H^{i}(\calE_\bullet)$ with $i=-1,0$.

Let $\calZ$ be a closed substack of a Noetherian stack $\mathcal{X}$, we have the following notions on $\calE_\bullet$: 
\begin{enumerate}

\item {\it cohomologically supported} on $\calZ$: $H^{i}(\calE_\bullet)$ is supported on $\calZ$, 

\item {\it cohomologically acyclic off $\calZ$}: $H^{i}(\calE_\bullet|_{\calX\setminus \calZ})=0$,

\item {\it locally contractible off $\calZ$}: see Definition \ref{defi of supp}. 

\end{enumerate}
We have $(1)\Leftrightarrow (2)$ 
and $(3) \Rightarrow (2)$ (e.g.~\cite[Cor.~2.10 (ii)]{OS}).

As argued in \cite[Lem.~2.12]{OS}, if $\calE_\bullet$ satisfies $(1)$, there exists a well-defined class
$$[\calE_\bullet]:=[H^{0}(\calE_\bullet)]-[H^{-1}(\calE_\bullet)]\in K_0(\calX,\calZ):=K_0(\mathrm{D}^b_{\coh}(\calX)_\calZ). $$
By d\'evissage, we have $K_0(\calZ)\cong K_0(\calX,\calZ)$.

\end{remark}

The exterior product \eqref{equ on ten prod} induces an \textit{exterior tensor product} map on critical $K$-theories: 
\begin{equation}\label{equ on ten prod on k}
\boxtimes: K_0(\mathcal{X},\phi)\otimes K_0(\mathcal{Y},\phi')\to K_0(\mathcal{X}\times \calY,\phi\boxplus \phi'),
\end{equation}
$$([\eE_\bullet],[\mathcal{F}_\bullet])\mapsto [\eE_\bullet\boxtimes \mathcal{F}_\bullet]. $$
It is straightforward to check the map is well-defined by using the fact that projections $\calX\times \calY\to \calX$, $\calX\times \calY\to \calY$ are flat. (Any $\C$-stack is flat over $\C$.)

As a  consequence of Lemma~\ref{lem:tensor_acyc}, we have the following.
\begin{prop}\label{prop:tensor_map}
Let $\mathcal{X}$ be a Noetherian algebraic stack with two regular functions $\phi_1$ and $\phi_2$ on it. Let 
$\calE_\bullet$ be a locally free factorization of $\phi_1$. Then there is a well-defined functor
\[\calE_\bullet\otimes(-):\mathrm{MF} (\calX,\phi_2)\to \mathrm{MF} (\calX,\phi_1+\phi_2).\]
If furthermore $\phi_2=-\phi_1$, and that $\calE_\bullet$ is locally contractible off a closed substack $\calZ$, then the above functor  induces a well-defined map 
\[\calE_\bullet\otimes(-):K_0(\calX,\phi_2)\to K_0(\calX,\calZ)\cong K_0(\calZ).\]
\end{prop}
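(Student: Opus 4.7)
The plan is to assemble the proposition from the three parts of Lemma~\ref{lem:tensor_acyc}, handling the two claims essentially independently.

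For the first claim, I would start at the level of the abelian category $\Fact_{\coh}(\calX,\phi_2)$: the tensor $\calE_\bullet \otimes \calF_\bullet$ defined with the usual Koszul sign rule is a factorization of $\phi_1+\phi_2$, functorially in $\calF_\bullet$. The same sign computation that produced the explicit homotopy in the proof of Lemma~\ref{lem:tensor_acyc}(1) shows that if $h_\bullet$ is a null-homotopy of a morphism $\calF_\bullet \to \calF'_\bullet$ then $\mathrm{id}_{\calE_\bullet}\otimes h_\bullet$ is a null-homotopy of the tensored morphism; hence $\calE_\bullet\otimes(-)$ descends to the homotopy categories. To pass to the Verdier quotient, it suffices to check that the subcategory of acyclic factorizations is preserved. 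The class of acyclic factorizations is generated (as a thick subcategory) by totalizations of bounded exact complexes, and these are preserved by tensoring with the locally free factorization $\calE_\bullet$: this is exactly Lemma~\ref{lem:tensor_acyc}(3), using that flat (in particular locally free) coherent sheaves preserve exactness of complexes of (quasi-)coherent sheaves. Therefore $\calE_\bullet\otimes(-)$ descends to $\mathrm{MF}(\calX,\phi_2)\to \mathrm{MF}(\calX,\phi_1+\phi_2)$.

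For the second claim, specialize to $\phi_2=-\phi_1$ so the target is $\mathrm{MF}(\calX,0)$, the 2-periodic derived category of coherent sheaves. Now invoke Lemma~\ref{lem:tensor_acyc}(2): since $\calE_\bullet$ is locally contractible off $\calZ$, so is $\calE_\bullet\otimes \calF_\bullet$ for every $\calF_\bullet \in \mathrm{MF}(\calX,\phi_2)$. By Remark~\ref{rmk on Kfact0}, locally contractible off $\calZ$ implies cohomologically supported on $\calZ$, so $\calE_\bullet\otimes\calF_\bullet$ defines a class $[H^0]-[H^{-1}] \in K_0(\calX,\calZ)\cong K_0(\calZ)$. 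To promote this to a map out of $K_0(\calX,\phi_2)$, I need to check that the assignment factors through the relations imposed by acyclic factorizations and distinguished triangles. Additivity in triangles is immediate because the tensor functor is triangulated (it preserves cones by the formulas in \S\ref{sec on ab fac}) and $H^{\bullet}$ on 2-periodic complexes is additive in triangles. The vanishing on acyclics is the key point: if $\calF_\bullet$ is acyclic then Lemma~\ref{lem:tensor_acyc}(3) applies (since $\calE_\bullet$ is locally free) to give $\calE_\bullet\otimes\calF_\bullet$ acyclic in $\mathrm{MF}(\calX,0)$, hence both cohomology sheaves vanish and the induced class in $K_0(\calZ)$ is zero.

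Putting these together gives the asserted map $\calE_\bullet\otimes(-):K_0(\calX,\phi_2)\to K_0(\calZ)$. The only point that requires real care is the compatibility between the two different ways in which $\calE_\bullet$ is used: local-freeness is needed for preservation of acyclics (so the tensor functor exists at the level of $\mathrm{MF}$ and sends $0$ to $0$ in $K_0$), while local contractibility off $\calZ$ is what confines the output to $K_0(\calZ)$. Since both assumptions are imposed on the same object $\calE_\bullet$ and Lemma~\ref{lem:tensor_acyc} supplies both inputs, there is no genuine obstacle; the argument is essentially an assembly of the three parts of that lemma together with the identification in Remark~\ref{rmk on Kfact0}.
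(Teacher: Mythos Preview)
Your proposal is correct and follows essentially the same approach as the paper: both use Lemma~\ref{lem:tensor_acyc}(3) (via flatness of $\calE_i$) to descend the tensor functor to $\mathrm{MF}$, Lemma~\ref{lem:tensor_acyc}(2) to confine the output to objects locally contractible off $\calZ$, and Remark~\ref{rmk on Kfact0} (i.e., \cite[Lem.~2.12]{OS}) to extract the $K_0(\calZ)$-class. You have simply filled in more of the details (descent to the homotopy category, additivity in triangles, vanishing on acyclics) that the paper leaves implicit in its terse invocation of these references.
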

\begin{proof}
Replacing the exterior tensor product in \eqref{equ on ten prod} by tensor product, we have a functor: 
$$\calE_\bullet\otimes(-): \Fact_{\coh}(\calX,\phi_2)\to \Fact_{\coh}(\calX,\phi_2+\phi_1), $$ 
which preserves short exact sequences by the flatness of $\calE_i$, and descends to the derived category by  Lemma~\ref{lem:tensor_acyc}. 

If furthermore $\calE_\bullet$ is locally contractible off $\calZ$, then Lemma~\ref{lem:tensor_acyc} yields that $\calE_\bullet\otimes\calF_\bullet$ is locally contractible off $\calZ$. 
The map between Grothendieck groups is guaranteed by \cite[Lem.~2.12]{OS} (see Remark~\ref{rmk on Kfact0} above).
\end{proof}

\section{Specialization to the normal cone in critical $K$-theory}\label{sect on sp m}

We construct a specialization map in critical $K$-theories. 
This will be used to construct critical pullbacks in \S \ref{sect on cri pb}. 

\subsection{Some categorical preparations}\label{sect on spe3}

For a full subcategory $\calA$ of a triangulated category $\calB$, we denote by   $\langle \calA \rangle$ the smallest triangulated subcategory of $\calB$
 which contains $\calA $ and is closed under taking direct summands. 
\begin{lemma}\label{lem on pb by bdl}
Let $\pi: E\to M$ be a finite rank vector bundle on a QCA stack $M$. Then we have 
$$\mathrm{D}^b_{\coh}(E)=\langle \pi^*\mathrm{D}^b_{\coh}(M) \rangle. $$
\end{lemma}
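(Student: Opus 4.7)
The plan is to reduce to the case of a projective bundle via the projective completion of $E$. Consider the compactification $\bar{\pi}: \bar{E} := \mathbb{P}_M(E \oplus \mathcal{O}_M) \to M$, together with the canonical open embedding $j: E \hookrightarrow \bar{E}$ whose complement is the hyperplane at infinity $E_\infty := \mathbb{P}_M(E)$. Note $\bar{\pi} \circ j = \pi$.

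First I would invoke the projective bundle formula for the smooth projective morphism $\bar{\pi}$, which yields a semiorthogonal decomposition
\[ \mathrm{D}^b_{\coh}(\bar{E}) = \bigl\langle \bar{\pi}^* \mathrm{D}^b_{\coh}(M) \otimes \mathcal{O}_{\bar{E}}(i) \bigr\rangle_{i=0}^{n}, \qquad n = \mathrm{rank}(E). \]
In particular, every object of $\mathrm{D}^b_{\coh}(\bar{E})$ lies in the triangulated subcategory (closed under summands) generated by twists of objects pulled back from $M$.

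Second, I would observe that the restriction functor $j^*: \mathrm{D}^b_{\coh}(\bar{E}) \to \mathrm{D}^b_{\coh}(E)$ is essentially surjective: since $M$ is QCA and hence Noetherian, $\bar{E}$ is Noetherian, so any coherent sheaf on the quasi-compact open $E$ extends to a coherent sheaf on $\bar{E}$; the standard argument then lifts this to the bounded derived category. The key geometric point to exploit next is that $\mathcal{O}_{\bar{E}}(1) \simeq \mathcal{O}_{\bar{E}}(E_\infty)$, so on the open complement $j^*\mathcal{O}_{\bar{E}}(i) \simeq \mathcal{O}_E$ for every $i$. Restricting Orlov's generators to $E$ therefore collapses all twists, and using $j^* \bar{\pi}^* = \pi^*$ we obtain
\[ \mathrm{D}^b_{\coh}(E) = j^* \mathrm{D}^b_{\coh}(\bar{E}) \subseteq \bigl\langle \pi^* \mathrm{D}^b_{\coh}(M) \bigr\rangle. \]
The reverse inclusion is tautological.

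The main obstacle is verifying the projective bundle formula in the generality of QCA stacks. The classical proof uses a relative Beilinson resolution of the diagonal on $\bar{E} \times_M \bar{E}$, built from the tautological sequence on $\bar{\pi}$; this construction is entirely relative to $M$ and extends to the present stacky setting because $\bar{\pi}$ is a smooth projective morphism. A secondary point is the coherent extension used for essential surjectivity of $j^*$, which relies on the QCA hypothesis through the approximation of quasi-coherent sheaves on $\bar{E}$ by coherent subsheaves.
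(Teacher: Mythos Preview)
Your argument is correct and takes a genuinely different route from the paper's. The paper proceeds via ind-coherent sheaves: it shows that $\pi^*\mathrm{D}^b_{\coh}(M)$ compactly generates $\IndCoh(E)$ by checking that $\pi_*$ is conservative on $\IndCoh$, which in turn is verified smooth-locally on $M$ (where $E$ trivializes and $\IndCoh(U_i\times\mathbb{A}^n)\cong\IndCoh(U_i)\otimes\IndCoh(\mathbb{A}^n)$); the QCA hypothesis enters through \cite[Thm.~0.4.5]{DG} to identify $\IndCoh(E)$ with $\Ind(\Coh(E))$, from which the conclusion on compact objects follows. Your approach instead compactifies to the projective bundle $\bar E=\mathbb{P}_M(E\oplus\mathcal{O}_M)$, invokes Orlov's relative semiorthogonal decomposition, and restricts back to $E$, observing that the twists $\mathcal{O}_{\bar E}(i)$ trivialize on the open part. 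This is more elementary, avoiding the $\infty$-categorical machinery of $\IndCoh$ and its descent, at the cost of relying on the projective bundle formula over a stacky base; as you note, the Beilinson resolution of the diagonal is a purely relative construction on $\bar E\times_M\bar E$ and carries over without change. One small point: you write ``QCA and hence Noetherian''; this is correct in the framework of \cite{DG} that the paper adopts (where stacks are locally almost of finite type over the ground field), but is not literally part of the abstract QCA definition, so you might want to flag that convention explicitly. The paper's approach meshes more naturally with the ambient use of $\IndCoh$ in later lemmas, but yours is a perfectly good standalone proof.
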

\begin{proof}
In this proof, we use the shorthand $\Coh(-)$ to denote the pre-triangulated dg-category of bounded derived category of coherent sheaves, i.e. a dg-enhancement of $\mathrm{D}^b_{\coh}(-)$, and 
$\IndCoh(-)$ the ind-coherent sheaves. We refer to~\cite{Gai, DG} for 
the foundation and basic properties of ind-coherent sheaves for stacks. Then 
$$ \pi^*\Coh(M)\hookrightarrow \Coh(E)=(\IndCoh(E))^{\mathrm{cp}}, $$
where the superscript denotes compact generators of ind-coherent sheaves.  

Since $\pi$ is scheme-theoretic and quasi-compact, there is a well-defined functor (e.g.\,\cite[\S 3.2.9]{DG}):
$$\pi_*: \IndCoh(E)\to \IndCoh(M). $$
For $A\in \IndCoh(E)$, if $\Hom(\pi^*\Coh(M),A)=0$, by adjunction, we have $$\Hom(\Coh(M),\pi_*A)=0, $$
which implies that $\pi_*A=0\in \IndCoh(M)$.

We claim that 
\begin{equation}\label{equ on a=0}A=0\in \IndCoh(E). \end{equation} 
Then
$$\Ind(\pi^*\Coh(M))=\IndCoh(E)=\Ind(\Coh(E))$$
where the last one is the Ind-completion of $\Coh(E)$ and the last equality holds by \cite[Thm.~0.4.5]{DG} as $E$ is a QCA stack by assumption. Then $\Coh(E)=\langle\pi^*\Coh(M)\rangle$ follows. 

To prove \eqref{equ on a=0}, we note that \cite[pp.~192, (3.14)]{DG}: 
$$\IndCoh(E)\xrightarrow{\simeq}  \varprojlim\IndCoh(E|_{U_i}). $$
Here the limit is after smooth morphisms $U_i \to M$
from an affine $U_i$. 
Then it is enough to show \eqref{equ on a=0} holds an affine cover of $M$. 
Let $U_i$ be an affine scheme with smooth map $g_i: U_i\to M$ such that 
$$\xymatrix{
E|_{U_i}\cong U_i\times \mathbb{A}^n \ar[r]^{\quad \quad\quad g_i} \ar[d]_{\pi_i}  \ar@{}[dr]|{\Box}  &  E \ar[d]^{\pi} \\
U_i \ar[r]^{g_i} & M. 
}
$$
Base change implies that 
$$ 0=g_i^!\pi_*A=\pi_{i*}g_i^!A. $$
Then we are left to show $\pi_{i*}: \IndCoh(E|_{U_i})\to \IndCoh(U_i)$ is conservative. 

We have an equivalence (e.g.\,\cite[\S 3.2.3]{DG}): 
$$\boxtimes: \IndCoh(U_i)\otimes \IndCoh(\mathbb{A}^n)\xrightarrow{\cong} \IndCoh(U_i\times  \mathbb{A}^n),   $$
and $$\IndCoh(U_i)=\Ind(\Coh(U_i)), \quad \IndCoh(\mathbb{A}^n)=\QCoh(\mathbb{A}^n)$$ which is compactly generated by $\Coh(U_i)$ 
and $\oO_{\mathbb{A}^n}$ respectively. So $\IndCoh(E|_{U_i})$ is compactly generated by $\pi_i^*\Coh(U_i)$. This implies
that $\pi_{i*}$ is conservative, because $\pi_{i*}A'=0$ gives that $$\Hom(\pi_i^*\Coh(U_i),A')=\Hom(\Coh(U_i),\pi_{i*}A')=0, $$  
and the above compact generation of $\IndCoh(E|_{U_i})$ implies that $A'=0$.  
\end{proof} 
\begin{lemma}\label{lem on cpg on M}
Let $f\colon M\to B$ be a quasi-projective morphism between classical stacks and $B$ is smooth and QCA. Then 
$\mathrm{D}_{\qcoh}(M)$ is compactly generated by $\Perf(M)$. 
\end{lemma}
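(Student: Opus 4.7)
My plan is to establish compact generation first on the base $B$, then transport it across $f$ using the two ingredients hidden in quasi-projectivity: a projective compactification and a relatively ample line bundle.

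\medskip

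\noindent\textbf{Step 1 (generation on the base).} Since $B$ is smooth and QCA, the Drinfeld--Gaitsgory compact generation theorem~\cite{DG} gives that $\IndCoh(B)$ is compactly generated by $\Coh(B)$. Smoothness of $B$ upgrades this to an equivalence $\IndCoh(B)\simeq \mathrm{D}_{\qcoh}(B)$ (every quasi-coherent complex has finite Tor-dimension) and forces bounded coherent complexes to be perfect. Thus $\mathrm{D}_{\qcoh}(B)$ is compactly generated by $\Perf(B)$.

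\medskip

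\noindent\textbf{Step 2 (projective closure).} By the definition of a quasi-projective morphism of stacks, factor $f=\bar f\circ j$ with $j\colon M\hookrightarrow \bar M$ a quasi-compact open immersion and $\bar f\colon \bar M\to B$ a representable projective morphism carrying a relatively ample line bundle $L$.

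\medskip

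\noindent\textbf{Step 3 (generation on $\bar M$).} I claim the family $\{L^{\otimes n}\otimes \bar f^*P\}_{n\in\ZZ,\,P\in\Perf(B)}$ compactly generates $\mathrm{D}_{\qcoh}(\bar M)$. Each object is perfect. Conversely, take $A\in \mathrm{D}_{\qcoh}(\bar M)$ with $\RHom(L^{\otimes n}\otimes\bar f^*P,A)=0$ for all $n,P$. By the projection formula for the proper representable morphism $\bar f$ and the $(\bar f^*,\bar f_*)$ adjunction, this is equivalent to $\RHom\bigl(P,\bar f_*(L^{\otimes -n}\otimes A)\bigr)=0$ for every $P\in\Perf(B)$. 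Step 1 then forces $\bar f_*(L^{\otimes -n}\otimes A)=0$ for every $n\in\ZZ$. Relative ampleness of $L$ then implies $A=0$ by the standard Bondal--Van den Bergh argument (vanishing of all $R^i\bar f_*$ of all sufficiently negative twists locally on $B$, combined with the relative Proj description of $\bar M$ in a neighborhood of $\Supp A$).

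\medskip

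\noindent\textbf{Step 4 (restrict to $M$).} The pullbacks $j^*(L^{\otimes n}\otimes\bar f^*P)$ lie in $\Perf(M)$ since $j^*$ preserves perfect complexes. For $A\in \mathrm{D}_{\qcoh}(M)$ with $\RHom\bigl(j^*(L^{\otimes n}\otimes\bar f^*P),A\bigr)=0$ for all $n,P$, the adjunction $(j^*,j_*)$ yields $\RHom(L^{\otimes n}\otimes\bar f^*P,\,j_*A)=0$. As $j$ is a quasi-compact open immersion, $j_*$ lands in $\mathrm{D}_{\qcoh}(\bar M)$, and Step 3 gives $j_*A=0$. Since $j^*j_*\simeq \id$ for open immersions, we conclude $A=0$, finishing the proof.

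\medskip

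\noindent\textbf{Main obstacle.} The one step that is not purely formal is the final implication in Step 3, passing from $\bar f_*(L^{\otimes -n}\otimes A)=0$ for all $n$ to $A=0$. In the scheme case this is classical (Serre vanishing plus the Proj construction), but in the stack setting one must check that relative ampleness of $L$ along the representable projective $\bar f$ still supports this argument. The remaining steps are standard adjunction manipulations once Step 1 and Step 3 are in hand.
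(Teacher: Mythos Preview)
Your argument is correct and follows essentially the same strategy as the paper's proof: establish compact generation of $\mathrm{D}_{\qcoh}(B)$ by $\Perf(B)$ via Drinfeld--Gaitsgory plus smoothness, then use twists by a relatively ample line bundle together with the $(f^*,f_*)$ adjunction to produce a compact generating family on $M$. The only difference is that the paper skips your Step~2 and Step~4 entirely: since a quasi-projective morphism already carries an $f$-ample line bundle $\mathcal{L}$, the paper works directly with $\{\mathcal{L}^{\otimes n}\otimes f^*\Perf(B)\}$ on $M$ itself and invokes the same conservativity statement (that $f_*(A\otimes\mathcal{L}^{-n})=0$ for all $n$ forces $A=0$) for the quasi-projective $f$ rather than for a projective closure $\bar f$. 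Your detour through $\bar M$ is harmless but unnecessary; the paper is equally terse about the ``main obstacle'' you flagged, simply writing ``as $f$ is quasi-projective.''
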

\begin{proof}
In this proof, we use shorthands $\Coh(-)$ (resp.~$\QCoh(-)$) to denote dg-enhancement of $\mathrm{D}^b_{\coh}(-)$ (resp.~$\mathrm{D}_{\qcoh}(-)$). 

Since $B$ is QCA, by \cite[Thm.~0.4.5]{DG}, we have 
$$\IndCoh(B)=\Ind(\Coh(B)). $$
Since $B$ is smooth, we have 
$$\IndCoh(B)=\QCoh(B), \quad \Coh(B)=\Perf(B), $$
where the first equality follows from \cite[Thm.~4.2.6]{AG} and the fact that the singular support $\Sing(B)$ of $B$ is the 
classical truncation of the $(-1)$-shifted cotangent $T^*[-1]B$, which is isomorphic to $B$, the second follows from the smoothness of $B$. 
Therefore 
\begin{align}\label{Qcoh:Perf}
\QCoh(B)=\Ind(\Perf(B))
\end{align}
is compactly generated by $\Perf(B)$. 

Let $\mathcal{L}$ be a relative ample line bundle for $f: M\to B$ and consider subcategories 
\begin{equation}\label{sub of ln and perfb}\{\mathcal{L}^{\otimes n}\otimes  f^*\Perf(B)\}_{n\in \Z}\hookrightarrow \Perf(M)=\QCoh(M)^{\mathrm{cp}}. \end{equation}
Here $\QCoh(M)^{\mathrm{cp}}$ is the subcategory of compact objects 
on $\QCoh(M)$, which coincides with perfect complexes $\Perf(M)$ 
by~\cite[Cor.\,1.4.3]{DG}. 

Take $A\in \QCoh(M)$ such that $\Hom(\mathcal{L}^{\otimes n}\otimes  f^*\Perf(B),A)=0$ for any $n$. 
Then by adjunction,   
$$\Hom( \Perf(B),f_*(A\otimes\mathcal{L}^{\otimes -n}) )=0, $$
which implies that $f_*(A\otimes\mathcal{L}^{\otimes -n})=0$, since $\Perf(B)$ compactly generates $\QCoh(B)$ as argued in (\ref{Qcoh:Perf}).
This further implies that $A=0$ as $f$ is quasi-projective.  
This implies that $\mathcal{L}^{\otimes n} \otimes f^{*}\mathrm{Perf}(B)$
for $n\in \mathbb{Z}$ compactly generates $\QCoh(M)$, hence $\mathrm{Perf}(M)$ compactly generates $\QCoh(M)$. 
\end{proof}
\begin{lemma}\label{lem on cpg on ZM}
Let $\pi\colon Z\to M$ be an affine morphism between derived stacks, $Z$ is QCA, and $\mathrm{D}_{\qcoh}(M)$ is compactly generated by $\Perf(M)$.
Then $$\Perf(Z)=\langle \pi^*\Perf(M)\rangle. $$
\end{lemma}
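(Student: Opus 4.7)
The plan is to mimic the structure of the proof of Lemma~\ref{lem on pb by bdl}, but now working with $\QCoh$ (compactly generated by $\Perf$) rather than $\IndCoh$ (compactly generated by $\Coh$). The two key inputs are: (a) since $\pi$ is affine, the pushforward $\pi_{*}\colon \QCoh(Z)\to \QCoh(M)$ is well-defined, preserves small colimits, and is conservative; and (b) since $Z$ is QCA, compact objects in $\QCoh(Z)$ coincide with $\Perf(Z)$ by \cite[Cor.~1.4.3]{DG}.

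\textbf{Step 1.} First I would observe that $\pi^{*}\Perf(M)\subset \Perf(Z)$, since pullback along any morphism preserves perfect complexes; in particular $\pi^{*}\Perf(M)\subset \QCoh(Z)^{\mathrm{cp}}$.

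\textbf{Step 2.} Next I would show that $\pi^{*}\Perf(M)$ compactly generates $\QCoh(Z)$. Let $A\in \QCoh(Z)$ satisfy $\Hom(\pi^{*}P,A)=0$ for all $P\in \Perf(M)$. By the adjunction $(\pi^{*},\pi_{*})$ this gives $\Hom(P,\pi_{*}A)=0$ for every $P\in \Perf(M)$. Since by assumption $\Perf(M)$ compactly generates $\QCoh(M)$, we conclude $\pi_{*}A=0$. Because $\pi$ is affine, $\pi_{*}$ is conservative, hence $A=0$. Therefore $\pi^{*}\Perf(M)$ is a set of compact generators of $\QCoh(Z)$.

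\textbf{Step 3.} Finally I would invoke the standard Ind-completion argument: a compactly generated stable $\infty$-category is the Ind-completion of the thick closure of any set of compact generators. Applied here, this gives
\[
\QCoh(Z)=\mathrm{Ind}\bigl(\langle \pi^{*}\Perf(M)\rangle\bigr),
\]
and taking compact objects yields $\QCoh(Z)^{\mathrm{cp}}=\langle \pi^{*}\Perf(M)\rangle$. Combining with input (b) above, $\Perf(Z)=\langle \pi^{*}\Perf(M)\rangle$, as claimed.

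The main potential obstacle is verifying that $\pi_{*}$ is conservative in the derived affine setting; for classical affine morphisms this is immediate, and in the derived setting it follows from $\pi_{*}$ being $t$-exact (on the heart it is faithfully flat-like for affine morphisms) and commuting with totalizations. A secondary point is the identification $\QCoh(Z)^{\mathrm{cp}}=\Perf(Z)$, which is exactly where the QCA hypothesis on $Z$ is used via \cite[Cor.~1.4.3]{DG}, following the same usage as in the proof of Lemma~\ref{lem on cpg on M}.
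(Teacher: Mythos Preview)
Your proof is correct and follows essentially the same approach as the paper: both use the adjunction $(\pi^{*},\pi_{*})$ together with conservativity of $\pi_{*}$ for affine morphisms to show that $\pi^{*}\Perf(M)$ compactly generates $\mathrm{D}_{\qcoh}(Z)$, and then invoke \cite[Cor.~1.4.3]{DG} (using the QCA hypothesis on $Z$) to identify compact objects with perfect complexes. Your write-up is slightly more explicit about Step~1 and the Ind-completion step, but the argument is the same.
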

\begin{proof}
Given $\eE\in \mathrm{D}_{\qcoh}(Z)$ with $$0=\Hom(\Perf(M),\pi_*\eE)=\Hom(\pi^*\Perf(M),\eE), $$
we have $\pi_*\eE=0$ as $\mathrm{D}_{\qcoh}(M)$ is compactly generated by $\Perf(M)$.

Since $\pi$ is affine, so $\pi_*$ is conservative on $\mathrm{D}_{\qcoh}(-)$, and hence $\eE=0$. Therefore we know $\pi^*\Perf(M)$ compactly generates
$\mathrm{D}_{\qcoh}(Z)$. By \cite[Cor.\,1.4.3]{DG}, $\Perf(Z)$ is the subcategory of compact objects of $\mathrm{D}_{\qcoh}(Z)$, therefore 
$\Perf(Z)=\langle \pi^*\Perf(M)\rangle$ by taking the subcategory of 
compact objects in $D_{\qcoh}(Z)$~\cite[\S 0.6.7]{DG}.
\end{proof}
\subsection{Potential function on deformation space}\label{sect on pt func}
Let $$f: M\to B$$ be a Deligne-Mumford morphism 
between algebraic stacks, by \cite[\S 6.1]{Kre} which generalizes \cite[\S 5]{Fu}, one has a \textit{deformation space} $M_f^{\circ}$ flat over $\mathbb{P}^1$ such that we have the following Cartesian diagrams
\begin{equation}\label{diag on defspace}
\xymatrix{
\fC_f  \ar@{^{(}->}[r]^{ }  \ar[d]_{ }  \ar@{}[dr]|{\Box}  &  M_f^{\circ} \ar[d]_{}^{\mathrm{flat}} \ar@{}[dr]|{\Box} & B\times (\mathbb{P}^1\setminus \{0\})   \ar@{_{(}->}[l]^{ } \ar[d] \\
\{0\}   \ar@{^{(}->}[r]^{ } &  \mathbb{P}^1 & \mathbb{P}^1\setminus \{0\}, 
\ar@{_{(}->}[l]^{ }
}
\end{equation}
where $\fC_f$ is the intrinsic normal cone of $f$ \cite{BF}.
 
Define an \textit{open substack} $\mathring{M}_f$ of the deformation space $M_f^{\circ}$ by the Cartesian diagram: 
\begin{equation}\label{diag on open defo} 
\xymatrix{
\mathring{M}_f \ar@{^{(}->}[r]^{ } \ar[d]_{ }  \ar@{}[dr]|{\Box}  &  M_f^{\circ} \ar[d]_{}^{\mathrm{flat}}   \\
\mathbb{P}^1\setminus \{\infty\}   \ar@{^{(}->}[r]^{\,\,\, \mathrm{open}} &  \mathbb{P}^1.
}
\end{equation}
Restricting diagram \eqref{diag on defspace} to the open substack $\mathring{M}_f$ gives 
 \begin{equation}\label{diag on defspace2}
\xymatrix{
\fC_f \ar@{^{(}->}[r]^{ }\ar[d]_{ }  \ar@{}[dr]|{\Box}  &  \mathring{M}_f \ar[d]_{t}^{\mathrm{flat}} \ar@{}[dr]|{\Box} & B\times \C^*  \ar@{_{(}->}[l]^{ } \ar[d] \\
\{0\}  \ar@{^{(}->}[r]^{ } & \mathbb{A}^1_{}:=\mathbb{P}^1\setminus \{\infty\} & \C^*. \ar@{_{(}->}[l]^{ }
}
\end{equation}
Assume $f: M\to B$ is the classical truncation of $\emph{\textbf{f}}$ in Setting \ref{setting of lag}, and 
\begin{equation}\label{reso of obs the}\mathbb{E}_f:=\mathbb{L}_{\emph{\textbf{f}}}\,|_{M}\cong (V\xrightarrow{d}  E \xrightarrow{d^\vee}  V^\vee)  \end{equation}
is a symmetric resolution by finite rank vector bundles $V, E$ on $M$. There is a 
closed immersion 
\begin{equation}\label{equ on cone emb to cone stack}\fC_f\hookrightarrow \fC_{\mathbb{E}_f} \end{equation} 
of the intrinsic normal cone $\fC_f$ of $f$ into the abelian cone stack \cite[Prop.~2.4]{BF}: 
\begin{align*}
    \fC_{\mathbb{E}_f}=\left[\Spec_M \mathrm{Sym}(Q)/V \right]
\end{align*}
of $\mathbb{E}_f$, where $Q=\mathrm{Cok}(d)$. 
It is proved in~\cite[Prop.~1.7]{Park1} that there is a well-defined function 
$$q_{\fC_{\mathbb{E}_f}}: \fC_{\mathbb{E}_f}\to \C $$
given by the descent of the function 
\begin{align*}
    \Spec \mathrm{Sym}(Q) \hookrightarrow E \stackrel{q_E}{\to} \mathbb{C}. 
\end{align*}
We denote its restriction to $\fC_f$ via \eqref{equ on cone emb to cone stack} to be 
\begin{equation}\label{fun on intr nc}q_{\fC_f}: \fC_f\to \C. \end{equation}
We notice that by the construction of $\mathring{M}_f$, there is a natural map 
\begin{align}\label{nat:map} \mathring{M}_f\to B 
\end{align}
making the following diagram commutative
\begin{align}\notag\xymatrix{
\mathring{M}_f\ar[dr]&B\times \bbC^*\ar[d]\ar@{_{(}->}[l]^{ }\\
&B,
}\end{align}
where the vertical map is the projection and horizontal map is an open immersion. 
Denote the pullback of function $\phi$ to $\mathring{M}_f$ by $\phi|_{\mathring{M}_f}$.

\begin{lemma}\label{prop on fun on def}
There is a regular function $\bar{\phi}\colon \mathring{M}_f \to \C$
such that  
\begin{itemize}
\item
$t^2\bar{\phi}=\phi|_{\mathring{M}_f}$, 
where $t$ is the coordinate function of $\mathbb{A}^1_{}$ 
\item
$\bar{\phi}|_{\fC_f}=q_{\fC_f}$.
\end{itemize}
\end{lemma}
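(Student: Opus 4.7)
The plan is to construct $\bar\phi$ locally via a blow-up computation and then patch. First, I establish that $\phi$ vanishes to order at least $2$ along $M$ inside $B$: by Proposition~\ref{thm on equi of two}, the exact Lagrangian $\textbf{f}$ factors through $\bCrit(\phi)$, hence its classical truncation $f$ factors through $\Crit(\phi)$, giving $d\phi|_M = 0$; combined with the hypothesis $\Crit(\phi_X) \hookrightarrow Z(\phi_X)$ in Setting~\ref{setting of lag}(1), one also has $\phi|_M = 0$. Since $B$ is smooth, these two conditions together are equivalent, smooth-locally on $B$, to $\phi$ lying in the square of the ideal of $M$.

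To build $\bar\phi$, I reduce to a local computation. Working smooth-locally on $B$ and using that $f$ is quasi-projective and DM, we may factor $f$ as $M \hookrightarrow U \to B$ with the first map a closed immersion and the second smooth; since both $\mathring{M}_f$ and $\phi|_{\mathring{M}_f}$ are compatible with smooth base change in $B$, this reduces us to the closed-immersion case, where Kresch's construction specializes to the classical Fulton--MacPherson blow-up. In an affine chart $B = \Spec A$ with $I = (g_1, \ldots, g_n)$ defining $M$, we write $\phi = \sum_{i,j} h_{ij} g_i g_j$ for some $h_{ij} \in A$. On the standard chart of the blow-up where $t$ generates the exceptional divisor (so $g_i = t y_i$), the pullback of $\phi$ to $\Spec A[t, y_1, \ldots, y_n]/(g_i - t y_i)$ equals $t^2 \sum h_{ij} y_i y_j$. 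Setting $\bar\phi := \sum h_{ij} y_i y_j$ on this chart, and observing that over $B \times \mathbb{C}^*$ it must coincide with $\phi/t^2$, the flatness of $\mathring{M}_f \to \mathbb{A}^1$ (hence the fact that $t$ is a non-zero-divisor in $\mathcal{O}_{\mathring{M}_f}$) ensures that these local choices patch uniquely to a global regular function $\bar\phi \in \Gamma(\mathring{M}_f, \mathcal{O})$ satisfying $t^2 \bar\phi = \phi|_{\mathring{M}_f}$.

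The subtle part, which I expect to be the main obstacle, is the identification $\bar\phi|_{\fC_f} = q_{\fC_f}$. Restricting the local formula gives $\bar\phi|_{\fC_f} = \sum h_{ij}|_M y_i y_j$, which is the image of the Hessian quadratic form $\Hess(\phi)|_M \in \Sym^2(I/I^2)$ on the normal cone $C_{M/B} = \Spec_M \Sym(I/I^2)$, then restricted along $\fC_f \hookrightarrow C_{M/B}$. On the other hand, $q_{\fC_f}$ is by construction pulled back from the function on the abelian cone stack $\fC_{\mathbb{E}_f}$ induced by the quadratic form $q_E$ in the symmetric resolution \eqref{reso of obs the}. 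Matching the two descriptions comes down to verifying that the middle quadratic form $q_E$ of the symmetric resolution underlying the relative $(-2)$-shifted symplectic form $\Omega_\textbf{f}$, whose periodic cyclic class is $\phi|_{\textbf{M}}$, coincides with the classical Hessian of $\phi$ along $M$. This is the classical shadow of the $\phi$-locked condition of Proposition~\ref{thm on equi of two}, and compatibility with the descent from $E$ to $\fC_{\mathbb{E}_f}$ follows by unwinding the construction in~\cite[Prop.~1.7]{Park1} together with~\cite[Ex.~3.1.2]{Park2}.
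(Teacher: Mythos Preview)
Your approach is genuinely different from the paper's. The paper uses Park's symplectic deformation on the \emph{derived} deformation space $\textbf{D}_{\textbf{f}}$ (\cite[Prop.~5.1.1]{Park2}): it defines $\bar\phi$ as the underlying function (periodic cyclic class) of the deformed $(-2)$-shifted symplectic form, and both claimed properties then follow from the construction and \cite[pp.~50, Prop.~3.2.1(2)]{Park2}. You instead attempt a classical construction --- divide $\phi$ by $t^2$ in blow-up charts --- and then match the special fibre with $q_{\fC_f}$ via the Hessian of $\phi$.

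There is a genuine gap in your first step. You assert that $\phi|_M=0$ together with $d\phi|_M=0$ in $\Omega_B|_M$ is equivalent, smooth-locally, to $\phi\in I^2$. This is only true when the conormal map $I/I^2\to\Omega_U|_M$ is injective, i.e.\ when $M$ is lci in the chart. In Setting~\ref{setting of lag} the classical truncation $M=t_0(\textbf{M})$ is typically singular and not lci, and then the conormal map has a kernel. A concrete obstruction: take $U=\mathbb{A}^2$, $I=(x^2,y)$, $\phi=x^3$; then $\phi\in I$, $\partial_x\phi=3x^2\in I$, $\partial_y\phi=0$, yet $x^3\notin I^2=(x^4,x^2y,y^2)$, so $\phi/t^2$ does not lie in the extended Rees ring. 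What actually forces divisibility by $t^2$ is the $\phi$-locked $(-2)$-shifted symplectic structure on $\textbf{f}$, and the paper accesses this precisely through the derived deformation space; that is the input you have bypassed.

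Your second step is also only a sketch: the references \cite[Prop.~1.7]{Park1} and \cite[Ex.~3.1.2]{Park2} do not by themselves compare the classical Hessian on $C_{M/U}$ with the quadratic form $q_E$ of the symmetric resolution of $\bbL_{\textbf{f}}|_M$. The paper's Remark after the proof says a direct gluing construction is possible, but via Joyce--Safronov $(-1)$-shifted Lagrangian neighbourhood charts \cite{JS}, which supply explicit local Darboux models where both the divisibility by $t^2$ and the identification $\bar\phi|_{\fC_f}=q_{\fC_f}$ are manifest. Your argument is in that spirit, but without invoking either the JS charts or the derived symplectic deformation, neither step is closed.
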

\begin{proof}
 
Let $\emph{\textbf{D}}_{\emph{\textbf{f}}}$ be the derived deformation space of $\emph{\textbf{f}}\colon\emph{\textbf{M}} \to B$ with canonical map $\emph{\textbf{M}}\times \mathbb{A}^1\to \emph{\textbf{D}}_{\emph{\textbf{f}}}$ (e.g.~\cite[\S 1.2]{Park2}).
By \cite[Prop.~5.1.1]{Park2}, we can deform the $(-2)$-shifted symplectic form $\Omega_{\emph{\textbf{f}}}$ on $\emph{\textbf{f}}\colon\emph{\textbf{M}} \to B$
to a $(-2)$-shifted symplectic form $\Omega_{ }$ on $\emph{\textbf{M}}\times \mathbb{A}^1\to D_{\emph{\textbf{f}}}$. 
In particular, when restricting to a general fiber $t\neq 0$, $\Omega$ becomes $t^{-2}\cdot \Omega_{\emph{\textbf{f}}}$. 

We can take the underlying function \cite[Def.~1.1.1]{Park2} of $\Omega_{\emph{\textbf{f}}}$ and $\Omega$ (i.e.~their image in the periodic cyclic homology), denoted by $[\Omega_{\emph{\textbf{f}}}]$ and $[\Omega]$ respectively. By Setting \ref{setting of lag}, we have 
$$[\Omega_{\emph{\textbf{f}}}]=\phi: B\to \C. $$ 
We define $\bar{\phi}$ to be the restriction of $[\Omega]$ to the classical truncation of $\emph{\textbf{D}}_{\emph{\textbf{f}}}$, which defines a function 
$\bar{\phi}:\mathring{M}_f\to \C$.
Its restriction to general fibers then satisfies 
$$\bar{\phi}|_{B\times \{t\}}=t^{-2}\cdot \phi.$$ 
As for the restriction of $\bar{\phi}$ to the special fiber, by tracing the map $\mathrm{Sp}$ in the bottom of \cite[pp.~50]{Park2}, and using \cite[Prop.~3.2.1\,(2)]{Park2}, 
one can see this is exactly the quadratic function $q_{\fC_f}$ mentioned in \cite[Rmk.~1.8]{Park1}, i.e.~\eqref{fun on intr nc}.

To sum up, we obtain a commutative diagram:  
 \begin{equation*}
\xymatrix{
\fC_f  \ar@{^{(}->}[r]^{ } \ar[dr]_{q_{\fC_f}}     &  \mathring{M}_f \ar[d]_{}^{\bar{\phi} }   & B\times \C^*    \ar[dl]^{t^{-2}\phi}  \ar@{_{(}->}[l]^{ }  \\
&\mathbb{A}^1 &  }
\end{equation*}
By the flatness of $t\colon \mathring{M}_f\to \mathbb{A}^1_{\infty}$, this implies that $t^2\bar{\phi}=\phi|_{\mathring{M}_f}$. 
\end{proof}
\begin{remark}
The above proof uses symplectic deformation of $(-2)$-shifted symplectic structure to normal bundle/cone \cite[\S 5.1]{Park2}. One can alternatively give a 
direct construction of the function by gluing local ones on $(-1)$-shifted Lagrangian neighbourhood charts of Joyce-Safronov \cite{JS}.
\end{remark}

\subsection{Specialization map for zero loci}\label{sect on spe1}

Continued with the previous section, denote derived zero loci $Z^{\mathrm{der}}(\bar\phi)$, $Z^{\mathrm{der}}(q_{\fC_f})$ by the following homotopy pullbacks  
\begin{equation}
\xymatrix{
Z^{\mathrm{der}}(\bar\phi) \ar@{^{(}->}[r]^{ } \ar[d]_{ } \ar@{}[dr]|{\Box}   &  \mathring{M}_f  \ar[d]_{}^{\bar\phi}   \\
 \{0\}   \ar@{^{(}->}[r]^{ } &  \mathbb{A}^1,
}
\quad \quad
\xymatrix{
Z^{\mathrm{der}}(q_{\fC_f}) \ar@{^{(}->}[r]^{ } \ar[d]_{ } \ar@{}[dr]|{\Box}   &  \fC_f  \ar[d]_{}^{q_{\fC_f}=\bar\phi|_{\fC_f}}   \\
 \{0\}  \ar@{^{(}->}[r]^{ } &  \mathbb{A}^1.
}
\nonumber \end{equation}
We have inclusions of closed and open substacks 
\begin{equation}\label{equ on zqc}Z^{\mathrm{der}}(q_{\fC_f})
\stackrel{i}{\hookrightarrow}  Z^{\mathrm{der}}(\bar\phi) \stackrel{j}{\hookleftarrow} Z(\phi)\times \C^*, \end{equation} 
where $i$ is a regular embedding, defined by the zero locus of $t: Z^{\mathrm{der}}(\bar\phi) \to \mathbb{A}^1$. 

By the localization sequence (e.g.~\cite[Thm.~3.9]{Kh}), the Grothendieck groups of coherent sheaves on them fit into an exact sequence 
$$K_0(Z^{\mathrm{der}}(q_{\fC_f})) \xrightarrow{i_*} K_0(Z^{\mathrm{der}}(\bar\phi)) \xrightarrow{\,\,j^*} K_0(Z(\phi)\times \C^*)\to 0. $$ 
Since $i$ is a regular closed immersion, there is a Gysin pullback 
$$i^{*} \colon K_0(Z^{\mathrm{der}}(\bar\phi)) \to K_0(Z^{\mathrm{der}}(q_{\fC_f})), $$
which satisfies that $i^*i_*=0$ since the normal bundle of $i$ is a trivial line bundle\footnote{We refer to \cite{AP} for intersection operations in $K$-theory.}. 
Therefore there is a unique vertical map
making the following diagram commutative
\begin{equation}\label{diag on uniq sp map}
\xymatrix{
K_0(Z^{\mathrm{der}}(\bar\phi))   \ar[rd]_{i^*}\ar[r]^{j^*}&K_0(Z(\phi)\times \C^*)  \ar[d]^{\exists !}\\
&K_0(Z^{\mathrm{der}}(q_{\fC_f})). }   \end{equation}
We define a \textit{specialization map}: 
\begin{equation}\label{sp map in k of zero}
sp: K_0(Z(\phi)) \to K_0(Z(\phi)\times \C^*) \to K_0(Z^{\mathrm{der}}(q_{\fC_f})),    \end{equation} 
as the composition of smooth pullback of the projection $$Z(\phi)\times \C^*\to Z(\phi)$$ and the vertical map in \eqref{diag on uniq sp map}.

Given a symmetric resolution as in Setting \ref{setting of lag}: 
\begin{equation}\mathbb{E}_f:=\mathbb{L}_{\emph{\textbf{f}}}\,|_{M}\cong (V\xrightarrow{d}  E \xrightarrow{d^\vee}  V^\vee), \nonumber \end{equation} 
we define closed substack $C_f$ of $E$ by the base change:  
\begin{equation}\label{base cg of normal cones}
\xymatrix{
  C_f \ar@{^{(}->}[r]^{ }  \ar[d]_{p}  \ar@{}[dr]|{\Box}  &  E \ar[d]^{p} \\
 \fC_f  \ar@{^{(}->}[r]^{ }  & [E/V].  }
 \end{equation}
By \cite[Prop.~1.7]{Park1}, we know 
$$q_E|_{C_f}=q_{\fC_f}\circ p: C_f \to \mathbb{A}^1, $$
where $q_E$ is the quadratic function of the quadratic bundle $E$. 

Notations as in Setting \ref{setting of lag}, we define
\begin{equation}\label{sp map1}K_0(Z(\phi_X))\xrightarrow{\pi_X^*}  K_0(Z(\phi)) \xrightarrow{sp} K_0(Z^{\mathrm{der}}(q_{\fC_f})) 
\xrightarrow{p^*} K_0(Z^{\mathrm{der}}(q_{E}|_{C_f})).  \end{equation}  
\begin{lemma}\label{lem on perf goes to pr}
The image of a perfect complex on $Z(\phi_X)$ under the composition \eqref{sp map1} can be represented by a perfect complex on $Z^{\mathrm{der}}(q_{E}|_{C_f})$. 
\end{lemma}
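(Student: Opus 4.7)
The plan is to exhibit an explicit perfect complex on $Z^{\mathrm{der}}(q_E|_{C_f})$ representing the image of $[\mathcal{E}]$ under the composition \eqref{sp map1}. The key step is to construct a canonical morphism of derived stacks
$$\alpha \colon Z^{\mathrm{der}}(\bar\phi) \longrightarrow Z(\phi_X)$$
that extends, across the open immersion $j$ in \eqref{equ on zqc}, the composition $Z(\phi) \times \mathbb{C}^* \xrightarrow{\mathrm{pr}_1} Z(\phi) \xrightarrow{\pi_X} Z(\phi_X)$. To build $\alpha$, I would compose the canonical map \eqref{nat:map} with $\pi_X$, obtaining $Z^{\mathrm{der}}(\bar\phi) \hookrightarrow \mathring M_f \to B \to X$; its post-composition with $\phi_X$ is, by Lemma \ref{prop on fun on def}, the restriction of $\phi|_{\mathring M_f} = t^2 \bar\phi$ to $Z^{\mathrm{der}}(\bar\phi)$. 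The tautological nullhomotopy of $\bar\phi$ on $Z^{\mathrm{der}}(\bar\phi)$, multiplied by $t^2$, provides a nullhomotopy of $\phi_X$ along this composition; together with the fact that $\phi_X$ is flat (so $Z(\phi_X) = Z^{\mathrm{der}}(\phi_X)$), this produces $\alpha$, which by construction satisfies $\alpha \circ j = \pi_X \circ \mathrm{pr}_1$.

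With $\alpha$ in hand, the lemma follows quickly. Given $\mathcal{E}$ perfect on $Z(\phi_X)$, the derived pullback $L\alpha^*\mathcal{E}$ is a perfect complex on $Z^{\mathrm{der}}(\bar\phi)$ with $j^* L\alpha^*\mathcal{E} \simeq \mathrm{pr}_1^* \pi_X^* \mathcal{E}$. By the uniqueness of the vertical arrow in diagram \eqref{diag on uniq sp map} together with the definition of $sp$ in \eqref{sp map in k of zero},
$$sp(\pi_X^*[\mathcal{E}]) \;=\; [\,Li^*L\alpha^* \mathcal{E}\,] \;\in\; K_0(Z^{\mathrm{der}}(q_{\fC_f})).$$
Since $i$ is a regular closed immersion cut out by the coordinate $t$, $Li^*L\alpha^*\mathcal{E}$ is perfect. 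Applying $p^*$, which is smooth pullback along the base change in \eqref{base cg of normal cones} of the atlas $E \to [E/V]$, preserves perfectness and yields the desired perfect representative in $K_0(Z^{\mathrm{der}}(q_E|_{C_f}))$.

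The principal technical obstacle is the rigorous construction of $\alpha$ at the derived level: one must verify that the nullhomotopy $t^2 \cdot (\text{tautological nullhomotopy of } \bar\phi)$ on $Z^{\mathrm{der}}(\bar\phi)$ is coherently compatible, on the open locus $t \neq 0$, with the canonical nullhomotopy of $\phi_X$ defining $Z(\phi_X) \hookrightarrow X$, so that the identity $\alpha \circ j = \pi_X \circ \mathrm{pr}_1$ genuinely holds as a morphism of derived stacks rather than merely of classical ones. Once this coherence is in place, every subsequent preservation-of-perfectness step (flat pullback by $\pi_X$, derived pullback by $\alpha$, Gysin pullback by the regular embedding $i$, and smooth pullback by $p$) is formal.
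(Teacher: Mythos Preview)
Your proposal is correct and follows essentially the same approach as the paper. The paper constructs the map you call $\alpha$ (landing in $Z(\phi)$ rather than $Z(\phi_X)$, handling $\pi_X^*$ separately) by explicitly factoring it as the closed immersion $Z^{\mathrm{der}}(\bar\phi)\hookrightarrow Z^{\mathrm{der}}(\phi|_{\mathring M_f})$ followed by the base-change projection $Z^{\mathrm{der}}(\phi|_{\mathring M_f})\to Z^{\mathrm{der}}(\phi)=Z(\phi)$; this factoring is exactly your ``multiply the tautological nullhomotopy by $t^2$'' step and makes the open-locus compatibility you flag as the main obstacle transparent, since each factor visibly restricts to the identity (respectively the projection) over $\mathbb{C}^*$ up to the invertible rescaling by $t^{-2}$.
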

\begin{proof}
The maps $\pi_X^*$ and $p^*$ clearly send perfect complexes to perfect complexes. 
It is enough to show the statement for the specialization map $sp$.  

By Lemma \ref{prop on fun on def}, we have a closed immersion and a projection
$$Z^{\mathrm{der}}(\bar\phi)\hookrightarrow Z^{\mathrm{der}}(\phi|_{\mathring{M}_f}) \to Z^{\mathrm{der}}(\phi)=Z^{}(\phi), $$ 
where the second arrow is induced by the natural map (\ref{nat:map}) and the last equality is by the flatness of $\phi: B\to \C$. By pulling back along these maps, a perfect complex $\eE$ on $Z^{}(\phi)$ 
goes to a perfect complex $\widetilde{\eE}$ on $Z^{\mathrm{der}}(\bar\phi)$. It is easy to check that the restriction of $\widetilde{\eE}$ to the open locus $Z(\phi)\times \C^*$ 
coincides with the pullback of $\eE$ via $Z(\phi)\times \C^*\to Z(\phi)$ (up to multiplication by the invertible function $t^{-2}$). Since $sp(\mathcal{E})$ is represented by 
$i^{\ast}\widetilde{\eE}$ where $i$ is the closed immersion in \eqref{equ on zqc}, 
it follows that $sp(\mathcal{E})$ is represented by a perfect complex. 
\end{proof}

\subsection{Relative singularity category and matrix factorization cateogry}\label{sect on spe2}

Consider homotopy pullback diagrams 
\begin{equation}\label{diag on iotace}
\xymatrix{
Z^{\mathrm{der}}(q_{E}|_{C_f}) \ar@{^{(}->}[r]^{ }\ar[d]^{\iota_{C}} \ar@{}[dr]|{\Box}   &  Z(q_E) \ar[r]^{ } \ar[d]^{\iota_{E}} \ar@{}[dr]|{\Box}  &  \{0\} \ar[d]_{}^{  }   \\
C_f   \ar@{^{(}->}[r]^{ }   & E   \ar[r]^{q_E} &  \mathbb{A}^1, }
\end{equation}
where we note that $q_E$ is flat so its derived zero locus coincides with the classical zero locus $Z(q_E)$. 

We have functors of taking quotient and proper pushforward: 
\begin{equation}\label{sp map2}\mathrm{D}^b_{\coh}(Z^{\mathrm{der}}(q_{E}|_{C_f}))\xrightarrow{ }  \frac{\mathrm{D}^b_{\coh}(Z^{\mathrm{der}}(q_{E}|_{C_f}))}{\langle 
\iota_{C}^*\mathrm{D}^b_{\coh}(C_f)) \rangle} \xrightarrow{ }  \frac{\mathrm{D}^b_{\coh}(Z (q_{E}))}{\langle \iota_E^*\mathrm{D}^b_{\coh}(E)) \rangle}.
\end{equation}
We show perfect complexes goes to zero under the functor \eqref{sp map2}.

\begin{prop}\label{prop on vanishing of pef}
Notations as in diagram \eqref{diag on iotace}, then any perfect complex on $Z^{\mathrm{der}}(q_{E}|_{C_f})$ lies in 
$\langle \iota_{C}^*\mathrm{D}^b_{\coh}(C_f)) \rangle$
\end{prop}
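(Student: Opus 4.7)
The plan is to deduce the stronger statement
\begin{equation*}
\Perf(Z^{\mathrm{der}}(q_{E}|_{C_f})) = \langle \iota_{C}^{*}\Perf(C_f)\rangle,
\end{equation*}
and then conclude via the inclusion $\Perf(C_f)\subseteq \mathrm{D}^b_{\coh}(C_f)$. Both ingredients are already in place: the first is a direct application of Lemma~\ref{lem on cpg on M} followed by Lemma~\ref{lem on cpg on ZM}, and the second is formal since a perfect complex is locally a bounded complex of finite rank projectives, hence coherent.

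The first step is to verify the hypotheses of Lemma~\ref{lem on cpg on M} for $C_f\to B$. By Setting~\ref{setting of lag}, $B$ is smooth QCA and $f\colon M\to B$ is quasi-projective. The vector bundle $E\to M$ is affine, and embeds as an open subscheme of the projective bundle $\mathbb{P}(V^{\vee}\oplus \mathcal{O}_M)\to M$, so it is quasi-projective over $M$; composition gives $E\to B$ quasi-projective, and the closed immersion $C_f\hookrightarrow E$ yields $C_f\to B$ quasi-projective. Lemma~\ref{lem on cpg on M} then gives that $\mathrm{D}_{\qcoh}(C_f)$ is compactly generated by $\Perf(C_f)$.

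The second step is to apply Lemma~\ref{lem on cpg on ZM} to the closed immersion $\iota_{C}\colon Z^{\mathrm{der}}(q_{E}|_{C_f})\to C_f$. This morphism is affine because it arises by base change from $\{0\}\hookrightarrow \mathbb{A}^1$, a closed immersion. For the QCA hypothesis on the source, note that $M$ is Noetherian and DM by Setting~\ref{setting of lag}, hence QCA (finite stabilizers are affine); $E\to M$ is affine and $C_f\hookrightarrow E$ closed, so $C_f$ is QCA; finally $\iota_C$ is affine so $Z^{\mathrm{der}}(q_{E}|_{C_f})$ is QCA. Lemma~\ref{lem on cpg on ZM} then gives $\Perf(Z^{\mathrm{der}}(q_{E}|_{C_f}))=\langle \iota_{C}^{*}\Perf(C_f)\rangle$, and combining with $\Perf(C_f)\subseteq \mathrm{D}^b_{\coh}(C_f)$ concludes the argument.

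The only delicate point is verifying the geometric hypotheses (quasi-projectivity of $C_f\to B$ and the QCA properties); these are routine bookkeeping tracing through the vector bundle $E\to M$ and the closed immersion $C_f\hookrightarrow E$, using the data fixed in Setting~\ref{setting of lag}. There is no need to analyse the (possibly singular) geometry of $C_f$ or the derived zero locus $Z^{\mathrm{der}}(q_{E}|_{C_f})$ beyond these bookkeeping checks, because the compact generation results of Lemmas~\ref{lem on cpg on M} and~\ref{lem on cpg on ZM} do not require smoothness of the source.
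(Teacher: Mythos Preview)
Your argument is correct and follows essentially the same strategy as the paper: apply Lemma~\ref{lem on cpg on M} and then Lemma~\ref{lem on cpg on ZM} in sequence. The only variation is the choice of intermediate stack: the paper applies Lemma~\ref{lem on cpg on M} to $f\colon M\to B$ and Lemma~\ref{lem on cpg on ZM} to the full composition $\pi_{Z\to M}\colon Z^{\mathrm{der}}(q_{E}|_{C_f})\to M$ (which is affine since both $\iota_C$ and the projection $C_f\to M$ are), yielding $\Perf(Z^{\mathrm{der}}(q_{E}|_{C_f}))=\langle \iota_C^{*}p^{*}\Perf(M)\rangle$; you instead insert $C_f$ as the intermediate. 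Both routes are valid and logically equivalent (indeed applying Lemma~\ref{lem on cpg on ZM} to $C_f\to M$ shows $\langle \iota_C^{*}\Perf(C_f)\rangle=\langle \iota_C^{*}p^{*}\Perf(M)\rangle$). A minor typo: the projective completion of $E$ is $\mathbb{P}(E\oplus\mathcal{O}_M)$, not $\mathbb{P}(V^{\vee}\oplus\mathcal{O}_M)$.
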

\begin{proof}
Consider the composition of $\iota_C$ in \eqref{diag on iotace} with the natural projection $p \colon C_f\to M$: 
\begin{equation}\label{equ pizm}\pi_{Z\to M}: Z^{\mathrm{der}}(q_{E}|_{C_f})\to M. \end{equation}
Applying Lemma \ref{lem on cpg on M} to $f: M\to B$, we know 
$\mathrm{D}_{\qcoh}(M)$ is compactly generated by $\Perf(M)$. Now
applying Lemma \ref{lem on cpg on ZM} to \eqref{equ pizm}, we conclude that 
$$\Perf(Z^{\mathrm{der}}(q_{E}|_{C_f}))= \langle \pi_{Z\to M}^*\Perf(M)\rangle=\langle \iota_C^{*}p^{*}\mathrm{Perf}(M))\rangle, $$ 
which is clearly a subcategory of 
$\langle \iota_{C}^*\mathrm{D}^b_{\coh}(C_f)) \rangle$. 
\end{proof}

Now we go from the relative singularity category to the matrix factorization category.
Note that our space $M$ (hence $E$) could be stacky and singular, it may not have resolution property. 
\begin{prop}\label{prop on compare sing and mf}
Notations as in diagram \eqref{diag on iotace}.
The functor
$$\iota_{E*}: \mathrm{D}^b_{\coh}(Z (q_{E})) \to \mathrm{MF}_{}(E,q_E),  $$
$$\eE\mapsto (\iota_{E*}\eE,0,0,0)$$
descends to a functor 
\begin{equation}\label{sp map3} \iota_{E*}: \frac{\mathrm{D}^b_{\coh}(Z (q_{E}))}{\langle \iota_E^*\mathrm{D}^b_{\coh}(E) \rangle}\to \mathrm{MF}_{}(E,q_E). \end{equation}
\end{prop}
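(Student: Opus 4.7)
The functor sends a coherent sheaf $\mathcal{E}\in\mathrm{Coh}(Z(q_E))$ to the factorization $(\iota_{E*}\mathcal{E},0,0,0)$, which is a valid matrix factorization of $q_E$ because $q_E$ annihilates any $\mathcal{O}_E$-module supported on $Z(q_E)$. I extend this to a triangulated functor on $\mathrm{D}^b_{\coh}(Z(q_E))$ by $\mathbb{Z}/2$-folded pushforward of a bounded complex representative $\mathcal{E}^\bullet$ on $Z(q_E)$: the folded differential squares to zero, matching the MF condition $q_E\cdot\mathrm{id}=0$ on $Z(q_E)$-supported sheaves, and quasi-isomorphisms on $Z(q_E)$ go to homotopy equivalences of factorizations.

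To prove that $\iota_{E*}$ descends to the Verdier quotient, I must show that for every $\mathcal{F}\in\mathrm{D}^b_{\coh}(E)$, the image $\iota_{E*}(\iota_E^*\mathcal{F})$ is acyclic in $\mathrm{MF}(E,q_E)$. Standard d\'evissage on the bounded $t$-structure reduces to the case $\mathcal{F}\in\mathrm{Coh}(E)$, and by a further local reduction on a smooth cover of $M$ where the resolution property is available (thanks to $M$ being quasi-projective Deligne-Mumford in Setting~\ref{setting of lag}), we may assume $\mathcal{F}$ is $q_E$-flat. Then $\iota_{E*}\iota_E^*\mathcal{F}=\mathcal{F}/q_E\mathcal{F}$ and the image matrix factorization is $(\mathcal{F}/q_E\mathcal{F},0,0,0)$.

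The essential input is the Koszul matrix factorization $K(\mathcal{F}):=(\mathcal{F},\mathcal{F},q_E\cdot,\mathrm{id})$, well defined because $q_E\cdot\colon\mathcal{O}_E\to\mathcal{O}_E$ is a non-zero divisor by flatness of $q_E$, and contractible via the null-homotopy $h_0=0,\ h_{-1}=\mathrm{id}$ (one checks $d_0h_0+h_{-1}d_{-1}=\mathrm{id}_{\mathcal{E}_0}$ and $d_{-1}h_{-1}+h_0d_0=\mathrm{id}_{\mathcal{E}_{-1}}$). The natural projection on the $\mathcal{E}_0$-component fits in a short exact sequence of matrix factorizations
$$
0\to (q_E\mathcal{F},\mathcal{F},q_E\cdot,\mathrm{inc})\to K(\mathcal{F})\to (\mathcal{F}/q_E\mathcal{F},0,0,0)\to 0.
$$
Under $q_E$-flatness, the isomorphism $q_E\cdot\colon\mathcal{F}\xrightarrow{\sim}q_E\mathcal{F}$ identifies the kernel factorization with $(\mathcal{F},\mathcal{F},\mathrm{id},q_E\cdot)$, a shifted Koszul factorization contractible via $h_0=\mathrm{id},\ h_{-1}=0$. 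The associated exact triangle in $\mathrm{MF}(E,q_E)$ therefore has its first two terms acyclic, forcing the third term $(\mathcal{F}/q_E\mathcal{F},0,0,0)=\iota_{E*}(\iota_E^*\mathcal{F})$ to be acyclic as well.

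For a general coherent $\mathcal{F}$, I resolve locally on a smooth cover of $E$ by a bounded complex of $q_E$-flat sheaves, apply the preceding argument termwise, and totalize. Acyclicity of a coherent matrix factorization is a local property in the smooth topology (compare \cite[\S 1.3]{EP}), so the local conclusion globalizes by smooth descent, aided by the compact-generation statements of \S\ref{sect on spe3} that control $\mathrm{D}^b_{\coh}$ and $\Perf$ on the stacky $E$. The main obstacle in the proof is precisely this local-to-global step, which arises from the potential failure of the global resolution property on $E$ inherited from the singularity and stackyness of the underlying $M$; handling it requires either \'etale descent in the abelian factorization category or, equivalently, the ind-coherent sheaf formalism used in Lemmas~\ref{lem on cpg on M}--\ref{lem on cpg on ZM}.
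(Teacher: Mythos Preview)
Your Koszul-factorization computation in the $q_E$-flat case is exactly the core of the paper's proof: one exhibits the short exact sequence with contractible extremes to conclude that $\iota_{E*}\iota_E^*\mathcal{F}$ vanishes in $\mathrm{MF}(E,q_E)$. The difference lies entirely in how you reduce to the $q_E$-flat case, and this is where your argument has a genuine gap.

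You propose to pass to a smooth cover of $M$ (or of $E$), resolve locally by $q_E$-flat sheaves, and then globalize by asserting that acyclicity of a coherent matrix factorization is smooth-local, citing \cite[\S 1.3]{EP}. That reference treats Noetherian \emph{schemes} with finite affine covers, where the \v{C}ech resolution is bounded. For a stack the smooth cover gives an infinite \v{C}ech complex, and one is forced to compare coherent acyclicity with quasi-coherent co-acyclicity through an infinite totalization; making this precise is delicate and is not handled by the compact-generation Lemmas~\ref{lem on cpg on M}--\ref{lem on cpg on ZM} you invoke (those concern $\Perf$ and $\mathrm{D}_{\qcoh}$, not acyclicity in $\mathrm{MF}$). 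So the local-to-global step, which you yourself flag as the main obstacle, is not actually justified.

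The paper sidesteps this entirely by using Lemma~\ref{lem on pb by bdl}, which says $\mathrm{D}^b_{\coh}(E)=\langle \pi^*\mathrm{D}^b_{\coh}(M)\rangle$ where $\pi\colon E\to M$ is the bundle projection. Objects of the form $\pi^*G^\bullet$ are \emph{globally} $q_E$-flat, because $q_E$ depends only on the fiber direction; hence the Koszul argument applies to each $\pi^*G^i$ on the nose, and closure under cones and summands finishes the proof with no descent whatsoever. You should replace your smooth-cover reduction by this direct global generation statement.
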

\begin{proof}
We follow the proof of \cite[\S 2.7, Theorem]{EP}. Let $F^\bullet\in\mathrm{D}^b_{\coh}(E)$, then 
$$\iota_E^*F^\bullet=(F^\bullet\xrightarrow{\cdot q_E} F^\bullet).$$
Assume that $F^\bullet=\pi^*G^\bullet$ for some $G^\bullet\in \mathrm{D}^b_{\coh}(M)$. Since the function $q_E$ locally depends only on fibers, and 
$\pi^*G^\bullet$ is pulled back from the base, 
hence $\pi^*G^i$ is $q_E$-flat, i.e. $\pi^*G^i\xrightarrow{\cdot q_E}\pi^*G^i$ is injective for any $i$. 
There is a short exact sequence of matrix factorizations:
$$(\pi^*G^i,\pi^*G^i,q_E,1)\to (\pi^*G^i,\pi^*G^i,1,q_E) \to \iota_{E*}\iota_E^*F^i=(\pi^*G^i\otimes \oO_{Z(q_E)},0,0,0), $$
where the first two terms are zero in $\mathrm{MF}_{}(E,q_E)$ (e.g.~\cite[Lem.~2.24]{BFK}), so is the last term. 
Therefore 
$$\iota_{E*}\iota_E^*F^\bullet=\iota_{E*}(F^\bullet\xrightarrow{\cdot q_E} F^\bullet)=\iota_{E*}\left(\pi^*G^\bullet\otimes \oO_{Z(q_E)}\right)$$
is zero in the derived matrix factorization category. The vanishing obviously holds also for any direct summand of $\iota_{E*}\iota_E^*F^\bullet$. 
By using Lemma \ref{lem on pb by bdl}, we are done.  
\end{proof}

\subsection{Definition of specialization map for critical $K$-theories}\label{sect on spe}

Combining \eqref{sp map1}, \eqref{sp map2}, \eqref{sp map3}, we obtain: 
\begin{equation}\label{equ on spe map on k0} K_0(Z(\phi_X))\xrightarrow{\eqref{sp map1}} K_0(Z^{\mathrm{der}}(q_{E}|_{C_f}))\xrightarrow{\eqref{sp map2}}K_0\left(\frac{\mathrm{D}^b_{\coh}(Z (q_{E}))}{\langle \iota_E^*\mathrm{D}^b_{\coh}(E)) \rangle}\right) \xrightarrow{\eqref{sp map3}} K_0(E,q_E). \end{equation}
By Proposition \ref{prop:ab_vs_D}, there is an exact sequence
$$K_0(\Perf(Z(\phi_X)))\to K_0(Z(\phi_X))\to K_0(X,\phi_X)\to 0$$
By Lemma \ref{lem on perf goes to pr}, Proposition \ref{prop on vanishing of pef}, we know that the $K$-theory class of a perfect complex on
$Z(\phi_X)$ goes to zero under the composition of \eqref{sp map1}, \eqref{sp map2}. Therefore \eqref{equ on spe map on k0} descends to 
a map from $K_0(X,\phi_X)$. 
\begin{definition}\label{defi of specialization map}
Notations as above, we define the \textit{specialization map}: 
\begin{equation}\label{equ on spe map on k}
\sigma_{\pi_X\circ f}: K_0(X,\phi_X)\xrightarrow{} K_0(E,q_E) \end{equation}
as the unique map such that the following diagram commutes
$$
\xymatrix{
K_0(Z(\phi_X))  \ar@{^{}->>}[r]^{ }  \ar[rd]_{\eqref{equ on spe map on k0}}    &  K_0(X,\phi_X) \ar[d]^{\sigma_{\pi_X\circ f}} \\
&  K_0(E,q_E).
}
$$
\end{definition} 
\begin{remark}\label{rmk on sp of cri ex}
In an arbitrary 1-parameter family, the  $K$-theory of coherent sheaves has a \textit{specialization map}. In contrast,  when the family has a potential function,  \textit{critical} $K$-\textit{theory} does not necessarily have a specialization map which is compatible with the canonical map. 

Let $X=\C^2$, $Z=\Delta$ be the diagonal of $X$ and $U:=X\setminus Z$ be the complement, which fit into 
Cartesian diagrams 
$$
\xymatrix{
Z \ar@{^{(}->}[r]^{ }\ar[d]_{ }  &  X\ar[d]^{x-y} & U  \ar@{_{(}->}[l]^{ } \ar[d] \\
\{0\}  \ar@{^{(}->}[r]^{i} & \C_{} & \C^*. \ar@{_{(}->}[l]^{ }
}
$$
Take a regular function $\phi: X=\C^2\to \C$, $(x,y)\mapsto x\cdot y$. A specialization map is a map 
\begin{equation}\label{equ on sp ex}K_0(U,\phi|_{U})\to K_0(Z,\phi|_{Z}), \end{equation}
obtained by taking an extension to $K_0(X,\phi)$, followed by the Gysin pullback $i^!$ to the zero fiber. Note that critical 
locus of $\phi$ is supported on $Z$, hence $K_0(U,\phi|_{U})=0$. If \eqref{equ on sp ex} exists, Gysin pullback 
$$i^!: K_0(X,\phi)\to K_0(Z,\phi|_{Z})$$
must be zero. However, $[(\oO_{\C^2},\oO_{\C^2},x,y)]\in K_0(X,\phi)$ goes to the generator of $K_0(Z,\phi|_{Z})\cong \Z_2$.
\end{remark}

\section{Spinor morphisms}\label{sec:spin}

In this section, we construct in a greater generality a $K$-theoretic analogue to the Koszul/fundamental factorizations in the literature \cite{CFGKS,FK,PV2},
which we call spinor morphisms (Definition \ref{defi of spin map gener}).
With the specialization map defined in the previous section, they will be used to construct critical pullbacks in \S \ref{sect on cri pb}.

\subsection{Koszul factorization}\label{sect on kos}

Let $\mathcal{X}$ be an algebraic stack with a regular function $\phi:\mathcal{X}\to \bbC$. 
Let $\Lambda$ be a finite rank vector bundle on $\calX$ and $E:=\Lambda\oplus\Lambda^\vee$ be the direct sum with natural non-degenerate quadratic form $Q$ defined by 
pairing $\Lambda$ and $\Lambda^\vee$. 
Let $s_0\in H^0(\calX,\Lambda)$ and $s_1\in H^0(\calX,\Lambda^\vee)$ such that 
$$ \langle s_0,s_1\rangle=\phi. $$
Written using $Q$ and $s=(s_0,s_1)$, the above is equivalent to $Q(s,s)=2\phi$. 
\begin{definition}\label{defi of kos fact}
The \textit{Koszul factorization} $\Kos(\Lambda, s)$ of the above data is 
\begin{equation}\label{equ on kos fa}\xymatrix{
\bigoplus_k\bigwedge^{2k+1}\Lambda^\vee
\ar@/^/[r]^{\,\,\, s_1\wedge+\iota_{s_0}}&\bigoplus_k\bigwedge^{2k}\Lambda^\vee. \ar@/^/[l]^{\,\,\, s_1\wedge+\iota_{s_0}}
}\end{equation}
It is easy to check the composition of two differentials above is the multiplication by $\phi$ \cite[Def.~3.19]{BFK1}. Therefore it is an object in the abelian category 
$\Fact_{\coh}(\mathcal{X},\phi)$ of matrix factorizations.
\end{definition}

\begin{prop} \label{prop:Spin}
We have the following: 
\begin{enumerate}
    \item In the setup above, for any section $s$  of $E$ so that $Q(s,s)=2\phi$, the underlying pair of coherent sheaves of the factorization $\mathrm{Kos}(\Lambda, s)$ is a pair of flat coherent sheaves. 
    \item The factorization $\mathrm{Kos}(\Lambda, s)$ is locally contractible  off $Z(s)$. 
\end{enumerate} 
\end{prop}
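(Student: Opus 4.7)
Part (1) is immediate from the construction: since $\Lambda$ has finite rank, the bundles $\bigwedge^k \Lambda^\vee$ are locally free of finite rank for all $k$, and only finitely many of them are nonzero. Hence both components $\bigoplus_k \bigwedge^{2k+1} \Lambda^\vee$ and $\bigoplus_k \bigwedge^{2k} \Lambda^\vee$ of $\mathrm{Kos}(\Lambda, s)$ are locally free of finite rank, in particular flat.

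For Part (2), the strategy is the standard Koszul contraction. The goal is to produce, on a smooth atlas $p \colon Y \to \calX \setminus Z(s)$, an explicit homotopy between $\id_{p^*\mathrm{Kos}(\Lambda, s)}$ and the zero endomorphism. I would proceed as follows. First, because $Q$ is non-degenerate and $s$ is nowhere vanishing on $\calX \setminus Z(s)$, the morphism of sheaves $Q(s, -) \colon E \to \oO_{\calX \setminus Z(s)}$ is surjective. Shrinking to a sufficiently fine smooth cover $\{U_\alpha \to \calX \setminus Z(s)\}$ (e.g.\ splitting the surjection locally, which we can do on an affine smooth chart), we find a section $s^* = (s_0^*, s_1^*) \in H^0(U_\alpha, E)$ satisfying
\[
Q(s, s^*) = \langle s_0, s_1^* \rangle + \langle s_0^*, s_1 \rangle = 1.
\]
Second, I would define the candidate homotopy
\[
h := s_1^* \wedge (-) + \iota_{s_0^*}(-),
\]
in direct analogy with the differential $d = s_1 \wedge + \iota_{s_0}$, and verify that $dh + hd = \id$ using the Clifford-type identities on $\bigwedge^\bullet \Lambda^\vee$: wedging with two elements of $\Lambda^\vee$ anticommutes, contracting with two elements of $\Lambda$ anticommutes, and the mixed anticommutator is $(a \wedge)\iota_b + \iota_b (a \wedge) = \langle a, b \rangle$ for $a \in \Lambda^\vee$, $b \in \Lambda$. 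The cross terms collapse exactly to $\langle s_0, s_1^* \rangle + \langle s_0^*, s_1 \rangle = Q(s, s^*) = 1$, while the ``like'' terms vanish.

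\textbf{Main obstacle.} The only delicate point is the global-versus-local nature of $s^*$: there is generally no global section $s^*$ with $Q(s, s^*) = 1$, so the homotopy cannot be globalized, but local contractibility is all that Definition~\ref{defi of supp} demands, so this is precisely what the statement allows. The algebraic identities are routine Koszul bookkeeping; the conceptual content is simply that non-degeneracy of $Q$ lets one invert $Q(s, -)$ smooth-locally off $Z(s)$. Everything else is formal.
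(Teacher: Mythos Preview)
Your proof of (1) is essentially identical to the paper's one-line observation that the components are locally free by construction.

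For (2), your argument is correct but follows a genuinely different route from the paper. The paper argues as follows: on a smooth atlas trivializing $\Lambda$, the Koszul factorization $\Kos(\Lambda,s)$ splits as a tensor product of rank-one Koszul factorizations $(\oO \rightleftarrows \oO)$ indexed by a local frame; off $Z(s)$ at least one coordinate of $s$ is a unit, so the corresponding rank-one factor is contractible (the homotopy is multiplication by the inverse of that coordinate), and then Lemma~\ref{lem:tensor_acyc}(1) propagates contractibility to the full tensor product. This is the argument referenced from \cite[proof of Prop.~2.3.3]{CFGKS}.

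Your approach instead produces a single explicit null-homotopy $h = s_1^{*}\wedge + \iota_{s_0^{*}}$ for the whole factorization at once, using a locally chosen $s^{*}$ with $Q(s,s^{*})=1$ and the Clifford identities on $\bigwedge^{\bullet}\Lambda^{\vee}$. This is cleaner in that it avoids trivializing $\Lambda$ and invoking the tensor lemma; it also makes the dependence on the non-degeneracy of $Q$ explicit (which is what guarantees the surjectivity of $Q(s,-)$ and hence the local existence of $s^{*}$). The paper's route, on the other hand, recycles Lemma~\ref{lem:tensor_acyc}, which it needs elsewhere anyway, and ties the argument to a standard reference. Both are short and entirely valid.
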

\begin{proof}
(1) By definition, this is a pair of locally free coherent sheaves. 

(2)  
There is a smooth atlas on which $\mathrm{Kos}(\Lambda, s)$ is a tensor product of flat factorizations with at least a tensor-factor contractible (e.g.~\cite[proof of Prop.~2.3.3]{CFGKS}). Hence, the conclusion follows from Lemma~\ref{lem:tensor_acyc}. 
\end{proof}

\subsection{Spinor morphism:~special case}
Let $(E, Q)$ be a non-degenerate quadratic bundle on an algebraic stack $\calX$, 
with a choice of \textit{orientation},~i.e.~a reduction of the structure group of $(E, Q)$ from $O(E, Q)$ to $SO(E, Q)$.
Assume there is a maximal isotropic subbundle $\Lambda$ of $E$, so that there is a short exact sequence 
\begin{equation}\label{equ on ses of E}0\to \Lambda\to E\to \Lambda^\vee\to 0.\end{equation}
We define the sign $(-1)^{|\Lambda|}=\pm 1$ of $\Lambda$ 
according to the choice of orientation on $(E,Q)$ \cite[Def.~2.2]{OT}. 

In this section, we use a Jouanolou-type trick as follows.
Consider the total space of vector bundle $\mathcal{H}om(\Lambda^\vee,E)$ on $\calX$. We define stack $\calY'_{\Lambda,E}$ by 
the following Cartesian diagram of stacks over $\calX$: 
$$\xymatrix{ 
\calY'_{\Lambda,E} \ar[r]^{ } \ar[d]_{ }  \ar@{}[dr]|{\Box}  &  \mathcal{H}om(\Lambda^\vee,E) \ar[d]^{} \\
\{\id_{\Lambda^\vee}\} \ar[r]^{ } & \mathcal{H}om(\Lambda^\vee,\Lambda^\vee), 
}
$$
where $\id_{\Lambda^\vee}: \Lambda^\vee\to \Lambda^\vee$ is the identity map, and the right vertical map is via the projection in \eqref{equ on ses of E}. 
Clearly, $\calY'_{\Lambda,E}$ is a torsor over the vector bundle $\mathcal{H}om(\Lambda^\vee,\Lambda)$. By construction,  after base change to $\calY'_{\Lambda,E}$, the sequence \eqref{equ on ses of E} splits which gives a canonical isomorphism 
$$E|_{\calY'_{\Lambda,E}}\cong \Lambda|_{\calY'_{\Lambda,E}}\oplus\Lambda^\vee|_{\calY'_{\Lambda,E}}. $$ 
Let $\calY_{\Lambda,E}\subset\calY'_{\Lambda,E}$ be the locus where quadratic forms are preserved under the isomorphism above, 
where the quadratic form on $\Lambda\oplus\Lambda^\vee$ is given by the pairing $\langle\bullet,\bullet\rangle$ between $\Lambda$ and  $\Lambda^\vee$. 
Now we construct a subbundle of $\mathcal{H}om(\Lambda^\vee,\Lambda)$, over which $\calY_{\Lambda,E}$ is  a torsor. 
This is a local question. We choose a local basis of $\Lambda$ and dual basis of $\Lambda^\vee$. Consider the subspace of $\mathcal{H}om(\Lambda^\vee,\Lambda)$
where the map  
$\varphi: \Lambda^\vee\to \Lambda$ preserves the quadratic form on $\Lambda^\vee\oplus\Lambda$. Under the choice of basis, this condition on $\varphi$ becomes 
$$
\begin{pmatrix}
\id_{ } & \varphi  \\
0 & \id_{ }
\end{pmatrix} 
\cdot
\begin{pmatrix}
0 & \id \\
\id & 0
\end{pmatrix}
\cdot 
\begin{pmatrix}
\id_{ } & 0  \\
\varphi^{t} & \id_{ }
\end{pmatrix} 
=
\begin{pmatrix}
0 & \id \\
\id & 0
\end{pmatrix},
$$
Here $\id$ is the identity matrix with the same rank as $\rk\Lambda$ and 
$\varphi^t$ denotes the transpose of the matrix $\varphi$. 
The condition above  reduces to 
$$\varphi+\varphi^t=0, $$
which defines a subvector bundle of $\mathcal{H}om(\Lambda^\vee,\Lambda)$, since on each fiber over $\calX$ the subspace consists of skew-symmetric matrices. (More intrinsically, it is the unipotent radical of the parabolic subgroup of $SO(E,Q)$ preserving $\Lambda$). 
It is straightforward to check $\calY_{\Lambda,E}$ is a torsor over this subvector bundle. 

Denote the projection of the affine bundle $\calY_{\Lambda,E}$ to the base $\calX$ by 
\begin{equation}\label{map of YtoX}\pi_{\calY/\calX}: \calY_{\Lambda,E}\to \calX. \end{equation}
By pulling $E$ back to $\calY_{\Lambda,E}$, we have a canonical isomorphism of quadratic bundles: 
$$\pi_{\calY/\calX}^*E\cong \pi_{\calY/\calX}^*\Lambda \oplus\pi_{\calY/\calX}^*\Lambda^\vee. $$ 
Let $s$ be a section of $E$ such that $2\phi:=-Q(s, s)$.
Then construction in \S \ref{sect on kos} gives a Koszul factorization $\Kos\left(\pi_{\calY/\calX}^*\Lambda,\pi_{\calY/\calX}^*s\right)$
of $-\pi_{\calY/\calX}^{\ast}\phi$. 

\begin{definition}
Let $(E, Q)$ be an oriented non-degenerate quadratic bundle on an algebraic stack $\calX$ such that \eqref{equ on ses of E} exists. 
Let  $s$ be a section of $E$ and $2\phi:=-Q(s,s): \calX\to \C$. Denote $Z(s)\subseteq\calX$ to be the zero locus of $s$, and 
$\tilde Z(s):=\pi_{\calY/\calX}^{-1}(Z(s))$ to be the preimage.

The \textit{spinor morphism} $\mathrm{Spin}(E,\Lambda,s)$ is the following composition:

\begin{align}\label{equ on spin mo}
K_0(\calX,\phi)\xrightarrow{\pi_{\calY/\calX}^*} K_0(\calY_{\Lambda,E},\pi_{\calY/\calX}^*\phi)  
 \xrightarrow{\otimes (-1)^{|\Lambda|} \Kos\left(\pi_{\calY/\calX}^*\Lambda,\pi_{\calY/\calX}^*s\right)}  K_0(\calY_{\Lambda,E},\tilde Z(s)) & \cong  K_0(\tilde Z(s))  
    \xrightarrow{\cong}    K_0(Z(s)) 
  \\ \nonumber 
  & \xrightarrow{\otimes\sqrt{\det\Lambda}}K_0(Z(s),\Z[1/2]), 
\end{align}
where the first map is the flat pullback by \eqref{map of YtoX}, the second map is given by Proposition~\ref{prop:tensor_map} while conditions therein are guaranteed by Proposition~\ref{prop:Spin}, $(-1)^{\lvert \Lambda \rvert}=\pm 1$ is the sign of $\Lambda$ \cite[Def.~2.2]{OT}. 
The first isomorphism is by d\'evissage, 
the second is
by the homotopy property of $K$-theory, e.g.~\cite[Thm.~3.17]{Kh},
the square root $\sqrt{\det\Lambda}$ exists in $K_0(\calX,\Z[1/2])$ by \cite[Lem.~5.1]{OT}. 
\end{definition}
\begin{prop}\label{prop on ycover}
Notations as above, if there is an isomorphism $E\cong \Lambda\oplus \Lambda^\vee$ of quadratic vector bundles, then the composition
$$K_0(\calX,\phi)\xrightarrow{\otimes (-1)^{|\Lambda|} \Kos(\Lambda,s)} K_0(\calX,Z(s))\cong K_0(Z(s)) \xrightarrow{\otimes\sqrt{\det\Lambda}} K_0(Z(s),\Z[1/2])$$
coincides with the spinor morphism \eqref{equ on spin mo}. Here the isomorphism is by d\'evissage. 
\end{prop}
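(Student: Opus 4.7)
The strategy is to exploit the canonical section of the torsor $\pi_{\calY/\calX}\colon \calY_{\Lambda,E}\to \calX$ afforded by the hypothesis that $E\cong \Lambda\oplus \Lambda^\vee$ is already split as a quadratic bundle, and then compare the two compositions by functoriality of pullback.

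First, I would observe that a splitting of \eqref{equ on ses of E} compatible with the quadratic form determines a distinguished $\calX$-point of $\calY_{\Lambda,E}$, hence a section
$$\sigma\colon \calX \longrightarrow \calY_{\Lambda,E}, \qquad \pi_{\calY/\calX}\circ \sigma = \id_\calX.$$
Under $\sigma^*$, the tautological splitting on $\calY_{\Lambda,E}$ restricts to the given splitting on $\calX$, so $\sigma^*\pi_{\calY/\calX}^*s = s$ and $\sigma^*\pi_{\calY/\calX}^*\Lambda = \Lambda$. By the naturality of the construction in Definition~\ref{defi of kos fact} (the Koszul factorization is built functorially from the pair $(\Lambda,s)$), we obtain a canonical identification
$$\sigma^*\Kos\!\left(\pi_{\calY/\calX}^*\Lambda,\,\pi_{\calY/\calX}^*s\right) \;\cong\; \Kos(\Lambda, s)$$
in $\Fact_{\coh}(\calX,\phi)$. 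Similarly $\sigma$ restricts to a section of the affine bundle $\tilde Z(s)\to Z(s)$.

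Next, since $\pi_{\calY/\calX}$ is a torsor under a vector bundle, the pullback $\pi_{\calY/\calX}^*$ is an isomorphism on Grothendieck groups of coherent sheaves by homotopy invariance; the same is true for $\tilde Z(s)\to Z(s)$, and this is precisely the isomorphism used in the second $\cong$ of \eqref{equ on spin mo}. Using that tensoring with a locally free factorization commutes with flat pullback (in particular with $\pi_{\calY/\calX}^*$), I would assemble the commutative diagram
\begin{equation*}
\xymatrix{
K_0(\calX,\phi) \ar[r]^-{\otimes \Kos(\Lambda,s)} \ar[d]_{\pi_{\calY/\calX}^*} & K_0(\calX,Z(s)) \ar[r]^-{\cong} \ar[d]^{\pi_{\calY/\calX}^*} & K_0(Z(s)) \ar[d]^{\pi_{\calY/\calX}^*}_{\cong} \\
K_0(\calY_{\Lambda,E},\pi_{\calY/\calX}^*\phi) \ar[r]_-{\otimes \Kos(\pi_{\calY/\calX}^*\Lambda,\pi_{\calY/\calX}^*s)} & K_0(\calY_{\Lambda,E},\tilde Z(s)) \ar[r]_-{\cong} & K_0(\tilde Z(s)),
}
\end{equation*}
where the rightmost vertical map is the homotopy-invariance isomorphism appearing in \eqref{equ on spin mo}. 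The bottom row, composed with multiplication by $(-1)^{|\Lambda|}$ and the twist by $\sqrt{\det \Lambda}$ (which commutes with all pullbacks as it is a universal construction depending only on the $K$-class of $\Lambda$), is exactly $\mathrm{Spin}(E,\Lambda,s)$, while the top row composed with the same operations is the map in the statement.

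The main technical point to verify carefully is that the d\'evissage identification $K_0(\calY_{\Lambda,E},\tilde Z(s))\cong K_0(\tilde Z(s))$ (and its analogue on $\calX$) is compatible with flat pullback along $\pi_{\calY/\calX}$; this follows because on both sides the identification is induced by the underlying coherent sheaf functor on matrix factorizations supported on the vanishing locus, and coherent pullback commutes with this functor by Proposition~\ref{prop:tensor_map} applied to the Koszul factorization together with the flatness of $\pi_{\calY/\calX}$. Once this compatibility is in place, the two compositions differ only by precomposition with the isomorphism $\pi_{\calY/\calX}^*$, and hence agree.
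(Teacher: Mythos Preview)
Your proposal is correct and follows essentially the same approach as the paper: both exploit the section of the affine bundle $\pi_{\calY/\calX}$ provided by the given splitting, identify the pulled-back Koszul factorization with $\Kos(\Lambda,s)$, and conclude via a commutative square. The only cosmetic difference is that the paper draws the diagram with the section's pullback $i^*$ going upward (using $i^*\pi_{\calY/\calX}^*=\id$ directly), whereas you draw it with $\pi_{\calY/\calX}^*$ going downward and invoke homotopy invariance; these are inverse formulations of the same argument.
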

\begin{proof}
The isomorphism $E\cong \Lambda\oplus \Lambda^\vee$ defines a section $i$ of $\pi_{\calY/\calX}: \calY_{\Lambda, E}\to \calX$,  
which restricts to a section $i$ of $\tilde Z(s)\to Z(s)$. And we have 
$$i^*\Kos\left(\pi_{\calY/\calX}^*\Lambda,\pi_{\calY/\calX}^*s\right)=\Kos(\Lambda,s). $$
Then the claim follows directly from the following commutative diagram
$$
\xymatrix{
K_0(\calX,\phi) \ar[rrr]^{\otimes \Kos(\Lambda,s)}     & & &  K_0(\calX,Z(s)) \ar[r]^{\,\, \cong}  &  K_0(Z(s))  
 \\
K_0(\calY_{\Lambda,E},\pi_{\calY/\calX}^*\phi)  
\ar[rrr]^{\otimes\Kos\left(\pi_{\calY/\calX}^*\Lambda,\pi_{\calY/\calX}^*s\right)} \ar[u]_{i^*} &  & & K_0(\calY_{\Lambda,E},\tilde Z(s)) \ar[r]^{\quad \cong} \ar[u]_{i^*}^{ }   &  K_0(\tilde Z(s)), \ar[u]_{i^*}^{ } }
$$
and the fact that $i^*\pi_{\calY/\calX}^*=\id$.
\end{proof}

\subsection{Spinor morphism:~general case}\label{subsec:general_spin}
Let $(E,Q)$ be an oriented non-degenerate quadratic bundle of even rank on $\calX$, 
$s$ be a section of $E$ with zero locus $Z(s)\subseteq\calX$, and $2\phi:=-Q(s,s): \calX\to \C$.

Let $\rho:\tilde\calX\to \calX$ be the isotropic Grassmannian of $(E,Q)$, so that $\rho^*E$ 
has a tautological maximal isotropic subbundle $\Lambda_\rho$ \cite[\S 6]{EG} (see also \cite[\S 3.1]{OT}). 
\begin{definition}\label{defi of spin map gener}
Notations as above, the \textit{spinor morphism} is the following composition:  
\begin{equation}\label{equ on spin gene}\Spin(E,s):K_0(\calX,\phi)\xrightarrow{\rho^*}K_0(\tilde\calX,\rho^*\phi)\xrightarrow{\Spin(\rho^*E,\Lambda_\rho,\rho^*s)}
K_0(Z(\rho^*s),\Z[1/2])\xrightarrow{\rho_*}K_0(Z(s),\Z[1/2]),
\end{equation}
where $\rho_*$ is the Deligne-Mumford proper pushforward. 
\end{definition}

\subsection{Comparison with $K$-theoretic localized square root Euler class}
We follow the notations in \S \ref{subsec:general_spin}. 
Let $Z(\phi)$ be the zero locus of $\phi: \calX\to \C$. 
By abuse of notations, we denote by 
 $(E,Q,s)$ its base-change to $Z(\phi)$. Similarly, we write base-change of  the isotropic Grassmannian to $Z(\phi)$ by $\rho$, and similar for the base-change of $\Lambda_\rho$.
Note that after base-change to $Z(\phi)$, the section $s$ is \textit{isotropic}. 
Using the data above, Oh and Thomas constructed a localized $K$-theoretic half Euler class \cite[Def.~5.7]{OT}:
\begin{equation}\label{equ on hec}\sqrt{e}(E,s): K_0(Z(\phi))\xrightarrow{\rho^*} K_0(Z(\rho^*\phi)) \xrightarrow{\sqrt{e}(\rho^*E,\rho^*s,\Lambda_\rho)} K_0(Z(\rho^*s),\Z[1/2])\xrightarrow{\rho_*} K_0(Z(s),\Z[1/2]). \end{equation}
Here the map $\sqrt{e}(\rho^*E,\rho^*s,\Lambda_\rho)$ is defined in \cite[Eqn.~(99)]{OT}.
It is easy to check that this class keeps the same if one multiplies $s$ by a constant.

We compare this map with the spinor morphism \eqref{equ on spin gene}. 
\begin{prop}\label{prop on cpr spin and old}
Notations as above, there is a commutative diagram 
$$
\xymatrix{
K_0(Z(\phi))  \ar@{^{}->>}[r]^{ }  \ar[rd]_{\sqrt{e}(E,s)}    &  K_0(\calX,\phi) \ar[d]^{\Spin(E,s)} \\
&  K_0(Z(s),\Z[1/2]),
}
$$
where the horizontal map is the proper pushforward along the inclusion $Z(\phi)\to \calX$, which is surjective by Proposition \ref{prop:ab_vs_D}. 
\end{prop}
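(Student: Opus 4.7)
The plan is to verify the diagram on generators. By the surjectivity of the canonical map (Proposition \ref{prop:ab_vs_D}), any class in $K_0(\calX,\phi)$ is represented by $[(i_*\calG,0,0,0)]$ for $\calG$ a coherent sheaf on $Z(\phi)$, where $i\colon Z(\phi)\hookrightarrow\calX$ is the closed immersion. Both $\Spin(E,s)$ and $\sqrt{e}(E,s)$ factor as $\rho_{*}\circ(\text{middle})\circ\rho^{*}$, with $\rho\colon\tilde\calX\to\calX$ the isotropic Grassmannian, and flat pullback $\rho^{*}$ is compatible with the canonical map (both send $i_{*}\calG$ to $i_{\tilde Z\,*}\rho^{*}\calG$). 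So it suffices to prove, for every coherent $\calG$ on $Z(\rho^{*}\phi)$,
\[
\Spin(\rho^*E,\Lambda_\rho,\rho^*s)\bigl([(i_{\tilde Z\,*}\calG,0,0,0)]\bigr) \;=\; \sqrt{e}(\rho^*E,\rho^*s,\Lambda_\rho)\bigl([\calG]\bigr)
\]
in $K_0(Z(\rho^*s),\Z[1/2])$.

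Next I would move to a setting with a split maximal isotropic, by pulling back further along the Jouanolou bundle $\pi_{\calY/\tilde\calX}\colon\calY_{\Lambda_\rho,\rho^*E}\to\tilde\calX$ used in defining the spinor morphism. After this pullback one has $\pi^{*}\rho^{*}E\cong\pi^{*}\Lambda_\rho\oplus\pi^{*}\Lambda_\rho^{\vee}$, and Proposition \ref{prop on ycover} identifies $\Spin(\rho^*E,\Lambda_\rho,\rho^*s)$ with the simpler composition: tensor with $(-1)^{|\Lambda_\rho|}\Kos(\Lambda_\rho,\rho^*s)$, apply d\'evissage $K_0(\tilde\calX,Z(\rho^*s))\cong K_0(Z(\rho^*s))$, then twist by $\sqrt{\det\Lambda_\rho}$.

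The key computation is to evaluate $[(i_{\tilde Z\,*}\calG,0,0,0)]\otimes\Kos(\Lambda_\rho,\rho^*s)$ explicitly. Since $\calG$ is supported on $Z(\rho^*\phi)$ where $Q(\rho^*s,\rho^*s)=-2\rho^*\phi$ vanishes, the section $\rho^*s$ is isotropic over the support of $\calG$, hence the differentials $(\rho^*s)_1\wedge+\iota_{(\rho^*s)_0}$ of the Koszul factorization square to zero there. Thus the tensor product is the $2$-periodic folding of the honest bounded Koszul complex $\bigl(i_{\tilde Z\,*}(\calG\otimes\textstyle\bigwedge^{\bullet}\Lambda_\rho^{\vee}),\,(\rho^*s)_1\wedge+\iota_{(\rho^*s)_0}\bigr)$. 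Applying Remark \ref{rmk on Kfact0}, its class under d\'evissage equals $[\calG\otimes\bigwedge^{\bullet}\Lambda_\rho^{\vee}|_{Z(\rho^*s)}]\in K_0(Z(\rho^*s))$. This is precisely the class appearing in \cite[Eqn.~(99)]{OT} as the definition of $\sqrt{e}(\rho^*E,\rho^*s,\Lambda_\rho)([\calG])$, and the sign $(-1)^{|\Lambda_\rho|}$ and twist $\sqrt{\det\Lambda_\rho}$ match on both sides. Pushing forward along $\rho_*$ then yields the commutativity of the diagram.

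The main obstacle I anticipate is the bookkeeping in this last step: identifying the class of a matrix factorization (an object of $\mathrm{MF}(\tilde\calX,\rho^*\phi)$) with that of an unfolded bounded Koszul complex through the d\'evissage isomorphism of Remark \ref{rmk on Kfact0}, and checking that the orientation conventions producing the sign $(-1)^{|\Lambda_\rho|}$ and the square-root twist $\sqrt{\det\Lambda_\rho}$ used in the spinor morphism coincide with those in \cite[\S 5]{OT}.
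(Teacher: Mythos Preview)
Your reduction strategy (via $\rho^*$, then the Jouanolou cover, then Proposition~\ref{prop on ycover}) matches the paper's exactly. The gap is in what you call the ``key computation''. You assert that the d\'evissage class of $(i_{\tilde Z*}\calG)\otimes\Kos(\Lambda_\rho,\rho^*s)$ equals $[\calG\otimes\textstyle\bigwedge^{\bullet}\Lambda_\rho^{\vee}|_{Z(\rho^*s)}]$ and that this ``is precisely the class appearing in \cite[Eqn.~(99)]{OT}''. Neither half of this is immediate. First, the alternating sum of cohomology sheaves of the $2$-periodic Koszul complex is not in general the na\"ive restriction $\calG\otimes\bigwedge^{\bullet}\Lambda_\rho^{\vee}|_{Z(\rho^*s)}$ when $s$ is not a regular section. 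Second, \cite[Eqn.~(99)]{OT} defines $\sqrt{e}(E,s,\Lambda)$ as $(-1)^{n+|\Lambda|}\fc_n(\Lambda^\vee,s)\cdot\sqrt{\det\Lambda^\vee}$, where $\fc_n(\Lambda^\vee,s)$ is the composite of a specialization map $sp\colon K_0(Z(\phi))\to K_0(C_{Z(s_1)/Z(\phi)})$ with Kiem--Li cosection localization $0^!_{\Lambda^\vee|_{Z(s_1)},\,s_0|_{Z(s_1)}}$; this is not a priori a Koszul-complex tensor operation.

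The paper bridges this gap by recognizing the tensor-with-Koszul map as the $K$-theoretic localized Chern character $h(\{s_1,s_0\})$ of Oh--Sreedhar \cite[Def.~2.13]{OS}, then invoking \cite[Thm.~3.5]{OS} to identify $h(\{p^*(s_0|_{Z(s_1)}),t\})$ with the cosection localization $0^!$, and finally a deformation-to-the-normal-cone argument \cite[Lem.~2.4(2)]{KO} to relate $h(\{s_0,s_1\})$ to the composite $0^!\circ sp$. A further identity $h(\{s_0,s_1\})=(-1)^n h(\{s_1,s_0\})\otimes\det\Lambda$ from \cite[Lem.~2.3]{KO} is needed to reconcile the sign $(-1)^{n+|\Lambda|}$ and the twist $\sqrt{\det\Lambda^\vee}$ on the Oh--Thomas side with $(-1)^{|\Lambda|}$ and $\sqrt{\det\Lambda}$ on the spinor side. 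This is substantive input from \cite{OS,KO}, not bookkeeping; your proposal as written does not account for it.
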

\begin{proof}
By comparing \eqref{equ on spin gene} and \eqref{equ on hec}, it is enough to show the commutativity of diagram:
\begin{equation}\label{diag on k00}
\xymatrix{ 
K_0(Z(\rho^*\phi))  \ar[rrr]^{ \sqrt{e}(\rho^*E,\rho^*s,\Lambda_\rho) \quad \,\,\,\, }  \ar[d]^{ } & &  &   K_0(Z(\rho^*s),\Z[1/2])  \ar@{=}[d] \\ 
K_0(\tilde\calX,\rho^*\phi)   \ar[rrr]^{\Spin(\rho^*E,\Lambda_\rho,\rho^*s) \quad \quad}  &  &  & K_0(Z(\rho^*s),\Z[1/2]).
}
\end{equation}
Without loss of generality, we may assume $E$ has a maximal isotropic subbundle $\Lambda$, and hence replace $\tilde\calX$ by  $\calX$ and
drop $\rho$ in  diagram above. 
By an abuse of notation, we denote by $\calY_{\Lambda,E}$ \eqref{map of YtoX} its restriction to $Z(\phi)$. Given the commutativity of the following diagram
$$
\xymatrix{ 
K_0(Z(\phi))  \ar[rrr]^{ \sqrt{e}(E,s,\Lambda) \quad \,\,\,\, }  \ar[d]^{\pi_{\calY/\calX}^*} & & &   K_0(Z(s),\Z[1/2])  \ar[d]^{\pi_{\calY/\calX}^*} \\ 
K_0(\calY_{\Lambda,E})  \ar[rrr]^{\sqrt{e}(\pi_{\calY/\calX}^*E,\pi_{\calY/\calX}^*s,\pi_{\calY/\calX}^*\Lambda) \quad \,\,\,\, }  & &  &   K_0(\tilde Z(s),\Z[1/2]),
}
$$
 to show the commutativity of diagram \eqref{diag on k00}, we may assume without loss of generality that there is an isomorphism $E\cong \Lambda\oplus \Lambda^\vee$ of quadratic bundles.

Let $n=\rk(\Lambda)$, $(-1)^{\lvert \Lambda \rvert}=\pm 1$ be the sign of $\Lambda$ \cite[Def.~2.2]{OT}, and
$$\fc_n(\Lambda^\vee,s): K_0(Z(\phi)) \xrightarrow{sp} K_0(C_{Z(s_1)/Z(\phi)})   \xrightarrow{0^!_{\Lambda^\vee |_{Z(s_1)},s_0|_{Z(s_1)}}} K_0(Z(s)),  $$
where $s=(s_0,s_1)\in \Gamma(Z(\phi),\Lambda\oplus \Lambda^\vee)$, and $0^!_{\Lambda^\vee |_{Z(s_1)},s_0|_{Z(s_1)}}$ is the cosection localization of Kiem and Li \cite{KL2}. 
We recall that \cite[Eqns.~(99)]{OT}: 
$$\sqrt{e}(E,s,\Lambda)=(-1)^{n+|\Lambda|}\fc_n(\Lambda^\vee,s)\cdot\sqrt{\det\Lambda^\vee}.$$
By Proposition \ref{prop on ycover}, and \cite[Eqns.~(98),~(99)]{OT}, the commutativity of \eqref{diag on k00} is equivalent to the commutativity of the following diagram
\begin{equation}\label{diag on compar two}
\xymatrix{ 
K_0(Z(\phi))  \ar[rrr]^{ (-1)^{n+|\Lambda|}\fc_n(\Lambda^\vee,s) \quad \quad \quad \,\,\, }  \ar[d]^{ } & &  & K_0(\calX,Z(s))\cong K_0(Z(s))   \ar[r]^{\quad \,\,\, \sqrt{\det\Lambda^\vee}} &  K_0(Z(s),\Z[1/2]) \ar@{=}[d]  \\ 
K_0(\calX,\phi)   \ar[rrr]^{\otimes (-1)^{|\Lambda|} \Kos(\Lambda,s)\quad \quad \quad \quad} &  &   &  K_0(\calX,Z(s))\cong K_0(Z(s)) \ar[r]^{\quad \,\,\, \sqrt{\det\Lambda}}  & K_0(Z(s),\Z[1/2]).
}
\end{equation}
The restriction of $\Kos(\Lambda,s)$ to $Z(\phi)$ is a 2-periodic complex, which is written as $\{s_1,s_0 \}$ following \cite[Def.~3.1]{OS}. 
By definition \cite[Def.~2.13]{OS}, the composition 
$$K_0(Z(\phi))\to K_0(\calX,\phi) \xrightarrow{\otimes \Kos(\Lambda,s)}  K_0(Z(s))$$ 
is the $K$-theoretic localized Chern character of $\{s_1,s_0 \}$, denoted by $h(\{s_1,s_0 \})$.

Let $p: \Lambda^\vee|_{Z(s_1)}\to Z(s_1)$ be the projection,  $t\in \Gamma(\Lambda^\vee|_{Z(s_1)}, p^*\Lambda^\vee|_{Z(s_1)})$  the tautological section, 
and $p^*(s_0|_{Z(s_1)}) \in \Gamma(\Lambda^\vee|_{Z(s_1)},p^*\Lambda|_{Z(s_1)})$  the pullback section of $s_0|_{Z(s_1)}$.
Define a function
$$w=\langle p^{\ast}\left(s_0|_{Z(s_1)}\right), t\rangle: \Lambda^\vee|_{Z(s_1)}\to \C. $$
Then \cite[Thm.~3.5]{OS} yields
$$  h(\{p^*(s_0|_{Z(s_1)}),t\})=0^!_{\Lambda^\vee|_{Z(s_1)},s_0|_{Z(s_1)}}: K_0(Z(w))\to K_0(Z(s)). $$ 
By  a deformation to the normal cone argument as in  \cite[Lem.~2.4\,(2)]{KO}, the composition 
$$K_0(Z(\phi)) \xrightarrow{sp} K_0(C_{Z(s_1)/Z(\phi)})  \xrightarrow{ } 
K_0\left(Z(w)\right) \xrightarrow{h(\{p^*(s_0|_{Z(s_1)}),t\})} K_0(Z(s))  $$
equals to $h(\{s_0,s_1 \})$, where $C_{Z(s_1)/Z(\phi)} \subseteq Z(w)$ follows from \cite[Lem.~2.4\,(1)]{KO}. 

Finally, we have 
\begin{align*}h(\{s_0,s_1 \})&=h(\{s_1,s_0 \}\otimes \det\Lambda[-n]) \\
&=(-1)^n\cdot h(\{s_1,s_0 \})\,\otimes \det\Lambda, 
\end{align*}
where the first equality is by \cite[Lem.~2.3]{KO} and the  second equality uses \cite[Def.~2.13]{OS}. 
Combing the above, diagram \eqref{diag on compar two} commutes. 
\end{proof}
\begin{remark}\label{rmk on biv on spin}
The spinor morphism \eqref{equ on spin gene} commutes with DM proper pushforwards, flat pullbacks and Gysin pullbacks and hence is a bivariant class.
This can be proven using Proposition \ref{prop on cpr spin and old}, the surjectivity of the horizontal map in \textit{loc}.\,\textit{cit.}~and the fact that 
$K$-theoretic localized square root Euler classes are bivariant classes.
\end{remark}

\begin{remark}
In the setting of \S \ref{subsec:general_spin}, if $E$ admits a maximal isotropic subbundle $\Lambda$, compatible with the orientation of $E$,
then the map   
\begin{equation}K_0(\calX,\phi)\xrightarrow{\mathrm{Spin}(E,\Lambda,s)} 
K_0(Z(s),\Z[1/2]), \nonumber\end{equation}
agrees with the spinor morphism $\Spin(E,s)$ \eqref{equ on spin gene}. 
This can be proven similarly as above, by reducing to  
the fact that analogous claim holds for $K$-theoretic localized square root Euler classes.

\end{remark}

\subsection{Tautological spinor morphism}\label{sec:fund_fact}

Let $(E,Q)$ be an oriented non-degenerate quadratic bundle of even rank on $\calX$.  

Let $\rho:\tilde\calX\to \calX$ be the isotropic Grassmannian of $(E,Q)$, so that $\rho^*E$ 
has a tautological maximal isotropic subbundle $\Lambda_\rho$ \cite[\S 6]{EG} (see also \cite[\S 3.1]{OT}). 
Let $\pi_{\rho}: \rho^*E\to \tilde\calX$ be the projection and $s_{\rho}^t$ be the tautological section of the bundle $\pi_{\rho}^*\rho^*E\to  \rho^*E$.
It satisfies 
$$\pi_{\rho}^*\rho^*Q(s_{\rho}^t,s_{\rho}^t)=\rho^*q_E:  \rho^*E\to \C, $$
where $q_E: E\to \C$ is the function defined by $q_E(e)=Q(e,e)$. 
By multiplying $s_{\rho}^t$ by $\sqrt{-2}$, we obtain a section which satisfies $\pi_{\rho}^*\rho^*Q(\sqrt{-2}\cdot s_{\rho}^t,\sqrt{-2}\cdot s_{\rho}^t)=-2\rho^*q_E$.

\begin{definition}
The \textit{tautological spinor morphism} on $(\calX,E,Q)$ is the  following composition:  
\begin{equation}\label{equ on spin fa}\Spin:K_0(E,q_E)\xrightarrow{\rho^*}K_0(\rho^*E,\rho^*q_E)\xrightarrow{\Spin(\pi_{\rho}^*\rho^*E,\pi_{\rho}^*\Lambda_\rho,\sqrt{-2}\cdot s_{\rho}^t)}K_0(\tilde\calX,\Z[1/2])\xrightarrow{\rho_*}K_0(\calX,\Z[1/2]),
\end{equation}
where $q_E: E\to \C$ is the quadratic function of $E$, and $\rho_*$ is the  proper pushforward. 
\end{definition}
When the rank of $E$ is odd, we define the tautological spinor morphism $\Spin$ to be zero.

\section{Critical pullbacks}\label{sect on cri pb}
Using the specialization map in \S \ref{sect on sp m} and 
the spinor morphism in \S \ref{sec:spin}, we define critical pullback maps, and prove their basic properties. 

\subsection{Definition}\label{sect on const of crit pb}

\begin{definition}\label{defi of crit pb}
In Setting \ref{setting of lag}, the \textit{critical pullback} is the following composition:
\begin{align}\label{equ on crit pb der}f_{\pi_X}^!: K_0(X,\phi_X)\xrightarrow{\sigma_{\pi_X\circ f}} K_0(E,q_E)\xrightarrow{\Spin}K_0(M,\Z[1/2]) 
\xrightarrow{\otimes\sqrt{\det(V)^\vee}}  K_0(M,\Z[1/2]), \end{align} 
where $\sigma_{\pi_X\circ f}$ is the specialization map \eqref{equ on spe map on k}, and $\Spin$ is given by \eqref{equ on spin fa}. 
\end{definition}
\begin{remark}\label{rmk on T-equiv}
When there is a torus $T$ acting on \eqref{equ on -2 symp} and map $\pi_X: B\to X$, which preserves the shifted symplectic form $\Omega_{\textbf{\emph{f}}}$, and  
the function $\phi_X: X\to \C$. Then critical pullbacks lift to maps between
$T$-equivariant $K$-groups.
\end{remark}

\subsection{Compatibility with square root virtual pullback}\label{subsec:zero-loci}
From this section on, we assume that the critical locus $\Crit(\phi)$ is a closed substack of the zero locus $Z(\phi)$:
\begin{equation}\label{equ on crit emb to zero}\Crit(\phi)\hookrightarrow Z(\phi). \end{equation}
In Example \ref{ex of papers}, this follows from the assumption $\Crit(\phi_X)\hookrightarrow Z(\phi_X)$ in Setting \ref{setting of lag}. 

We have the following Cartesian diagram of classical stacks
\begin{equation}\label{diag on MZPHI}
\xymatrix{
M \ar[r]^{f_Z} \ar[d]_{}  \ar@{}[dr]|{\Box}  &  Z(\phi) \ar[d]^{} \\
M \ar[r]^{f} & B,  }
\end{equation}
where the base change of $M$ is still isomorphic to $M$ 
because $f$ factors through the critical locus $\Crit(\phi)$ (ref.~Proposition~\ref{thm on equi of two}) and hence the zero locus by inclusion \eqref{equ on crit emb to zero}. 
We can endow  $f_Z$ with a symmetric obstruction theory by pulling back the one on $f$.

In Setting \ref{setting of lag}, we also have a Cartesian diagram
\begin{equation}\label{diag on ZZ}
\xymatrix{
Z(\phi) \ar[r]^{\pi_X} \ar[d]_{}  \ar@{}[dr]|{\Box}  & Z(\phi_X)  \ar[d]^{} \\
B \ar[r]^{\pi_X} & X.  }
\end{equation}
The Cartesian diagram \eqref{diag on MZPHI} induces a closed embedding \cite[Prop.~2.26]{Man}:
$$\fC_{f_Z}\hookrightarrow  \fC_f. $$ 
We define $C_{f_Z}$ and $C_f$ by the following Cartesian diagrams 
$$\xymatrix{
C_{f_Z} \ar@{^{(}->}[r]^{ } \ar[d]_{p}  \ar@{}[dr]|{\Box}  & C_f \ar@{^{(}->}[r]^{ }  \ar[d]_{p}  \ar@{}[dr]|{\Box}  &  E \ar[d]^{p} \\
\fC_{f_Z} \ar@{^{(}->}[r]^{ }  & \fC_f  \ar@{^{(}->}[r]^{ }  & [E/V].  }
$$
By Proposition \ref{thm on equi of two}, the function $q_{\fC_f}: \fC_f\to \C$ restricts to zero on $\fC_{f_Z}$, so there is a 
$K$-theoretic square root virtual pullback \cite[App.~B.2]{Park1}:
\begin{equation}\label{equ on k sqr pb}
 \sqrt{f_Z^!}: K^{}_0(Z(\phi)) \xrightarrow{sp_{f_Z}}  K^{}_0(\fC_{f_Z})  \xrightarrow{p^*}  K^{}_0(C_{f_Z})  \xrightarrow{\sqrt{e}\left(\pi_E^*E|_{C_{f_Z}},\tau_E|_{C_{f_Z}}\right)}   K_0(M,\Z[1/2])  \xrightarrow{\sqrt{\det(V)^\vee}}  K_0(M,\Z[1/2]), \end{equation}
where $sp_{f_Z}$ is the specialization map, $\pi_E: E\to M$ is the projection and $\tau_E$ is its tautological section. 

\begin{prop}\label{prop:compare_OT}
Notations as above, there is a commutative diagram 
$$
\xymatrix{
K_0(Z(\phi_X))    \ar@{->>}[r]^{ }   \ar[rd]_{\sqrt{f_Z^!}\circ\pi_X^*\,\,\,\, }    &  K_0(X,\phi_X) \ar[d]^{f_{\pi_X}^!} \\
&  K_0(M,\Z[1/2]),
}
$$
where $\pi_X^*: K_0(Z(\phi_X))\to K_0(Z(\phi))$ is the flat pullback by $\pi_X$ in diagram \eqref{diag on ZZ}.
\end{prop}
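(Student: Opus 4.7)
The plan is to exploit the surjectivity of $can\colon K_0(Z(\phi_X))\twoheadrightarrow K_0(X,\phi_X)$ from Proposition \ref{prop:ab_vs_D} and reduce the assertion to showing $f_{\pi_X}^!\circ can=\sqrt{f_Z^!}\circ\pi_X^*$ as maps out of $K_0(Z(\phi_X))$. Unpacking Definitions \ref{defi of crit pb} and \ref{defi of specialization map}, the left-hand side is the composition
\begin{equation*}
K_0(Z(\phi_X))\xrightarrow{\pi_X^*}K_0(Z(\phi))\xrightarrow{sp}K_0(Z^{\mathrm{der}}(q_{\fC_f}))\xrightarrow{p^*}K_0(Z^{\mathrm{der}}(q_E|_{C_f}))\xrightarrow{\iota_{E*}}K_0(E,q_E)\xrightarrow{\Spin}K_0(M,\Z[1/2])
\end{equation*}
followed by $\otimes\sqrt{\det(V)^\vee}$, while the right-hand side is \eqref{equ on k sqr pb} precomposed with $\pi_X^*$. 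After canceling the common initial $\pi_X^*$ and final twist, I will match the two compositions in two steps.

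\textbf{Step 1 (compatibility of specializations).} Since $f_Z$ factors through $\mathbf{Crit}(\phi)\subset Z(\phi)$ by Proposition \ref{thm on equi of two} and \eqref{equ on crit emb to zero}, the function $q_{\fC_f}$ vanishes on $\fC_{f_Z}$, producing natural closed embeddings $j\colon\fC_{f_Z}\hookrightarrow Z^{\mathrm{der}}(q_{\fC_f})$ and $\tilde j\colon C_{f_Z}\hookrightarrow Z^{\mathrm{der}}(q_E|_{C_f})$. The derived deformation space of $f$ used to produce $\bar\phi$ in the proof of Lemma \ref{prop on fun on def} base-changes naturally along $Z(\phi)\hookrightarrow B$ to the classical deformation space of $f_Z$, on which $\bar\phi$ vanishes identically. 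Feeding this into the defining uniqueness of \eqref{diag on uniq sp map} will yield $sp=j_*\circ sp_{f_Z}$, and hence $p^*\circ sp=\tilde j_*\circ p^*\circ sp_{f_Z}$.

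\textbf{Step 2 (Spin versus square root Euler class).} I consider the total space $E$ with the pulled-back quadratic bundle $\pi_E^*E$ and tautological section $\tau_E$, whose associated potential is $q_E$ (the scalar $\sqrt{-2}$ being absorbed by the rescaling invariance noted after \eqref{equ on hec}). Applying Proposition \ref{prop on cpr spin and old} to this data identifies the composition
\begin{equation*}
K_0(Z(q_E))\xrightarrow{\iota_{E*}}K_0(E,q_E)\xrightarrow{\Spin}K_0(M,\Z[1/2])
\end{equation*}
with the localized square root Euler class $\sqrt{e}(\pi_E^*E,\tau_E)$. Precomposing with the proper pushforward from $K_0(C_{f_Z})$ to $K_0(Z(q_E))$ and invoking the bivariance of $\sqrt{e}$ (Remark \ref{rmk on biv on spin}) identifies this further with $\sqrt{e}(\pi_E^*E|_{C_{f_Z}},\tau_E|_{C_{f_Z}})$, precisely the class appearing in \eqref{equ on k sqr pb}.

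Combining Step 1 and Step 2 will deliver the claimed equality. The main obstacle I anticipate is Step 1: one must verify carefully that the derived deformation space from \cite[\S 5.1]{Park2} base-changes compatibly along $Z(\phi)\hookrightarrow B$ and that $\bar\phi$ indeed deforms to $0$ on the resulting subfamily over $Z(\phi)$. I expect this to follow from a careful unraveling of Park's construction via the exact Lagrangian structure (Proposition \ref{thm on equi of two}) together with the functoriality of derived deformation spaces under base change.
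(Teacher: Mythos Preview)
Your proposal is correct and follows essentially the same two-step strategy as the paper: compare the two specialization maps, then invoke Proposition~\ref{prop on cpr spin and old} to match $\Spin$ with the localized square root Euler class. The paper resolves your anticipated obstacle in Step~1 more directly than you expect: rather than unraveling Park's construction, it uses \cite[Thm.~2.31]{Man} to obtain the Cartesian diagram of \emph{classical} deformation spaces $\mathring{M}_{f_Z}\hookrightarrow\mathring{M}_f$, and then observes that since the derived structure on $f_Z$ is pulled back from $f$, the potential on $\mathring{M}_{f_Z}$ (as produced by Lemma~\ref{prop on fun on def}) is the restriction of $\bar\phi$; this restriction vanishes because $\phi|_{Z(\phi)}=0$ on general fibers and $q_{\fC_f}|_{\fC_{f_Z}}=0$ on the special fiber, hence by flatness $\bar\phi|_{\mathring{M}_{f_Z}}=0$. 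With this in hand, base-change for the Gysin map $i^*$ together with the equality of open complements $Z(\bar\phi)\setminus Z(q_{\fC_f})=\mathring{M}_{f_Z}\setminus\fC_{f_Z}=Z(\phi)\times\mathbb{C}^*$ gives exactly your $sp=j_*\circ sp_{f_Z}$ via the uniqueness in \eqref{diag on uniq sp map}.
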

\begin{proof}
Recall that the specialization map $\sigma_{\pi_X\circ f}$ \eqref{equ on spe map on k} satisfies the following commutative diagram  
$$
{\footnotesize
\xymatrix{
K^{}_0(Z(\phi_X)) \ar[r]^{\pi_X^*} \ar[d]_{ } & K^{}_0(Z(\phi)) \ar[r]^{sp\quad \,\,}  & K_0(Z^{\mathrm{}}(q_{\fC_f}))   \ar[r]^{p^*\,\,}  & 
K_0(Z^{\mathrm{}}(q_{E}|_{C_f}))    \ar[r]^{ }  &   
K_0(Z(q_E))  \ar[d]_{ }     \\ 
K^{}_0(X,\phi_X)\ar[rrrr]^{\sigma_{\pi_X\circ f}  } &  &  &  & K_0(E,q_E).
}}
$$
Here we use the nil-invariance of $K$-theory $K_0(Z^{\mathrm{ }}(-))\cong K_0(Z^{\mathrm{der}}(-))$ for $(-)=q_{\fC_f}, q_{E}|_{C_f}$. 

We compare the above with the first two maps in \eqref{equ on k sqr pb}. 
By \cite[Thm.~2.31]{Man}, the Cartesian diagram \eqref{diag on MZPHI} induces Cartesian diagrams 
$$ 
\xymatrix{
 \fC_{f_Z} \ar@{^{(}->}[r]^{ } \ar@{^{(}->}[d]^{ } \ar@{}[dr]|{\Box}   &  \mathring{M}_{f_Z}   \ar[d]^{ } \ar@{}[dr]|{\Box}  &  Z(\phi)\times \C^* \ar@{_{(}->}[l]^{ } \ar[d]_{}^{  }   \\
\fC_f  \ar@{^{(}->}[r]^{ } \ar[d]^{ }  \ar@{}[dr]|{\Box} & \mathring{M}_f    \ar[d]^{ } \ar@{}[dr]|{\Box} &  B\times \C^* \ar[d]^{ } \ar@{_{(}->}[l]^{ } \\
\{0\}  \ar@{^{(}->}[r]^{ }  & \mathbb{A}^1    &   \C^*  \ar@{_{(}->}[l]^{ }   }
$$
As the derived structure we put on $f_Z$ is the pullback one from $f$, the potential function on $\mathring{M}_{f_Z}$
as constructed from Lemma \ref{prop on fun on def} is the pullback of that function on $\mathring{M}_{f}$, which is zero. 
Then we have Cartesian diagrams 
$$ 
\xymatrix{
 \fC_{f_Z} \ar@{^{(}->}[r]^{ } \ar[d]^{ } \ar@{}[dr]|{\Box}   &  \mathring{M}_{f_Z}   \ar[d]^{ }  &    \\
Z^{\mathrm{ }}(q_{\fC_f }) \ar[d]^{ } \ar@{^{(}->}[r]^{ } \ar@{}[dr]|{\Box}   & Z^{\mathrm{ }}(\bar\phi)   \ar[d]^{ } \ar[r] \ar@{}[dr]|{\Box}  &  \{0\}    \ar[d]_{}^{  }   \\
\fC_f  \ar@{^{(}->}[r]^{ } \ar[d]^{ }  \ar@{}[dr]|{\Box} & \mathring{M}_f    \ar[d]^{ }  \ar[r]^{\bar\phi}    &   \mathbb{A}^1    \\
\{0\}  \ar@{^{(}->}[r]^{i}  & \mathbb{A}^1.    &       }
$$
Base change implies the commutativity of 
$$
\xymatrix{
 K^{}_0(\mathring{M}_{f_Z}) \ar[r]^{i^*}  \ar[d]^{}   & K_0(\fC_{f_Z}) \ar[d]^{ } \\
  K^{}_0(Z^{\mathrm{ }}(\bar\phi)) \ar[r]^{i^*}  & K_0(Z^{\mathrm{ }}(q_{\fC_f})).  }
$$
Combining with fact that $Z^{\mathrm{ }}(\bar\phi)\setminus Z^{\mathrm{ }}(q_{\fC_f})=\mathring{M}_{f_Z}\setminus \fC_{f_Z}=Z(\phi)\times \C^*$, we know 
the diagram 
$$
\xymatrix{
 K^{}_0(Z(\phi)) \ar[r]^{sp_{f_Z} }  \ar@{=}[d]   & K_0(\fC_{f_Z}) \ar[d]^{ } \\
  K^{}_0(Z(\phi)) \ar[r]^{sp  \, \,\,}  & K_0(Z^{\mathrm{ }}(q_{\fC_f}))  }
$$
commutes. Therefore we obtain commutative diagrams: 
$$
{\footnotesize
\xymatrix{
   K^{}_0(Z(\phi)) \ar[r]^{sp_{f_Z}} \ar@{=}[d]  & K^{}_0(\fC_{f_Z})\ar[r]^{p^*} \ar[d]_{ } & K^{}_0(C_{f_Z}) \ar[d]_{ }  \ar[r]_{ }  & K^{}_0(Z(q_E))  \ar@{=}[d]  
   \ar[rr]^{\sqrt{e}\left(\pi_E^*E,\tau_E\right)\quad \quad}  &  &   K_0(M,\Z[1/2])  \ar@{=}[d]     \\ 
 K^{}_0(Z(\phi)) \ar[r]^{sp  \, \,\,} & K_0(Z^{\mathrm{ }}(q_{\fC_f}))  \ar[r]^{p^*\,\, } & K_0(Z^{\mathrm{ }}(q_{E}|_{C_f}))  \ar[r]^{ } & K^{}_0(Z(q_E)) \ar[r]^{ }  & K^{}_0(E,q_E)  \ar[r]^{\Spin \quad \,\,\,} &K_0(M,\Z[1/2]),
}}
$$
where the commutativity of the right square follows from Proposition \ref{prop on cpr spin and old}.

It is obvious that the following commutes
$$
\xymatrix{
K^{}_0(C_{f_Z}) \ar[r]^{ }    \ar[rrrd]_{\sqrt{e}\left(\pi_E^*E|_{C_{f_Z}},\tau_E|_{C_{f_Z}}\right) \quad \quad\quad\,\, } &K^{}_0(Z(q_E))  
\ar[rr]^{\sqrt{e}\left(\pi_E^*E,\tau_E\right)\quad} & & K_0(M,\Z[1/2])  \ar@{=}[d]    \\ 
 & & &  K_0(M,\Z[1/2]).
}
$$
Therefore we are done. 
 \end{proof}

\subsection{Properties of critical pullbacks} 
\subsubsection{Independence of resolutions}
\begin{prop}\label{prop:indep}
The critical pullback is independent of the choice of the symmetric resolution in Setting \ref{setting of lag}
\end{prop}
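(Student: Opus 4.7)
The plan is to reduce independence of the critical pullback to the resolution-independence of Park's square root virtual pullback, using Proposition \ref{prop:compare_OT} together with the surjectivity of the canonical map $can: K_0(Z(\phi_X))\twoheadrightarrow K_0(X,\phi_X)$ from Proposition \ref{prop:ab_vs_D}.

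Suppose we are given two symmetric resolutions
$$\left(V^{(i)} \xrightarrow{d_i} E^{(i)} \xrightarrow{d_i^\vee} V^{(i),\vee}\right), \quad i=1,2,$$
of $\bbL_{\bbf}|_M$ as in Setting \ref{setting of lag}(3). Each produces its own specialization map $\sigma_{\pi_X\circ f}^{(i)}$, tautological spinor morphism $\Spin^{(i)}$, and twist by $\sqrt{\det(V^{(i)})^\vee}$, and hence \emph{a priori} distinct critical pullbacks $f_{\pi_X}^{!,i}: K_0(X,\phi_X)\to K_0(M,\Z[1/2])$. Analogously each resolution yields a $K$-theoretic square root virtual pullback $\sqrt{f_Z^{!,i}}$ as in \eqref{equ on k sqr pb}.

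By \cite{Park1, Park2}, the square root virtual pullback $\sqrt{f_Z^{!,i}}$ depends only on the intrinsic $(-2)$-shifted symplectic structure carried by $\bbf$ (equivalently, on the exact Lagrangian structure supplied by Proposition \ref{thm on equi of two}), and therefore is independent of the choice of symmetric resolution; in particular $\sqrt{f_Z^{!,1}}=\sqrt{f_Z^{!,2}}$. Applying Proposition \ref{prop:compare_OT} to each resolution yields the commutative identities
$$f_{\pi_X}^{!,i}\circ can=\sqrt{f_Z^{!,i}}\circ \pi_X^*, \quad i=1,2.$$
Combining these gives $f_{\pi_X}^{!,1}\circ can=f_{\pi_X}^{!,2}\circ can$, and the surjectivity of $can$ then forces $f_{\pi_X}^{!,1}=f_{\pi_X}^{!,2}$.

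The main step requiring care is verifying that the proof of Proposition \ref{prop:compare_OT} --- which traces the specialization map through the deformation space $\mathring{M}_f$ --- is valid for an arbitrary symmetric resolution satisfying Setting \ref{setting of lag}(3), and not merely for some distinguished choice; inspection of that argument shows the resolution enters only via the cutout $C_f\subseteq E$ and the subsequent square root Euler class computation, both of which are compatible with any admissible resolution. Granted this, the dependence of $\Spin$, $V$ and $E$ on the resolution is never confronted directly: the whole statement becomes a tautological consequence of the surjectivity of $can$ combined with Park's resolution-independence.
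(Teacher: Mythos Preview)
Your proof is correct and follows essentially the same approach as the paper's own argument: both use Proposition~\ref{prop:compare_OT} together with the surjectivity of the canonical map to reduce to the resolution-independence of the square root virtual pullback, citing \cite[Lem.~1.14]{Park1}. The paper writes this slightly more succinctly as $f_{\pi_X}^! = \sqrt{f_Z^!}\circ \pi_X^*\circ c^{-1}$, but the logic is identical.
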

\begin{proof}
By Proposition \ref{prop:compare_OT}, 
we have a commutative diagram 
$$
\xymatrix{
K^{}_0(Z(\phi_X)) \ar[r]^{\pi_X^* }\ar@{->>}[d]^{c}   &  K^{}_0(Z(\phi))   \ar[r]^{ \sqrt{f_Z^!} \quad \,\,\, } &  K^{}_0(M,\Z[1/2])  \ar@{=}[d]  \\
K^{}_0(X,\phi_X) \ar[rr]^{f_{\pi_X}^! }  &   &   K^{}_0(M,\Z[1/2]),    }
$$
i.e.~the critical pullback satisfies that 
$$f_{\pi_X}^!= \sqrt{f_Z^!}  \circ \pi_X^*\circ c^{-1}. $$
Since $\sqrt{f_Z^!}$ is independent of the choice of resolution \cite[Lem.~1.14]{Park1}, so is $f_{\pi_X}^!$. 
\end{proof}

\subsubsection{Bivariance}

Given a pullback diagram of derived stacks (below $B_1,B_2$ are classical stacks and viewed as derived stacks via the inclusion functor):
\begin{equation}\label{diag on comm of pp}
\xymatrix{
\textbf{\emph{M}}_2 \ar[r]^{\textbf{\emph{f}}_2} \ar[d]_{\textbf{\emph{h}}}  \ar@{}[dr]|{\Box}  &  B_2 \ar[d]^{k} \\
\textbf{\emph{M}}_1 \ar[r]^{\textbf{\emph{f}}_1} & B_1, 
}
\end{equation}
and a Cartesian diagram of classical stacks: 
\begin{equation}\label{diag on comm of pp3}
\xymatrix{
B_2 \ar[r]^{\pi_{X_2}} \ar[d]_{k}  \ar@{}[dr]|{\Box}    &  X_2 \ar[d]^{l} \\
B_1 \ar[r]^{\pi_{X_1}} & X_1, 
}
\end{equation}
where $(\textbf{\emph{M}}_i,B_i,\textbf{\emph{f}}_i,X_i,\pi_{X_i})$ ($i=1,2$) satisfy conditions in Setting \ref{setting of lag}, such that 
\begin{itemize}
\item
the $(-2)$-shifted symplectic structure on $\textbf{\emph{f}}_2$ is the pullback $\Omega_{\textbf{\emph{f}}_2}=\textbf{\emph{h}}^*\Omega_{\textbf{\emph{f}}_1}$
from $\textbf{\emph{f}}_1$
\item
the regular functions $\phi_{X_i}: X_i\to \C$ satisfy that 
$\phi_{i}=\pi_{X_i}^*\phi_{X_i}$ ($i=1,2$) and $\phi_{X_2}=\phi_{X_1}\circ l$.
\end{itemize}
Then critical pullbacks
$$f_{\pi_{X_i}}^!: K_0(X_i,\phi_{X_i})\to K_0(M_i,\Z[1/2]), \quad i=1,2 $$ 
satisfy the following bivariance properties. 
\begin{prop}\label{pullback comm with base change}
We have the following:
\begin{itemize}
\item If $l$ is flat, then 
$$h^* \circ f_{\pi_{X_1}}^!=f_{\pi_{X_2}}^! \circ l^*.$$
\item If $l$ is proper, then 
$$h_* \circ f_{\pi_{X_2}}^!=f_{\pi_{X_1}}^! \circ l_*. $$
\item If $l$ is a regular embedding, then 
$$l^! \circ f_{\pi_{X_1}}^!=f_{\pi_{X_2}}^! \circ l^*,$$
where $l^!: K_0(M_1,\Z[1/2])\to K_0(M_2,\Z[1/2])$ is the Gysin pullback. 
\end{itemize}
\end{prop}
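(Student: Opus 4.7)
The plan is to reduce all three statements to the corresponding bivariance properties of the square root virtual pullback $\sqrt{f_Z^!}$ from~\cite{Park1}, by exploiting Proposition~\ref{prop:compare_OT}. Since the canonical maps $can_i \colon K_0(Z(\phi_{X_i})) \twoheadrightarrow K_0(X_i, \phi_{X_i})$ of Proposition~\ref{prop:ab_vs_D} are surjective, it suffices to verify each identity after precomposing with $can_1$.

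The first ingredient is that $can_i$, being induced by the functor $\mathcal{E} \mapsto (j_{i*}\mathcal{E}, 0, 0, 0)$ for the closed embedding $j_i \colon Z(\phi_{X_i}) \hookrightarrow X_i$, is compatible with each of our three operations: writing $\bar{l}$ for the restriction of $l$ to the zero loci, one has $l^* \circ can_1 = can_2 \circ \bar{l}^*$ (flat $l$), $l_* \circ can_2 = can_1 \circ \bar{l}_*$ (proper $l$), and $l^! \circ can_1 = can_2 \circ \bar{l}^*$ (regular embedding $l$), by flat/proper base change along the Cartesian square $Z(\phi_{X_2}) = l^{-1}(Z(\phi_{X_1})) \to Z(\phi_{X_1})$ and the deformation-to-normal-cone definition of $l^!$. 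The second ingredient is Proposition~\ref{prop:compare_OT}, which gives
\begin{align*}
f_{\pi_{X_i}}^! \circ can_i = \sqrt{(f_i)_Z^!} \circ \pi_{X_i}^*,
\end{align*}
together with the known bivariance of $\sqrt{(f_i)_Z^!}$ from~\cite[App.~B.2]{Park1}. The obstruction-theory compatibility needed to invoke the latter is guaranteed by the hypothesis $\Omega_{\textbf{\emph{f}}_2} = \textbf{\emph{h}}^* \Omega_{\textbf{\emph{f}}_1}$.

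Concretely, for flat $l$ the identity is chained as
\begin{align*}
h^* \circ f_{\pi_{X_1}}^! \circ can_1 = h^* \circ \sqrt{(f_1)_Z^!} \circ \pi_{X_1}^* = \sqrt{(f_2)_Z^!} \circ k^* \circ \pi_{X_1}^* = \sqrt{(f_2)_Z^!} \circ \pi_{X_2}^* \circ \bar{l}^* = f_{\pi_{X_2}}^! \circ can_2 \circ \bar{l}^* = f_{\pi_{X_2}}^! \circ l^* \circ can_1,
\end{align*}
using base change for $\sqrt{f_Z^!}$ along the derived-Cartesian diagram~\eqref{diag on comm of pp}, flat base change around~\eqref{diag on comm of pp3}, and the compatibility of $can_i$ with $l^*$. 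The proper case follows by the same template with arrows reversed and proper base change in place of flat; the regular-embedding case uses the base-change of $\sqrt{f_Z^!}$ with respect to Gysin pullback from \cite[App.~B]{Park1} (or the analogous statement in \cite[\S 5]{Park2}).

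The main obstacle I anticipate is not conceptual but technical: one has to check, in the generality of Setting~\ref{setting of lag}, that the bivariance statements of~\cite{Park1} apply under the derived-Cartesian hypothesis~\eqref{diag on comm of pp}. This amounts to verifying that the symmetric resolution $\mathbb{E}_{f_1} \cong (V_1 \to E_1 \to V_1^\vee)$ with its orientation pulls back via $\textbf{\emph{h}}$ to a symmetric resolution of $\mathbb{E}_{f_2}$ with compatible orientation, and that the induced comparison of intrinsic normal cones $\fC_{f_2} \to h^*\fC_{f_1}$ is of the type for which the square root virtual pullback enjoys the claimed bivariance. Once these compatibilities are in place, all three identities reduce to the diagram chase above.
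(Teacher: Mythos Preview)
Your proposal is correct and follows essentially the same approach as the paper's proof: use the surjectivity of the canonical map together with Proposition~\ref{prop:compare_OT} to reduce to the bivariance of square root virtual pullbacks, which is \cite[Prop.~1.15]{Park1}. The paper's proof is a one-line reference to this reduction, while you have spelled out the intermediate compatibilities of $can_i$ with $l^*$, $l_*$, $l^!$ and the diagram chase explicitly; the technical checks you flag (pullback of the symmetric resolution and orientation under $\textbf{\emph{h}}$) are automatic from the hypothesis $\Omega_{\textbf{\emph{f}}_2}=\textbf{\emph{h}}^*\Omega_{\textbf{\emph{f}}_1}$ and are part of the standing setup in \cite{Park1}.
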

\begin{proof}
Similar to the proof of Proposition~\ref{prop:indep} above, by using 
Proposition~\ref{prop:compare_OT}, the claim is reduced to the bivariance of square root virtual pullbacks \cite[Prop.~1.15]{Park1}.
\end{proof}

\subsubsection{Functoriality }

Given a commutative diagram of derived stacks
\begin{equation}
   \label{eqn on comm diag func}
   \xymatrix{
\textbf{\emph{N}}' \ar[r]^{\textbf{\emph{i}}} \ar[d]_{\textbf{\emph{f}}}     &  \textbf{\emph{N}} \ar[d]^{\textbf{\emph{h}}} \\
\textbf{\emph{M}} \ar[r]^{\textbf{\emph{g}}} & \textbf{\emph{B}},  }
\end{equation}
where the classical truncation $i:=t_0(\textbf{\emph{i}})$ of $\textbf{\emph{i}}$ is an \textit{isomorphism}, and $B:=t_0(\textbf{\emph{B}})\cong \textbf{\emph{B}}$, we write the obstruction theories: 
$$\phi_f:\bbE_f\to \bfL_f, \quad \phi_g:\bbE_g\to \bfL_g, \quad \phi_{h}:\bbE_{h}\to \bfL_{h}, \quad  \phi'_{h}:\bbD:=\bbE_{h\circ i}\to \bfL_{h\circ i}\cong \bfL_{h}, $$ 
obtained by restricting cotangent complexes of $\textbf{\emph{f}}$, $\textbf{\emph{g}}$, $\textbf{\emph{h}}$, $\textbf{\emph{h}}\circ \textbf{\emph{i}}$ to their classical truncations $f,g,h,h\circ i\cong h$ (here we use $i$ is an isomorphism), and then taking 
$[-1,0]$-truncations (here $\bfL_{(-)}:=\tau^{\geqslant -1}\bbL_{(-)}$ denotes the truncated cotangent complex). 
The diagram \eqref{eqn on comm diag func} implies two maps $\alpha:\bbE_{h}\to \bbD$ and $\beta:f^*\bbE_g\to \bbD$ such that 
$\phi'_{h}\circ \alpha=\phi_{h}$.

We say the above obstruction theories are {\it compatible},~if we have a commutative diagram:
\begin{equation}
   \label{eqn:triang_obst}
\xymatrix{\bbD^\vee[2]\ar[r]^{\alpha^\vee}\ar[d]_{\beta^\vee}&\bbE_{h}\ar[d]_\alpha\ar[r]^\delta&\bbE_f \ar@{=}[d] \\
f^*\bbE_g\ar[r]^\beta\ar[d]_{f^*\phi_g}&\bbD\ar[r]^\gamma\ar[d]_{\phi'_{h}}&\bbE_f\ar[d]_{\phi'_f}\\
\tau^{\geqslant -1}f^*\bfL_g\ar[r]& \bfL_{h}\ar[r]&\bfL_f'.}
\end{equation}
Here horizontal lines are exact triangles (the bottom one defines $\bfL_f'$), and $\phi_f=r\circ \phi'_f$ 
(where $r: \bfL_f'\to \tau^{\geqslant -1}\bfL_f'\cong \bfL_f$ is the canonical map), and the orientation of $\bbE_{h}$ is compatible with the orientation of $\bbE_{g}$ (ref.~\cite[Def.~2.1]{Park1}).

\begin{theorem}\label{thm on funct}
Given a commutative diagram \eqref{eqn on comm diag func} of derived stacks
such that 
\begin{itemize}
\item $\textbf{{g}}$ and $\textbf{{h}}$ satisfy conditions in Setting \ref{setting of lag} for a common $\pi_X: B\to X$ and $\phi_X: X\to \C$,
\item $\textbf{{f}}$ is quasi-smooth and the classical truncation of $\textbf{{i}}$ is an isomorphism,
\item the obstruction theories are compatible, 
\end{itemize}
then we have   
\begin{equation}\label{equ on fun of crit pb}h_{\pi_X}^!=f^!\circ g_{\pi_X}^!: K_0(X,\phi_X)\to K_0(N,\Z[1/2]), \end{equation}
where $g_{\pi_X}^!,h_{\pi_X}^!$ are critical pullbacks \eqref{equ on crit pb der} and $f^!: K_0(M)\to K_0(N)$ is the virtual pullback \cite{Qu}. 
\end{theorem}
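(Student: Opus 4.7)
The plan is to reduce the statement to the $K$-theoretic functoriality of the square root virtual pullback via Proposition~\ref{prop:compare_OT}, and then invoke Park's corresponding theorem.

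First I would apply Proposition~\ref{prop:compare_OT} to $\textbf{g}$ and $\textbf{h}$, obtaining
\begin{align*}
g_{\pi_X}^! \circ can &= \sqrt{g_Z^!} \circ \pi_X^*, \\
h_{\pi_X}^! \circ can &= \sqrt{h_Z^!} \circ \pi_X^*,
\end{align*}
where $can: K_0(Z(\phi_X)) \twoheadrightarrow K_0(X,\phi_X)$ is the surjective canonical map of Proposition~\ref{prop:ab_vs_D}, and $g_Z: M \to Z(\phi)$, $h_Z: N \to Z(\phi)$ denote the base changes of $g, h$ along $Z(\phi) \hookrightarrow B$. Here I use that $g$ and $h$ factor through $Z(\phi)$ by Proposition~\ref{thm on equi of two} combined with~(\ref{equ on crit emb to zero}). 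The surjectivity of $can$ then reduces the target identity~(\ref{equ on fun of crit pb}) to
\[
\sqrt{h_Z^!} \circ \pi_X^* = f^! \circ \sqrt{g_Z^!} \circ \pi_X^*: K_0(Z(\phi_X)) \to K_0(N, \mathbb{Z}[1/2]),
\]
which in turn follows at once from the cleaner functoriality identity
\[
\sqrt{h_Z^!} = f^! \circ \sqrt{g_Z^!}: K_0(Z(\phi)) \to K_0(N, \mathbb{Z}[1/2]).
\]

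For this last identity, the plan is to invoke the $K$-theoretic functoriality of the square root virtual pullback in the spirit of \cite{Park1,Park2}. Base-changing the diagram~(\ref{eqn on comm diag func}) along $Z(\phi) \hookrightarrow B$ yields a commutative square in which $f$ remains quasi-smooth and $t_0(\textbf{i})$ remains an isomorphism. The symmetric obstruction theories and orientations on $g_Z, h_Z$ are inherited from the corresponding data on $\textbf{g}, \textbf{h}$ prescribed by Setting~\ref{setting of lag}, and the compatibility triangle~(\ref{eqn:triang_obst}), being a statement purely about cotangent complexes of $\textbf{f}, \textbf{g}, \textbf{h}, \textbf{h}\circ \textbf{i}$ and their truncations, restricts verbatim to the analogous compatibility for $f, g_Z, h_Z$. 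This places us precisely in the setup of Park's functoriality theorem for the square root virtual pullback in $K$-theory, which delivers the identity.

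The main technical point is to verify that all auxiliary data---the symmetric resolution~(\ref{reso of obs the}), the orientation, and in particular the twisting factor $\sqrt{\det(V)^\vee}$ appearing in the definition~(\ref{equ on crit pb der}) of critical pullback---transport correctly from the $\textbf{g}$-side to the $\textbf{h}$-side under the exact triangle $\bbE_f \to \bbE_h \to f^*\bbE_g$ extracted from the compatibility diagram. This amounts to checking that the quasi-smooth contribution $\bbE_f = \bbL_{\textbf{f}}|_{N'}$ accounts precisely for the difference between the symmetric resolutions of $\bbL_{\textbf{g}}|_M$ and $\bbL_{\textbf{h}}|_N$, and hence that the corresponding square-root determinant twists on the $\textbf{g}$- and $\textbf{h}$-sides differ by exactly the factor absorbed by Qu's virtual pullback $f^!$. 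Once this bookkeeping is verified, the theorem follows by applying Park's functoriality argument for square root virtual pullbacks in the underlying Grothendieck group.
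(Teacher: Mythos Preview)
Your approach is correct and essentially identical to the paper's: reduce via Proposition~\ref{prop:compare_OT} and the surjectivity of $can$ to the identity $\sqrt{h_Z^!} = f^! \circ \sqrt{g_Z^!}$, then cite Park's functoriality theorem \cite[Thm.~2.2]{Park1}. Your final paragraph about transporting symmetric resolutions and square-root determinant twists is unnecessary---the compatibility hypothesis~(\ref{eqn:triang_obst}) is exactly the input Park's theorem requires, and the independence of resolution (Proposition~\ref{prop:indep}) means no further bookkeeping on your side is needed.
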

\begin{proof}
As in above, by using Proposition~\ref{prop:compare_OT}, the claim follows from the functoriality of 
square root virtual pullbacks \cite[Thm.~2.2]{Park1}.
\end{proof}
\begin{remark}\label{rmk:lag_cl}
It is straightforward to check that
if the induced map $\textbf{\emph{N}}'\to \textbf{\emph{N}} \times_{\textbf{\emph{B}}} \textbf{\emph{M}}$ has a $(-2)$-shifted Lagrangian structure (relative to $\textbf{\emph{B}}$),
where $\textbf{\emph{N}}\to \textbf{\emph{B}}$ is endowed with the inverse shifted symplectic structure, or more generally, if $\bbL_{\textbf{\emph{N}}'/\textbf{\emph{B}}}$ is maximal isotropic (quotient) complex of the symmetric complex $\bbL_{\textbf{\emph{N}} \times_{\textbf{\emph{B}}} \textbf{\emph{M}}/\textbf{\emph{B}}}$, 
then the compatibility condition in \eqref{eqn:triang_obst} holds.
\end{remark}
\begin{remark}\label{rmk2 on T-equiv}
As in Remark \ref{rmk on T-equiv}, in the presence of a torus action, the above listed properties hold in the $T$-equivariant setting. 
\end{remark}

\section{Category of Lagrangian correspondences and its linearization} 
\label{sec:linearization}
In this section, as an application of the theory of critical pullbacks, we define a linearization of the category of Lagrangian correspondences.
\subsection{Lagrangian correspondence} 
Consider derived critical loci $\textbf{Crit}(\boldsymbol{\phi}_i)$ for $\boldsymbol{\phi}_i: \textbf{\emph{B}}_i\to \C$ ($i=1,2$) (Definition \ref{defi of derived crit loci}) with $(-1)$-shifted symplectic structure $\Omega_i$. The fiber product  
$$\textbf{Crit}(\boldsymbol{\phi}_1)\times \textbf{Crit}(\boldsymbol{\phi}_2)$$
has a natural $(-1)$-shifted symplectic structure $\Omega_1 \boxplus \Omega_2$.  

\begin{definition}\label{def of lag corr}
An oriented (exact) \textit{Lagrangian correspondence} of $\textbf{Crit}(\boldsymbol{\phi}_i)$ ($i=1,2$) is an (exact) Lagrangian 
$$\textbf{\emph{f}}_{12}: \textbf{\emph{M}}_{12}\to \textbf{Crit}(-\boldsymbol{\phi}_1)\times \textbf{Crit}(\boldsymbol{\phi}_2), $$
where the target is endowed with $(-1)$-shifted symplectic structure $(-\Omega_1) \boxplus \Omega_2$, and 
$\textbf{\emph{f}}_{12}$ has an orientation in the sense of Definition \ref{def of rel or} (here the canonical orientation of the target is chosen as in Remark \ref{rmk on can ori}).  
\end{definition}
\begin{definition}\label{def of comp lag corr}
Given a Lagrangian correspondence of $\textbf{Crit}(\boldsymbol{\phi}_i)$ ($i=1,2$):
$$\textbf{\emph{f}}_{12}: \textbf{\emph{M}}_{12}\to \textbf{Crit}(-\boldsymbol{\phi}_1)\times \textbf{Crit}(\boldsymbol{\phi}_2), $$
and a Lagrangian correspondence of $\textbf{Crit}(\boldsymbol{\phi}_i)$ ($i=2,3$): 
$$\textbf{\emph{f}}_{23}: \textbf{\emph{M}}_{23}\to \textbf{Crit}(-\boldsymbol{\phi}_2)\times \textbf{Crit}(\boldsymbol{\phi}_3),$$
their \textit{composed Lagrangian correspondence} is the following Lagrangian~\cite[Cor.~2.13]{AB}: 
\begin{equation}\label{equ on M13}\textbf{\emph{M}}_{13}:=\textbf{\emph{M}}_{12}\times_{\textbf{Crit}(\boldsymbol{\phi}_2)} \textbf{\emph{M}}_{23}, \quad \textbf{\emph{f}}_{13}:=(\textbf{\emph{f}}_{12}\times_{\textbf{Crit}(\boldsymbol{\phi}_2)} \textbf{\emph{f}}_{23})_{13}: \textbf{\emph{M}}_{13}\to\textbf{Crit}(-\boldsymbol{\phi}_1)\times \textbf{Crit}(\boldsymbol{\phi}_3), 
\end{equation}
where 
$(-)_{13}$ denotes the projection 
$$\textbf{Crit}(-\boldsymbol{\phi}_1)\times \textbf{Crit}(\boldsymbol{\phi}_2)\times \textbf{Crit}(\boldsymbol{\phi}_3)
\to \textbf{Crit}(-\boldsymbol{\phi}_1)\times \textbf{Crit}(\boldsymbol{\phi}_3) $$
to the 1st and 3rd components, and we also use the canonical equivalence 
$$(\textbf{Crit}(\boldsymbol{\phi}_2),\Omega_2)\cong (\textbf{Crit}(-\boldsymbol{\phi}_2),-\Omega_2). $$
\end{definition}
\begin{remark}
If $\textbf{\emph{f}}_{12}$ and $\textbf{\emph{f}}_{23}$ are (oriented) exact Lagrangian correspondences, then $\textbf{\emph{f}}_{13}$ is also (oriented) exact. 
\end{remark}
\begin{proof}
Derived critical loci are exact symplectic, the composition of exact Lagrangian correspondences is still exact \cite[\S 2.2]{Park2}. The orientation follows from 
a direct calculation of cotangent complexes and the pairing on them, e.g.~\cite[Lem.~5.6]{AB}. 
\end{proof}

The above composition of Lagrangian correspondences is pictured in the following diagram: 
$$
{\footnotesize
\xymatrix{
& & \textbf{\emph{M}}_{13}\ar[dr]^{ } \ar[dl]^{ }  & & \\
  &  \textbf{\emph{M}}_{12} \ar[dr]^{ } \ar[dl]^{ } &    & \textbf{\emph{M}}_{23} \ar[dr]^{ } \ar[dl]^{ }   &  \\
\textbf{Crit}(\boldsymbol{\phi}_1)   & & \textbf{Crit}(\boldsymbol{\phi}_2) & &  \textbf{Crit}(\boldsymbol{\phi}_3). }
}
$$

\subsection{Lagrangian category}\label{sect on lag cat}





We refer to \cite[\S 4]{AB} for the general construction of 2-category of Lagrangians in a shifted symplectic stack. 
Here we restrict to the following setting:  
\begin{itemize}
\item
Consider the 2-category $\textbf{Lag}_{\mathrm{crit}}^{(2)}$ of Lagrangians in derived critical loci:
objects are pairs $(X,\phi)$ of quotient stack $X$ of smooth quasi-projective scheme by linear algebraic group and flat regular function $\phi$,
1-morphisms are oriented exact Lagrangian correspondences of 
$\bCrit^{}({\phi}), \bCrit^{}({\phi'})$ (Definition~\ref{def of lag corr}), 
2-morphisms are exact Lagrangeomorphisms \cite[\S 3]{AB} which preserve orientations. The identity functor for any $(X,\phi)$ is given by the 
diagonal Lagrangian $\bCrit^{}({\phi})\to \bCrit^{}({\phi})\times \bCrit^{}({\phi})$.
\item
Let $\textbf{Lag}_{\mathrm{crit}}^{}$ denote the 1-truncation of the 2-category $\textbf{Lag}_{\mathrm{crit}}^{(2)}$, which has isomorphism classes of oriented exact Lagrangians as morphisms. 
\end{itemize}
Let $\textbf{Lag}_{\mathrm{crit}}^{\mathrm{DM}}$ be the non-unital subcategory 
of $\textbf{Lag}_{\mathrm{crit}}^{}$ in which objects $X$ are Deligne-Mumford, morphisms and their compositions 
are proper oriented exact Lagrangians satisfying \textit{``DM"} and \textit{``Resolution"} conditions in Setting \ref{setting of lag}. 

In the presence of a torus $T$-action, there is a category 
$$\textbf{Lag}_{\mathrm{crit},T}^{\mathrm{DM}} $$
of $T$-equivariantly proper\,\footnote{This means the $T$-fixed locus of the Lagrangian is proper.} oriented exact Lagrangians, 
which recovers $\textbf{Lag}_{\mathrm{crit}}^{\mathrm{DM}}$ when $T=\{\id\}$.

The category $\textbf{Lag}_{\mathrm{crit}}$ is a \textit{monoidal} category:  
$$\otimes: \textbf{Lag}_{\mathrm{crit}} \times \textbf{Lag}_{\mathrm{crit}} \to \textbf{Lag}_{\mathrm{crit}}, $$
where at the level of objects, the tensor structure is given by 
$$(X_1,\phi_{1}), (X_2,\phi_{2})\mapsto 
(X_1\times X_2,\phi_{1}\boxplus \phi_{2} ),$$
with sum function   
$$\phi_{1}\boxplus \phi_{2}: X_1\times X_2\to \C, \quad (x_1,x_2)\mapsto \phi_{1}(x_1)+\phi_{2}(x_2). $$
At the level of morphisms, the tensor structure $\otimes$ is given by the product of Lagrangians \cite[Prop.~2.6]{AB}, 
where one uses the fact that there is a canonical equivalence  
$$ \textbf{{Crit}}(\phi_{1}\boxplus \phi_{2})\cong \textbf{{Crit}}(\phi_{1})\times \textbf{{Crit}}(\phi_{2}). $$
The category $\textbf{Lag}_{\mathrm{crit}}$ has a \textit{dualizing} functor 
$$\textbf{Lag}_{\mathrm{crit}} \to \textbf{Lag}_{\mathrm{crit}}, $$
where at the level of objects, it is given by 
$$(X,\phi)\mapsto (X,-\phi), $$
and at the level of morphisms, it is given by the canonical bijection between (oriented, exact) Lagrangians in 
$\textbf{{Crit}}(\phi)$ and $\textbf{{Crit}}(-\phi)$ \cite[Prop.~2.6]{AB}.

The category $\textbf{Lag}_{\mathrm{crit}}^{\mathrm{DM}}$ is a monoidal non-unital subcategory of $\textbf{Lag}_{\mathrm{crit}}$ with induced 
dualizing functor. Similarly for the $T$-equivariant version $\textbf{Lag}_{\mathrm{crit},T}^{\mathrm{DM}}$.

\subsection{Critical pullbacks under compositions of Lagrangian correspondences}\label{sec on cpb and lagcor}
Continued with the setting in \S \ref{sect on lag cat}. Given morphisms $\textbf{\emph{f}}_{12}, \textbf{\emph{f}}_{23}$ in $\textbf{Lag}_{\mathrm{crit},T}^{\mathrm{DM}}$
and their composition $\textbf{\emph{f}}_{13}$ in the sense of Definition \ref{def of comp lag corr}. 
By abuse of notations, we denote their composition with the inclusion 
$$\textbf{Crit}(-\phi_i)\times \textbf{Crit}(\phi_j)\to X_i\times X_j $$
also by  
\begin{equation}\label{equ on fij symp}\textbf{\emph{f}}_{ij}: \textbf{\emph{M}}_{ij} \to X_i\times X_j. \end{equation} 
By Proposition \ref{thm on equi of two}, 
$\textbf{\emph{f}}_{ij}$ \eqref{equ on fij symp} has a relative $(-2)$-shifted symplectic structure
$\Omega_{\textbf{\emph{f}}_{ij}}$ with 
$$[\Omega_{\textbf{\emph{f}}_{ij}}]=(-\phi_i\boxplus \phi_j) |_{\textbf{\emph{M}}_{ij}}. $$ 
Clearly, the product $$\textbf{\emph{f}}_{12}\times \textbf{\emph{f}}_{23}: \textbf{\emph{M}}_{12}\times \textbf{\emph{M}}_{23} \to X_1\times X_2\times X_2\times X_3 $$
has a product relative $(-2)$-shifted symplectic structure $\Omega_{\textbf{\emph{f}}_{12}}\boxplus \Omega_{\textbf{\emph{f}}_{23}}$ with 
$$[\Omega_{\textbf{\emph{f}}_{12}}\boxplus \Omega_{\textbf{\emph{f}}_{23}}]=
(-\phi_1\boxplus \phi_2\boxplus -\phi_2\boxplus \phi_3) |_{\textbf{\emph{M}}_{12}\times \textbf{\emph{M}}_{23}}. $$
Consider the diagonal morphism
$${\Delta}^{123}_{1223}: X_1\times X_2 \times X_3\to 
X_1\times X_2\times X_2\times X_3, $$
$$(x_1,x_2,x_3)\mapsto (x_1,x_2,x_2,x_3), $$
and a homotopy pullback diagram of derived stacks 
\begin{equation}\label{equ on define m1223b}
\xymatrix{
\textbf{\emph{M}}_{12}\times_{X_2} \textbf{\emph{M}}_{23} \ar[r]^{ } \ar[d]_{\textbf{\emph{f}}_{12}\times_{X_2} \textbf{\emph{f}}_{23}}  \ar@{}[dr]|{\Box}  &  \textbf{\emph{M}}_{12}\times \textbf{\emph{M}}_{23} 
\ar[d]^{\textbf{\emph{f}}_{12}\times \textbf{\emph{f}}_{23}} \\
X_1\times  X_2\times X_3\ar[r]^{{\Delta}^{123}_{1223} \quad \,\,\, } \ar[d]_{\pi^{123}_{13}} & X_1\times X_2\times X_2\times X_3 \\
X_1\times X_3, &  }
\end{equation}
where $\pi^{123}_{13}$ is the natural projection. 
Under pullback, the symplectic structure on $\textbf{\emph{f}}_{12}\times \textbf{\emph{f}}_{23}$ defines a symplectic structure
on $\textbf{\emph{f}}_{12}\times_{X_2} \textbf{\emph{f}}_{23}$ with 
$$[\Omega_{\textbf{\emph{f}}_{12}\times_{X_2} \textbf{\emph{f}}_{23}}]=
(-\phi_1\boxplus  \phi_3) |_{\textbf{\emph{M}}_{12}\times_{X_2} \textbf{\emph{M}}_{23}}. $$
Note the Cartesian diagrams of classical stacks: 
$$
\xymatrix{
M_{12}\times_{\Crit(\phi_2)}  M_{23} \ar@{}[dr]|{\Box}  \ar[r]^{ } \ar[d]_{ }     &  M_{12}\times_{}  M_{23}   \ar[d]^{ } \\
\Crit(\phi_2) \ar[r]^{ }  \ar[d]_{ }  \ar@{}[dr]|{\Box}  & \Crit(\phi_2) \times \Crit(\phi_2) \ar[d]_{ } \\ 
X_2 \ar[r]^{ }    & X_2\times X_2,
}
$$
which implies an isomorphism of classical stacks: 
$$M_{12}\times_{\Crit(\phi_2)}  M_{23}\cong M_{12}\times_{X_2}  M_{23}. $$
Definition \ref{defi of crit pb} gives critical pullbacks:
$$(f_{12}\times_{X_2}f_{23})_{}^!: K_0(X_1\times  X_2\times X_3,-\phi_{1}\boxplus0 \boxplus \phi_{3})\to K_0(M_{12}\times_{X_2} M_{23}),  $$
$$(f_{13})_{}^!: K_0(X_1\times X_3,-\phi_{1} \boxplus \phi_{3})\to K_0(M_{12}\times_{\mathrm{Crit}(\phi_2)} M_{23}). $$ 
Here in view of Setting \ref{setting of lag}, we are in the situation where $B=X$ and $\pi_X$ is the identity map, 
so we simply denote the critical pullback $f^!_{\pi_X}$ (of Definition \ref{defi of crit pb}) as $f^!$.
\begin{theorem}\label{thm:functorial}
We have 
$$(f_{13})_{}^!=(f_{12}\times_{X_2}f_{23})_{}^!\circ \left(\pi^{{123}}_{{13}}\right)^*, $$
where $\left(\pi^{{123}}_{{13}}\right)^*$ is the smooth pullback.
\end{theorem}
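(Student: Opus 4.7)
The strategy is to reduce the theorem, via Proposition~\ref{prop:compare_OT}, to an equivalent statement about square root virtual pullbacks, and then invoke their functoriality.

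By Proposition~\ref{prop:compare_OT}, both $(f_{13})^!$ and $(f_{12}\times_{X_2}f_{23})^!$ are characterised as the unique descents of the square root virtual pullbacks $\sqrt{(f_{13})_Z^!}$ and $\sqrt{((f_{12}\times_{X_2}f_{23}))_Z^!}$ along the canonical surjections $c$ from the $K$-theory of zero loci. Combined with the naturality of these surjections under the flat pullback $(\pi^{{123}}_{{13}})^*$ (a routine check using the explicit form of $c$ from Proposition~\ref{prop:ab_vs_D} and the fact that the formation of zero loci commutes with the smooth base change $\pi^{{123}}_{{13}}$), the theorem is equivalent to the identity
\begin{equation*}
\sqrt{(f_{13})_Z^!} \;=\; \sqrt{((f_{12}\times_{X_2}f_{23}))_Z^!}\,\circ\,(\pi^{{123}}_{{13}})^*
\end{equation*}
of maps $K_0(Z(-\phi_1\boxplus\phi_3))\to K_0(M_{13})$.

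The key geometric input is the commutative diagram of classical stacks
\begin{equation*}
\xymatrix{
M_{13} \ar@{=}[d] \ar[r]^-{(f_{12}\times_{X_2}f_{23})_Z} & Z(-\phi_1\boxplus 0\boxplus\phi_3) \ar[d]^{\pi^{{123}}_{{13}}} \\
M_{13} \ar[r]^-{(f_{13})_Z} & Z(-\phi_1\boxplus\phi_3),
}
\end{equation*}
using the classical isomorphism $M_{12}\times_{X_2}M_{23}\cong M_{13}$ from diagram \eqref{equ on define m1223b}. The right vertical map is a smooth projection, since $Z(-\phi_1\boxplus 0\boxplus\phi_3)\cong Z(-\phi_1\boxplus\phi_3)\times X_2$ (the middle function is identically zero). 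The reduced identity then follows from the functoriality of square root virtual pullbacks~\cite[Thm.~2.2]{Park1}---equivalently, the zero-potential case of Theorem~\ref{thm on funct}---applied to this composition of a symmetric obstruction map with a smooth projection.

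The main obstacle will be to verify the compatibility of symmetric obstruction theories required by this functoriality. The symmetric complex on $M_{13}$ coming from $\textbf{\emph{f}}_{13}$ and the one coming from $\textbf{\emph{f}}_{12}\times_{X_2}\textbf{\emph{f}}_{23}$ differ by an orthogonal hyperbolic summand $T_{X_2}|_{M_{13}}\oplus T^*_{X_2}|_{M_{13}}$, corresponding to the $X_2$-direction that is reduced away under Lagrangian composition; this summand matches the (co)tangent of the smooth projection $\pi^{{123}}_{{13}}$ and provides the compatibility diagram \eqref{eqn:triang_obst} required in Theorem~\ref{thm on funct}. The detailed matching of orientation data reduces to the standard $(-2)$-shifted Lagrangian composition formula (compare \cite[\S 5]{AB}), which ensures that the hyperbolic summand is canonically oriented so that its spin class is trivial (by \S\ref{sec:spin}), thereby producing no discrepancy in the final equality.
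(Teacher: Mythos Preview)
Your reduction via Proposition~\ref{prop:compare_OT} to square root virtual pullbacks is sound and is indeed how the paper proceeds (Theorem~\ref{thm on funct} is itself proved by this reduction). The gap lies in how you invoke functoriality. The result you cite, \cite[Thm.~2.2]{Park1} (equivalently Theorem~\ref{thm on funct}), treats a composition $h=g\circ f$ where the quasi-smooth map $f$ is on the \emph{domain} side of the symmetric map $g$; your factorisation $(f_{13})_Z=\pi^{123}_{13}\circ(f_{12}\times_{X_2}f_{23})_Z$ has the smooth map on the \emph{target} side, which is not the same format. The paper handles this by pulling $\textbf{\emph{f}}_{13}$ back along $\pi^{123}_{13}$ to obtain $\textbf{\emph{g}}$ and then using the section $\textbf{\emph{s}}$ (quasi-smooth, domain side) to reach $\textbf{\emph{M}}_{12}\times_{X_2}\textbf{\emph{M}}_{23}$; this is precisely the manoeuvre needed to put things into the shape of Theorem~\ref{thm on funct}, and your proposal skips it.

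More seriously, your claim that the two symmetric complexes differ by an orthogonal hyperbolic summand $T_{X_2}\oplus T_{X_2}^*$ is not correct as stated. The derived stacks $\textbf{\emph{M}}_{13}=\textbf{\emph{M}}_{12}\times_{\textbf{Crit}(\phi_2)}\textbf{\emph{M}}_{23}$ and $\textbf{\emph{M}}_{12}\times_{X_2}\textbf{\emph{M}}_{23}$ are genuinely different, so $\bbL_{\textbf{\emph{f}}_{13}}|_{M_{13}}$ and $\bbL_{\textbf{\emph{f}}_{12}\times_{X_2}\textbf{\emph{f}}_{23}}|_{M_{13}}$ are related by a nontrivial filtration, not a direct sum: the map $\textbf{\emph{i}}\colon\textbf{\emph{M}}_{13}\to\textbf{\emph{M}}_{12}\times_{X_2}\textbf{\emph{M}}_{23}$ has cotangent governed by $\bbL_{\textbf{Crit}(\phi_2)/X_2}$, and the passage from one obstruction theory to the other involves an isotropic reduction rather than splitting off a hyperbolic. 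Establishing the correct compatibility diagram~\eqref{eqn:triang_obst} is exactly the substance of the paper's proof, which invokes the Lagrangian triple intersection theorem \cite[Thm.~3.1]{Ben} to produce the Lagrangian correspondence~\eqref{diag:lag_cor}, and then performs an explicit isotropic reduction of $\bbL_{C_2/X_{22}}\oplus E$ by $\bbL_{X_2/X_{22}}$ to exhibit $\bbL_{M_{13}/X_{123}}$ as maximal isotropic. Your sketch correctly identifies this as the ``main obstacle'' but does not supply the argument; the reference to \cite[\S5]{AB} alone does not yield the compatibility of \emph{obstruction theories} (as opposed to Lagrangian structures) in the required form.
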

\begin{proof}
By abuse of notations, the composition of  
$$ \textbf{\emph{f}}_{12}\times_{\bCrit(\phi_2) } \textbf{\emph{f}}_{23}: \textbf{\emph{M}}_{12}\times_{\bCrit(\phi_2)} \textbf{\emph{M}}_{23}\to 
\textbf{Crit}(-{\phi}_1)\times \textbf{Crit}({\phi}_2)\times \textbf{Crit}({\phi}_3)$$
and the inclusion 
$$\textbf{Crit}(-{\phi}_1)\times \textbf{Crit}({\phi}_2)\times \textbf{Crit}({\phi}_3)\to X_1\times X_2 \times X_3 $$
is still denoted as $\textbf{\emph{f}}_{12}\times_{\bCrit(\phi_2) } \textbf{\emph{f}}_{23}$. 
We have the following diagram
\begin{equation}\label{diag:s exi}
\xymatrix{
\textbf{\emph{M}}_{12}\times_{\bCrit(\phi_2)} \textbf{\emph{M}}_{23} \times_{X_1\times  X_3} X_1\times  X_2\times X_3 \ar[r]^{\quad \quad \quad \quad \quad \,\,\bar{\pi}^{123}_{13} }     \ar[d]_{\textbf{\emph{g}}}     
   &   \ar@{.>}@/_1.5pc/[l]_(.5){\textbf{\emph{s}}}  \textbf{\emph{M}}_{12}\times_{\bCrit(\phi_2)} \textbf{\emph{M}}_{23} \ar[ld]^{\quad\quad\,\,\, 
   \textbf{\emph{f}}_{12}\times_{\bCrit(\phi_2) } \textbf{\emph{f}}_{23}  }  
\ar[d]^{\textbf{\emph{f}}_{13}} \\
 X_1\times  X_2\times X_3    \ar[r]_{\quad \pi^{123}_{13}}  & X_1\times  X_3,     }
\end{equation}
where the square is a homotopy pullback diagram (which defines $\bar{\pi}^{123}_{13}$, $\textbf{\emph{g}}$), and the lower right triangle commutes 
by \eqref{equ on M13}.
This implies the map $\bar{\pi}^{123}_{13}$ has a section 
$\textbf{\emph{s}}$ which satisfies 
\begin{equation}\label{equ on sect s}\bar{\pi}^{123}_{13}\circ \textbf{\emph{s}}=\id. \end{equation}
The maps $\textbf{\emph{s}}$, $\textbf{\emph{g}}$ fit into commutative diagrams 
\begin{equation} \label{diag:s_g}
\xymatrix{
\textbf{\emph{M}}_{12}\times_{\bCrit(\phi_2)} \textbf{\emph{M}}_{23} \ar[r]^{\emph{\textbf{i}} } \ar[d]_{\textbf{\emph{s}}} \ar[rd]^{\quad \textbf{\emph{f}}_{12}\times_{\bCrit(\phi_2) } \textbf{\emph{f}}_{23}}   &  
\textbf{\emph{M}}_{12}\times_{X_2} \textbf{\emph{M}}_{23}
\ar[d]^{\textbf{\emph{f}}_{12}\times_{X_2} \textbf{\emph{f}}_{23}} \\
\textbf{\emph{M}}_{12}\times_{\bCrit(\phi_2)} \textbf{\emph{M}}_{23} \times_{X_1\times  X_3} X_1\times  X_2\times X_3\ar[r]^{\quad \quad\quad\quad \quad\quad \textbf{\emph{g}} }  & X_1\times X_2 \times X_3. }
\end{equation}
As $X_2$ is smooth, hence $\textbf{\emph{s}}$ is quasi-smooth. As $\textbf{\emph{g}}$ is the pullback of $\textbf{\emph{f}}_{13}$ and $\textbf{\emph{f}}_{12}\times_{X_2} \textbf{\emph{f}}_{23}$ is the pullback of $\textbf{\emph{f}}_{12}\times \textbf{\emph{f}}_{23}$ \eqref{equ on define m1223b}, 
the image of their $(-2)$-symplectic forms in the periodic cyclic homology are given by the pullback of the same function $-\phi_1\boxplus \phi_3$.
In below, we show how to verify remaining conditions in Theorem \ref{thm on funct} for diagram \eqref{diag:s_g}, 
which then gives
\begin{equation}\label{equ on f123}\left(f_{12}\times_{X_2} f_{23}\right)_{}^!=s^!\circ g_{}^!. \end{equation}
First of all, we claim that there is a commutative diagram
\begin{equation} \label{diag:lag_cor}
\xymatrix{
(\textbf{\emph{M}}_{12}\times_{\textbf{Crit}(\phi_2)} \textbf{\emph{M}}_{23})\ar[d]_{}\ar[r]&(\textbf{\emph{M}}_{12}\times \textbf{\emph{M}}_{23})\times_{X_2^2}\textbf{Crit}(\phi_2) \ar[d]\\
(\textbf{\emph{M}}_{12}\times_{\textbf{Crit}(\phi_2)} \textbf{\emph{M}}_{23})\times_{ X_1\times X_3 }{X_1\times X_2\times X_2\times X_3 }\ar[r]_{\quad \quad\quad\quad \quad\quad}& X_1\times X_2\times X_2\times X_3 
  }
\end{equation}
such that the  $\textbf{\emph{M}}_{12}\times_{\textbf{Crit}(\phi_2)} \textbf{\emph{M}}_{23}$  
is a Lagrangian correspondence between the upper-right and lower-left corners relative to $X_1\times X_2\times X_2\times X_3$. 

This follows from  Lagrangian triple intersection theorem \cite[Thm.~3.1]{Ben}, by a similar argument as that of \cite[Lem.~2.4.3]{Park2}. 
Indeed, we simplify notations and write $X_{13}= X_1\times X_3 $, similarly for $X_{1223}$, $X_{22}$. We write $C_i:=\textbf{Crit}(\phi_i)$ for $i=1,2,3$,  $C_{ij}=C_i\times C_j$, and similarly $C_{1223}$, $C_{22}X_{13}$, $M_{12}M_{23}$, etc, stand for the product. 
The Lagrangian fibration $C_{13}\to X_{13}$ induces a Lagrangian $C_{13}\to C_{13}X_{13}$ relative to the base $X_{13}$. By base change, 
\begin{equation}\label{equ on lag c13}C_{13}X_{22}\to C_{13}X_{1223} \end{equation}is a Lagrangian relative to the base $X_{1223}$. 

By Definition \ref{def of lag corr} and base change, we know 
$M_{12}M_{23}X_{1223}\to C_{1223}X_{1223}$ is a Lagrangian relative to the base $X_{1223}$. Then we have composition of Lagrangian correspondences relative to $X_{1223}$: 
$$
{\footnotesize
\xymatrix{
& & C_{13}X_{22}\times_{C_{13}X_{1223}}M_{12}M_{23}X_{1223} \ar[dr]^{ } \ar[dl]^{ }  & & \\
  &  C_{13}X_{22}\ar[dr]^{ } \ar[dl]^{ } &    & M_{12}M_{23}X_{1223} \ar[dr]^{ } \ar[dl]^{ }   &  \\
X_{1223}   & & C_{13}X_{1223} & &  C_{22}X_{1223}, }
}
$$
i.e.~$L_1:=C_{13}X_{22}\times_{C_{13}X_{1223}}M_{12}M_{23}X_{1223}\to M:=C_{22}X_{1223}$ is a Lagrangian relative to $X=X_{1223}$. 

We have two more Lagrangians on $M$ relative to $X$, $L_2:=C_2X_{1223}\to M$ given by diagonal map $C_{2}\to C_{22}$, and $L_3:=C_{22}X_{13}\to M$ 
constructed as \eqref{equ on lag c13}. 
A direct calculation shows that 
\begin{equation}
\label{eqn:Lag_triple}L_1\times_ML_2\times_ML_3\cong C_2\times_{C_{22}}M_{12}M_{23}, 
\end{equation}
\begin{equation}
    \label{eqn:symp_triple}(L_1\times_ML_2)\times_X(L_2\times_ML_3)\times_X(L_3\times_ML_1)\cong C_2\times_{X_{22}}(M_{12}\times_{C_2}M_{23})\times_{X_{13}}M_{12}M_{23}.
\end{equation}
Then Lagrangian triple intersection theorem  \cite[Thm.~3.1]{Ben} yields that the \eqref{eqn:Lag_triple} is Lagrangian in the \eqref{eqn:symp_triple}. This implies diagram \eqref{diag:lag_cor} and the claimed Lagrangian property. 

We write $M_{13}:=C_2\times_{C_{22}}M_{12}M_{23}$.
By base-change of diagram \eqref{diag:lag_cor} along the map ${\Delta}^{123}_{1223} :X_{123}\to X_{1223}$, we obtain a Lagrangian correspondence
\[\xymatrix{
M_{13}\times_{X_{13}}X_{123}&\ar[l]\ar[r]M_{13}\times_{X_{22}}X_2&M_{12}\times_{X_2}M_{23}\times_{X_{22}}C_2
}\] 
relative to $X_{123}$. That implies $\bbL_{M_{13}/X_{1223}}\cong \bbL_{M_{13}\times_{X_{22}}X_2/X_{123}}$ is a maximal isotropic quotient complex of  
\begin{equation}\label{equ on lag 100}\bbL_{C_2\times_{X_{22}}(M_{13})\times_{X_{13}}M_{12}M_{23}/X_{1223}}\cong \bbL_{C_2\times_{X_{22}}(M_{13})\times_{X_{13}}M_{12}\times_{X_2}M_{23}/X_{123}}, \end{equation}
where we omit pullbacks of cotangent complexes in the above two identifications, also for the remaining argument. 

Notice also that, we have commutative diagram:
\[\xymatrix{
\bbL_{C_2/X_{22}}    \ar[r]^{\cong \quad\quad\quad\quad\quad}   & \bbL_{M_{12}\times_{X_2}M_{23}\times_{X_{22}}C_2/M_{12}\times_{X_2}M_{23}} \ar[d]  \\
  \bbL_{X_2/X_{22}} \ar[r]^{\cong  \quad \quad}   \ar[u] &\bbL_{M_{13}\times_{X_{22}}X_2/M_{13}, } 
}\]
which makes
$\bbL_{M_{13}\times_{X_{22}}X_2/M_{13}}$ maximal isotropic quotient complex of the symmetric complex 
\begin{equation}\label{equ on lag 101}\bbL_{C_2\times_{X_{22}}M_{12}M_{23}\times_{X_{22}}X_2/M_{12}M_{23}\times_{X_{22}}X_2}\cong \bbL_{C_2/X_{22}}. \end{equation}
Here the $(-2)$-shifted symplectic structure on $C_2/X_{22}$ follows from 
the composition of $(-1)$-shifted Lagrangian $C_2\to C_{22}$ and $(-1)$-shifted Lagrangian fibration $C_{22}\to X_{22}$. 

We have a direct sum decomposition of symmetric complex: 
$$\bbL_{C_2\times_{X_{22}}(M_{13})\times_{X_{13}}M_{12}M_{23}/X_{1223}}\cong \bbL_{C_2/{X_{22}}}\oplus\bbL_{M_{13}\times_{X_{13}}M_{12}M_{23}\times_{X_{22}}X_2/X_{123}}. $$
Since $\bbL_{X_2/X_{22}}$ is maximal isotropic in $\bbL_{C_2/X_{22}}$, it is isotropic in the above direct sum. 
The isotropic reduction of $\bbL_{C_2\times_{X_{22}}(M_{13})\times_{X_{13}}M_{12}M_{23}/X_{1223}}$ by $\bbL_{X_2/X_{22}}$ is  $\bbL_{M_{13}\times_{X_{13}}M_{12}M_{23}\times_{X_{22}}X_2/X_{123}}$ (e.g.~\cite[App.~C]{Park2}). Moreover, 
under the above isotropic reduction, the maximal isotropic quotient complex $\bbL_{M_{13}/X_{1223}}$ defines a maximal isotropic quotient complex $\bbL_{M_{13}/X_{123}}$ in $\bbL_{M_{13}\times_{X_{13}}M_{12}M_{23}\times_{X_{22}}X_2/X_{123}}$. 

More precisely, with shorthand $E:=\bbL_{M_{13}\times_{X_{13}}M_{12}M_{23}\times_{X_{22}}X_2/X_{123}}$, we have a commutative diagram 
\[\xymatrix{
& \bbL_{M_{13}/X_{1223}}^\vee[2] \ar[d]^{ }  & \\
\bbL_{X_2/X_{22}}\oplus E^\vee[2]   \ar[r]^{ }   & \bbL_{C_2/X_{22}}^\vee[2]  \oplus E^\vee[2]  \ar[rr]^{ } \ar[d]^{\cong}  & & \bbL_{X_2/X_{22}}^\vee[2]    \\
  \bbL_{X_2/X_{22}} \ar[r]^{ } \ar[u]^{ }  \ar[rd]^{ }        & \bbL_{C_2/X_{22}}  \oplus E  \ar[rr]^{ }  \ar[d]^{ }      &    &   \bbL_{X_2/X_{22}}^\vee[2] \oplus E  \ar[u] \\
  &  \bbL_{M_{13}/X_{1223}}  \ar[rd]^{ }    & & E \ar[u]  \ar[dl] \\ 
  & &  \bbL_{M_{13}/X_{123}} & 
}\]
Here two horizontal lines are dual fiber sequences (up to a degree shift), the middle vertical fiber sequence follows from the Lagrangian in \eqref{equ on lag 100}, the right vertical fiber sequence 
realizes symmetric complex $E$ as the isotropic reduction.
The row $\bbL_{X_2/X_{22}}\to \bbL_{M_{13}/X_{1223}} \to \bbL_{M_{13}/X_{123}}$ is a fiber sequence, and 
it is straightforward to check $\bbL_{M_{13}/X_{123}}$ is a maximal isotropic quotient complex of $E$.  

Therefore, by Remark~\ref{rmk:lag_cl}, diagram \eqref{eqn:triang_obst} holds for \eqref{diag:s_g}. It is straightforward to check the compatibility of orientations. 
So,  Theorem \ref{thm on funct}  applies to \eqref{diag:s_g} and yields \eqref{equ on f123}.  

Applying smooth pullback $\left(\pi^{{123}}_{{13}}\right)^*$ to \eqref{equ on f123}, we obtain
\begin{align*}
\left(f_{12}\times_{X_2} f_{23}\right)_{}^!\circ \left(\pi^{{123}}_{{13}}\right)^*&= s^!\circ g_{}^!\circ \left(\pi^{{123}}_{{13}}\right)^* \\
&= s^!\circ \left(\bar{\pi}^{123}_{13}\right)^* \circ (f_{13})_{}^! \\
&=  (f_{13})_{}^!,
\end{align*}
where the second equality follows from the first property of Proposition \ref{pullback comm with base change} applied on diagram \eqref{diag:s exi}, the last equality is by considering the classical truncation of \eqref{equ on sect s}.
\end{proof}

\begin{corollary}\label{cor on funct of linearization}
Notations as in Theorem \ref{thm:functorial}. 
    Let $[\Delta_{X_2}]\in K^T_0(  X_2 \times X_2 ,\phi_{2}\boxplus (-\phi_{2}) )_{\loc} $ be the diagonal class \eqref{equ on diag class}, 
    and $i_{\Delta}: M_{12}\times_{X_2} M_{23}\to M_{12}\times_{} M_{23}$ be the inclusion. 
    Then
    \[i_{\Delta*}\circ (f_{13})_{}^!=(f_{12}\times f_{23})_{}^!\circ (-\,\boxtimes\, [\Delta_{X_2}]):K^T_0(X_1\times X_3,-\phi_{1}\boxplus \phi_{3})_{\loc}\to  K_0^T(M_{12}\times_{} M_{23})_{\loc},\]
    where $i_{\Delta*}$ is the proper pushforward.  
\end{corollary}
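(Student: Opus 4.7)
The plan is to derive the identity by chaining three ingredients: the functoriality of critical pullbacks (Theorem \ref{thm:functorial}), the proper pushforward bivariance (Proposition \ref{pullback comm with base change}, second bullet), and a standard projection-formula identification of the diagonal class. All steps are carried out $T$-equivariantly and after localization, which is permitted by Remarks \ref{rmk on T-equiv} and \ref{rmk2 on T-equiv}.

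First I would use Theorem \ref{thm:functorial} to rewrite the left-hand side as
\[
i_{\Delta*}\circ (f_{13})^! \;=\; i_{\Delta*}\circ (f_{12}\times_{X_2}f_{23})^! \circ (\pi^{123}_{13})^*,
\]
reducing the corollary to showing that this coincides with $(f_{12}\times f_{23})^! \circ (-\boxtimes [\Delta_{X_2}])$. Next I would apply the second bullet of Proposition \ref{pullback comm with base change} to the Cartesian square
\[
\xymatrix{
M_{12}\times_{X_2}M_{23} \ar[r]\ar[d]_{i_\Delta} & X_1\times X_2\times X_3 \ar[d]^{\Delta^{123}_{1223}} \\
M_{12}\times M_{23} \ar[r] & X_1\times X_2\times X_2\times X_3,
}
\]
obtained from \eqref{equ on define m1223b}. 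Since $\Delta^{123}_{1223}=\id_{X_1}\times\Delta\times\id_{X_3}$ is a proper (closed) embedding and, by construction, the $(-2)$-shifted symplectic structure on $\textbf{\emph{f}}_{12}\times_{X_2}\textbf{\emph{f}}_{23}$ is the pullback of that on $\textbf{\emph{f}}_{12}\times\textbf{\emph{f}}_{23}$, the proposition yields
\[
i_{\Delta*}\circ (f_{12}\times_{X_2}f_{23})^! \;=\; (f_{12}\times f_{23})^! \circ (\Delta^{123}_{1223})_*.
\]

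It then remains to identify $(\Delta^{123}_{1223})_* \circ (\pi^{123}_{13})^*$ with the operation $-\boxtimes [\Delta_{X_2}]$. For $\alpha \in K_0^T(X_1\times X_3,-\phi_{1}\boxplus \phi_{3})_{\loc}$, the flat pullback along the projection is $(\pi^{123}_{13})^*\alpha = \alpha\boxtimes [\calO_{X_2}]$ with $X_2$ in the middle slot. Since $\Delta^{123}_{1223}$ acts by the diagonal embedding only on the middle pair of factors, the projection formula then yields
\[
(\Delta^{123}_{1223})_*\bigl(\alpha\boxtimes [\calO_{X_2}]\bigr) \;=\; \alpha\boxtimes \Delta_*[\calO_{X_2}] \;=\; \alpha\boxtimes [\Delta_{X_2}],
\]
where the final equality is the definition of $[\Delta_{X_2}]$ recorded in \eqref{equ on diag class} (after the canonical rearrangement of factors). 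Combining the three identities establishes the desired formula.

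The main technical point, though essentially bookkeeping, is to verify that the hypotheses of Proposition \ref{pullback comm with base change} hold for the middle square: the orientation, $(-2)$-shifted symplectic form, and symmetric resolution on $\textbf{\emph{f}}_{12}\times_{X_2}\textbf{\emph{f}}_{23}$ must all be compatibly induced by pullback along $\Delta^{123}_{1223}$ from the corresponding data on $\textbf{\emph{f}}_{12}\times\textbf{\emph{f}}_{23}$. This compatibility is built into Definition \ref{def of comp lag corr} and the discussion following \eqref{equ on define m1223b}, so once it is written out carefully no further input is required.
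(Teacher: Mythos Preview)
Your proposal is correct and follows essentially the same route as the paper's proof: the paper packages your three steps into a two-square commutative diagram, with the upper square commuting by Theorem~\ref{thm:functorial} and the lower square by the proper-pushforward bivariance of Proposition~\ref{pullback comm with base change}, then reads off the outer square as the desired identity. Your sequential presentation is the same argument unwound, and your explicit identification $(\Delta^{123}_{1223})_* \circ (\pi^{123}_{13})^* = -\boxtimes [\Delta_{X_2}]$ is precisely what the paper means by ``the outer square then commutes which is the statement.''
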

\begin{proof}
We have the following  diagram 
\begin{equation}
    \label{eqn:proof_lemma_func}
\xymatrix{
K^T_0(X_1\times X_3,(-\phi_{1})\boxplus \phi_{3})_{\loc}    \ar[d]_{\pi^{{123}*}_{{13}} }   \ar[rr]^{\quad \quad (f_{13})_{}^! }  &   & K_0^T(M_{12}\times_{\mathrm{Crit}(\phi_2)} M_{23})_{\loc}   \ar[d]^{\cong}  \\
K^T_0(X_1\times  X_2 \times X_3,(-\phi_{1})\boxplus 0 \boxplus \phi_{3})_{\loc}  \ar[rr]^{\quad \quad\quad \quad (f_{12}\times_{X_2}f_{23})_{}^!}  \ar[d]_{\Delta^{{123}}_{{1223}*}} & & K_0^T(M_{12}\times_{X_2} M_{23})_{\loc}  \ar[d]^{i_{\Delta*}} \\
K^T_0(X_1\times  X_2 \times X_2 \times X_3,(-\phi_{1})\boxplus \phi_{2}\boxplus (-\phi_{2}) \boxplus \phi_{3})_{\loc}  \ar[rr]^{\quad \quad\quad \quad \quad\quad \quad\quad\quad (f_{12}\times f_{23})_{}^!\quad\,\, } & & K_0^T(M_{12}\times_{} M_{23})_{\loc}. }
\end{equation} 
The upper square commutes by Theorem \ref{thm:functorial}, and the lower square commutes by base change property of critical pullbacks 
(Proposition~\ref{pullback comm with base change}).  
The outer square then commutes which is the statement.
\end{proof}

\subsection{Linearization of Lagrangian category}\label{sect on linearization}
Let $T$ be a torus, and $K_0^{T}(\mathrm{pt})_{\loc}$ be the field of fractions of $K_0^{T}(\mathrm{pt})$. Denote  
$\mathrm{Vect}_{K_0^{T}(\mathrm{pt})_{\loc}}$ to be the category of vector spaces over $K_0^{T}(\mathrm{pt})_{\loc}$.
\begin{definition}\label{def of linear func}
A \textit{linearization functor} is a Lax monoidal functor 
$$\mathscr{L}: \textbf{Lag}_{\mathrm{crit},T}^{\mathrm{DM}}\to \mathrm{Vect}_{K_0^{T}(\mathrm{pt})_{\loc}}.$$
Here the \textit{Lax structure} requires a map 
\[\mathscr{L}(X_1,\phi_{1})\otimes_{K_0^{T}(\mathrm{pt})_{\loc}} \mathscr{L}(X_2,\phi_{2})\to 
\mathscr{L}(X_1\times X_2,\phi_{1}\boxplus \phi_{2} )\] 
for objects $(X_1,\phi_{1})$ and $(X_2,\phi_{2})$. 
\textit{Functoriality} requires for each morphism 
$$\left(\textbf{\emph{M}}_{12}\xrightarrow{\textbf{\emph{f}}_{12}} \bCrit^{}(-\phi_1)\times \bCrit^{}(\phi_2)\right), $$ 
a linear map $\mathscr{L}(X_2,\phi_{2}) \to \mathscr{L}(X_1,\phi_{1})$ 
compatible with compositions.
\end{definition}
\begin{remark} 
    Recall \cite[Cor.~7.23, Prop.~7.25]{AB} that
    there is  a functor from the \textit{oriented cobordism} 2-category to the 2-category of Lagrangians in $(-1)$-shifted symplectic stacks:
    \[\calZ:\mathrm{Cob}^{or}\to \textbf{Lag}_{-1}^{(2)}. \]
 Here objects of $\mathrm{Cob}^{or}_{}$ are closed oriented 3-manifolds and 1-morphisms are 4-manifolds with boundaries and compatible orientations,  
 $\textbf{Lag}_{-1}^{(2)} $ is similar to $\textbf{Lag}_{\mathrm{crit}}^{(2)}$ but objects are allowed to be any $(-1)$-shifted symplectic stacks (instead of derived critical loci).
 
    Similarly, one reformulates \cite[Thm.~2.9]{Cal} as a functor of 2-categories
     \[\calZ^\bbC:\mathrm{Cob}^{ or_{\bbC}}_{}\to \textbf{Lag}_{-1}^{(2)}.\]
     Here  objects of $\mathrm{Cob}^{ or_\bbC}_{} $  are compact Calabi-Yau 3-folds, and 1-morphisms are 4-dimensional log Calabi-Yau pairs.
    There are examples of interesting subcategories of $\mathrm{Cob}^{or}_{}$ or $\mathrm{Cob}^{ or_{\bbC}}_{}$  to which the restrictions of the functors land in $\textbf{Lag}^{\mathrm{DM}}_{\mathrm{crit}}$. 
    
    Composing the 1-truncation of above with a linearization functor (Definition \ref{def of linear func}) then induces Lax monoidal functors from the 1-truncation of $\mathrm{Cob}^{or}_{}$ or $\mathrm{Cob}^{ or_\bbC}_{}$ to the category of vector spaces, which is a \textit{topological field theory} in the usual sense with target a linear category.
\end{remark}
We formulate a linearization of the Lagrangian category in terms of critical $K$-theory.  
\begin{theorem}\label{thm:functor}
Under Setting \ref{set of perf pair}, the assignment 
$$(X,\phi) \mapsto K^T_0(X,\phi)_{\loc}:=K^T_0(X,\phi)\otimes_{K_0^{T}(\mathrm{pt})_{}}K_0^{T}(\mathrm{pt})_{\loc} $$
defines a Lax monoidal linearization functor $$K_{\mathrm{crit}}: \textbf{\emph{Lag}}_{\mathrm{crit},T}^{\mathrm{DM}}\to \mathrm{Vect}_{K_0^{T}(\mathrm{pt})_{\loc}}.$$
\end{theorem}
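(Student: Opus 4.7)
The plan is to define $K_{\mathrm{crit}}$ on objects by $K_{\mathrm{crit}}(X,\phi) := K_0^T(X,\phi)_{\loc}$ and on $1$-morphisms by transposing the critical pullback against the perfect pairing provided by Setting~\ref{set of perf pair}. More precisely, for a representative $[\textbf{\emph{f}}_{12}: \textbf{\emph{M}}_{12} \to \bCrit(-\phi_1) \times \bCrit(\phi_2)]$, composition with the inclusion into $X_1 \times X_2$ verifies Setting~\ref{setting of lag} with $\pi_X = \id$ and potential $(-\phi_1)\boxplus \phi_2$, so by Remark~\ref{rmk on T-equiv} we obtain a $T$-equivariant critical pullback $f_{12}^!$. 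Using the $T$-equivariant properness of $M_{12}$ to form a proper pushforward $p_{M_{12}*}$ to a point, together with the exterior product~\eqref{equ on ten prod on k}, I will define $K_{\mathrm{crit}}(\textbf{\emph{f}}_{12}): K_0^T(X_2,\phi_2)_{\loc} \to K_0^T(X_1,\phi_1)_{\loc}$ as the unique linear map characterized by
$$\langle K_{\mathrm{crit}}(\textbf{\emph{f}}_{12})(\alpha), \beta\rangle = p_{M_{12}*}\bigl(f_{12}^!(\beta \boxtimes \alpha)\bigr), \quad \beta \in K_0^T(X_1,-\phi_1)_{\loc}.$$
Independence of the isomorphism class representative follows from the bivariance properties of Proposition~\ref{pullback comm with base change}.

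Functoriality is the main step. Given composable $\textbf{\emph{f}}_{12}$ and $\textbf{\emph{f}}_{23}$ with composition $\textbf{\emph{f}}_{13}$, the perfect pairing reduces the identity $K_{\mathrm{crit}}(\textbf{\emph{f}}_{13}) = K_{\mathrm{crit}}(\textbf{\emph{f}}_{12}) \circ K_{\mathrm{crit}}(\textbf{\emph{f}}_{23})$ to
$$p_{M_{13}*}\bigl(f_{13}^!(\beta \boxtimes \alpha)\bigr) = p_{M_{12}*}\bigl(f_{12}^!(\beta \boxtimes K_{\mathrm{crit}}(\textbf{\emph{f}}_{23})(\alpha))\bigr)$$
for all $\alpha \in K_0^T(X_3,\phi_3)_{\loc}$ and $\beta \in K_0^T(X_1,-\phi_1)_{\loc}$. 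By Corollary~\ref{cor on funct of linearization} and the functoriality of proper pushforward, the left hand side equals $p_{(M_{12}\times M_{23})*}\bigl((f_{12}\times f_{23})^!((\beta\boxtimes\alpha)\boxtimes[\Delta_{X_2}])\bigr)$. Base change (Proposition~\ref{pullback comm with base change}) together with the product structure of $f_{12}\times f_{23}$ make this pullback multiplicative along the decomposition $(X_1\times X_2)\times(X_2\times X_3)$. Writing $[\Delta_{X_2}] = \sum_i \delta_i \boxtimes \delta_i'$ in $K_0^T(X_2\times X_2,\phi_2\boxplus(-\phi_2))_{\loc}$ and using the K\"unneth multiplicativity of pushforward, the right hand side becomes
$$\sum_i \langle K_{\mathrm{crit}}(\textbf{\emph{f}}_{12})(\delta_i),\beta\rangle \cdot \langle K_{\mathrm{crit}}(\textbf{\emph{f}}_{23})(\alpha),\delta_i'\rangle.$$
The defining property of $[\Delta_{X_2}]$, namely $\gamma = \sum_i \langle \gamma,\delta_i'\rangle\, \delta_i$ for every $\gamma \in K_0^T(X_2,\phi_2)_{\loc}$, applied to $\gamma = K_{\mathrm{crit}}(\textbf{\emph{f}}_{23})(\alpha)$, collapses this sum to $\langle K_{\mathrm{crit}}(\textbf{\emph{f}}_{12})(K_{\mathrm{crit}}(\textbf{\emph{f}}_{23})(\alpha)),\beta\rangle$, as desired.

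Finally, the Lax monoidal structure is given by the localization over $K_0^T(\mathrm{pt})_{\loc}$ of the exterior product~\eqref{equ on ten prod on k}, and its compatibility with composition of Lagrangian correspondences is another instance of the base change formula combined with the product structure of the relevant critical pullbacks. The main obstacle is the functoriality step: one has to carefully translate Corollary~\ref{cor on funct of linearization} into an equality of induced linear maps, reshuffle tensor factors on $X_1\times X_2\times X_2\times X_3$, and identify $[\Delta_{X_2}]$ as the K\"unneth representative of the perfect pairing promised by Setting~\ref{set of perf pair}.
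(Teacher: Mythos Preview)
Your proposal is correct and follows essentially the same approach as the paper: define the map on morphisms by transposing $p_{M_{12}*}\circ f_{12}^!\circ(-\boxtimes\,\cdot\,)$ against the perfect pairing, then deduce functoriality from Corollary~\ref{cor on funct of linearization} together with the diagonal decomposition. The paper phrases the last step via the K\"unneth isomorphism \eqref{equ on kunn prod2} and dual-space identifications rather than writing $[\Delta_{X_2}]=\sum_i \delta_i\boxtimes\delta_i'$ explicitly, but these are equivalent manipulations; your explicit version is arguably clearer, and your remark on independence of the Lagrangeomorphism-class representative is a point the paper leaves implicit.
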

The rest of this section is devoted to its proof. 
Firstly, the \textit{Lax monoidal structure} is given by the \textit{exterior tensor product} \eqref{equ on ten prod on k}:
\begin{equation}\label{equ on ten prod on k2}K^T_0(X_1,\phi_{1})_{\loc}\otimes K^T_0(X_2,\phi_{2})_{\loc}\to K^T_0(X_1\times X_2,\phi_{1}\boxplus \phi_{2})_{\loc}.\end{equation} 
For a morphism in the category $\textbf{Lag}_{\mathrm{crit},T}^{\mathrm{DM}}$: 
$$\left(\textbf{\emph{M}}_{12}\xrightarrow{\textbf{\emph{f}}_{12}} \bCrit^{}(-\phi_1)\times \bCrit^{}(\phi_2)\right), $$ 
we want to define a morphism between critical $K$-groups $K^T_0(X_i,\phi_{i})_{\loc}$ ($i=1,2$).

Consider the following linear maps:
\begin{equation}K^T_0(X_1\times X_2,-\phi_{1}\boxplus \phi_{2})_{\loc} \xrightarrow{ (f_{12})_{}^!} K^T_0(M_{12})_{\loc} 
\xrightarrow{p_{M_{12}*}} K^T_0(\pt)_{\loc}, \nonumber \end{equation}
where $(f_{12})_{}^!$ is the critical pullback \eqref{equ on crit pb der} where we drop the subindex $\pi_X$ in the notation as $\pi_X$ is the identity in this case, $p_{M_{12}*}$ is the (equivariant) pushforward to a point.  

Composing with the exterior tensor product \eqref{equ on ten prod on k2}, we obtain a map 
\begin{equation}\label{equ on K0X12} K^T_0(X_1,-\phi_{1})_{\loc}\otimes K^T_0(X_2,\phi_{2})_{\loc}\to K_0^{T}(\mathrm{pt})_{\loc}. \end{equation}
To turn this map into a map between $K^T_0(X_i,\phi_{i})_{\loc}$, we need a perfect pairing on $K$-groups. 

\subsection{Pairing on critical $K$-theories} \label{subsec:pairing_crit_K}
\begin{lemma}\label{lem on pairing}
Let $X$ be a quotient stack of a smooth quasi-projective scheme  by a linear algebraic group and $\phi$ be a flat regular function\,\footnote{We assume the critical locus $\Crit(\phi)\hookrightarrow Z(\phi)$  embeds into 
 the zero locus $Z(\phi)$.} on $X$. 
Assume there is a torus $T$-action on $X$ which preserves $\phi$  
such that the critical locus $\Crit(\phi)$ is $T$-equivariantly proper.
Then we have a natural pairing
\begin{equation}\label{equ trace pairing}
K^T_0(X,\phi)_{\loc}\otimes K^T_0(X,-\phi)_{\loc}\to K^T_0(\Crit(\phi))_{\loc} \to K_0^{T}(\mathrm{pt})_{\loc},
\end{equation}
where the first map is induced by \eqref{equ on ten prod on k} and the last map is the equivariant pushforward to a point.
\end{lemma}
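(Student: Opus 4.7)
The plan is to realize the first map in \eqref{equ trace pairing} via the tensor product of matrix factorizations. For $[\mathcal{E}_\bullet]\in K^T_0(X,\phi)$ and $[\mathcal{F}_\bullet]\in K^T_0(X,-\phi)$, the exterior product \eqref{equ on ten prod on k} pulled back along the diagonal $X\hookrightarrow X\times X$ yields the $\mathcal{O}_X$-linear tensor $\mathcal{E}_\bullet\otimes \mathcal{F}_\bullet\in \mathrm{MF}(X,0)$, i.e.\ a $2$-periodic complex of coherent sheaves. Since $X$ is a quotient of a smooth quasi-projective scheme by a linear algebraic group, it has the resolution property, and one may replace $\mathcal{E}_\bullet$ by a $T$-equivariant locally free factorization; Proposition~\ref{prop:tensor_map} then guarantees that the tensor is well defined at the chain level and descends to a bilinear operation on Grothendieck groups.

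The crucial step will be to show that $\mathcal{E}_\bullet\otimes \mathcal{F}_\bullet$ is cohomologically supported on $\Crit(\phi)$ in the sense of Remark~\ref{rmk on Kfact0}. This is a local statement I would verify by case analysis at each $x\in X\setminus \Crit(\phi)$. If $\phi(x)\neq 0$, then $d_{-1}\circ d_0=\phi\cdot\id$ is a unit in a neighborhood of $x$, so the locally free factorization $\mathcal{E}_\bullet$ is contractible there, and Lemma~\ref{lem:tensor_acyc} yields acyclicity of the tensor. If $\phi(x)=0$ but $d\phi(x)\neq 0$, then by flatness of $\phi$ the zero locus $Z(\phi)$ is smooth in a neighborhood $U$ of $x$; Orlov's equivalence identifies $\mathrm{MF}(U,\phi|_U)$ with the singularity category $\mathrm{D}^b_{\coh}(Z(\phi)|_U)/\Perf(Z(\phi)|_U)=0$, so that $\mathcal{E}_\bullet|_U$ is zero in the homotopy category, hence, being locally free, contractible, and the tensor is again acyclic.

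With the support established, Remark~\ref{rmk on Kfact0} produces a well-defined class
\begin{equation*}
[\mathcal{E}_\bullet\otimes \mathcal{F}_\bullet]:=[H^0(\mathcal{E}_\bullet\otimes \mathcal{F}_\bullet)]-[H^{-1}(\mathcal{E}_\bullet\otimes \mathcal{F}_\bullet)]\in K^T_0(X,\Crit(\phi))\cong K^T_0(\Crit(\phi)),
\end{equation*}
where the last isomorphism is d\'evissage. Bilinearity and independence of the chosen locally free resolution follow from Proposition~\ref{prop:tensor_map}, since tensoring with a locally free factorization preserves short exact sequences. Localizing yields the first arrow of \eqref{equ trace pairing}. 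The second arrow is the equivariant proper pushforward: although $\Crit(\phi)$ itself need not be proper, the hypothesis of $T$-equivariant properness implies via Thomason's localization theorem that $K^T_0(\Crit(\phi))_{\loc}\cong K^T_0(\Crit(\phi)^T)_{\loc}$, and $\Crit(\phi)^T$ being genuinely proper admits the ordinary pushforward to $K^T_0(\mathrm{pt})_{\loc}$.

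The main obstacle is the support statement in the second paragraph, in particular the vanishing at smooth points of $Z(\phi)$: one must upgrade ``zero in the homotopy category $\mathrm{MF}(U,\phi|_U)$'' to ``contractible at the chain level'', the matrix-factorization analog of the fact that bounded exact complexes of projectives are contractible. Once this is in place, the remaining pieces --- bilinearity, $T$-equivariance, and compatibility with localization --- are formal.
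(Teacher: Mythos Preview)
Your proposal is correct and arrives at the same map, but the paper takes a shorter route. Rather than arguing support for the \emph{tensor product} by a case analysis at points of $X\setminus\Crit(\phi)$, the paper invokes \cite[Cor.~3.18]{PV3} once to conclude that every object of $\mathrm{MF}(X,\phi)$ is already supported on $\mathrm{Sing}(Z(\phi))=Z(\phi)\cap\Crit(\phi)=\Crit(\phi)$ in the sense of Definition~\ref{defi of supp}; hence $K^T_0(X,\phi)=K^T_0(X,\Crit(\phi),\phi)$, and the diagonal tensor of two supported classes automatically lands in $K^T_0(X,\Crit(\phi),0)\cong K^T_0(\Crit(\phi))$ by d\'evissage. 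Your two cases are in effect a hands-on reproof of the relevant instance of that support theorem. Citing \cite{PV3} also sidesteps the obstacle you flag: once each factor is known to be acyclic off $\Crit(\phi)$, no upgrade from ``zero in $\mathrm{MF}$'' to ``contractible at the chain level'' is needed. That said, your upgrade is available if you want it: on an affine chart a locally free factorization has projective components, and an acyclic factorization with projective components is contractible (cf.~\cite[Cor.~2.23]{BFK}), so ``locally contractible off $\Crit(\phi)$'' does follow and Proposition~\ref{prop:tensor_map} applies as you outline.
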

\begin{proof}
By \cite[Cor.~3.18]{PV3}, we know any element $\eE$ in $\mathrm{MF} (X,\phi)$ is supported 
on the  singular locus $\mathrm{Sing}(Z(\phi))=Z(\phi)\cap \Crit(\phi)=\Crit(\phi)$ of $Z(\phi)$,
where the second equation follows from the assumption on the function $\phi$. 
Therefore we have 
$$K^T_0(X,\phi)_{\loc}=K^T_0(X,\Crit(\phi),\phi)_{\loc}. $$
By composing \eqref{equ on ten prod on k} with the pullback along diagonal (and taking care of support), we have  
$$K^T_0(X,\Crit(\phi),\phi)_{\loc}\otimes K^T_0(X,\Crit(\phi),-\phi)_{\loc}\to K^T_0(X,\Crit(\phi),0)_{\loc}\cong K^T_0(\Crit(\phi))_{\loc}, $$
where the isomorphism is by  d\'evissage.
This defines the required map.
\end{proof}
\begin{setting}\label{set of perf pair}
Let $(X,\phi)$ be as in Lemma \ref{lem on pairing} and assume further that $X$ is a DM stack.

Consider the following conditions on $K^T_0(X,\phi)_{\loc}$: 
\begin{itemize}
\item $K^T_0(X,\phi)_{\loc}$ and $K^T_0(X\times X,-\phi\boxplus \phi)_{\loc}$ are finite dimensional over $K:=K_0^{T}(\mathrm{pt})_{\loc}$,  
$$\dim_{K}K^T_0(X\times X,-\phi\boxplus \phi)_{\loc}=\left(\dim_{K}K^T_0(X,\phi)_{\loc}\right)^2.$$
\item Under the canonical isomorphism 
\begin{equation}\label{equ on cano iso on minus}K^T_0(X,-\phi)_{\loc}\cong K^T_0(X,\phi)_{\loc}, \end{equation}
the composition map \eqref{equ trace pairing} is a perfect pairing,~i.e.~the induced map 
\begin{equation}\label{equ on iso of dual}K^T_0(X,\phi)_{\loc}\xrightarrow{\cong} K^T_0(X,\phi)_{\loc}^\vee \end{equation} 
is an isomorphism.
\end{itemize}
\end{setting}
We give an equivalent description of the hypotheses in above.  
Under the pullback along diagonal
$$X \xrightarrow{\Delta}  X\times X, $$
the sum function $\phi \boxplus (-\phi)$ goes to zero, therefore there is a proper pushforward
\begin{equation}\label{equ on diag}\Delta_*: K^T_0(X,0)\to  K^T_0(X\times X,\phi\boxplus (-\phi))_{ }. \end{equation}
The \textit{diagonal class} is the following image 
\begin{equation}\label{equ on diag class}[\Delta]:=\Delta_*[\oO_X]. \end{equation}
\begin{prop}\label{prop on equiv diag}
The following conditions are equivalent: 
\begin{itemize}

\item Conditions in Setting \ref{set of perf pair}. 

\item The diagonal class $[\Delta]$ lies in the image of the exterior product \eqref{equ on ten prod on k}:
\begin{equation}\label{equ on kunn prod}K^T_0(X,\phi)_{\loc}\otimes K^T_0(X,-\phi)_{\loc}\to K^T_0(X\times X,\phi\boxplus (-\phi))_{\loc}. \end{equation}
\end{itemize}
Moreover the above conditions imply that for any pair $(Y,\phi_Y)$ of quotient stack $Y$ and regular function $\phi_Y$ with $T$-action, the exterior product  
\begin{equation}\label{equ on kunn prod2}K^T_0(X,\phi)_{\loc}\otimes K^T_0(Y,\phi_Y)_{\loc}\xrightarrow{\cong} K^T_0(X\times Y,\phi\boxplus \phi_Y)_{\loc} \end{equation}
is an isomorphism. 
\end{prop}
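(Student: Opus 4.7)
The proof will hinge on a ``slant product'' operation. For $\delta \in K_0^T(X,-\phi)_{\loc}$, the product $\pi_1^*\delta \cdot \gamma$ on $X\times X$ lies in $K_0^T(X\times X, 0 \boxplus (-\phi))_{\loc}$, and since its support is contained in $\Crit(\phi)\times\Crit(\phi)$, the pushforward $\pi_{2*}$ is well defined by the properness of $\Crit(\phi)$, yielding
\[ {/}_{\delta}\colon K_0^T(X\times X, \phi\boxplus(-\phi))_{\loc} \to K_0^T(X,-\phi)_{\loc},\quad \gamma\mapsto \pi_{2*}(\pi_1^*\delta\cdot \gamma), \]
and symmetrically ${/}'_{\alpha}\colon K_0^T(X\times X, \phi\boxplus(-\phi))_{\loc}\to K_0^T(X,\phi)_{\loc}$ for $\alpha\in K_0^T(X,\phi)_{\loc}$. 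Since $[\Delta]=\Delta_*\oO_X$, the projection formula combined with $\pi_i\circ\Delta=\id$ gives ${/}_{\delta}([\Delta])=\delta$ and ${/}'_{\alpha}([\Delta])=\alpha$, while on decomposable elements ${/}_{\delta}(\alpha\boxtimes\beta)=\langle\alpha,\delta\rangle\,\beta$ and ${/}'_{\alpha}(\gamma\boxtimes\delta)=\langle\delta,\alpha\rangle\,\gamma$.

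For $(2)\Rightarrow(1)$ together with the Künneth statement, assume $[\Delta]=\sum_i e_i\boxtimes f_i$. Applying the two slant products to this decomposition gives, for every $\alpha\in K_0^T(X,\phi)_{\loc}$ and $\delta\in K_0^T(X,-\phi)_{\loc}$,
\[ \alpha=\sum_i\langle f_i,\alpha\rangle\, e_i,\qquad \delta=\sum_i\langle e_i,\delta\rangle\, f_i, \]
so both $K$-groups are finitely spanned; substituting $\alpha=e_k$ in the first relation yields $\langle f_i,e_k\rangle=\delta_{ik}$, hence $\{e_i\}$ and $\{f_j\}$ are dual bases and the pairing is perfect. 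For the Künneth isomorphism \eqref{equ on kunn prod2}, define
\[ \Psi\colon K_0^T(X\times Y,\phi\boxplus\phi_Y)_{\loc}\to K_0^T(X,\phi)_{\loc}\otimes K_0^T(Y,\phi_Y)_{\loc},\quad \gamma\mapsto \sum_i e_i\otimes \pi_{Y*}(\pi_X^*f_i\cdot\gamma). \]
The identity $\Psi(\alpha\boxtimes\beta)=\alpha\otimes\beta$ is immediate, and $\boxtimes\circ\Psi=\id$ follows from the three-factor identity $\gamma=\pi_{13*}(\pi_{12}^*[\Delta]\cdot\pi_{23}^*\gamma)$ on $X\times X\times Y$, obtained from $[\Delta]=\Delta_*\oO_X$ by projection formula and base change, which after substituting $[\Delta]=\sum e_i\boxtimes f_i$ and applying base change through the Cartesian square formed by $\pi_{13},\pi_{23}$ and $\pi_Y$ expands to $\gamma=\sum_i e_i\boxtimes \pi_{Y*}(\pi_X^*f_i\cdot\gamma)$. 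Taking $(Y,\phi_Y)=(X,-\phi)$ then supplies the dimension identity in Setting~\ref{set of perf pair}.

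For $(1)\Rightarrow(2)$, pick a basis $\{e_i\}$ of $K_0^T(X,\phi)_{\loc}$ and the dual basis $\{f_j\}$ of $K_0^T(X,-\phi)_{\loc}$ under the perfect pairing. The exterior product $\boxtimes$ is injective: a relation $\sum a_i\boxtimes b_i=0$ with $\{a_i\}$ linearly independent produces, via ${/}_{\delta}$, the equation $\sum\langle a_i,\delta\rangle\, b_i=0$ for every $\delta$; since $a\mapsto\langle a,-\rangle$ is injective by perfect pairing, evaluating at a basis dual to $\{\langle a_i,-\rangle\}$ forces $b_i=0$. Combined with the dimension identity $N^2=N^2$, $\boxtimes$ is an isomorphism, so writing $[\Delta]=\sum_{i,j}c_{ij}\,e_i\boxtimes f_j$ and applying ${/}'_{e_k}$ gives $e_k=\sum_i c_{ik}\,e_i$, forcing $c_{ij}=\delta_{ij}$ and hence $[\Delta]=\sum_i e_i\boxtimes f_i$. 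The principal obstacle throughout is the bookkeeping of potentials and supports so that each product, pullback, and pushforward lands in the correct critical $K$-group \textemdash{} once this verification is in place, the combinatorics mirrors the classical Künneth / perfect-pairing argument in ordinary $K$-theory.
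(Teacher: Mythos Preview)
Your proposal is correct and follows essentially the same approach as the paper, which simply cites the proof of \cite[Thm.~5.6.1]{CG}; you have spelled out the Chriss--Ginzburg slant-product argument with the required adaptations to critical $K$-theory. One small presentational point: before concluding $\langle f_i,e_k\rangle=\delta_{ik}$ from $\alpha=\sum_i\langle f_i,\alpha\rangle e_i$, you should first pass to a linearly independent subcollection of the $e_i$ (re-collecting the $f_i$ accordingly), after which the rest of your deduction goes through verbatim.
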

\begin{proof}
We follow the proof of \cite[Thm.~5.6.1]{CG}.
\end{proof}

\begin{example}
[Dimensional reduction]
\label{ex on nak}
Let $E$ be a vector bundle on a smooth scheme $U$ and $s\in \Gamma(U,E^\vee)$ be a cosection of $E$, whose zero locus $Z(s)$ is a smooth scheme.
There is an induced regular function $\phi=\langle s,-\rangle: E\to \C$ and  a Cartesian square 
$$
\xymatrix{
E|_{Z(s)} \ar[r]^{\,\,\iota} \ar[d]_{\pi}     &  E \ar[d]^{ } \\
Z(s) \ar[r]^{} & U. }
$$
By dimensional reduction \cite[Thm.~2.4 \& Cor.~3.13]{Toda:localsurfaceZ/2}, we have isomorphisms 
$$\iota_*\pi^*: K_0(Z(s))\xrightarrow{\cong}  K_0(E,\phi), \quad (\iota\times\iota)_*(\pi\times\pi)^*: K_0(Z(s)\times Z(s))\xrightarrow{\cong}  K_0(E\times E,\phi\boxplus -\phi). $$
\textbf{Claim}: The diagonal class \eqref{equ on diag class} of $K_0(E\times E,\phi\,\boxplus -\phi)$  coincides with the diagonal class of $K_0(Z(s)\times Z(s))$ under the 
above isomorphism. 

When there is a torus action, the above also extends to the torus equivariant setting. 
Note that \textit{Nakajima quiver varieties} can be realized as $Z(s)$ above and have decomposition of diagonals \cite[\S 7.3]{Nak}. Therefore, 
 corresponding quivers with potentials $(E,\phi)$ also have decomposition of diagonals.  
\end{example}
\begin{proof}
In fact, we have a commutative diagram 
$$
\xymatrix{
K_0(Z(s)) \ar[r]^{\pi^*}_{\cong\,\,\,\,} \ar[d]^{\Delta_*}     &  K_0\left(E|_{Z(s)}\right) \ar[d]^{\Delta_*}\ar[r]^{\iota_*}_{ } & K_0(E) \ar[d]^{\Delta_*}  \\
K_0(Z(s)\times Z(s)) \ar[r]^{(\pi\times\pi)^*\,\,}_{\cong\,\,\,\,\,\,} & K_0\left(E|_{Z(s)}\times E|_{Z(s)}\right)  \ar[r]^{(\iota\times\iota)_*}_{\cong\,\,\,} &  K_0(E\times E,\phi\boxplus -\phi).  }
$$
The difference of two diagonals is 
$$\Delta_*\left(\iota_*\oO_{E|_{Z(s)}}-\oO_{E}\right), $$
whose support is away from the critical locus $\Crit(\phi\boxplus -\phi)\cong Z(s)\times Z(s)$. Hence the claim holds.
\end{proof}

\begin{example}
[Hilbert schemes of points on $\C^3$]
\label{ex on hilb3}
Let $T=\{t_1 t_2 t_3=1\} \subset (\mathbb{C}^{\ast})^3$ and consider the non-commutative Hilbert scheme of $n$-points on $\C^3$:  
\begin{equation}\mathrm{NHilb}^n(\C^3)=\left(\Hom(\mathbb{C}^n,\mathbb{C}^n)^{\times 3}\times \mathbb{C}^n\right)/\!\!/\GL(n),  \nonumber \end{equation}
with potential function
$$\phi(b_1,b_2,b_3,v)=\tr (b_1[b_2,b_3]), $$  
whose critical locus satisfies 
\begin{equation} \Crit(\phi)\cong \Hilb^n(\C^3). \nonumber \end{equation}
Then the equivalent conditions in Proposition \ref{prop on equiv diag} hold for $(\mathrm{NHilb}^n(\C^3),\phi)$.
\end{example}
\begin{proof}
By~Remark~\ref{rmk:general}, we have an isomorphism 
\begin{align}\label{isom:KHilb}
    K_0^{T}(\mathrm{MF}^{\rm{gr}}(\mathrm{NHilb}^n(\C^3), \phi)) \stackrel{\cong}{\to} 
    K_0^{T}(\mathrm{NHilb}^n(\C^3), \phi),
\end{align}
where the grading on LHS is with respect to a weight two 
action on the last factor of $\Hom(\mathbb{C}^n, \mathbb{C}^n)^{\times 3}$. 
For each $\mu \in \mathbb{R}$, 
we have a semiorthogonal decomposition (SOD) by~\cite[Thm.~3.2]{PT}:
\begin{align}\label{sod:PT}
    \mathrm{MF}_T^{\rm{gr}}(\mathrm{NHilb}^n(\C^3), \phi)=\left\langle\, \boxtimes_{i=1}^k \mathbb{T}_T(d_i)_{v_i} \right\rangle,
\end{align}
where the RHS runs over all partitions $n=d_1+\cdots+d_k$ such that 
\begin{equation}\label{equ on hnf}\mu\leq v_1/d_1<\cdots<v_k/d_k <\mu+1. \end{equation} 
The category $\mathbb{T}_T(d)_v$
is a subcategory in $\D_{T}^b(\mathcal{C}(d))$ called \textit{quasi-BPS category} in~\cite{PT}, 
where $\mathcal{C}(d)$ is the derived commuting stack. 
The category $\boxtimes_{i=1}^k \mathbb{T}_T(d_i)_{v_i}$ is the subcategory 
of $\D_T^b(\times_{i=1}^k \mathcal{C}(d_i))$ split generated by $\boxtimes_{i=1}^k A_i$ 
for $A_i \in \mathbb{T}_T(d_i)_{v_i}$. 
It is shown in the proof of~\cite[Cor.~4.14]{PT} that we have the isomorphism 
\begin{align}\label{kunneth:T}
    \bigotimes_{i=1}^k K_0^T(\mathbb{T}(d_i)_{v_i})_{\loc} \stackrel{\cong}{\to} 
    K_0^T(\boxtimes_{i=1}^k \mathbb{T}(d_i)_{v_i})_{\loc}. 
\end{align}
It follows that we have the isomorphism 
\begin{align*}
    K_0^T(\mathrm{MF}^{\rm{gr}}(\mathrm{NHilb}^n(\C^3), \phi))_{\loc}\cong
    \bigoplus \bigotimes_{i=1}^k K_0^T(\mathbb{T}(d_i)_{v_i})_{\loc}.
\end{align*}
Here the direct sum runs over all $n=d_1+\cdots+d_k$ and $v_i \in \mathbb{Z}$ as in (\ref{sod:PT}), \eqref{equ on hnf}.

We also have a semiorthogonal decomposition:
\begin{align*}
    \mathrm{MF}^{\rm{gr}}_T\left(\mathrm{NHilb}^n(\mathbb{C}^3) \times \mathrm{NHilb}^n(\mathbb{C}^3), \phi \boxplus -\phi\right)=\left\langle \left(\boxtimes_{i=1}^k \mathbb{T}_T(d_i)_{v_i}\right)\boxtimes \left(\boxtimes_{i=1}^{k'}\mathbb{T}_T(d_i')_{v_i'}\right)  \right\rangle,
\end{align*}
where the right hand side is after all partitions $n=d_1+\cdots+d_k=d_1'+\cdots+d_{k'}'$
such that $$0\leq v_1/d_1<\cdots<v_k/d_k<1, \quad 1\leq v_1'/d_1'<\cdots<v_{k'}'/d_{k'}' <2.$$ 
The above SOD can be proved to exist either using an argument 
similar to~\cite[Thm.~3.2]{PT}, or proving a similar SOD with zero potential case from the fact that $A\boxtimes B$ for 
$A, B \in \D_T^b(\mathrm{NHilb}^n(\C^3))$ split generate $\D_T^b(\mathrm{NHilb}^n(\mathbb{C}^3)\times \mathrm{NHilb}^n(\mathbb{C}^3))$, together with~\cite[Eqn.~(3.62)]{PT}, and then 
applying~\cite[Prop.~2.5]{PT}. 
Combing with~\cite[Cor.~3.13]{Toda:localsurfaceZ/2}, we can also decompose $K_0^T(\mathrm{NHilb}^n(\C^3)^{\times 2}, \phi \boxplus -\phi)$ into the direct sum of products of $K_0^T(\mathbb{T}(d_i)_{v_i})$ and $K_0^T(\mathbb{T}(d_i')_{v_i'})$. It follows that we have an isomorphism 
\begin{equation*}
    K_0^{T}(\mathrm{NHilb}^n(\C^3), \phi)_{\loc} \otimes  K_0^{T}(\mathrm{NHilb}^n(\C^3), -\phi)_{\loc}
    \stackrel{\cong}{\to}  K_0^{T}(\mathrm{NHilb}^n(\C^3)^{\times 2}, \phi\boxplus -\phi)_{\loc}. \qedhere
\end{equation*}
\end{proof}

\begin{remark}\label{rmk:general}
In (\ref{isom:KHilb}), we need a slightly generalized version of~\cite[Cor.~3.13]{Toda:localsurfaceZ/2} as follows. 

Let $Y$ be a smooth $\mathbb{C}$-scheme with an action of 
a reductive group $G$, $V \to Y$ be a $G$-equivariant 
vector bundle with a $G$-invariant section $s$. 
Consider $G$-invariant function: 
$$w \colon V^{\vee} \to \mathbb{C}, \quad (y, v)\mapsto \langle s(y), v\rangle,$$
where $y\in Y$ and $v \in V^{\vee}|_{y}$. 
Let $Z \subset \mathrm{Crit}(w)$ be a 
$(G\times \mathbb{C}^{\ast})$-invariant closed subset of the critical locus, where 
$\mathbb{C}^{\ast}$ acts on the fibers of $V^{\vee} \to Y$
by weight two. 

\textbf{Claim}: the forgetful functor induces an isomorphism 
\begin{align}\label{isom:Z2}
\mathrm{forg} \colon K_0(\mathrm{MF}^{\rm{gr}}([(V^{\vee}\setminus Z)/G] , w))
\stackrel{\cong}{\to} K_0([(V^{\vee}\setminus Z)/G], w).
\end{align}
Note that the isomorphism (\ref{isom:KHilb}) is a special case of (\ref{isom:Z2}). 
And the isomorphism (\ref{isom:Z2}) is proved 
in~\cite[Cor.3.13]{Toda:localsurfaceZ/2} when $Z=\emptyset$ and $G$ is trivial. 
The same argument of~\cite[Cor.~3.13]{Toda:localsurfaceZ/2} works verbatim 
when $Z=\emptyset$ and $G$ is non-trivial, so 
we explain the case when $Z\neq \emptyset$. 

Let $\mathcal{V}^\vee=[V^\vee/G]$. 
We have functors 
\begin{align*}
    \mathrm{MF}^{\rm{gr}}(\mathcal{V}^{\vee}, w)
    \stackrel{p^{\ast}}{\to} \mathrm{MF}^{\rm{gr}}(\mathcal{V}^{\vee}\times \mathbb{C}^{\ast}, p^{\ast}w)
    \stackrel{i^{\ast}}{\to} \mathrm{MF}^{\rm{gr}}(\mathcal{V}^{\vee}, w),
\end{align*}
where the first one is given by the pull-back via 
$p \colon \mathcal{V}^{\vee} \times \mathbb{C}^{\ast} \to \mathcal{V}^{\vee}$
and the second one is the restriction by 
$1 \in \mathbb{C}^{\ast}$. 
By the same argument of~\cite[Lem.~3.8]{Toda:localsurfaceZ/2}, we have the equivalence 
\begin{align*}
\mathrm{MF}(\mathcal{V}^{\vee}, w) \stackrel{\cong}{\to}
\mathrm{MF}^{\rm{gr}}(\mathcal{V}^{\vee}\times \mathbb{C}^{\ast}, p^{\ast}w)
\end{align*}
and the composition of the following induced map 
\begin{align}\label{induced:K}
K_0(\mathrm{MF}^{\rm{gr}}(\mathcal{V}^{\vee}, w))\stackrel{p^{\ast}}{\to} K_0(\mathcal{V}^{\vee}, w) 
\stackrel{i^{\ast}}{\to} K_0(\mathrm{MF}^{\rm{gr}}(\mathcal{V}^{\vee}, w))
    \end{align}
is the identity. 
The first map is identified with (\ref{isom:Z2})
for $Z=\emptyset$, 
which is an isomorphism by~\cite[Cor.~3.13]{Toda:localsurfaceZ/2}, hence 
the second map is also an isomorphism. 

The maps in \eqref{induced:K} fit into a commutative diagram 
\begin{align}\label{diag on kkk}
\xymatrix{
K_0\left(\mathrm{MF}^{\rm{gr}}(\mathcal{V}^{\vee}, w)_{[Z/G]}\right) \ar[r]^{ }  \ar[d]^{} &  
K_0\left(\mathrm{MF}(\mathcal{V}^{\vee}, w)_{[Z/G]}\right) \ar[r]^{ } \ar[d]^{} &
K_0\left(\mathrm{MF}^{\rm{gr}}(\mathcal{V}^{\vee}, w)_{[Z/G]}\right) \ar[d]^{} \\
K_0\left(\mathrm{MF}^{\rm{gr}}(\mathcal{V}^{\vee}, w)_{ }\right) \ar[r]^{p^{\ast}}_{\cong \,\,\, } \ar[d]^{} &  
K_0\left(\mathrm{MF}(\mathcal{V}^{\vee}, w)\right) \ar[r]^{i^{\ast}}_{\cong \,\,\, } \ar[d]^{}  &
K_0\left(\mathrm{MF}^{\rm{gr}}(\mathcal{V}^{\vee}, w)_{ }\right) \ar[d]^{} \\
K_0\left(\mathrm{MF}^{\rm{gr}}([(V^{\vee}\setminus Z)/G], w)_{ }\right) \ar[r]^{ }   &  
K_0\left(\mathrm{MF}([(V^{\vee}\setminus Z)/G], w)\right) \ar[r]^{ }  &
K_0\left(\mathrm{MF}^{\rm{gr}}([(V^{\vee}\setminus Z)/G], w)_{ }\right),  
}
 \end{align} 
where vertical maps are exact sequences by \cite[Eqn.~(2.7)]{Toda:localsurfaceZ/2}. 
Let $K_0'$ denote the image of the natural map 
    $K_0\left((-)_{[Z/G]}\right) \to K_0(-)$. The above diagram induces injective maps
\begin{align}\label{induced:K2}
K_0'\left(\mathrm{MF}^{\rm{gr}}(\mathcal{V}^{\vee}, w)_{[Z/G]}\right)
\stackrel{p^{\ast}}{\to} 
K_0'\left(\mathrm{MF}(\mathcal{V}^{\vee}, w)_{[Z/G]}\right)
\stackrel{i^{\ast}}{\to}
K_0'\left(\mathrm{MF}^{\rm{gr}}(\mathcal{V}^{\vee}, w)_{[Z/G]}\right).
    \end{align}
Since $p^{\ast}$ in \eqref{induced:K} is an isomorphism and $i^{\ast}$ is its 
    inverse, it follows that the maps in (\ref{induced:K2}) are also surjective, hence are isomorphisms. 
Therefore by taking quotients of vertical arrows  in \eqref{diag on kkk}, and using \cite[Eqn.~(2.8)]{Toda:localsurfaceZ/2}, we obtain (\ref{isom:Z2})
for $Z\neq \emptyset$.

\end{remark}

\subsection{Proof of Theorem~\ref{thm:functor}}

Using the pairing introduced above, we can return back to the end of \S \ref{sect on linearization} and finish the definition of linearization functor.
\begin{definition}
Assume the decomposition of diagonal \eqref{equ on kunn prod}. 
Under the perfect pairing  \eqref{equ on kunn prod2}, 
the map \eqref{equ on K0X12}: 
$$K^T_0(X_1\times X_2,-\phi_{1}\boxplus \phi_{2})_{\loc}\to K^T_0(\mathrm{pt})_{\loc}$$
induces a map 
\begin{equation}\label{equ on K0X12dual}f^!_{1\to 2}: K^T_0(X_2,\phi_{2})_{\loc}\to K^T_0(X_1,\phi_{1})_{\loc}. \end{equation}
Similarly we define $f^!_{2\to 3}$, $f^!_{1\to 3}$. 
\end{definition} 
The above map associates morphisms in $\textbf{{Lag}}_{\mathrm{crit},T}^{\mathrm{DM}}$ to morphisms in $\mathrm{Vect}_{K_0^{T}(\mathrm{pt})_{\loc}}$. We prove the functoriality as follows. 
\begin{corollary}
The map \eqref{equ on K0X12dual} is functorial,~i.e.
\begin{equation}\label{equ on K0X12dual compse}
f^!_{1\to 3}= f^!_{1\to 2}\circ f^!_{2\to 3}: K^T_0(X_3,\phi_{3})_{\loc}\to K^T_0(X_1,\phi_{1})_{\loc}. \end{equation}
\end{corollary}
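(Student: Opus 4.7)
The plan is to verify the identity \eqref{equ on K0X12dual compse} by testing it against the perfect pairing $\langle -, -\rangle_{X_1}$ from Lemma \ref{lem on pairing}. Concretely, by the non-degeneracy established in Setting \ref{set of perf pair}, it suffices to show that for every $\alpha \in K^T_0(X_1, -\phi_1)_{\loc}$ and $\gamma \in K^T_0(X_3, \phi_3)_{\loc}$ one has
$$\langle \alpha, f^!_{1\to 3}(\gamma)\rangle_{X_1} = \langle \alpha, (f^!_{1\to 2}\circ f^!_{2\to 3})(\gamma)\rangle_{X_1}.$$
Unraveling the definition \eqref{equ on K0X12dual}, the left-hand side equals $p_{M_{13}*}(f_{13})^!(\alpha\boxtimes \gamma)$.

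Next I would route the computation through $M_{12}\times M_{23}$ using the identification $M_{13}\cong M_{12}\times_{X_2}M_{23}$, so that $p_{M_{13}*} = p_{M_{12}\times M_{23}*}\circ i_{\Delta *}$. Applying Corollary \ref{cor on funct of linearization} then yields
$$p_{M_{13}*}(f_{13})^!(\alpha\boxtimes \gamma) = p_{M_{12}\times M_{23}*}(f_{12}\times f_{23})^!\bigl(\alpha\boxtimes \gamma \boxtimes [\Delta_{X_2}]\bigr),$$
after the canonical rearrangement of factors from $X_1\times X_3\times X_2\times X_2$ to $X_1\times X_2\times X_2\times X_3$. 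I would then invoke Proposition \ref{prop on equiv diag} to expand $[\Delta_{X_2}] = \sum_i e_i\boxtimes e^i$ with $\{e_i\}\subset K^T_0(X_2,\phi_2)_{\loc}$ and $\{e^i\}\subset K^T_0(X_2,-\phi_2)_{\loc}$ dual to each other under $\langle -, -\rangle_{X_2}$, and use that $(f_{12}\times f_{23})^!$ distributes over external products of classes to rewrite the above as
$$\sum_i p_{M_{12}*}(f_{12})^!(\alpha\boxtimes e_i)\cdot p_{M_{23}*}(f_{23})^!(e^i\boxtimes \gamma) = \sum_i \langle \alpha, f^!_{1\to 2}(e_i)\rangle_{X_1}\cdot \langle e^i, f^!_{2\to 3}(\gamma)\rangle_{X_2},$$
where the second equality is the definition \eqref{equ on K0X12dual}.

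The dual basis identity $\sum_i e_i\cdot \langle e^i, \beta\rangle_{X_2} = \beta$ applied to $\beta = f^!_{2\to 3}(\gamma)$, combined with the linearity of both $f^!_{1\to 2}$ and $\langle\alpha, -\rangle_{X_1}$, collapses the above sum into $\langle \alpha, f^!_{1\to 2}(f^!_{2\to 3}(\gamma))\rangle_{X_1}$, completing the identification. The main obstacle will be verifying the product compatibility $(f_{12}\times f_{23})^!((\alpha\boxtimes e_i)\boxtimes(e^i\boxtimes \gamma)) = (f_{12})^!(\alpha\boxtimes e_i)\boxtimes (f_{23})^!(e^i\boxtimes \gamma)$; this will be checked by tracking that the $(-2)$-shifted symplectic structure on $\textbf{\emph{f}}_{12}\times \textbf{\emph{f}}_{23}$ is the exterior sum of those on $\textbf{\emph{f}}_{12}$ and $\textbf{\emph{f}}_{23}$, so that both the deformation-space specialization of \S \ref{sect on sp m} and the spinor morphism of \S \ref{sec:spin} split as external products; alternatively, this can be reduced via Proposition \ref{prop:compare_OT} to the known K\"unneth compatibility of Park's square root virtual pullback.
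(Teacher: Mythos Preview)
Your proposal is correct and follows essentially the same route as the paper: both arguments hinge on Corollary~\ref{cor on funct of linearization} to replace $(f_{13})^!$ by $(f_{12}\times f_{23})^!(-\boxtimes[\Delta_{X_2}])$, and then interpret this via the K\"unneth decomposition and perfect pairing to recover the composition $f^!_{1\to 2}\circ f^!_{2\to 3}$. The paper phrases this more abstractly by reading off the maps \eqref{equ on wrt as du1}--\eqref{equ on wrt as du2} directly from the diagram \eqref{eqn:proof_lemma_func} under the isomorphisms \eqref{equ on iso of dual} and \eqref{equ on kunn prod2}, whereas you make the dual-basis expansion $[\Delta_{X_2}]=\sum_i e_i\boxtimes e^i$ explicit; the content is the same, and the product compatibility you flag as an obstacle is also implicitly used (and similarly justified) in the paper's passage from $(f_{12}\times f_{23})^!$ to the tensor-product map \eqref{equ on wrt as du1}.
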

\begin{proof}
This follows from Corollary~\ref{cor on funct of linearization}. In fact, 
under isomorphisms \eqref{equ on iso of dual} and \eqref{equ on kunn prod2}, the upper-right composition in diagram \eqref{eqn:proof_lemma_func} composed with 
equivariant pushforward to a point: 
$$ K^T_0(X_1\times X_3,-\phi_{1}\boxplus \phi_{3})_{\loc} \xrightarrow{(f_{13})_{}^!} K_0^T(M_{12}\times_{\mathrm{Crit}(\phi_2)} M_{23})_{\loc} \to K_0^{T}(\mathrm{pt})_{\loc}$$
 becomes 
$$f^!_{1\to 3}: K^T_0(X_3,\phi_{3})_{\loc}\to K^T_0(X_1,\phi_{1})_{\loc}. $$
While the maps
$$K^T_0(X_1\times  X_2 \times X_2 \times X_3,-\phi_{1}\boxplus \phi_{2}\boxplus (-\phi_{2}) \boxplus \phi_{3})_{\loc} \xrightarrow{(f_{12}\times f_{23})_{}^!}
 K_0^T(M_{12}\times_{} M_{23})_{\loc} \to K^T_0(\mathrm{pt})_{\loc}, $$
$$ 
K^T_0(X_1\times X_3,-\phi_{1}\boxplus \phi_{3})_{\loc}\xrightarrow{(f_{12}\times f_{23})_{}^!\circ \Delta^{{123}}_{{1223}*}\circ \pi^{{123}*}_{{13}}} K_0^T(M_{12}\times_{} M_{23})_{\loc} \to K^T_0(\mathrm{pt})_{\loc}$$ 
become
\begin{equation}\label{equ on wrt as du1} K^T_0(X_2,\phi_{2})_{\loc}^\vee \otimes K^T_0(X_2,\phi_{2})_{\loc}\otimes 
K^T_0(X_3,\phi_{3})_{\loc}\to K^T_0(X_1,\phi_{1})_{\loc}, 
\end{equation}
\begin{equation}\label{equ on wrt as du2} K^T_0(X_3,\phi_{3})_{\loc}\to K^T_0(X_1,\phi_{1})_{\loc} \end{equation}
respectively. 
Plugging $[\Delta_{X_2}]\boxtimes (-)$ into \eqref{equ on wrt as du1} gives the map \eqref{equ on wrt as du2}, which by definition 
is the composition $f^!_{1\to 2}\circ f^!_{2\to 3}$.
\end{proof}
This finishes the proof of Theorem~\ref{thm:functor}. 

\section{Examples and applications}\label{sec:appl}
\subsection{Application I: Quantum critical $K$-theory} 
In this section, we define $K$-theoretic \textit{Gromov-Witten/quasimap type invariants} for $(W/\!\!/ G,\phi_{W/\!\!/ G})$ and prove a gluing formula for these invariants. 

\subsubsection{$K$-theoretic invariants}

Notations as in Example \ref{ex of papers}\,(1), let $W/\!\!/ G$ be a smooth GIT quotient of a complex vector space $W$ by a linear algebraic group $G$, and 
$$\phi_{W/\!\!/ G}: W/\!\!/ G\to \C$$ be a flat regular function, descent from a regular function $\phi_W: W\to \C$. 

Let $\widetilde{T}$ be a torus acting on $W/\!\!/ G$ such that $\phi_{W/\!\!/ G}$ is equivariant with respect to a character 
$$\chi:\widetilde{T}\to \C^* $$ and  $T:=\ker(\chi)$ be the subtorus, such that 
the torus fixed locus $\mathrm{Crit}(\phi_{W/\!\!/ G})^T$ is proper.

Let $\overline{\mathcal{M}}_{g,n}$ be the moduli stack of genus $g$, $n$-pointed stable curves (with $2g-2+n>0$) and consider the product
$$X:=(W/\!\!/ G)^n\times \overline{\mathcal{M}}_{g,n}.$$ 
Define the sum function    
$$\phi_X^{\boxplus n}:=\phi_{W/\!\!/ G}^{\boxplus n}\boxplus 0: (W/\!\!/ G)^n\times \overline{\mathcal{M}}_{g,n} \to \C, \quad (x_1,x_2,\cdots,x_n,C)\mapsto \sum_{i=1}^n \phi_{W/\!\!/ G}(x_i),$$
which does not depend on the factor $\overline{\mathcal{M}}_{g,n}$.

Proposition \ref{prop on ex sat set} ensures conditions in Definition \ref{defi of crit pb}. Combining with  Remark \ref{rmk on T-equiv}, we get a group homomorphism 
$$f_{\pi_X}^!:K_0^T\left(X, \phi_X^{\boxplus n}\right) \to K_0^T\left(\mathrm{QM}_{g,n}^{R_{\chi}=\omega_{\mathrm{log}}}(\Crit(\phi_{W/\!\!/ G}),\beta)
\right)_{}. $$
In below we also write $\mathrm{QM}_{g,n}^{R_{\chi}=\omega_{\mathrm{log}}}(\Crit(\phi_{W/\!\!/ G}),\beta)$ as $\mathrm{QM}_{g,n,\beta}$ for short. 
Applying \eqref{equ on k sqr pb} to above setting gives a $K$-theoretic analogue of \cite[Thm.~1.1]{CZ} (see \cite[Rmk.~4.14]{CZ}): 
\[\sqrt{f_Z^!}\circ\pi_X^*:K_0^T\left(Z\left(\phi_X^{\boxplus n}\right)\right)   \to K_0^T\left(\mathrm{QM}_{g,n,\beta}\right)_{ }.\]
By Proposition \ref{prop:compare_OT}, the following  diagram commutes
$$
\xymatrix{
K_0^T\left(Z\left(\phi_X^{\boxplus n}\right)\right)    \ar@{->>}[d]^{ }   \ar[rr]^{\sqrt{f_Z^!}\circ\pi_X^*      }   & & K_0^T\left(\QM_{g,n,\beta}\right)_{ }  \ar@{=}[d] \\
K_0^T\left(X,\phi_X^{\boxplus n}\right) \ar[rr]^{f_{\pi_X}^!  }  & &  K_0^T\left(\QM_{g,n,\beta}\right)_{ }.
}
$$
Taking exterior product in the domain and equivariant pushforward to a point in the target, we obtain the following
commutative diagram 
\begin{equation}
    \label{eqn:GW_diag}\xymatrix{
K_0(\overline{\mathcal{M}}_{g,n})\otimes K_0^T\left(Z\left(\phi_{W/\!\!/ G}^{\boxplus n}\right)\right)_{ }    \ar@{->>}[d]^{ }   \ar[r]^{ }    & K_0^T\left(\pt\right)_{\loc} \ar@{=}[d] \\
K_0(\overline{\mathcal{M}}_{g,n})\otimes K_0^T\left((W/\!\!/ G)^n,\phi_{W/\!\!/ G}^{\boxplus n}\right)_{ } 
\ar[r]^{\quad \quad \quad\quad\quad \quad  \Phi^{\Crit,K}_{g,n,\beta}}   &  K_0^T\left(\pt\right)_{\loc},
}
\end{equation}
where the upper row is the $K$-theoretical analogue of the map constructed in \cite[Def.~5.5]{CZ}. 
\begin{definition}
When $2g-2+n>0$, we define the $K$-theoretic \textit{quasimap invariants} of $(W/\!\!/ G,\phi_{W/\!\!/ G})$ to be the map in the bottom row:
 \begin{equation}\label{equ on phign}\Phi^{\Crit,K}_{g,n,\beta}: K_0(\overline{\mathcal{M}}_{g,n})\otimes K_0^T\left((W/\!\!/ G)^n,\phi_{W/\!\!/ G}^{\boxplus n}\right)_{ } \to K_0^T\left(\pt\right)_{\loc}. \end{equation}
 \end{definition}
\begin{remark}
For any decomposition $n=n_1+n_2$, by using tensor product, 
we can also evaluate $\Phi^{\Crit,K}_{g,n,\beta}$ on classes from
$K_0(\overline{\mathcal{M}}_{g,n})\otimes K_0^T\left((W/\!\!/ G)^{n_1},\phi_{W/\!\!/ G}^{\boxplus n_1}\right)\otimes K_0^T\left((W/\!\!/ G)^{n_2},\phi_{W/\!\!/ G}^{\boxplus n_2}\right)$. 
\end{remark}
When $(g,n)=(0,2)$, which is not in the stable range, there is no stablization map. 
We instead consider the map 
 $$\mathrm{QM}_{0,2}^{R_{\chi}=\omega_{\mathrm{log}}}(\Crit(\phi_{W/\!\!/ G}),\beta)\to \fBun_{H_R,0,2}^{R_{\chi}=\omega_{\mathrm{log}}}\times_{[\pt/G]^2}(W/\!\!/ G)^2\to (W/\!\!/ G)^2.$$ 
 This still satisfies Setting \ref{setting of lag} and we have
 \begin{equation}\label{equ on phi02}\Phi^{\Crit,K}_{0,2,\beta}: K_0^T\left((W/\!\!/ G)^2,\phi_{W/\!\!/ G}^{\boxplus 2}\right)_{ } \to K_0^T\left(\pt\right)_{\loc}. \end{equation}
 In what follows, we show that $\Phi^{\Crit,K}_{g,n,\beta}$ satisfies a gluing formula. 

\subsubsection{An automorphism}
There is an identification between $K_0^T(W/\!\!/ G,\phi_{W/\!\!/ G})$ and $K_0^T(W/\!\!/ G,-\phi_{W/\!\!/ G})$ (e.g.,~\cite[\S 5]{FK} and \cite[Rmk.~4.18]{CZ}), different from the one in
\eqref{equ on cano iso on minus}, which we now recall. 

Consider a $(G\times T)$-equivariant \textit{automorphism}: 
\begin{equation}\label{ord 2 autom}\sigma:W\to W, \quad \mathrm{such}\,\, \mathrm{that}\,\, \sigma^{*}\phi_W=-\phi_W. \end{equation}
It descents to an automorphism
\begin{equation}\label{ord 2 autom2}\sigma: W/\!\!/ G\to W/\!\!/ G, \quad \mathrm{such}\,\, \mathrm{that}\,\,\sigma^{*}\phi_{W/\!\!/ G}=-\phi_{W/\!\!/ G}.\end{equation}

\begin{example}
Recall \cite[Setting~2.1, Def.~2.5]{CZ}, starting from a nontrivial group homomorphism 
$$R_\chi: \bbC^*\stackrel{R}{\to} \widetilde{T} \stackrel{\chi}{\to} \bbC^*, $$
we can take $\sigma\in R_\chi^{-1}(-1)$ as a preimage of $-1$, which acts on $W$ through map $R$ and $\widetilde{T}$-action. 
Then $\sigma$ commutes with the action of $G\times T$ and satisfies $\sigma^{*}\phi_W=-\phi_W$ (ref.~\cite[Rmk.~4.18]{CZ}). 
\end{example}
The automorphism \eqref{ord 2 autom2} induces an   
automorphism on the moduli stack (\cite[Def.~4.20]{CZ}):
\begin{equation}\label{ord 2 autom3}\sigma: \QM_{g,n,\beta}\xrightarrow{\cong} \QM_{g,n,\beta}, \end{equation}
which commutes with \eqref{ord 2 autom2} under the evaluation map. In particular, for any 
$g_1,g_2,n_1,n_2,\beta_0,\ldots,\beta_k,\beta_{\infty}$, there is a commutative diagram 
\begin{equation}\label{eqn:bdy_divisor_curve2}
{\footnotesize{
\xymatrix{
\QM_{g_1,n_1+1,\beta_0}\times \prod_{i=1}^k\QM_{0,2,\beta_i}\times  \QM_{g_2,n_2+1,\beta_\infty} \ar[d]\ar[rrr]_{\cong}^{\id\times \prod_{i=1}^k\sigma^{\frac{1-(-1)^{i}}{2}}\times \sigma^{\frac{1+(-1)^{k}}{2}}} & & &
\QM_{g_1,n_1+1,\beta_0}\times \prod_{i=1}^k\QM_{0,2,\beta_i} \times \QM_{g_2,n_2+1,\beta_\infty} \ar[d] \\
(W/\!\!/ G)^{n_1+1} \times ((W/\!\!/ G)^2)^{k}\times (W/\!\!/ G)^{n_2+1} \ar[rrr]_{\cong}^{\id\times \prod_{i=1}^k\sigma^{\frac{1-(-1)^{i}}{2}}\times \sigma^{\frac{1+(-1)^{k}}{2}} } & &  &
 (W/\!\!/ G)^{n_1+1}\times  ((W/\!\!/ G)^2)^{k} \times (W/\!\!/ G)^{n_2+1},
} }}
\end{equation}
where $\sigma^0=\id$ is the identity map and $\sigma^{\frac{1+(-1)^{k}}{2}}$ acts on the factor $\QM_{g_2,n_2+1,\beta_\infty}$ and $(W/\!\!/ G)^{n_2+1}$. 

It is easy to check that the pullback of function (below we denote $\phi=\phi_{W/\!\!/ G}$ for simplicity)
$$\phi_{}^{\boxplus (n_1+1)}\boxplus (\phi \boxplus \phi)^{\boxplus k} \boxplus \phi^{\boxplus (n_2+1)}: (W/\!\!/ G)^{n_1+1} \times ((W/\!\!/ G)^2)^{k}\times (W/\!\!/ G)^{n_2+1}\to \C $$
via the isomorphism $\left(\id\times \prod_{i=1}^k\sigma^{\frac{1-(-1)^{i}}{2}}\times \sigma^{\frac{1+(-1)^{k}}{2}}\right)$ becomes: 
$$\phi^{\boxplus (n_1+1)}\boxplus \left(\boxplus_{i=1}^k(-1)^i(\phi \boxplus \phi)\right) \boxplus ((-1)^{k+1}\phi)^{\boxplus (n_2+1)}: (W/\!\!/ G)^{n_1+1} \times ((W/\!\!/ G)^2)^{k} \times (W/\!\!/ G)^{n_2+1}\to \C. $$
Therefore, we obtain: 
\begin{prop}\label{lem of crit k via sigma}
The above map induces an isomorphism of critical $K$-groups: 
\begin{align*}
&\quad \, \, K_0^T\left((W/\!\!/ G)^{n_1+1} \times ((W/\!\!/ G)^2)^{k}\times (W/\!\!/ G)^{n_2+1},\phi^{\boxplus (n_1+1)}\boxplus (\phi \boxplus \phi)^{\boxplus k} \boxplus \phi^{\boxplus (n_2+1)}\right) \\ \nonumber 
&\cong K_0^T\left((W/\!\!/ G)^{n_1+1} \times ((W/\!\!/ G)^2)^{k}\times (W/\!\!/ G)^{n_2+1},\phi^{\boxplus (n_1+1)}\boxplus \left(\boxplus_{i=1}^k(-1)^i(\phi \boxplus \phi)\right) \boxplus ((-1)^{k+1}\phi)^{\boxplus (n_2+1)}\right).
\end{align*}
\end{prop}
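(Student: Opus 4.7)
The plan is to reduce the statement to a general principle: an equivariant isomorphism of stacks that pulls one potential back to another induces an isomorphism of the associated critical $K$-theories. Concretely, the isomorphism of stacks
\[\Psi:=\id\times \prod_{i=1}^k\sigma^{\frac{1-(-1)^{i}}{2}}\times \sigma^{\frac{1+(-1)^{k}}{2}}\]
on $(W/\!\!/ G)^{n_1+1} \times ((W/\!\!/ G)^2)^{k}\times (W/\!\!/ G)^{n_2+1}$ is $T$-equivariant because each factor $\sigma$ is $(G\times T)$-equivariant by construction (this was used to descend it from $\sigma\colon W\to W$ to a map on the GIT quotient), and a product of $T$-equivariant isomorphisms is $T$-equivariant.

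Next I would record the potential identity that is already spelled out in the paragraph preceding the proposition: the pullback $\Psi^{*}\bigl(\phi^{\boxplus (n_1+1)}\boxplus (\phi \boxplus \phi)^{\boxplus k} \boxplus \phi^{\boxplus (n_2+1)}\bigr)$ equals $\phi^{\boxplus (n_1+1)}\boxplus \bigl(\boxplus_{i=1}^k(-1)^i(\phi \boxplus \phi)\bigr) \boxplus ((-1)^{k+1}\phi)^{\boxplus (n_2+1)}$. This is a direct factor-by-factor calculation using $\sigma^{*}\phi_{W/\!\!/ G}=-\phi_{W/\!\!/ G}$, together with the elementary observation that $\sigma^{\frac{1-(-1)^{i}}{2}}$ equals $\sigma$ on odd factors and the identity on even factors, so that the $i$-th pair contributes the sign $(-1)^{i}$; the last factor contributes the sign $(-1)^{k+1}$ by the same parity analysis applied to $\sigma^{\frac{1+(-1)^{k}}{2}}$.

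Finally, I would invoke general functoriality. For any equivariant isomorphism of algebraic stacks $\Psi\colon \calX\to \calY$ and any regular function $\psi\colon \calY\to \C$, the pullback functor $\Psi^{*}$ gives an equivalence of abelian categories $\Fact_{\coh}(\calY,\psi)\xrightarrow{\simeq}\Fact_{\coh}(\calX,\Psi^{*}\psi)$ that preserves acyclic objects, hence descends to an equivalence on homotopy categories and on their Verdier quotients, and therefore induces an isomorphism on Grothendieck groups, and similarly in the $T$-equivariant setting. Applying this to $\Psi$ and $\psi=\phi^{\boxplus (n_1+1)}\boxplus (\phi \boxplus \phi)^{\boxplus k} \boxplus \phi^{\boxplus (n_2+1)}$ yields exactly the desired isomorphism of critical $K$-groups.

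There is essentially no obstacle here; the only step requiring mild care is the parity bookkeeping in the potential pullback, which I have isolated as the middle step and which the authors have already written out in the body of the paper preceding the proposition statement.
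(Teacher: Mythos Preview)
Your proposal is correct and matches the paper's approach: the proposition is stated there as an immediate consequence (``Therefore, we obtain:'') of the potential pullback computation in the preceding paragraph, relying implicitly on the same general functoriality you spell out. You have simply made explicit the step the authors leave tacit.
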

\begin{remark}\label{rmk on repack fun}
By repacking the functions, we have 
\begin{align*}
&\quad \, \, K_0^T\left((W/\!\!/ G)^{n_1+1} \times ((W/\!\!/ G)^2)^{k}\times (W/\!\!/ G)^{n_2+1},\phi^{\boxplus (n_1+1)}\boxplus \left(\boxplus_{i=1}^k(-1)^i(\phi \boxplus \phi)\right) \boxplus ((-1)^{k+1}\phi)^{\boxplus (n_2+1)}\right) \\ \nonumber 
&\cong K_0^T\left((W/\!\!/ G)^{n_1} \times ((W/\!\!/ G)^2)^{k+1}\times (W/\!\!/ G)^{n_2},\phi^{\boxplus \,n_1}\boxplus \left(\boxplus_{i=1}^{k+1}(-1)^i(-\phi \boxplus \phi)\right) \boxplus ((-1)^{k+1}\phi)^{\boxplus \,n_2}\right).
\end{align*}
\end{remark}

\subsubsection{Gluing formula}
For a decomposition $n=n_1+n_2$, $g=g_1+g_2$ (with $2g_i-2+n_i+1>0$ for $i=1,2$), we have the \textit{gluing morphism}
\begin{equation}\label{equ on glue mor}\iota: \overline{\calM}_{g_1,n_1+1}  \times \overline{\calM}_{g_2,n_2+1} \to \overline{\calM}_{g,n}. \end{equation}
The image of $\iota$ is a divisor. 
The preimage of $\Image(\iota)$ along the map $\QM_{g,n,\beta}\to \overline{\calM}_{g,n}$ is a reducible divisor. 
Each decomposition of curve class gives a component on this divisor. 

More precisely, let $\bbX(G)$ denote the character group of $G$. For any $\beta\in \Hom_{\mathbb{Z}}(\bbX(G),\mathbb{Z})$, and a decomposition of curve class 
$$\beta=\beta_0+\beta_1+\cdots+\beta_k+\beta_\infty, $$ 
we consider Cartesian diagram
\begin{equation}\label{eqn:bdy_divisor_curve}
{\footnotesize{
\xymatrix{
\QM_{g_1,n_1+1,\beta_0}\times_{\Delta}\cdots\times_{\Delta}\QM_{g_2,n_2+1,\beta_\infty}\ar[d]\ar[rr] \ar@{}[dr]|{\Box} 
& &\QM_{g_1,n_1+1,\beta_0}\times \QM_{0,2,\beta_1}\times \cdots\times \QM_{0,2,\beta_k}\times \QM_{g_2,n_2+1,\beta_\infty}\ar[d] \\
(W/\!\!/ G)^{n_1}\times (W/\!\!/ G)^{k+1}\times (W/\!\!/ G)^{n_2}\ar[rr]^{\id^{n_1}\times \Delta^{k+1} \times \id^{n_2} \quad \quad \quad \quad } &&
(W/\!\!/ G)^{n_1}\times(W/\!\!/ G) \times ((W/\!\!/ G)^2)^{k}\times(W/\!\!/ G) \times (W/\!\!/ G)^{n_2},
} }}
\end{equation}
where $\Delta: W/\!\!/ G\to W/\!\!/ G\times W/\!\!/ G$ is the \textit{diagonal map},
the upper-left corner being a substack of $\QM_{g,n,\beta}$, which under the map $\QM_{g,n,\beta}\to \overline{\calM}_{g,n}$ lands in the image of $\iota$,   

Proper pushforward along the diagonal map 
$$\Delta_*: K_0^T\left(W/\!\!/ G\right)\to K_0^T\left(W/\!\!/ G\times W/\!\!/ G, -\phi_{W/\!\!/ G}\boxplus \phi_{W/\!\!/ G}\right)$$
gives the diagonal class
$$\oO_{\Delta}:=\Delta_*\oO_{W/\!\!/ G}. $$
For any $k\in\bbN$, consider the product 
\begin{equation}\label{equ on higher diag}\oO_{\Delta^{k+1}}:=( \oO_{\Delta})^{\otimes(k+1)}\in 
K_0^T\left(W/\!\!/ G\times W/\!\!/ G, -\phi_{W/\!\!/ G}\boxplus \phi_{W/\!\!/ G}\right)^{\otimes (k+1)}. \end{equation}
We define a map 
\begin{align}\label{map phi tens}
&\Phi^{\Crit,K}_{g_1,n_1+1,\beta_0}\otimes \Phi^{\Crit,K}_{0,2,\beta_1}\otimes \cdots \otimes\Phi^{\Crit,K}_{0,2,\beta_k}\otimes \Phi^{\Crit,K}_{g_2,n_2+1,\beta_{\infty}}: 
\\  \nonumber
&\quad \quad  K_0(\overline{\mathcal{M}}_{g_1,n_1+1}\times \overline{\mathcal{M}}_{g_2,n_2+1})\otimes  
K_0^T\left((W/\!\!/ G)^n\times (W/\!\!/ G)^{2k+2},\phi_{W/\!\!/ G}^{\boxplus n}\boxplus \phi_{W/\!\!/ G}^{\boxplus (2k+2)}\right) 
\to K_0^T\left(\pt\right)_{\loc},
\end{align} 
as the critical pullback applied to the right column of \eqref{eqn:bdy_divisor_curve} followed by equivariant pushforward to a point. 
Using Proposition \ref{lem of crit k via sigma} and Remark \ref{rmk on repack fun}, we can evaluate \eqref{map phi tens} on  
\begin{equation} 
\alpha\boxtimes \gamma \boxtimes \oO_{\Delta^{k+1}},
\nonumber \end{equation}
where $$\alpha\in K_0(\overline{\mathcal{M}}_{g_1,n_1+1}\times \overline{\mathcal{M}}_{g_2,n_2+1}), \,\,\, \gamma\in K_0^T\left((W/\!\!/ G)^{n_1},\phi_{W/\!\!/ G}^{\boxplus n_1}\right)\otimes K_0^T\left((W/\!\!/ G)^{n_2},(-1)^{k+1}\phi_{W/\!\!/ G}^{\boxplus n_2}\right).$$
By the isomorphism $\sigma^{\frac{1+(-1)^{k}}{2}}$, $\gamma$ is also a class in 
$K_0^T\left((W/\!\!/ G)^{n_1},\phi_{W/\!\!/ G}^{\boxplus n_1}\right)\otimes K_0^T\left((W/\!\!/ G)^{n_2},\phi_{W/\!\!/ G}^{\boxplus n_2}\right)$.
\begin{theorem}
[Gluing formula in quantum critical $K$-theory]
\label{thm on glue in GLSM}
For any classes 
$$\alpha\in K_0(\overline{\mathcal{M}}_{g_1,n_1+1}\times \overline{\mathcal{M}}_{g_2,n_2+1}),
\,\,\, \gamma\in K_0^T\left((W/\!\!/ G)^{n_1},\phi_{W/\!\!/ G}^{\boxplus n_1}\right)\otimes K_0^T\left((W/\!\!/ G)^{n_2},(-1)^{k+1}\phi_{W/\!\!/ G}^{\boxplus n_2}\right), $$
we have
\begin{align} 
\label{form on glue in GLSM}
&\quad \,\, \Phi^{\Crit,K}_{g,n,\beta}(\iota_*\alpha\boxtimes\gamma)\\  \nonumber
&=\sum_{k=0}^{\infty}(-1)^k\sum_{\beta_0+\beta_1+\cdots+\beta_k+\beta_\infty=\beta}\Phi^{\Crit,K}_{g_1,n_1+1,\beta_0}\otimes \Phi^{\Crit,K}_{0,2,\beta_1}\otimes \cdots \otimes\Phi^{\Crit,K}_{0,2,\beta_k}\otimes \Phi^{\Crit,K}_{g_2,n_2+1,\beta_{\infty}}(\alpha\boxtimes \gamma\boxtimes\oO_{\Delta^{k+1}}).
\end{align}
 \end{theorem}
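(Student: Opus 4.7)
The plan is to reduce the gluing identity for critical pullbacks to a corresponding identity for square root virtual pullbacks on the zero locus, via Proposition~\ref{prop:compare_OT}. By the commutative diagram \eqref{eqn:GW_diag}, $\Phi^{\Crit,K}_{g,n,\beta}$ factors through the surjection $K_0^T(Z(\phi_X^{\boxplus n}))\twoheadrightarrow K_0^T(X,\phi_X^{\boxplus n})$ as the composition $p_*\circ \sqrt{f_Z^!}\circ \pi_X^*$, and similarly for each factor $\Phi^{\Crit,K}_{g_i,n_i+1,\beta_j}$ appearing on the right-hand side of \eqref{form on glue in GLSM}. Hence by choosing coherent lifts of $\alpha,\gamma$, and lifting each higher diagonal class $\oO_{\Delta^{k+1}}$ tautologically via \eqref{equ on diag}, it suffices to establish the analogue of \eqref{form on glue in GLSM} with $f_{\pi_X}^!$ replaced by $\sqrt{f_Z^!}\circ\pi_X^*$ throughout, where standard bivariant intersection theory is available.

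The geometric input is encoded in diagram \eqref{eqn:bdy_divisor_curve}: the preimage in $\QM_{g,n,\beta}$ of the divisor $\iota(\overline{\calM}_{g_1,n_1+1}\times\overline{\calM}_{g_2,n_2+1})\subset\overline{\calM}_{g,n}$ decomposes as a union, indexed by $k\geq 0$ and by decompositions $\beta=\beta_0+\beta_1+\cdots+\beta_k+\beta_\infty$, of loci parameterizing a stable left piece, a stable right piece, and a chain of $k$ unstable rational bubbles attached at $k+1$ nodes, with the matching at each node enforced by pullback along the diagonal $\Delta\colon W/\!\!/ G\to (W/\!\!/ G)^2$. I will apply Theorem~\ref{thm on funct} (functoriality) and Proposition~\ref{pullback comm with base change} (bivariance) to the right column of \eqref{eqn:bdy_divisor_curve}, so that the contribution of the $(k,\beta_*)$-stratum to $\iota^!\Phi^{\Crit,K}_{g,n,\beta}$ becomes exactly the evaluation of \eqref{map phi tens} on a class involving the $(k+1)$-fold diagonal $\oO_{\Delta^{k+1}}$, one factor of $\oO_\Delta$ per node. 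The isomorphism of Proposition~\ref{lem of crit k via sigma}, in combination with the compatibility diagram \eqref{eqn:bdy_divisor_curve2}, is then used to repackage the alternating product of $\pm\phi_{W/\!\!/ G}$-factorizations along the chain into the uniform shape $\phi_{W/\!\!/ G}^{\boxplus n}\boxplus \phi_{W/\!\!/ G}^{\boxplus(2k+2)}$ required by the definition of the right-hand side of \eqref{form on glue in GLSM}.

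The alternating sign $(-1)^k$ and the summation over $k$ originate from the $K$-theoretic boundary structure: iterating the relation $[\oO_D]=1-\oO(-D)$ at the $k$ bubble nodes, then specializing the smoothing parameters along the deformation space from Lemma~\ref{prop on fun on def} as $t\to 0$, produces an overall factor of $(-1)^k$ for the $k$-bubble stratum after the normal-bundle contributions are trivialized. I expect the technical heart of the argument to be the combinatorial bookkeeping: verifying that the $\mathbb{C}^*$-rescaling automorphism of each bubble $\QM_{0,2,\beta_i}$ is absorbed into the normalization of $\Phi^{\Crit,K}_{0,2,\beta_i}$, that strata of different $k$ neither over- nor under-count in the decomposition of $\iota^*[\oO_{\QM_{g,n,\beta}}]$, and that the signs produced by the $\sigma$-twists along alternating segments match those produced by the boundary expansion. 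This mirrors the cohomological quasimap gluing formula of \cite{CZ}; once the reduction to zero-locus $K$-theory described above is in place, the argument of \textit{loc.\,cit.} transports with coherent sheaves replacing cycles throughout.
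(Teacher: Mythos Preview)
Your overall strategy is correct and essentially matches the paper's, which also defers to the argument of \cite[Thm.~5.7]{CZ} together with the $K$-theoretic inclusion-exclusion principle of \cite[Lem.~3]{Lee}, replacing square root virtual pullbacks by critical pullbacks via the functoriality Theorem~\ref{thm on funct}. One minor difference: you propose to first reduce back to square root virtual pullbacks on the zero locus via Proposition~\ref{prop:compare_OT}, whereas the paper works directly with critical pullbacks throughout, using Theorem~\ref{thm on funct} and Proposition~\ref{pullback comm with base change} as drop-in replacements for the corresponding properties in \cite{Park1}. Both routes are valid and lead to the same argument from \cite{CZ}; your detour through the zero locus is harmless but unnecessary once those structural results are available.

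One correction: your invocation of Lemma~\ref{prop on fun on def} in the third paragraph is misplaced. That lemma constructs the potential function $\bar\phi$ on the deformation space $\mathring{M}_f$ used to define the specialization map \eqref{equ on spe map on k}; it has no bearing on the boundary stratification of $\overline{\mathcal{M}}_{g,n}$ or on the sign $(-1)^k$. The alternating sign and the sum over $k$ come purely from the inclusion-exclusion principle \cite[Lem.~3]{Lee} applied to the reducible boundary divisor over $\mathrm{Im}(\iota)$, exactly as in ordinary quantum $K$-theory. The $\sigma$-twists (Proposition~\ref{lem of crit k via sigma}, diagram~\eqref{eqn:bdy_divisor_curve2}) repackage functions but do not generate signs.
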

\begin{proof}
Taking into account the inclusion-exclusion principle \cite[Lem.~3]{Lee} and the functoriality (Theorem \ref{thm on funct}), the proof of \cite[Thm.~5.7]{CZ} 
implies the claim, with the role of square root virtual pullbacks in \cite[Prop.~4.22]{CZ} replaced by critical pullbacks. 
We leave details to interested readers.
\end{proof}

\subsubsection{Gluing formula under decomposition of diagonal}
Now we assume the potential function
$$\phi_{W/\!\!/ G}: W/\!\!/ G\to \C, $$
satisfies the decomposition of diagonal \eqref{equ on kunn prod}.  
The diagonal class decomposes according to a basis of the critical $K$-group:
$$\oO_{\Delta}=\sum_{i,j}g_{ij}\,\eta_{i}\otimes \eta_{j}.$$
Taking product, for any $k\in\bbN$
$$\oO_{\Delta^{k+1}}=\sum_{i_0,\cdots,i_{2k+1}}\bigotimes_{m=0}^{k}g_{i_{2m},i_{2m+1}}\eta_{i_{2m}}\otimes \eta_{i_{2m+1}}.$$
We write $$\gamma=\gamma_1\otimes\cdots \otimes \gamma_n\in K_0^T\left(W/\!\!/ G,\phi_{W/\!\!/ G}\right)^{\otimes n}_{\loc}\cong  K_0^T\left((W/\!\!/ G)^n,\phi_{W/\!\!/ G}^{\boxplus n}\right)_{\loc}. $$
Let $$\alpha=\alpha_1\otimes\alpha_2\in K_0(\overline{\mathcal{M}}_{g_1,n_1+1})\otimes K_0(\overline{\mathcal{M}}_{g_2,n_2+1}).$$
The right-hand side of Eqn.~\eqref{form on glue in GLSM} then takes the more familiar form:
\begin{align}
\label{equ on pair n}
&\quad \,\, \Phi^{\Crit,K}_{g_1,n_1+1,\beta_0}\otimes \Phi^{\Crit,K}_{0,2,\beta_1}\otimes \cdots \otimes\Phi^{\Crit,K}_{0,2,\beta_k}\otimes \Phi^{\Crit,K}_{g_2,n_2+1,\beta_{\infty}}(\alpha\boxtimes\gamma\boxtimes\oO_{\Delta^{k+1}})= \sum_{i_0,\cdots,i_{2k+1}}\prod_{m=0}^{k}g_{i_{2m},i_{2m+1}} \\ \nonumber 
&\cdot \Phi^{\Crit,K}_{g_1,n_1+1,\beta_0}(\alpha_1\boxtimes \gamma_1\otimes\cdots \otimes\gamma_{n_1}\otimes \eta_{i_0})\cdot \prod_{m=1}^{k}\Phi^{\Crit,K}_{0,2,\beta_m}(\eta_{i_{2m-1}}\otimes\eta_{i_{2m}}) \cdot \Phi^{\Crit,K}_{g_2,n_2+1,\beta_{\infty}}(\alpha_2\boxtimes\gamma_{n_1+1}\otimes\cdots \otimes\gamma_{n}\otimes\eta_{i_{2k+1}}), 
\end{align}
written as a sum over tensor products of \eqref{equ on phign} and \eqref{equ on phi02}.



\subsection{Application II: Donaldson-Thomas theory of local Calabi-Yau 4-folds} 
Let $(Y,D)$ be a 4-dimensional (local) log Calabi-Yau pair \cite[Eqn.~(1.3)]{CZZ},~i.e.
\begin{equation}\label{equ on log cy}Y=\Tot(\omega_{U}(S)), \quad D=\Tot(\omega_S),  \end{equation}
where $U$ is a smooth 3-fold and $S$ is a smooth divisor of $U$. Note that $D$ is an anti-canonical divisor of $Y$.
We assume in below that for all numerical $K$-theory classes $R$ on $D$ considered, there is an equivalence 
of $(-1)$-shifted symplectic derived schemes: 
\begin{equation}\label{equ on iso of hilbd}\textbf{Hilb}^{R}(D)\cong \textbf{Crit}\left(\phi_R\right), \end{equation}
where $\phi_R$ is a regular function on a smooth scheme. Let $T$ be a torus acting on the pair $(Y,D)$ and domain of $\phi_R$ (preserving the function 
$\phi_R$), so that the action is compatible under \eqref{equ on iso of hilbd}.

Given a simple degeneration $X\to \mathbb{A}^1$ of local Calabi-Yau 4-folds \cite[Defs.~2.6,~2.10]{CZZ}, such that the central fiber 
is the union  
$$X_0=Y_-\cup_D Y_+, $$
where $(Y_-,D)$, $(Y_+,D)$ are two log Calabi-Yau pairs as \eqref{equ on log cy} 
and $D$ is connected (for simplicity), and given numerically equivalent $K$-theory classes $P_t \in K_{c,\leqslant 1}^\mathrm{{num}} (X/\mathbb{A}^1)$ for all $t\in \bbA^1$ \cite[Def.~2.16,~Eqn.~(2.11)]{CZZ}, 
there is a \textit{family virtual class}\footnote{The $\Phi^{P_t}_{X/\bbA^1}$ here is the $K$-theoretic analogue of $\Phi^{P_t}_{X/\bbA^1}([\calC^{P_t}])$ in \cite[Def.~5.13]{CZZ}.}:\begin{equation*}\Phi^{P_t}_{X/\bbA^1}\in K_0^T\left(\Hilb^{P_t}(X/\bbA^1)\right)_{ }.  \end{equation*}
Its Gysin pullback under inclusion $0\hookrightarrow \bbA^1$ \cite[diagram~(5.21)]{CZZ} gives 
\begin{equation}\label{equ on i0}i_0^!\Phi^{P_t}_{X/\bbA^1}\in K_0^T\left(\Hilb^{P_t}(X/\bbA^1)_0\right)_{ }. \end{equation}
Here $\Hilb^{P_t}(X/\bbA^1)_0$ is the zero fiber of $\Hilb^{P_t}(X/\bbA^1)\to \bbA^1$, which admits gluing maps: 
\begin{equation}\label{equ on glue map}
{
\footnotesize{
\xymatrix{
\Hilb^{Q_0}(Y_-,D)\times_{\Hilb^{R_0}(D)} \Hilb^{Q_1}(\Delta)^{\sim}\times_{\Hilb^{R_1}(D)}\cdots \times_{\Hilb^{R_{k-1}}(D)}\Hilb^{Q_k}(\Delta)^{\sim}\times_{\Hilb^{R_{k}}(D)} \Hilb^{Q_{\infty}}(Y_+,D) \to 
\Hilb^{P_t}(X/\bbA^1)_0, } } }
\end{equation}
where $\Delta=\mathbb{P}(N_{D/Y_-}\oplus \oO_D)$ is the bubble with two anti-canonical divisors $D_0, D_\infty$, $ \Hilb^{-}(\Delta)^{\sim}$ denotes the rubber Hilbert stack (e.g.~\cite[Def.~A.1]{CZZ}), 
and 
$$Q_0|_D=Q_1|_{D_0}=R_0, \,\,\,  Q_1|_{D_\infty}=Q_2|_{D_0}=R_1, \,\,\, \cdots, \,\,\, Q_k|_{D_\infty}=Q_\infty|_{D}=R_k, $$
$$P_0=\sum_{i=0}^kQ_i+Q_\infty-\sum_{i=0}^kR_i.$$
We denote 
\begin{equation}\label{equ on class delta}\delta=(Q_0,Q_1,\cdots,Q_k,Q_\infty,R_0,R_1,\cdots,R_k) \end{equation} 
to be the collection of those numerical classes, with $k(\delta):=k$ recording the number of bubble components, and  
$$P(\delta):=\sum_{i=0}^kQ_i+Q_\infty-\sum_{i=0}^kR_i.$$
Under assumptions in Lemma \ref{lem on exi of reso2}, critical pullbacks define \textit{relative virtual classes} of $\Hilb^{Q_0}(Y_{-},D)$, $\Hilb^{Q_\infty}(Y_{+},D)$:
\begin{equation*} \Phi_{Y_{-},D}^{Q_{0}}:
K_0^T\left(\phi_{Q_{0}|_D}\right) \to K_0^T\left(\Hilb^{Q_{0}}(Y_{-},D)\right)_{ }, \end{equation*}
\begin{equation*} \Phi_{Y_{+},D}^{Q_{\infty}}:
K_0^T\left(\phi_{Q_{\infty}|_D}\right) \to K_0^T\left(\Hilb^{Q_{\infty}}(Y_{+},D)\right)_{ }, \end{equation*}
where $K_0^T\left(\phi_{Q_{(-)}|_D}\right)$ is the shorthand for the critical $K$-groups of regular functions $\phi_{Q_{(-)}|_D}$. 

And similarly there are \textit{relative virtual classes} of rubber moduli stacks $\Hilb^{Q_i}(\Delta)^{\sim}$: 
$$\Phi_{\Delta^{\sim}}^{Q_i}: K_0^T\left(\phi_{Q_i|_{D_0}}\right)\otimes K_0^T\left(\phi_{Q_i|_{D_\infty}}\right) \to K_0^T\left( \Hilb^{Q_i}(\Delta)^{\sim}\right)_{ }.$$
Given $\delta$ as \eqref{equ on class delta}, we have
$R_i=Q_i|_{D_\infty}=Q_{i+1}|_{D_0}$ and diagonal class 
$$\oO_{\Delta_i}=\Delta_{i*}\oO\in K_0^T\left(-\phi_{R_i}\right)\otimes K_0^T\left(\phi_{R_i}\right)\cong K_0^T\left(\phi_{R_i}\right)\otimes K_0^T\left(\phi_{R_i}\right), $$ 
where 
$$\Delta_i: W_i/\!\!/ G_i\to W_i/\!\!/ G_i\times W_i/\!\!/ G_i, \quad x\mapsto (x,x)$$
is the diagonal embedding and $W_i/\!\!/ G_i$ is the domain of the function $\phi_{R_i}$, and the  isomorphism is the canonical one \eqref{equ on cano iso on minus}. 

We denote the product diagonal class 
\begin{align}\label{equ on higher diag2}\oO_{\Delta^{k+1}}:=\bigotimes_{i=0}^{k}\oO_{\Delta_i}&  \in  \,
\bigotimes_{i=0}^{k} \left(K_0^T\left(\phi_{R_i}\right)\otimes K_0^T\left(\phi_{R_i}\right)\right) \\ \nonumber
&\cong  K_0^T\left(\phi_{Q_0|_{D}}\right) \otimes \bigotimes_{i=1}^{k}\left(K_0^T\left(\phi_{Q_i|_{D_0}}\right) \otimes K_0^T\left(\phi_{Q_i|_{D_\infty}}\right)\right)
\otimes K_0^T\left(\phi_{Q_\infty|_{D}}\right).
\end{align}
As in the previous section, following the proof of \cite[Thm.~5.21]{CZZ} and \cite[\S 4.2]{Qu}, we have:
\begin{theorem}
[Degeneration formula of $K$-theoretic $\DT_4$ invariants]
\label{thm on glue in DT4}

\begin{align}\label{equ on glue for of DT4} i_0^!(\Phi^{P_t}_{X/\bbA^1})=\sum_{k=0}^{\infty}(-1)^k\sum_{\begin{subarray}{c}\delta \\ k(\delta)=k \\ P(\delta)=P_0   \end{subarray}} \Phi_{Y_{-},D}^{Q_0}\otimes \Phi_{\Delta^{\sim}}^{Q_1}\otimes 
\cdots \otimes\Phi_{\Delta^{\sim}}^{Q_k}\otimes \Phi_{Y_{+},D}^{Q_\infty}(\oO_{\Delta^{k+1}}),  
\end{align}
where $\oO_{\Delta^{k+1}}$ is the class \eqref{equ on higher diag2} and the RHS lies in $K_0^T\left(\Hilb^{P_t}(X/\bbA^1)_0\right)_{}$ by the map
\eqref{equ on glue map}. 
\end{theorem}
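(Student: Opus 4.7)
The plan is to adapt the strategy of Theorem~\ref{thm on glue in GLSM}, replacing the quasimap moduli stacks with relative and rubber Hilbert stacks, and leveraging the isomorphism \eqref{equ on iso of hilbd} that realizes Hilbert schemes of $D$ as derived critical loci. First, for each pair $(Y_\pm,D)$ and each numerical class $Q$ such that the critical pullback hypotheses in Setting~\ref{setting of lag} hold (as stipulated via Lemma~\ref{lem on exi of reso2}), I would express the relative virtual classes $\Phi^{Q}_{Y_\pm,D}$ and the rubber classes $\Phi^{Q}_{\Delta^{\sim}}$ as critical pullbacks of the evaluation maps to $W_i/\!\!/ G_i$, rather than as square root virtual pullbacks of the zero loci $Z(\phi_{Q|_D})$. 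By Proposition~\ref{prop:compare_OT}, these two formulations agree modulo the surjective canonical map, so the resulting invariants match those of \cite[Def.~5.13]{CZZ} after composing with that canonical map.

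Second, I would establish the analog of diagram \eqref{eqn:bdy_divisor_curve}: for each gluing datum $\delta$ as in \eqref{equ on class delta}, there is a Cartesian square
\[
\xymatrix{
\Hilb^{Q_0}(Y_{-},D)\times_\Delta \cdots\times_\Delta \Hilb^{Q_\infty}(Y_+,D) \ar[r] \ar[d] \ar@{}[dr]|{\Box} & \Hilb^{Q_0}(Y_{-},D)\times \prod_i\Hilb^{Q_i}(\Delta)^\sim \times \Hilb^{Q_\infty}(Y_+,D) \ar[d] \\
\prod_i W_i/\!\!/ G_i \ar[r]^{\Delta^{k+1}\quad} & \prod_i (W_i/\!\!/ G_i)^2,
}
\]
where the vertical maps are the evaluation maps at each of the $k+1$ nodes. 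The upper-left corner is a component of $\Hilb^{P_t}(X/\bbA^1)_0$ via the gluing map \eqref{equ on glue map}. Using the bivariance of critical pullbacks (Proposition~\ref{pullback comm with base change}) together with the definition of the higher diagonal class \eqref{equ on higher diag2}, the restriction of $i_0^!(\Phi^{P_t}_{X/\bbA^1})$ to this component can be expressed as the critical pullback applied to the tensor $\Phi^{Q_0}_{Y_-,D}\otimes \bigotimes_i\Phi^{Q_i}_{\Delta^\sim}\otimes \Phi^{Q_\infty}_{Y_+,D}$ evaluated on $\oO_{\Delta^{k+1}}$.

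Third, the functoriality of critical pullbacks (Theorem~\ref{thm on funct}), applied to the tower of derived fiber products describing how $\Hilb^{P_t}(X/\bbA^1)$ deforms through the family, gives the compatibility of $i_0^!\Phi^{P_t}_{X/\bbA^1}$ with the decomposition of the central fiber into components indexed by $\delta$ with $P(\delta)=P_0$. The compatibility of obstruction theories required in diagram \eqref{eqn:triang_obst} is ensured by the Lagrangian correspondence structure of the gluing data (cf.\,Remark~\ref{rmk:lag_cl}), which is already verified in the cohomological proof of \cite[Thm.~5.21]{CZZ}.

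Finally, the alternating sum $\sum_{k}(-1)^k$ comes from the inclusion-exclusion principle (as in \cite[Lem.~3]{Lee} and \cite[\S 4.2]{Qu}) applied to the stratification of $\Hilb^{P_t}(X/\bbA^1)_0$ by the number of rubber bubbles: each configuration with $k$ bubbles is overcounted at the boundary of configurations with fewer bubbles, and inclusion-exclusion yields the stated signs. The main obstacle is to verify carefully that the Lagrangian compatibility of the obstruction theories in \eqref{eqn:triang_obst} is preserved through iterated fiber products over rubber components, which requires combining the Lagrangian triple intersection argument of the proof of Theorem~\ref{thm:functorial} with the expanded pair formalism of \cite{CZZ}; once this is done, the functoriality \eqref{equ on fun of crit pb} can be iterated to reduce \eqref{equ on glue for of DT4} to the cohomological degeneration formula whose $K$-theoretic lift is essentially dictated by Proposition~\ref{prop:compare_OT}.
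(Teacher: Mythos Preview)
Your proposal is correct and follows essentially the same approach as the paper: the paper's proof is the single sentence ``As in the previous section, following the proof of \cite[Thm.~5.21]{CZZ} and \cite[\S 4.2]{Qu}, we have:'', which defers to exactly the ingredients you identify---adapting the GLSM gluing argument (Theorem~\ref{thm on glue in GLSM}), replacing square root virtual pullbacks by critical pullbacks via functoriality (Theorem~\ref{thm on funct}) and Proposition~\ref{prop:compare_OT}, and invoking the $K$-theoretic inclusion-exclusion of \cite[\S 4.2]{Qu} for the alternating sum. Your proposal is in fact a more detailed elaboration of the same strategy the paper sketches.
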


\appendix 


\section{}\label{app on sympl}
We recall basics in shifted symplectic structures (ref.~\cite{PTVV, Cal, CPTVV, Park2}). 

\begin{definition} 
A graded mixed complex $E$ (over a base ring $R$) consists of the following data: 
\begin{itemize}
\item A $\mathbb{Z}$-graded complex of $R$-modules 
$$E=\bigoplus_{p\in \mathbb{Z}}E(p), $$
where each $E(p)$ has internal differential $d_{E}|_{E(p)}\equiv d_{E(p)}$. 
\item A map of chain complexes 
$$\epsilon: E(p)\to E(p+1)[-1]$$
such that $\epsilon^2=0$. 
\end{itemize}
\end{definition}

\begin{definition} 
Let $M\to N$ be a map of derived stacks over $R$ and $DR(M/N)$ be the derived de Rham complex \cite[Def.~2.4.2]{CPTVV}, which is a graded mixed complex. 
Consider the \textit{negative cyclic}, \textit{periodic cyclic}, \textit{Hochschild} complexes of weight $p$
$$NC(DR(M/N))(p), \quad PC(DR(M/N))(p), \quad C(DR(M/N))(p), $$
where 
$$NC^k(E)(p):=\prod_{i\geqslant 0} E^{k-2i}(p+i), \quad d_{NC(E)}=d_E+\epsilon, $$
$$PC^k(E)(p):=\prod_{i\in \mathbb{Z}} E^{k-2i}(p+i), \quad d_{PC(E)}=d_E+\epsilon, $$
$$C^k(E)(p):=E^{k}(p), \quad d_{C(E)}=d_E, $$
and the corresponding \textit{negative cyclic}, \textit{periodic cyclic}, \textit{Hochschild} homology 
$$HN^k(DR(M/N))(p), \quad HP^k(DR(M/N))(p), \quad HH^k(DR(M/N))(p). $$
\end{definition}
\begin{remark}
We refer to elements in $HN^{k-p}(DR(M/N))(p)$ (resp.~$HH^{k-p}(DR(M/N))(p)$) as $k$-\textit{shifted} closed $p$-\textit{forms} (resp.~$k$-\textit{shifted} $p$-\textit{forms}). 
It is straightforward to define canonical maps:
$$HN^k(DR(M/N))(p)\to HP^k(DR(M/N))(p), \quad HN^k(DR(M/N))(p)\to HH^k(DR(M/N))(p).  $$
\end{remark}
\begin{definition} 
Let $f: M\to N$ be a map of derived stacks over $R$. An $n$-\textit{shifted symplectic structure} on $f$ is an element
$\Omega\in HN^{n-2}(DR(M/N))(2)$ such that its image in $HH^{n-2}(DR(M/N))(2)$ is non-degenerate,~i.e. inducing a quasi-isomorphism 
$$\bbT_{M/N}\cong \bbL_{M/N}[n]. $$
\end{definition}

\begin{definition} 
Let $f: M\to N$ be a map of derived stacks over a derived stack $B$ and $N\to B$ be the structure map endowed with an $n$-shifted symplectic structure $\Omega$. 
A \textit{Lagrangian structure} on $f$ (relative to $B$) is a nullhomotopy 
$$f^*\Omega \thicksim 0$$
in the space $\calA^{2,cl}(M/B)(n)$ of $n$-shifted closed $2$-forms, such that the induced morphism
$$\bbT_{f}\to \bbL_{M/B}[n-1] $$
is a quasi-isomorphism. 
\end{definition}
Next we recall the notion of orientations on shifted symplectic stacks, as in \cite[\S 2.4]{BJ}, \cite{CL1, CL2}. 
\begin{definition}\label{ori on even cy}
Let $f: M\to N$ be a map of derived stacks which has a (relative) $n$-shifted symplectic structure (with even $n$). The symplectic structure induces an isomorphism
\begin{equation}\label{serre iso even}\det\left(\bbT_{f}|_{t_0(M)}\right)^{\otimes 2}\cong \oO_{t_0(M)} \end{equation}
of the restriction of the tangent complex of $f$ to the classical truncation $t_0(M)$ of $M$. 

An \textit{orientation} on $f$ is a square root of the above isomorphism \eqref{serre iso even}. 
\end{definition}
\begin{definition}\label{def of rel or}
Let $N$ be a $n$-shifted symplectic derived stack with odd $n$ and $f: M\to N$ be a map of derived stacks which has a Lagrangian structure. 
The Lagrangian structure induces an isomorphism 
\begin{equation}\label{rel ori iso}\left(\det\left(\bbT_{f}|_{t_0(M)}\right)\right)^{\otimes 2}\cong 
t_0(f)^*\det\left(\bbT_{N}|_{t_0(N)} \right) \end{equation}
of the restriction of the tangent complexes of $f$ and $N$ to the classical truncation $t_0(M)$ of $M$. 

An \textit{orientation} of $N$ is a choice of a line bundle $\det\left(\bbT_{N}|_{t_0(N)} \right)^{1/2}$ on $t_0(N)$ such that 
$$\det\left(\bbT_{N}|_{t_0(N)} \right)^{1/2}\otimes \det\left(\bbT_{N}|_{t_0(N)} \right)^{1/2}\cong \det\left(\bbT_{N}|_{t_0(N)} \right). $$

An \textit{orientation} of $f$ is a choice of square root $\det\left(\bbT_{N}|_{t_0(N)} \right)^{1/2}$ as above and an isomorphism 
$$\det\left(\bbT_{f}|_{t_0(M)}\right)\cong t_0(f)^*\det\left(\bbT_{N}|_{t_0(N)} \right)^{1/2}, $$
whose square is the isomorphism \eqref{rel ori iso}. 
\end{definition}
\begin{remark}\label{rmk on can ori}
Consider the derived critical locus $\textbf{Crit}(\boldsymbol{\phi})$ of $\boldsymbol{\phi}: \textbf{\emph{B}}\to \C$ as in Definition \ref{defi of derived crit loci}. 
Then we have a canonical isomorphism 
$$\det\left(\bbT_{\textbf{Crit}(\boldsymbol{\phi})}\right)\cong \det\left(\bbT_{\textbf{\emph{B}}}|_{\textbf{Crit}(\boldsymbol{\phi})}\right)^{\otimes 2}. $$
Therefore $\det\left(\bbT_{\textbf{\emph{B}}}|_{\Crit(\phi)}\right)$ provides a canonical choice of orientation for $\textbf{Crit}(\boldsymbol{\phi})$. 
\end{remark}
 \begin{remark}
By compositing a Lagrangian $f: \textbf{\emph{M}}\to  \textbf{Crit}(\boldsymbol{\phi})$ and the Lagrangian fibration $p: \textbf{Crit}(\boldsymbol{\phi})\to \textbf{\emph{B}}$,
we obtain a $(-2)$-shifted symplectic structure on map $p\circ f: \textbf{\emph{M}}\to \textbf{\emph{B}}$ \cite[Lem.~3.6]{CZZ}. One can easily check 
that an orientation of $f$ and the canonical orientation in the previous remark induces an orientation of $p\circ f$ (e.g.~\cite[Prop.~4.11]{CZZ}). 
\end{remark}

\section{}\label{app b}
\begin{prop}\label{prop on ex sat set}
We have 

\begin{itemize}
\item Example \ref{ex of papers} (1) satisfies all conditions in Setting \ref{setting of lag}. 
\item Example \ref{ex of papers} (2) satisfies all conditions in Setting \ref{setting of lag} but the (Resolution) property. 
\end{itemize}
\end{prop}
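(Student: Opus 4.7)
The plan is to verify each clause of Setting~\ref{setting of lag} in turn for each example, drawing on the geometric constructions of~\cite{CZ} and~\cite{CZZ} together with Proposition~\ref{thm on equi of two} for the shifted symplectic, orientation, and compatibility data, and performing direct checks for the LG data $(X,\pi_X)$ and, only in Example~(1), the Resolution condition.

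For Example~\ref{ex of papers}(1), I would take $X=(W/\!\!/G)^n$ with $\phi_X=\phi_{W/\!\!/G}^{\boxplus n}$ (flat since $\phi_{W/\!\!/G}$ is), and let $\pi_X:B\to X$ be the projection from
\[
B=\fBun_{H_R,g,n}^{R_\chi=\omega_{\mathrm{log}}} \times_{[\pt/G]^n}(W/\!\!/G)^n.
\]
Smoothness and QCA of $\fBun_{H_R,g,n}^{R_\chi=\omega_{\mathrm{log}}}$ over $[\pt/G]^n$ are classical, giving (LG); $M$ is Noetherian as a finite-type $\mathbb{C}$-stack; the DM property and quasi-projectivity of $f$ follow from quasimap stability; the Lagrangian/orientation/compatibility data come from~\cite[\S 3]{CZ}. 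For the Resolution condition, at a geometric point $(C,P,u)$ the complex $\mathbb{L}_{\mathbf{f}}|_M$ is computed by $R\pi_{\mathcal{C}*}$ applied to $u^*\mathbb{L}_{\bCrit(\phi)/B}$; since $\bCrit(\phi)\to B$ is a $(-1)$-shifted Lagrangian fibration, the latter is a two-term self-dual complex of vector bundles, and relative Serre duality on the universal curve $\pi_{\mathcal{C}}:\mathcal{C}\to M$ produces a symmetric three-term complex after accounting for the $\omega_{\mathrm{log}}$ twist intrinsic to the $R_\chi$-structure. Globalizing to a finite-rank symmetric resolution $V\to E\to V^\vee$ relies on the global quotient presentation of $M$ afforded by~\cite[\S 3]{CZ} together with a twisting argument by $\pi_{\mathcal{C}}$-ample line bundles to obtain a surjection from a finite-rank bundle onto the zeroth cohomology, and then using the self-dual pairing to upgrade it to a symmetric three-term resolution.

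For Example~\ref{ex of papers}(2), take $X=(W/\!\!/G)^n$ with $\phi_X$ the $n$-fold sum of the potentials witnessing the critical-locus presentation $\textbf{Hilb}^{R|_D}(D)\cong\bCrit(\phi_R)$, and let $\pi_X$ be induced by the projection $\calA^P\times(W/\!\!/G)^n\to(W/\!\!/G)^n$. Smoothness of $\calA^P$ from~\cite{CZZ} and of $(W/\!\!/G)^n$ yields (LG); the relative $(-2)$-shifted symplectic structure, orientation, compatibility with $\phi$, DM property, and quasi-projectivity of $\mathbf{f}$ are all established in~\cite{CZZ}. The Resolution property is excluded in the statement, so no further verification is needed here. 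The main obstacle in the whole proposition is thus the Resolution step in (1): the pointwise symmetric three-term structure is essentially formal from relative Serre duality, but producing a globally defined locally free resolution by finite-rank bundles requires carefully leveraging the Quot/quotient-stack presentation of the quasimap moduli to pick an equivariant surjection from a finite-rank bundle compatible with the symmetric pairing, and this is where the argument must be worked out with care.
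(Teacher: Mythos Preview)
Your outline is broadly correct, but two points differ from the paper's argument.

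For Example~(1), the paper takes $X=\overline{\mathcal{M}}_{g,n}\times(W/\!\!/G)^n$ rather than just $(W/\!\!/G)^n$, invoking Kresch's theorem to exhibit $\overline{\mathcal{M}}_{g,n}$ as a global quotient; $\pi_X$ then factors through the flat stabilization map $\fM_{g,n}\to\overline{\mathcal{M}}_{g,n}$. Your smaller $X$ would also verify Setting~\ref{setting of lag}, but the paper's choice is what feeds into the invariants of \S\ref{sec:appl}. The paper also checks the affine-stabilizer part of QCA separately (Lemma~\ref{lem on aff stab}) by a d\'evissage through exact sequences of automorphism groups, which you pass over as classical.

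More substantively, your Resolution argument for~(1) invokes a ``global quotient presentation of $M$'' from~\cite[\S3]{CZ}; the paper uses no such presentation and it is not clear one is available. Instead (Lemma~\ref{lem on exi of reso}) the paper works on the universal curve: it writes $\bbT_{\textbf{\emph{f}}}$ explicitly as $\pi_*(\calV_0\to\calV_1)$ for vector bundles $\calV_i$ on $\calC$ with differential the Hessian of $\phi_W$, applies the trick of~\cite[\S5.2]{CiKM} (twisting by a section of a $\pi$-ample line bundle to get $\pi$-acyclic bundles $\calV_i(m)$ with cokernels $\calB_i^m$ finite over $\QM$), and then---this is the step you only gesture at---verifies that Serre duality is realised by an actual chain map, using the identification $\calV_1\cong\calV_0^\vee\otimes\omega_{\calC/B}$ and the residue map, before symmetrising via~\cite[Prop.~4.1]{OT}. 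Your $\pi_{\calC}$-ample twisting idea is the right one, but routing it through a resolution property of $M$ itself is unnecessary and possibly unjustified; the point is that the tangent complex is already a pushforward of honest vector bundles from $\calC$, so the resolution happens upstairs.
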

\begin{proof}
We first fix the map $\pi_X$ in both examples. 

In (1), let $\overline{\mathcal{M}}_{g,n}$ be the moduli stack  of genus $g$, $n$-pointed stable curves. It is a smooth proper 
Deligne-Mumford stack whose coarse moduli space is projective \cite{Kn}. 
Therefore $\overline{\mathcal{M}}_{g,n}\cong [Y/K]$ for 
a quasi-projective scheme $Y$ and a linear algebraic group $K$ \cite[Thm.~4.4]{Kre2}. 
Then we can take 
$$X=\overline{\mathcal{M}}_{g,n}\times (W/\!\!/ G)^n  $$ 
with the regular function $\phi_X$ being the pullback of the regular function $\phi_{W/\!\!/ G}^{\boxplus n}$ on $(W/\!\!/ G)^n$. 
And 
$$\pi_X: \fBun_{H_R,g,n}^{R_{\chi}=\omega_{\mathrm{log}}}\times_{[\pt/G]^n}(W/\!\!/ G)^n\to 
\fBun_{H_R,g,n}^{R_{\chi}=\omega_{\mathrm{log}}}\times (W/\!\!/ G)^n\to \overline{\mathcal{M}}_{g,n}\times (W/\!\!/ G)^n$$
is the composition, where the last map is the composition of the (smooth) forgetful map to the moduli stack $\fM_{g,n}$ of prestable curves 
with the (flat) stabilization map \cite{Beh}:
\begin{equation*}\fM_{g,n}\to \overline{\mathcal{M}}_{g,n} \end{equation*}
to the moduli of stable curves.

In (2), we take $X=(W/\!\!/ G)^n$, with projection 
$$\pi_X: \calA^P\times (W/\!\!/ G)^n\to (W/\!\!/ G)^n. $$
In both cases, the domain of $\emph{\textbf{f}}$ is a derived stack whose classical truncation is separated, Deligne-Mumford, finite type over $\C$ (e.g.~\cite[Thm.~2.11]{CZ}, \cite[Prop.~2.25]{CZZ}), 
and the map $f$ is quasi-projective (e.g.~\cite[\S 4.2]{CiKM}, \cite[Proof of Prop.~4.4]{LW}).
 The target is a smooth quasi-separated algebraic stack, locally of finite type over $\C$. Although it is not quasi-compact, but we can replace it by 
 a quasi-compact open subset where the image of $f$ lies in. The affine stabilizer property is proved in Lemma \ref{lem on aff stab}. 
 This finishes the proof of (LG), (DM) conditions. 

The remaining properties are verified in Lemma \ref{lem on ori lag}, Lemma \ref{lem on exi of reso} and Lemma \ref{lem on exi of reso2}.  
\end{proof}

\begin{lemma}\label{lem on aff stab}
The targets of ${\textbf{f}}$ in Example \ref{ex of papers} have affine stabilizer.
\end{lemma}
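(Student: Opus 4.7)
The plan is to verify the affine stabilizer property factor by factor and then use that it is preserved under fiber products and open substacks. Recall that a point $x$ of a stack $\calY$ has affine stabilizer if $\mathrm{Aut}(x)$ is an affine algebraic group, and this property descends through any representable morphism $\calY \to \calY'$ and is stable under fiber products over such stacks, since the stabilizer at a point of $\calY_1 \times_{\calY'} \calY_2$ is an extension involving the stabilizers at the images in $\calY_1, \calY_2, \calY'$.

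First I would dispose of the easy factors. The GIT quotient $W /\!\!/ G$ is a quotient stack $[W^{ss}/G]$ by the linear algebraic group $G$, so all stabilizers are closed subgroups of $G$ and hence affine; the classifying stack $[\mathrm{pt}/G]$ is obviously affine-stabilized. Products of such stacks retain the property.

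For Example \ref{ex of papers}(1), the main factor to analyze is $\fBun_{H_R, g, n}^{R_\chi = \omega_{\mathrm{log}}}$. I would first reduce to $\fM_{g,n}$ (the stack of prestable $n$-pointed curves) via the forgetful morphism, and recall that automorphism groups of prestable pointed curves are affine (they are successive extensions of $\mathrm{Aut}(\bbP^1, \mathrm{marked\ points}) \subset \PGL_2$ and finite groups, all affine). The fiber of $\fBun_{H_R,g,n}^{R_\chi = \omega_{\mathrm{log}}} \to \fM_{g,n}$ at a curve $C$ is an open substack of $\mathrm{Bun}_{H_R}(C)$ (with the $\omega_{\mathrm{log}}$-twist condition), whose automorphism group at a bundle $P$ is $H^0(C, \mathrm{Ad}(P))$, a closed subgroup of the affine group scheme of global sections of the adjoint group bundle (finite-dimensional, affine because $H_R = G \times \bbG_m$ is affine). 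Thus the stabilizers are extensions of affine groups by affine groups, hence affine. The fiber product with $(W/\!\!/G)^n$ over $[\mathrm{pt}/G]^n$ inherits the property.

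For Example \ref{ex of papers}(2), the stack $\calA^P$ is (by the construction reviewed in \cite{CZZ}) an open substack of a disjoint union of quotient stacks $[\bbA^k / \bbG_m^k]$ parametrizing expansions of the pair $(Y, D)$ with their natural scaling symmetries on bubble components; stabilizers of such a quotient stack are subtori of $\bbG_m^k$, which are affine. Combined with the affine stabilizers of $(W/\!\!/G)^n$, the product $\calA^P \times (W/\!\!/G)^n$ has affine stabilizers. The main obstacle would be verifying carefully the automorphism description of the stacks of bundles and of expanded pairs, but in both cases the relevant descriptions appear in the cited references and yield affine groups without further difficulty.
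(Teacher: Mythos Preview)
Your proposal is correct and follows essentially the same approach as the paper: for Example~\ref{ex of papers}(1) both arguments reduce to the fact that automorphisms of a prestable curve and automorphisms of an $H_R$-bundle on a fixed curve are affine, then use that extensions of affine groups are affine; for Example~\ref{ex of papers}(2) the paper simply cites \cite[Prop.~2.2]{CZZ}, which is precisely the description of $\calA^P$ as an open substack of $[\bbA^k/\bbG_m^k]$ that you spell out. One small inaccuracy: the fiber of $\fBun_{H_R,g,n}^{R_\chi=\omega_{\mathrm{log}}}\to\fM_{g,n}$ over $C$ is not an \emph{open} substack of $\mathrm{Bun}_{H_R}(C)$ but rather the fiber product with $\{\omega_{\mathrm{log}}\}$ over $\mathrm{Bun}_{\bbC^*}(C)$; the paper handles this more carefully via the exact sequence $1\to\Aut_A(x)\to\Aut_B(x)\to\Aut_C(x)$, but your conclusion about affineness is unaffected.
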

\begin{proof}
For (1), it is enough to show that $\fBun_{H_R,g,n}^{R_{\chi}=\omega_{\mathrm{log}}}$ has affine stabilizer ($H_R=G\times \C^*$).  
Recall that we have a Cartesian diagram 
$$
\xymatrix{
A:=\fBun_{H_R,g,n}^{R_{\chi}=\omega_{\mathrm{log}}} \ar[r]^{ } \ar[d]_{}  \ar@{}[dr]|{\Box}    &  \{\omega_{\mathrm{log}}\} \ar[d]^{ } \\
B:=\fBun_{H_R,g,n} \ar[r]^{ } & C:=\fBun_{\C^*,g,n}. 
}
$$
Let $x\in A$ be a closed point, then 
$$1\to \Aut_A(x)\to \Aut_B(x)\to \Aut_C(x)$$
is exact. Therefore, it is enough to show $\Aut_B(x)$ is affine. 

Take a closed point $\{C\}$ of the moduli stack $\mathfrak{M}_{g,n}$ of prestable genus $g$ curves with $n$-marked points, 
denote $\fBun_{H_R}(C)$ to be the moduli stack of principle $H_R$-bundle over $C$. 
There is a Cartesian diagram 
$$
\xymatrix{
B':=\fBun_{H_R}(C) \ar[r]^{ } \ar[d]_{}  \ar@{}[dr]|{\Box}    &  \{C\} \ar[d]^{ } \\
B:=\fBun_{H_R,g,n} \ar[r]^{ } & \mathfrak{M}:=\mathfrak{M}_{g,n},
}
$$
and an exact sequence
$$1\to \Aut_{B'}(x)\to \Aut_B(x)\to \mathrm{Im}(\Aut_B(x)\to \Aut_{\mathfrak{M}}(x))\to 1. $$
Note that $\fBun_{H_R}(C)$ and $\mathfrak{M}_{g,n}$ (when $(g,n)\neq (1,0)$)
have affine stabilizers (e.g.~\cite[Rmk.~2.2.6]{DG2}, \cite[Prop.~3.1]{BS}). As an extension of affine algebraic group, we know 
$\Aut_B(x)$ is also affine \cite[pp.~379, $\mathrm{VI_B}$, Prop.~9.2 (viii)]{DeGr}.

For (2), this is \cite[Prop.~2.2]{CZZ}.
\end{proof}

\begin{lemma}\label{lem on ori lag}
(Oriented Lagrangian) and (Compatibility) of Setting \ref{setting of lag} hold for Example \ref{ex of papers}.
\end{lemma}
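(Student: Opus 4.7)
The plan is to verify the two conditions case by case, reducing each to known constructions in the literature.

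For Example \ref{ex of papers}\,(1), I would first construct the $(-2)$-shifted symplectic structure on $\emph{\textbf{f}}$ using the AKSZ-type formalism of \cite{PTVV}. The moduli stack $\textbf{QM}_{g,n}^{R_{\chi}=\omega_{\mathrm{log}}}(\bCrit(\phi_{W/\!\!/ G}),\beta)$ arises as an open substack of the derived mapping stack from a twisted family of curves over $\fBun_{H_R,g,n}^{R_{\chi}=\omega_{\mathrm{log}}}$ into $\bCrit(\phi_{W/\!\!/ G})$, and the target carries a canonical $(-1)$-shifted symplectic structure $\Omega_{\bCrit(\phi_{W/\!\!/ G})}$ (Definition \ref{defi of derived crit loci}). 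Combining the universal curve (which is oriented of dimension $1$, after taking into account the $\omega_{\log}$-twist) with $\Omega_{\bCrit(\phi_{W/\!\!/ G})}$ via transgression yields a relative $(-2)$-shifted symplectic form $\Omega_{\emph{\textbf{f}}}$ on the projection to $\fBun_{H_R,g,n}^{R_{\chi}=\omega_{\mathrm{log}}}\times_{[\pt/G]^n}(W/\!\!/ G)^n$, after evaluation at the marked points. The cohomological version of this construction is already used in \cite[\S 4]{CZ}.

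To obtain (Compatibility), I would apply Proposition \ref{thm on equi of two}: the \emph{exact} Lagrangian structure on $\bCrit(\phi_{W/\!\!/ G}) \to \textup{pt}$ determined by the function $\phi_{W/\!\!/ G}$ (Definition \ref{def of exact lag}) transgresses to an exact structure on $\emph{\textbf{f}}$ whose underlying function is the AKSZ integral of $\phi_{W/\!\!/ G}$ over the marked points, namely $\phi_{W/\!\!/ G}^{\boxplus n}$ pulled back through $\pi_X\circ\emph{\textbf{f}}$. For the orientation, the canonical orientation of $\bCrit(\phi_{W/\!\!/ G})$ (Remark \ref{rmk on can ori}) induces a relative orientation on $\emph{\textbf{f}}$ via transgression, using that the universal curve is canonically oriented; this is the $K$-theoretic analog of \cite[Prop.~4.11]{CZZ}.

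For Example \ref{ex of papers}\,(2), the same strategy applies, but now the input is the relative $(-2)$-shifted symplectic structure on the moduli of perfect complexes on a log Calabi-Yau $4$-fold pair $(Y,D)$ with relative boundary conditions, constructed in \cite{CZZ} using Brav-Dyckerhoff/Calaque-type relative AKSZ. The Hilbert stack $\textbf{Hilb}^P(Y,D)$ sits as an open/closed substack of such moduli, and the relative symplectic form is inherited. The identification \eqref{equ on iso of hilbd} and Proposition \ref{thm on equi of two} then give the compatibility $[\Omega_{\emph{\textbf{f}}}]=\phi|_{\textbf{M}}$, where $\phi$ is the sum of the potentials pulled back from $(W/\!\!/ G)^n$ via the boundary evaluation. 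The orientation is constructed in \cite[\S 4]{CZZ} and transports to this setting.

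The main point to verify carefully is the compatibility $[\Omega_{\emph{\textbf{f}}}]=\phi|_{\textbf{M}}$ in the periodic cyclic homology, because naturality of transgression with respect to passing to underlying functions of exact shifted Lagrangians is somewhat subtle. I would handle this by citing (or extending) \cite[\S 5]{Park2}, where the compatibility between transgression and the exactness data is worked out in the required generality; in both examples the derivation is formally identical and reduces to checking that the chosen exact structure on $\bCrit(\phi_{W/\!\!/ G})$ (resp.\ on $\textbf{Hilb}^{P|_D}(D)$) is the canonical one determined by $\phi_{W/\!\!/ G}$ (resp.\ $\phi_{P|_D}$).
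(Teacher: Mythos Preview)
Your overall strategy---reduce to constructions already in \cite{CZ}, \cite{CZZ}, and \cite{Park2}---matches the paper's, and for case~(1) your argument is essentially the same as the paper's (which cites the $\phi$-locked AKSZ formalism of \cite[\S 6.2]{Park2} and \cite[\S 3.3, \S 3.5]{CZ}, with orientation from \cite[Rmk.~4.2]{CZ} rather than \cite[Prop.~4.11]{CZZ}).

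For case~(2) there is a genuine gap. You invoke Proposition~\ref{thm on equi of two} to conclude $[\Omega_{\emph{\textbf{f}}}]=\phi|_{\textbf{M}}$ directly, but that proposition only says that \emph{if} the symplectic structure arises from an exact Lagrangian on $\bCrit(\phi)$ then its underlying function is $\phi$; it does not by itself show that the $(-2)$-shifted form produced by the relative AKSZ construction of \cite{CZZ} has this property. The paper handles this more carefully: \cite[Rmk.~4.1.7]{Park2} shows that the image of $\Omega_{\emph{\textbf{f}}}$ in periodic cyclic homology is $\phi+c$ for some \emph{locally constant} function $c$, and then \cite[Prop.~5.6]{CZZ} is invoked to pin down $c=0$. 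Your final paragraph gestures at this issue but defers it to ``citing (or extending) \cite[\S 5]{Park2}''; the actual argument requires the two specific inputs above, and the vanishing of $c$ is not automatic from general transgression formalism.
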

\begin{proof}
For (1), shifted symplectic structures with 
image in the periodic cycle homology given by $\phi$ (in the sense of Setting \ref{setting of lag}) are called $\phi$-locked symplectic forms in \cite[Notation~1.1.2]{Park2}. 
Note that the AKSZ type mapping stack construction works for $\phi$-locked symplectic forms \cite[\S 6.2]{Park2}).
By running through the construction in \cite[\S 3.3, \S 3.5]{CZ}, one can show the claim, where orientation is constructed as \cite[Rmk.~4.2]{CZ}.

For (2), by \cite[Thm.~3.2]{CZZ}, we have a Lagrangian structure (relative to $\calA^P$) on
$$\emph{\textbf{f}}: \textbf{Hilb}^P(Y,D) \to  \calA^P\times \textbf{Crit}(\phi)^n, $$
whose composition with $\textbf{Crit}(\phi)^n\to (W/\!\!/ G)^n$ has a canonically induced 
$(-2)$-shifted symplectic structure $\Omega_{\emph{\textbf{f}}}$ \cite[Prop.~3.7]{CZZ}. 
Orientation on $f$ is constructed in \cite[Prop.~4.11]{CZZ}. 
Note that \cite[Rmk.~4.1.7]{Park2} implies that the image of $\Omega_{\emph{\textbf{f}}}$ in the periodic cycle homology 
is given by the pullback function of $\phi+c$ for a locally constant function $c$. By \cite[Prop.~5.6]{CZZ}, we know $c=0$, 
this proves the compatibility with the function $\phi$.
\end{proof}

\begin{lemma}\label{lem on exi of reso}
(Resolution) of Setting \ref{setting of lag} holds for Example \ref{ex of papers}\,(1). 
\end{lemma}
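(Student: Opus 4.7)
The plan is to identify the cotangent complex $\bbL_{\textbf{\emph{f}}}|_M$ as an AKSZ-type pushforward from the universal curve, extract its shifted self-duality from the $(-2)$-shifted symplectic form of Lemma \ref{lem on ori lag}, and then apply a resolution argument to produce the desired three-term presentation.

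First, let $\pi\colon \mathcal{C}\to M$ denote the universal curve over the classical truncation $M$ of $\textbf{QM}_{g,n}^{R_\chi=\omega_{\log}}(\bCrit(\phi_{W/\!\!/ G}),\beta)$, and let $\mathrm{ev}\colon \mathcal{C}\to \bCrit(\phi_{W/\!\!/ G})$ be the universal evaluation map. Following the AKSZ formalism of \cite{PTVV} and the quasimap obstruction theory of \cite[\S3--4]{CZ}, the relative cotangent complex of $\textbf{\emph{f}}$ is of the form
\begin{equation*}
\bbL_{\textbf{\emph{f}}}|_M \;\cong\; R\pi_*\big(\mathrm{ev}^*[\TT_{W/\!\!/ G}\xrightarrow{\Hess(\phi_{W/\!\!/ G})}\TT^*_{W/\!\!/ G}]\otimes \omega_{\log,\pi}\big)[1],
\end{equation*}
up to the usual corrections at the marked-point divisor absorbed into the evaluation part of the target $B$. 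Since $\Hess(\phi_{W/\!\!/ G})$ is a symmetric morphism and $\omega_{\log,\pi}$ is the relative dualizing sheaf for $\pi$, Serre duality along $\pi$ combined with the self-duality of the Hessian complex yields a canonical non-degenerate pairing
\begin{equation*}
\theta\colon \bbL_{\textbf{\emph{f}}}|_M \otimes \bbL_{\textbf{\emph{f}}}|_M \;\to\; \oO_M[2],
\end{equation*}
which is the cotangent-level shadow of the $(-2)$-shifted symplectic form $\Omega_{\textbf{\emph{f}}}$ established in Lemma \ref{lem on ori lag}.

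Second, since $M$ is a quasi-projective Deligne-Mumford stack by Proposition \ref{prop on ex sat set}, it has the resolution property. I would choose a finite rank vector bundle $V^\vee$ on $M$ with a morphism $\bbL_{\textbf{\emph{f}}}|_M\to V^\vee[0]$ which is surjective on $\calH^0$. Dualizing via $\theta$ yields a morphism $V[-2]\to \bbL_{\textbf{\emph{f}}}|_M$, and the fiber of the composite $V[-2]\to\bbL_{\textbf{\emph{f}}}|_M\to V^\vee[0]$ will then be concentrated in cohomological degree $-1$, i.e.\ of the form $E[1]$ for a single coherent sheaf $E$. The pairing $\theta$ endows $E$ with a quadratic form $q_E$ under which the differentials in the resulting presentation $[V\xrightarrow{d} E\xrightarrow{d^\vee} V^\vee]\simeq \bbL_{\textbf{\emph{f}}}|_M$ are mutually adjoint, producing the sought symmetric resolution.

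The main obstacle will be verifying globally that the middle term $E$ of the constructed three-term complex is actually locally free of finite rank and that $q_E$ is pointwise non-degenerate. This requires a careful diagrammatic check along the lines of \cite[\S 4]{OT} and \cite[\S 1.4]{Park1}, showing that the symmetric structure on $\bbL_{\textbf{\emph{f}}}|_M$ passes through the resolution construction in a manner compatible with dualization. The key input is that non-degeneracy of the Hessian on the critical locus, together with Serre duality on $\pi$, forces $\theta$ to be pointwise non-degenerate, so that after the above construction $q_E$ is automatically non-degenerate; the bookkeeping around the marked-point divisor and the log-canonical twist is handled by the defining $R_\chi=\omega_{\log}$ condition of the twisted quasimap moduli.
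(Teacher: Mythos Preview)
Your approach diverges from the paper's and carries a genuine gap. The paper does not invoke an abstract resolution property of $M$. Instead, it uses the explicit description $\bbT_{\textbf{\emph{f}}}|_M\cong \pi_*(\calV_0\to\calV_1)$ as a pushforward from the universal curve and applies the \cite[\S5.2]{CiKM} trick: choose a $\pi$-ample line bundle $\oO(1)$ with section $s$, twist by $\oO(m)$ for $m\gg0$ so that both $\calV_i(m)$ and the cokernels $\calB_i^m$ are $\pi$-acyclic, and then $\pi_*$ of these are automatically locally free. This yields a concrete three-term vector-bundle resolution, on which Serre duality is realized at the chain level via the residue map; only then is \cite[Prop.~4.1]{OT} invoked, as a symmetrization of an already-constructed three-term bundle complex.

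Your argument instead assumes $M$ is a quasi-projective Deligne--Mumford stack and hence has the resolution property, but Proposition~\ref{prop on ex sat set} only gives that $f\colon M\to B$ is a quasi-projective DM \emph{morphism}; the base $B$ is merely smooth QCA, so the resolution property of $M$ is not established. Without it, your choice of $V^\vee$ need not exist globally, and even if it does, your fiber construction does not force $E$ to be locally free. Finally, the justification you give for non-degeneracy of $q_E$---``non-degeneracy of the Hessian on the critical locus''---is incorrect: the Hessian of $\phi_{W/\!\!/G}$ is typically degenerate on $\Crit(\phi_{W/\!\!/G})$ (it is non-degenerate iff the critical locus is smooth). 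What is non-degenerate is the shifted-symplectic pairing $\theta$ on $\bbL_{\textbf{\emph{f}}}|_M$, coming from Lemma~\ref{lem on ori lag}; this is a statement about the self-duality of the two-term Hessian \emph{complex} under Serre duality, not about invertibility of the Hessian map itself. The paper's curve-theoretic construction sidesteps all three issues by producing the vector bundles directly from $\pi$-acyclicity.
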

\begin{proof}
Let $B=\fM_{g,n}$ and let $\pi: \calC\to B$ be the universal curve. 
Recall that the quasimap derived stack $\textbf{QM}:=\textbf{QM}_{g,n}^{R_{\chi}=\omega_{\mathrm{log}}}(\bCrit(\phi_{W/\!\!/ G}),\beta)$ is an open substack of the twisted mapping stack 
$\bMap^{R_{\chi}=\omega_{\mathrm{log}}}_{g,n}([\bCrit^{}(\phi_W)/H_R])$ (where $\phi_{W/\!\!/ G}$ is the descent of $\phi_W: W\to \C$). 
Let 
$$\pi: \bMap^{R_{\chi}=\omega_{\mathrm{log}}}_{g,n}([\bCrit^{}(\phi_W)/H_R])\times_B \calC\to \bMap^{R_{\chi}=\omega_{\mathrm{log}}}_{g,n}([\bCrit^{}(\phi_W)/H_R])$$ be the induced 
map, $\calP$ be the universal $H_R$-bundle and $\calW:=\calP\times _{H_R}W$ be the universal $W$-bundle.
By \cite[Prop.~3.26, Lem.~4.8]{CZ}, the relative tangent complex of 
\begin{equation}\label{eqn:f}
    \textbf{\emph{f}}:=ev^n\times_{[\pt/H_R]^n}\mu:\bMap^{R_{\chi}=\omega_{\mathrm{log}}}_{g,n}([\bCrit^{}(\phi_W)/H_R]) \to [W/H_R]^n\times_{[\pt/H_R]^n}\fBun_{H_R,g,n}^{R_\chi=\omega_{\mathrm{log}}}
\end{equation}
is given by 
$$\bbT_\textbf{\emph{{f}}}\cong \big(\pi_*\left(\calW\boxtimes (\omega^\vee_{\mathrm{log}}\otimes\omega_{\calC/B})\right)\to 
\pi_*\left(\calW^\vee\boxtimes\omega_{\mathrm{log}}\right)\big) $$ 
where the map is induced by $\mathrm{Hess}(\phi_W)$.
For simplicity, we write this complex as $\pi_*(\calV_0\to\calV_1)$.

When restricted to the classical truncation $\mathrm{QM}$ of $\textbf{QM}$,  the complex $\bbT_\textbf{\emph{{f}}}$ above has a resolution following the construction in \cite[\S 5.2]{CiKM}, which we briefly summarize below.
Let $\calO(1)$ be a $\pi$-ample line bundle with a preferred section $s$. We have  for $i=0,1$, an injective map
$s_n:\calV_i\to \calV_i(n)$ whose cokernel $\calB_i^n$ has support $Z(s_n)$ finite over  $\mathrm{QM}$. We choose $m$ large enough so that both $\calV_i(m)$ and the cokernel $\calB_i^m$ are $\pi$-acyclic. Notice that the map  $\mathrm{Hess}(\phi_W)$ induces maps on the resolutions, making the diagram commutative
\[\xymatrix{\calV_0(m)\ar[r]\ar[d]&\calB_0^m\ar[d]   \\
\calV_1(m)\ar[r]&\calB_1^m.
}
\]
Applying $\pi_*$ to the diagram above, and taking the  associated total complex, we obtain a 3-step resolution of $\bbT_\textbf{\emph{f}}$.

Notice that the Serre duality map $\bbT_\textbf{\emph{f}}\to \bbT_\textbf{\emph{{f}}}^\vee[2]$ on the 3-step resolution above is defined on the level of complexes. 
This follows from the fact that 
$\calV_1\cong(\calV_0)^\vee\otimes\omega_{\calC/B}$ is given by definitions of $\calV_0$ and $\calV_1$ above,  $\calB_0^m=\calV_0(m)|_{Z(s_m)}$, and the residue map (e.g.~\cite[Thm.~3.2.5]{Nee}): 
\[\pi_*(\calV_0(m)|_{Z(s_m)}\otimes \calV_1(m)[1])\cong \pi_*(\calV_0^\vee(m)\otimes\omega_{\calC/B}\otimes\calV_0(m)|_{Z(s_m)}[1]) \to \calO_{\mathrm{QM}}.\] 
Similarly for \[\pi_*(\calV_1(m)|_{Z(s_m)}\otimes \calV_0(m)[1])\to \calO_{\mathrm{QM}}.\] 
Finally, a construction following \cite[Proof of Prop.~4.1]{OT} symmetrizes the 3-step complex into a self-dual complex.
\end{proof}

We do not know whether the resolution property (of Setting \ref{setting of lag}) holds for Example \ref{ex of papers}\,(2) in general. The following gives a 
positive answer in interesting examples. 
\begin{lemma}\label{lem on exi of reso2}
Let $D_0$ be a Calabi-Yau 3-fold with a $\C^*$-action such that the holomorphic volume form has weight $1$, $\mathring{\omega}_{\mathbb{P}^1}(n)$ be the $\C^*$-bundle obtained by removing the zero section of $\oO_{\mathbb{P}^1}(n-2)$.  

Take $Y=\mathring{\omega}_{\mathbb{P}^1}(n)\times_{\C^*}D_0$, and consequently  the anti-canonical  divisor is a disjoint union $D=\bigsqcup_{i=1}^n D_0$. 
Assume that $\textbf{\emph{Hilb}}^{P|_{D}}(D)=\bigsqcup_{i=1}^n \textbf{\emph{Crit}}(\phi)$ for a regular function $\phi: W/\!\!/ G\to \C$
on a linear GIT quotient, and that $\textbf{\emph{Hilb}}^P(Y,D)$ is contained in the open substack in  
\eqref{equ on ope imm} below. 
Then (Resolution) of Setting \ref{setting of lag} holds in this case.
\end{lemma}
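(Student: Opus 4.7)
The plan is to mimic the three-step approach used in the proof of Lemma~\ref{lem on exi of reso}, with modifications tailored to the Hilbert scheme of a Calabi-Yau $4$-fold: present the relative cotangent complex as an $Rp_*$ from a universal family, produce a finite locally free resolution by twisting with a $p$-relatively ample line bundle, then symmetrize using Serre duality and the Oh-Thomas procedure. The specific geometry of $Y = \mathring{\omega}_{\mathbb{P}^1}(n) \times_{\C^*} D_0$ as a $D_0$-bundle over $\mathbb{P}^1$, together with the critical-locus form of $\textbf{Hilb}^{P|_D}(D) = \bigsqcup_{i=1}^n \textbf{Crit}(\phi)$ and the open-substack hypothesis on $\textbf{Hilb}^P(Y,D)$, are exactly what is required to globalize each step.

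First, I would identify the relative cotangent complex $\bbL_{\bbf}|_M$ of
$\bbf\colon \textbf{Hilb}^P(Y,D) \to \calA^P \times (W/\!\!/G)^n$
with the dual of a boundary-trimmed trace-free variant of $Rp_* \mathbf{R}\sHom(\calI, \calI)_0[1]$, where $p\colon \mathcal{Y} \to M := \Hilb^P(Y,D)$ denotes the projection from the universal expanded family and $\calI \subset \oO_{\mathcal{Y}}$ the universal ideal sheaf. The boundary trimming replaces the absolute $(-2)$-shifted symplectic complex of $\textbf{Hilb}(Y)$ by the relative one over $\calA^P \times (W/\!\!/G)^n$, and matches the self-duality built in Lemma~\ref{lem on ori lag}.

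Second, I would construct a bounded locally free resolution of this complex on $M$. Since $D_0$ is quasi-projective with its $\C^*$-action, so is $Y$; combined with the open-substack condition on $\textbf{Hilb}^P(Y,D)$, this yields a $p$-ample line bundle $\oO(1)$ on $\mathcal{Y}$. Twisting by $\oO(m)$ for $m \gg 0$ (as in \cite[\S 5.2]{CiKM} and the analogous step of Lemma~\ref{lem on exi of reso}) makes the terms of a locally free resolution of $\mathbf{R}\sHom(\calI, \calI)_0$ on $\mathcal{Y}$ into $p$-acyclic sheaves, so that $Rp_*$ becomes a bounded complex of locally free sheaves on $M$. The assumption that the boundary Hilbert stack is $\bigsqcup_i \textbf{Crit}(\phi)$ with $W/\!\!/G$ a linear GIT quotient ensures that the resolution can be built compatibly with the trimming at $D$. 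Finally, Serre duality on $Y/\calA^P$ (with the weight-one $\C^*$-action on the volume form of $D_0$ trivializing the relative canonical appropriately) induces a cochain-level self-duality between the two halves of this three-term complex via the residue map, just as in the proof of Lemma~\ref{lem on exi of reso}. The symmetrization procedure of \cite[Proof of Prop.~4.1]{OT} then converts this into the required form $(V \xrightarrow{d} E \xrightarrow{d^\vee} V^\vee)$ with $E$ a non-degenerate quadratic bundle.

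The main obstacle is the second step. In the quasimaps case of Lemma~\ref{lem on exi of reso}, the relative tangent complex has $\pi$-amplitude $[0,1]$ on the universal curve, and a single twist by $\oO(m)$ suffices. Here the relative amplitude of $\mathbf{R}\sHom(\calI, \calI)_0$ over the universal $4$-fold is $[0,3]$, and producing a bounded resolution that is simultaneously Serre-self-dual and globally defined requires careful use of the bundle structure $Y \to \mathbb{P}^1$ and the explicit quasi-projective description of $\textbf{Hilb}^P(Y,D)$ provided by the open substack of \eqref{equ on ope imm}. Once such a resolution is in hand, passing to the symmetric form is essentially formal by the Oh-Thomas procedure.
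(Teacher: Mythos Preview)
Your proposal misses the key idea and, as you yourself note, leaves the main obstacle unresolved. You try to work with the universal $4$-fold $p\colon \mathcal{Y}\to M$ and the trace-free complex $R\sHom(\calI,\calI)_0$; you then correctly observe that this has relative amplitude $[0,3]$, and your only plan for overcoming this is a vague appeal to ``careful use of the bundle structure $Y\to\mathbb{P}^1$.'' That is not a proof.

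The paper's approach is to use the bundle structure $Y\to\mathbb{P}^1$ in a much sharper way: it identifies $\textbf{Hilb}^P(Y,D)$ not with a moduli of sheaves on a $4$-fold, but with an open substack of a \emph{quasimap}-type moduli space $\bMap^{\mathrm{twist}}_{\mathfrak{M}}(\calC,\bCrit(\phi)\times\mathfrak{M})$, where $\calC$ is the universal tree of rational curves over a suitable stack $\mathfrak{M}$ and the target $\bCrit(\phi)$ is the derived Hilbert scheme of the fiber $D_0$. This is precisely what the open immersion \eqref{equ on ope imm} is doing, and it is why that hypothesis is stated. Under this identification, the relative tangent complex is computed as a pushforward from the universal \emph{curve} $\calC$ (relative dimension $1$, not $4$), and takes the explicit four-term form
\[
\pi_*(ad_{\fg}\calP_G\boxtimes\oO(-S))\to\pi_*(\calW\boxtimes\oO(-S))\to\pi_*(\calW^\vee\boxtimes\omega_{\log})\to\pi_*(ad_{\fg}\calP_G\boxtimes\omega_{\log}),
\]
using the formulas from \cite[Eqn.~(3.10)]{CZ}. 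At that point the amplitude problem disappears entirely, and the same twisting-plus-symmetrization argument as in Lemma~\ref{lem on exi of reso} applies verbatim. So the resolution is produced by reducing to the quasimap case, not by resolving $R\sHom(\calI,\calI)_0$ on a $4$-fold.
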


\begin{proof}
Let $\mathfrak{M}$ be the moduli stack of $n$-pointed trees of rational curves $C$ with a distinguished component $C_0\cong \mathbb{P}^1$ such that every irreducible component of $C$ (except $C_0$) has exactly two special points (nodes or marked points) \cite[\S 7.2]{CiK}.
Let $\pi: \calC\to \mathfrak{M}$ be the universal curve. 

Define $\bMap^{\mathrm{twist}}_{\mathfrak{M}}(\calC,\bCrit^{}(\phi)\times \mathfrak{M})$ by the following homotopy pullback diagram 
\begin{equation}\label{diag on twist mapp stac}\xymatrix{
\bMap^{\mathrm{twist}}_{\mathfrak{M}}(\calC,\bCrit^{}(\phi)\times \mathfrak{M}) \ar[r] \ar[d] \ar@{}[dr]|{\Box} & \{\mathring{\omega}_{\mathrm{log}} \} \ar[d]  \\
\bMap^{}_{\mathfrak{M}}(\calC,[\bCrit^{}(\phi)/\C^*]\times \mathfrak{M}) \ar[r]& \bMap^{ }_{\mathfrak{M}}(\calC,[\pt/\C^*]\times \mathfrak{M}). 
}\end{equation}
Here $\bMap$ means derived mapping stack and $\mathring{\omega_{\mathrm{log}}}$ is the principal $\C^*$-bundle over $\calC$ corresponding to the log-canonical bundle.
We claim that there is an open immersion 
\begin{equation}\label{equ on ope imm}\bMap^{\mathrm{twist}}_{\mathfrak{M}}(\calC,\bCrit^{}(\phi)\times \mathfrak{M}) \hookrightarrow   
\bMap_{\mathfrak{M}}(\mathring{\omega_{\mathrm{log}}}\times_{\C^*} D_0,\textbf{Perf}\times \mathfrak{M})_{0} \end{equation}
with the target the derived moduli stack of perfect complexes with trivial determinant on $(\mathring{\omega_{\mathrm{log}}}\times_{\C^*} D_0)/\mathfrak{M}$. 
Indeed, we have 
\begin{align*}\bMap^{}_{\mathfrak{M}}(\calC,[\bCrit^{}(\phi)/\C^*]\times \mathfrak{M})&\cong 
\bMap^{\C^*\emph{-}\mathrm{equi}}_{\mathfrak{M}}(\calP_{\C^*},\bCrit^{}(\phi)\times \mathfrak{M}) \\
& \stackrel{\mathrm{open}}{\hookrightarrow}  \bMap^{\C^*\emph{-}\mathrm{equi}}_{\mathfrak{M}}(\calP_{\C^*},\bMap(D_0,\textbf{Perf})_0\times \mathfrak{M}) \\
&\cong  \bMap_{\mathfrak{M}}(\calP_{\C^*}\times_{\C^*} D_0,\textbf{Perf}\times \mathfrak{M})_0.
\end{align*}
By base change through diagram \eqref{diag on twist mapp stac}, we obtain \eqref{equ on ope imm}. 

We have a commutative diagram 
$$\xymatrix{
\bMap^{\mathrm{twist}}_{\mathfrak{M}}(\calC,\bCrit^{}(\phi)\times \mathfrak{M}) \ar[r]^{\mathrm{open}\quad \, \, \, \, } \ar[d]   &\bMap_{\mathfrak{M}}(\mathring{\omega_{\mathrm{log}}}\times_{\C^*} D_0,\textbf{Perf}\times \mathfrak{M})_0 \ar[d]  \\
(\bCrit^{}(\phi))^{\times n}\times \mathfrak{M} \ar[r]^{\mathrm{open}\quad\quad \, \, \, \,} & \bMap^{ }_{\mathfrak{M}}(\bigsqcup_{i=1}^nD_0,\textbf{Perf})_0\times \mathfrak{M},
}$$
where the left vertical map is evaluation at marked points, and the right vertical map is induced by 
the inclusion $\bigsqcup_{i=1}^n (D_0\times \mathfrak{M})\hookrightarrow (\mathring{\omega_{\mathrm{log}}}\times_{\C^*} D_0)$ of the anti-canonical divisor relative to $\mathfrak{M}$.  

Let 
$\pi: \bMap^{\mathrm{twist}}_{\mathfrak{M}}(\calC,\bCrit^{}(\phi)\times \mathfrak{M})\times_{\mathfrak{M}} \calC \to 
\bMap^{\mathrm{twist}}_{\mathfrak{M}}(\calC,\bCrit^{}(\phi)\times \mathfrak{M})$ be the induced projection, 
$ad_{\fg}\calP_G:=\calP_{G}\times_{G}\fg$ be the adjoint bundle, $\calW:=\calP_{G}\times _{G}W$ the universal $W$-bundle,  
$\oO(S)=\omega_{\log}\otimes \omega_{\calC/\mathfrak{M}}^{-1}$ be the line bundle defined by the divisor $S\subset \calC$ of marked points.
Using the isomorphism below \cite[Eqn.~(3.10) of pp.~17]{CZ}, 
we know the tangent complex of 
$$\bMap^{\mathrm{twist}}_{\mathfrak{M}}(\calC,\bCrit^{}(\phi)\times \mathfrak{M})\to (W/\!\!/ G)^{\times n}\times \mathfrak{M}$$
can be computed as 
\begin{equation}\label{equ on ta cp}\bbT_\textbf{\emph{{f}}}\cong \left(\pi_*(ad_{\fg}\calP_G\boxtimes\oO(-S)) \to \pi_*(\calW\boxtimes\oO(-S)) \to 
\pi_*\left(\calW^\vee\boxtimes\omega_{\log}\right)\to \pi_*\left(ad_{\fg}\calP_G\boxtimes\omega_{\log}\right) \right). \end{equation} 
By assumption, $\textbf{{Hilb}}^P(Y,D)$ is a finite type open substack of $\bMap^{\mathrm{twist}}_{\mathfrak{M}}(\calC,\bCrit^{}(\phi)\times \mathfrak{M})$. Therefore 
its tangent complex can be computed using \eqref{equ on ta cp}. Same argument as the proof of Lemma \ref{lem on exi of reso} gives the construction. 
\end{proof}
\begin{remark} We note that Lemma \ref{lem on exi of reso2} has straightforward generalizations when $D_0$ has more symmetries than $\bbC^*$-action.
For example, when $D_0=\C^3$,  the torus $(\C^*)^3$ acts coordinate-wise. 
Fix a principal $(\C^*)^3$-bundle $P$ on the distinguished component $C_0\cong \mathbb{P}^1$ whose associated rank three bundle is
$$\oO_{\mathbb{P}^1}(a)\oplus \oO_{\mathbb{P}^1}(b) \oplus \oO_{\mathbb{P}^1}(c), \quad \mathrm{with} \,\, a+b+c=n-2. $$ 
Via the contraction $\calC\to \mathbb{P}^1$, we can pullback $P$ to a bundle $\calP$ on $\calC$ and consider $P$-twisted maps as in \eqref{diag on twist mapp stac}
whose parametrizing stack is denoted by $\bMap^{\mathrm{twist}}_{\mathfrak{M}}(\calC,\bCrit^{}(\phi)\times \mathfrak{M})$. 

Let $Y=P\times_{(\C^*)^3}\C^3$, whose anti-canonical  divisor is a disjoint union $D=\bigsqcup_{i=1}^n D_0$. 
Assume that $\textbf{{Hilb}}^{P|_{D}}(D)=\bigsqcup_{i=1}^n \textbf{{Crit}}(\phi)$ for a regular function $\phi: W/\!\!/ G\to \C$
on a linear GIT quotient, and that $\textbf{{Hilb}}^P(Y,D)$ is contained in the open substack: 
\begin{equation}\label{equ on ope imm2}\bMap^{\mathrm{twist}}_{\mathfrak{M}}(\calC,\bCrit^{}(\phi)\times \mathfrak{M}) \hookrightarrow   
\bMap_{\mathfrak{M}}(\calP\times_{(\C^*)^3} D_0,\textbf{Perf}\times \mathfrak{M})_{0} \end{equation}
Then (Resolution) of Setting \ref{setting of lag} holds in this case by the same arguement as in Lemma \ref{lem on exi of reso2}.

In particular, the moduli stack of relative PT stable pairs on a log Calabi-Yau local curve $(Y=\oO_{\mathbb{P}^1}(a,b,c),D=\bigsqcup_{i=1}^n \C^3)$ with $n=a+b+c+2$ 
satisfies (Resolution) of Setting \ref{setting of lag}.
\end{remark}

\providecommand{\bysame}{\leavevmode\hbox to3em{\hrulefill}\thinspace}
\providecommand{\MR}{\relax\ifhmode\unskip\space\fi MR }
\providecommand{\MRhref}[2]{%
 \href{http://www.ams.org/mathscinet-getitem?mr=#1}{#2}}
\providecommand{\href}[2]{#2}

\end{document}